\newcommand{\arXiv}[1]{\href{http://arxiv.org/abs/#1}{\tt arXiv:\nolinkurl{#1}}}
\newcommand{\arxiv}[1]{\href{http://arxiv.org/abs/#1}{\tt arXiv:\nolinkurl{#1}}}
\newcommand{\googlebooks}[1]{(preview at \href{http://books.google.com/books?id=#1}{google books})}
\definecolor{dark-red}{rgb}{0.7,0.25,0.25}
\definecolor{dark-blue}{rgb}{0.15,0.15,0.55}
\definecolor{medium-blue}{rgb}{0,0,.8}
\definecolor{DarkGreen}{RGB}{0,150,0}
\theoremstyle{plain}
\newtheorem{thm}{Theorem}[section]
\newtheorem*{thm*}{Theorem}
\newtheorem{thmalpha}{Theorem}
\newtheorem{cor}[thm]{Corollary}
\newtheorem*{cor*}{Corollary}
\newtheorem{lem}[thm]{Lemma}
\newtheorem*{claim}{Claim}
\newtheorem{prop}[thm]{Proposition}
\newtheorem*{quest*}{Question}
\theoremstyle{definition}
\newtheorem{defn}[thm]{Definition}
\newtheorem{nota}[thm]{Notation}
\newtheorem{exs}[thm]{Examples}
\newtheorem{ex}[thm]{Example}
\newtheorem{rem}[thm]{Remark}
\newtheorem{rems}[thm]{Remarks}
\newtheorem{fact}[thm]{Fact}
\newtheorem{facts}[thm]{Facts}
\DeclareMathOperator{\Ad}{Ad}
\DeclareMathOperator{\Aut}{Aut}
\DeclareMathOperator{\coker}{coker}
\DeclareMathOperator{\core}{core}
\DeclareMathOperator{\depth}{depth}
\DeclareMathOperator{\Hom}{Hom}
\DeclareMathOperator{\Gr}{Gr}
\DeclareMathOperator{\id}{id}
\DeclareMathOperator{\KMS}{KMS}
\DeclareMathOperator{\op}{op}
\DeclareMathOperator{\spann}{span}
\DeclareMathOperator{\Tr}{Tr}
\DeclareMathOperator{\tr}{tr}
\newcommand{\D}{\displaystyle}
\newcommand{\comment}[1]{}
\newcommand{\be}{\begin{enumerate}[(1)]}
\newcommand{\ee}{\end{enumerate}}
\newcommand{\Z}{\mathbb{Z}}
\newcommand{\F}{\mathbb{F}}
\newcommand{\R}{\mathbb{R}}
\newcommand{\T}{\mathbb{T}}
\newcommand{\C}{\mathbb{C}}
\newcommand{\I}{\infty}
\newcommand{\set}[2]{\left\{#1 \middle| #2\right\}}
\newcommand{\ketbra}[2]{|#1\rangle \langle #2|}
\newcommand{\e}{\epsilon}
\newcommand{\Pro}{{\sf Pro}}
\newcommand{\Rep}{{\sf Rep}}
\newcommand{\sEnd}{{\sf End}}
\newcommand{\noshow}[1]{}
\newcommand{\MR}[1]{}
\newcommand{\Asterisk}{\mathop{\scalebox{1.5}{\raisebox{-0.2ex}{$*$}}}}%
\newcommand{\PA}{\cP\hspace{-.1cm}\cA}
\newcommand{\TL}{\cT\hspace{-.08cm}\cL}
\newcommand{\NC}{\cN\hspace{-.03cm}\cC}
\newcommand{\TP}[1]{\cT_{#1}(\cP_\bullet)}
\newcommand{\OP}[1]{\cO_{#1}(\cP_\bullet)}
\newcommand{\SP}[1]{\cS_{#1}(\cP_\bullet)}
\newcommand{\FP}[1]{\cF_{#1}(\cP_\bullet)}
\newcommand{\KP}[1]{\cK_{#1}(\cP_\bullet)}
\newcommand{\coreTP}[1]{\core(\TP{#1})}
\newcommand{\coreOP}[1]{\core(\OP{#1})}
\def\semicolon{;}
\def\applytolist#1{
    \expandafter\def\csname multi#1\endcsname##1{
        \def\multiack{##1}\ifx\multiack\semicolon
            \def\next{\relax}
        \else
            \csname #1\endcsname{##1}
            \def\next{\csname multi#1\endcsname}
        \fi
        \next}
    \csname multi#1\endcsname}
\def\calc#1{\expandafter\def\csname c#1\endcsname{{\mathcal #1}}}
\def\bbc#1{\expandafter\def\csname bb#1\endcsname{{\mathbb #1}}}
\def\bfc#1{\expandafter\def\csname bf#1\endcsname{{\mathbf #1}}}
\def\sfc#1{\expandafter\def\csname s#1\endcsname{{\sf #1}}}
\def\ffc#1{\expandafter\def\csname f#1\endcsname{{\mathfrak #1}}}
\newcommand{\err}{\mathfrak{r}}
\tikzstyle{shaded}=[fill=red!10!blue!20!gray!30!white]
\tikzstyle{unshaded}=[fill=white]
\tikzstyle{empty box}=[circle, draw, thick, fill=white, opaque, inner sep=2mm]
\tikzstyle{annular}=[scale=.7, inner sep=1mm, baseline]
\tikzstyle{rectangular}=[scale=.75, inner sep=1mm, baseline=-.1cm]
\newcommand{\nbox}[6]{
	\draw[thick, #1] ($#2+(-#3,-#3)+(-#4,0)$) rectangle ($#2+(#3,#3)+(#5,0)$);
	\coordinate (ZZa) at ($#2+(-#4,0)$);
	\coordinate (ZZb) at ($#2+(#5,0)$);
	\node at ($1/2*(ZZa)+1/2*(ZZb)$) {#6};
}
\newcommand{\ncircle}[5]{
	\draw[thick, #1] #2 circle (#3);
	\node at #2 {#5};
	\node at ($#2+(#4:.15cm)+(#4:#3cm)$) {$\star$};
}
\begin{document}

\title{$C^*$-algebras from planar algebras I:
\\ 
{\large canonical $C^*$-algebras associated to a planar algebra}
}
\author{Michael Hartglass and David Penneys}
\date{\today}
\maketitle
\begin{abstract}

From a planar algebra, we give a functorial construction to produce numerous associated $C^*$-algebras.
Our main construction is a Hilbert $C^*$-bimodule with a canonical real subspace which produces Pimsner-Toeplitz, Cuntz-Pimsner, and generalized free semicircular $C^*$-algebras.
By compressing this system, we obtain various canonical $C^*$-algebras, including Doplicher-Roberts algebras, Guionnet-Jones-Shlyakhtenko algebras, universal (Toeplitz-)Cuntz-Krieger algebras, and the newly introduced free graph algebras.
This is the first article in a series studying canonical $C^*$-algebras associated to a planar algebra.
\end{abstract}


\section{Introduction}

Since Jones' landmark article \cite{MR696688}, the modern theory of subfactors has developed deep connections to numerous branches of mathematics, including, but not limited to, representation theory, category theory, knot theory, topological quantum field theory, statistical mechanics, conformal field theory, and free probability.
The purpose of this series of articles is to develop connections to $C^*$-algebras with a view toward connections to non-commutative geometry.

A {\rm II}$_1$-subfactor $N\subset M$ has finite index if $\sb{N}M$ is a finitely generated projective $N$-module.
In this case, the index $[M: N]$ is the trace of the corresponding idempotent in $K_0(N)^+$.
Jones remarkably proved that the index has discrete and continuous ranges
$$
[M:N]\in \set{4\cos^2(\pi/n)}{n=3,4,5,\dots}\cup [4,\I],
$$
and he constructed an example with each admissible index \cite{MR696688}.

Decomposing the alternating tensor powers of $\sb{N}L^2(M)_M$ and $\sb{M}L^2(M)_N$ into irreducible bimodules yields a unitary 2-category called the \underline{standard invariant}, generalizing the representation categories of quantum groups. 
The objects are $N$ and $M$, the 1-morphisms are bimodules generated by $\sb{N}L^2(M)_M$ and its dual $\sb{M}L^2(M)_N$, and the 2-morphisms are bimodule maps.
The principal graphs are the induction/restriction multi-graphs corresponding to tensoring with $\sb{N}L^2(M)_M$ and $\sb{M}L^2(M)_N$.

The standard invariant has been axiomatized in three similar ways, each emphasizing slightly different structure: Ocneanu's paragroups \cite{MR996454,MR1642584}, Popa's $\lambda$-lattices \cite{MR1334479}, and Jones' planar algebras \cite{math/9909027}.
In \cite{MR1198815,MR1334479,MR1887878}, Popa starts with a $\lambda$-lattice $A_{\bullet,\bullet}=(A_{i,j})$ and constructs a {\rm II}$_1$-subfactor whose standard invariant is $A_{\bullet,\bullet}$.
If the standard invariant has finite depth, or more generally, is strongly amenable, the resulting subfactor is hyperfinite.
Hence we have a Tannaka-Krein like duality between hyperfinite (strongly) amenable subfactors and (strongly) amenable standard invariants.
For finite depth subfactors, Popa's canonical commuting squares \cite{MR1055708} are roughly equivalent to Ocneanu's flat connections and string algebras \cite{MR996454}.

The techniques used for the above duality, particularly Ocneanu's string algebras in finite depth, are highly similar to those used by Doplicher-Roberts \cite{MR1010160} in their duality for subgroups of compact groups independent of the classical Tannaka-Krein theory \cite{MR1173027}. 
Given a closed subgroup $G\subset SU(n)$, the representation category $\Rep(G)$ forms a symmetric rigid $C^*$-tensor category.
Conversely, given a suitably nice object $\rho$ in a symmetric rigid $C^*$-tensor category $\sC$, they construct a $C^*$-algebra $\cO_\rho$ together with a canonical endomorphism $\widehat{\rho}$ which produces a closed subgroup $G\subset SU(n)$ that encodes the category $\sC$. 

The algebra $\cO_\rho$ is a compression of the Cuntz-Krieger graph algebra $\cO_{\vec{\Gamma}_\rho}$ of (a directed version of) the fusion graph $\Gamma_\rho$ with respect to $\rho$ \cite{MR1183780} (see Remark \ref{rem:DRIsCornerOfCK}).
In fact, Izumi used a slight variation of the Doplicher-Roberts algebras to find a link with subfactor theory in \cite{MR1604162}, where he obtained inclusions of (compressions of) Cuntz-Krieger algebras with finite Watatani indices (see Remark \ref{rem:Izumi}).

In \cite{MR2732052,MR2807103}, starting with a subfactor planar algebra $\cP_\bullet$, Guionnet, Jones, and Shlyakhtenko (GJS) gave a diagrammatic proof of Popa's celebrated reconstruction theorem, developing connections to free probability and random matrices.
They showed that the resulting factors are interpolated free group factors when $\cP_\bullet$ is finite depth.
When $\cP_\bullet$ is infinite depth, Hartglass showed the factors are $L(\F_\infty)$ \cite{MR3110503}.

In this article, we provide a framework to fit the above constructions together.
Our construction is motivated by the work of Voiculescu and Pimsner and ideas of Jones \cite{JonesSeveral}.

First, in \cite{MR799593}, Voiculescu produced his free Gaussian functor.
Starting with an $n$-dimensional real Hilbert space $H_\bbR$, and real vectors $\xi\in H_\bbR$, Voiculescu forms creation and annihilation operators $L(\xi),L(\xi)^*$ acting on the full Fock space $\cF(H_\C)$ of the complexified Hilbert space. 
The $L(\xi),L(\xi)^*$ generate the Toeplitz algebra $\cT_n$ which contains the compacts $\cK$, and $\cT_n/\cK$ is isomorphic to the Cuntz algebra $\cO_n$ \cite{MR0467330}. The sums $L(\xi)+L(\xi)^*$ generate Voiculescu's free semicircular algebra $\cS_n$.

Starting with a $C^*$-algebra $A$ and a Hilbert $A-A$ bimodule $\cY$, one can mimic the same construction, as in work of Pimsner \cite{MR1426840}.
We get the Pimsner-Toeplitz algebra $\cT(\cY)$ of creation and annihilaiton operators on the Pimsner-Fock space $\cF(\cY)$, which contains the compacts $\cK(\cF(\cY))$.  
If $A$ acts on $\cY$ by compact operators, the Cuntz-Pimsner algebra is the quotient $C^*$-algebra $\cO(\cY)=\cT(\cY)/\cK(\cF(\cY))$.
If we have a distinguished real subspace $\cY_\bbR\subset \cY$ such that $\cY_\bbR \cdot A= \cY$, we also get a free semicircular algebra $\cS(\cY_\bbR)$ and $KK$-equivalences $A\hookrightarrow \cS(\cY_\bbR)\hookrightarrow \cT(\cY)$ \cite{Germain}.

Starting with a (sub)factor planar algebra $\cP_\bullet$ (see Subsection \ref{sec:PAs}), we define a ground $C^*$-algebra $\cB=\cB(\cP_\bullet)$ and a $\cB-\cB$ Hilbert bimodule $\cX=\cX(\cP_\bullet)$ with a distinguished real subspace $\cX_\bbR\subset \cX$ satisfying $\cX_\bbR \cdot \cB= \cX$.
From our bimodule, we obtain the Pimsner-Fock space $\cF(\cP_\bullet)$, a Pimsner-Toeplitz algebra $\TP{}$ of creation and annihilation operators, a Cuntz-Pimsner quotient $\OP{}$, and a free semicircular algebra $\SP{}$, which is isomorphic to the $C^*$-analog of the semifinite GJS von Neumann algebra \cite{MR2807103}.
Moreover, we have the following.

\begin{thmalpha}\label{thm:MainA}
The assignment $\cP_\bullet$ to $\cX(\cP_\bullet)$, $\FP{}$, $\TP{}$, $\OP{}$, and $\SP{}$ is functorial.
\end{thmalpha}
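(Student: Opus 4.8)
The plan is to unwind the word "functorial" into its concrete content and then verify each piece in the order the constructions are built. First I would fix the source category: objects are (sub)factor planar algebras $\cP_\bullet$, and morphisms are the appropriate planar algebra morphisms (maps $\cP_\bullet \to \cQ_\bullet$ of filtered $*$-algebras intertwining the planar operad action, presumably shift-graded and trace-preserving on the relevant boxes). The target categories are: $C^*$-bimodules (a $C^*$-algebra together with a Hilbert bimodule over it, with the evident notion of morphism — a pair of a $*$-homomorphism and a compatible bimodule map, appropriately isometric on the real subspace), and then plain $C^*$-algebras for $\TP{}$, $\OP{}$, $\SP{}$ (and a category of Hilbert modules for $\FP{}$). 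So the theorem is really five functoriality statements chained together, and the strategy is: prove functoriality of $\cP_\bullet \mapsto (\cB(\cP_\bullet), \cX(\cP_\bullet), \cX_\bbR)$ first, then show each of the four subsequent constructions $\cX \mapsto \FP{}, \TP{}, \OP{}, \SP{}$ is itself functorial on the category of $C^*$-bimodules-with-real-subspace, so the composites are functorial.

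The first and main step is the ground level. A morphism $\varphi \colon \cP_\bullet \to \cQ_\bullet$ should induce a $*$-homomorphism $\cB(\varphi)\colon \cB(\cP_\bullet) \to \cB(\cQ_\bullet)$ and a bimodule map $\cX(\varphi)\colon \cX(\cP_\bullet) \to \cX(\cQ_\bullet)$ compatible over $\cB(\varphi)$, carrying $\cX_\bbR(\cP_\bullet)$ into $\cX_\bbR(\cQ_\bullet)$. Here I would go back to the explicit description of $\cB$ and $\cX$ in terms of the graded pieces of $\cP_\bullet$: since these are assembled diagrammatically from the $\cP_{n}$'s with operations given by planar tangles (including the trace/inner product coming from closing up diagrams), and a planar algebra morphism by definition commutes with exactly these operations, the induced maps are defined slotwise by applying $\varphi$ to each input box. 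The content to check is: (i) $\cX(\varphi)$ is well-defined on the (completed) bimodule and bounded — this needs $\varphi$ to be compatible with the $\cB$-valued inner products, which follows from $\varphi$ intertwining the diagrammatic trace; (ii) the $\cB(\cP_\bullet)$- and $\cB(\cQ_\bullet)$-actions are intertwined — again immediate from $\varphi$ being a morphism of the diagrammatic algebra structure; (iii) $\cX(\varphi)$ restricts to a real-linear isometry (or at least contraction) $\cX_\bbR(\cP_\bullet)\to \cX_\bbR(\cQ_\bullet)$; and (iv) the assignment respects identities and composition, which is automatic because everything is "apply $\varphi$ in each slot." I expect (i), verifying boundedness/well-definedness of $\cX(\varphi)$ on the Hilbert-module completion, to be the main obstacle: one must show the inner-product-preservation identity at the pre-completion level and then extend, and one must be careful about whether $\varphi$ is isometric on all the relevant graded pieces (it is on a subfactor planar algebra because the partition function is faithful, but this should be stated as where the hypothesis is used).

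The second step is functoriality of the Fock/Pimsner constructions $\cF$, $\cT$, $\cO$, $\cS$ as functors out of the category of Hilbert $C^*$-bimodules (with real subspace). For $\cF$, a bimodule map $\Phi\colon \cX \to \cX'$ over $\psi\colon \cB \to \cB'$ induces $\bigoplus_n \Phi^{\otimes n}$ on Fock space, which is an isometry (since $\Phi$ preserves inner products) and intertwines the left actions; functoriality in $\Phi$ is clear. For $\cT(\cX)$: one checks $\cF(\Phi)$ conjugates the creation operator $L(\xi)$ on $\cF(\cX)$ to $L(\Phi\xi)$ on $\cF(\cX')$ and conjugates the left $\cB$-action to the left $\cB'$-action, hence conjugation by $\cF(\Phi)$ (or rather compression, since $\cF(\Phi)$ need only be an isometry) sends $\TP{}$ into $\TP{}'$ — this requires a small argument that the image of the generators is exactly the right subalgebra, or one invokes the universal property of the Pimsner-Toeplitz algebra to define the map directly from the covariant representation $\xi \mapsto L(\Phi\xi)$, $b\mapsto \psi(b)$ of $\cX'$ on $\cF(\cX')$ pulled back — the universal-property route is cleaner and I would use it. The Cuntz-Pimsner case $\cO$ then follows by checking the induced map descends to the quotient, i.e. sends the relevant ideal (the compacts $\cK(\cF(\cX))$, or more precisely the Katsura ideal implementing the covariance relation) into the corresponding ideal; again the universal property of $\cO(\cX)$ as the quotient by the covariance relations makes this formal once one knows $\Phi$ respects the covariance relations, which it does because it commutes with the left action and the inner products. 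Finally $\cS(\cX_\bbR)$: the map sends the semicircular generator $L(\xi)+L(\xi)^*$ to $L(\Phi\xi)+L(\Phi\xi)^*$, so functoriality is inherited from $\cT$ by restriction to the $C^*$-subalgebra generated by the self-adjoint part — here one uses $\Phi(\cX_\bbR)\subseteq \cX'_\bbR$. Composing the ground functor with these four gives all five assignments functorial, proving Theorem~\ref{thm:MainA}. The only genuinely delicate points are the boundedness in step one and the bookkeeping of which ideal to quotient by in the $\cO$ case; everything else is a diagram-chase of "apply the morphism slotwise, check it commutes with the structure maps."
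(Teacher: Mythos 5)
Your proposal follows essentially the same route as the paper's Theorem \ref{thm:functorial}: define $i_\cB$ and $i_\cX$ slotwise from the planar algebra morphism, check compatibility with the $\cB$-valued inner products and bimodule actions, invoke the universal property of the Pimsner--Toeplitz algebra to get the map on $\cT$, descend to $\cO$ by observing that compacts map to compacts, and restrict to $\cS$ via $\cX_\bbR$. The only point you omit that the paper builds into its formulation of functoriality is injectivity of the induced maps: the paper notes that any morphism of factor planar algebras is automatically injective (positivity of the inner product tangle together with $\cP_0\cong\C$), proves $i_\cT$ is injective by exhibiting a spatial intertwiner from $\cF(\cQ_\bullet)$ onto the closed span of $i(\cQ_\bullet)$ inside $\cF(\cP_\bullet)$, and deduces injectivity on $\cO$ from simplicity of $\cO(\cQ_\bullet)$ rather than from the quotient construction alone.
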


Compressions of $\cX(\cP_\bullet)$ yield other canonical Hilbert bimodules associated to $\cP_\bullet$ and its principal graph $\Gamma$.
First, we recover the Cuntz-Krieger bimodule of \cite{MR1426840,MR1722197}.
The corresponding compressions of the Cuntz-Pimsner and Pimsner-Toeplitz algebras are exactly the Cuntz-Kriger algebra \cite{MR561974} of $\vec{\Gamma}$, a certain directed graph associated to $\Gamma$, and its universal Toeplitz extension \cite{MR1722197}.

We define a new canonical $C^*$-algebra associated to a undirected graph, which is the $C^*$-analog of the free graph von Neumann algebras appearing in \cite{MR2807103, MR3016810, MR3110503}.

\begin{thmalpha}\label{thm:MainB}
Given an arbitrary undirected graph $\Lambda$ and a (non-degenerate) weighting $\mu$ on its vertices, we define the free graph algebra $\cS(\Lambda,\mu)$ as a canonical subalgebra of the Toeplitz-Cuntz-Krieger algebra $\cT_{\vec{\Lambda}}$, provided we remember the edge generators $S_\epsilon$ of $\cT_{\vec{\Lambda}}$.
The inclusion $C_0(V(\Gamma))\hookrightarrow \cS(\Lambda,\mu)$ is a $K$-equivalence.
In certain situations, $\cS(\Lambda,\mu)$ passes injectively to the quotient Cuntz-Krieger algebra $\cO_{\vec{\Lambda}}$.
(See Subsection \ref{sec:FreeGraphAlgebraOfUnoriented} for more details.)
\end{thmalpha}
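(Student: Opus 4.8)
The plan is to realize all three algebras $\cT_{\vec\Lambda}$, $\cO_{\vec\Lambda}$ and $\cS(\Lambda,\mu)$ as outputs of the machinery of Pimsner and Germain recalled above, applied to one explicitly described Hilbert bimodule over $A:=C_0(V(\Lambda))$. Orient $\Lambda$ by replacing each edge by an opposite pair $\epsilon,\overline\epsilon$ to form $\vec\Lambda$, and let $\cX=\cX(\Lambda,\mu)$ be the right Hilbert $A$-module with orthonormal basis $\{\delta_\epsilon\}$ indexed by the directed edges, where $\delta_\epsilon\cdot p_v=\delta_\epsilon$ when $v=s(\epsilon)$ (and $0$ otherwise) and the left action is $p_v\cdot\delta_\epsilon=\delta_\epsilon$ when $v=t(\epsilon)$; this is the Cuntz-Krieger bimodule of \cite{MR1426840,MR1722197}. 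Inside $\cX$ take the real subspace $\cX_\bbR$ given by the closed real span of the vectors $\lambda_\epsilon\delta_\epsilon+\lambda_{\overline\epsilon}\delta_{\overline\epsilon}$, $\epsilon\in\vec\Lambda$, where the nonzero real weights $\lambda_\epsilon$ are prescribed by $\mu$ so that each $L(\xi)+L(\xi)^*$ with $\xi\in\cX_\bbR$ is semicircular for the vacuum state, exactly as in the GJS weighting. Non-degeneracy of $\mu$ is precisely what guarantees that every $\lambda_\epsilon\neq0$, hence that $\cX_\bbR\cdot A=\cX$, so the standing hypotheses of the general construction are met.

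First I would match the three associated $C^*$-algebras with the named ones. The universal property of $\cT(\cX)$ --- creation operators $L(\delta_\epsilon)$ satisfying $L(\delta_\epsilon)^*L(\delta_{\epsilon'})=\langle\delta_\epsilon,\delta_{\epsilon'}\rangle$, together with the evident compatibility with the two $A$-actions --- coincides term by term with the defining presentation of the Toeplitz-Cuntz-Krieger algebra $\cT_{\vec\Lambda}$, yielding an isomorphism $\cT(\cX)\cong\cT_{\vec\Lambda}$ that carries $L(\delta_\epsilon)\mapsto S_\epsilon$ and the vacuum cutdowns to the vertex projections. One then \emph{defines} $\cS(\Lambda,\mu):=\cS(\cX_\bbR)\subseteq\cT(\cX)=\cT_{\vec\Lambda}$; here ``remembering the edge generators $S_\epsilon$'' is exactly what is needed to recover the creation/annihilation splitting of $\cX$, without which the subspace $\cX_\bbR$, and hence the subalgebra $\cS(\Lambda,\mu)$, are not canonically visible inside $\cT_{\vec\Lambda}$. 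When $\Lambda$ is locally finite, $A$ acts on $\cX$ by compact operators, the Cuntz-Pimsner quotient is defined, and the same comparison of universal properties gives $\cO(\cX)\cong\cO_{\vec\Lambda}$.

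For the $K$-equivalence I would invoke the chain $A\hookrightarrow\cS(\cX_\bbR)\hookrightarrow\cT(\cX)$ of $KK$-equivalences of \cite{Germain} quoted in the introduction; since $\cT(\cX)=\cT_{\vec\Lambda}$ is itself $KK$-equivalent to $A$ through the canonical inclusion (Pimsner \cite{MR1426840}), the inclusion $C_0(V(\Lambda))=A\hookrightarrow\cS(\cX_\bbR)=\cS(\Lambda,\mu)$ is a $KK$-equivalence, and in particular a $K$-equivalence. I note that $\mu$ enters only the identification of the bimodule $\cX_\bbR$ and not the $K$-theory, which depends on $\Lambda$ alone.

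Finally, the passage to the quotient. The surjection $\cT(\cX)\twoheadrightarrow\cO(\cX)$ has kernel the compact operators $\cK(\cF(\cX))$ on the Pimsner-Fock space, so $\cS(\Lambda,\mu)$ maps injectively into $\cO_{\vec\Lambda}$ precisely when $\cS(\cX_\bbR)\cap\cK(\cF(\cX))=0$, and this is the step I expect to be the main obstacle. I would attack it by producing, under suitable hypotheses on $\Lambda$, a faithful state (or trace) on $\cS(\cX_\bbR)$ that is a weak-$*$ limit of vector states supported away from the vacuum, so that it annihilates every compact operator while remaining faithful on $\cS(\cX_\bbR)$ --- equivalently, by showing directly on the Fock space that no nonzero element of the subalgebra generated by the free semicircular elements is norm-approximable by finite-rank operators. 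The hypotheses hide here: if a connected component of $\Lambda$ is finite --- say a finite tree --- the local structure degenerates and the intersection with $\cK(\cF(\cX))$ can be nonzero, whereas when every component is infinite, or carries a cycle, the GJS-type tracial computations of \cite{MR2807103,MR3110503} can be recycled to supply the required ``compact-free'' faithful state. I would isolate the precise sufficient condition in Subsection~\ref{sec:FreeGraphAlgebraOfUnoriented}, leaving the sharp characterization open if need be.
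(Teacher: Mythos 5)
Your construction of $\cS(\Lambda,\mu)$ as $\cS(\cX_\bbR)$ for a weighted real subspace of the Cuntz--Krieger bimodule, and your derivation of the $K$-equivalence from Germain's theorem, follow the paper's route essentially verbatim (the paper additionally handles infinite $\Lambda$, where $C_0(V(\Lambda))$ is non-unital, by exhausting $\Lambda$ by finite subgraphs and taking inductive limits of $KK$-equivalences --- worth making explicit, since Theorem \ref{thm:Germain} as stated assumes $A$ unital). One small but telling inaccuracy: the weights $a=\sqrt[4]{\mu(s(\e'))/\mu(t(\e'))}$ are \emph{not} chosen to make the generators semicircular --- for a non-loop edge with $a\neq 1$ the element $T_\e$ is not semicircular, and the paper instead computes that $T_{\e'}^*T_{\e'}$ has a free Poisson law. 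The weighting is chosen precisely so that $\Tr_\mu=\tau_\mu\circ E$ is \emph{tracial} on each $\cS_\e$ (the paper verifies $\Tr_\mu((T_{\e'}^*T_{\e'})^n)=\Tr_\mu((T_{\e'}T_{\e'}^*)^n)$ from the explicit densities), and traciality is what makes the whole third part of the theorem work.

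The genuine gap is in the injectivity into $\cO_{\vec\Lambda}$. Your proposed mechanism (a faithful trace on $\cS(\Lambda,\mu)$ whose GNS representation kills compacts) is the right one, but the sufficient condition you guess --- every component infinite, or carrying a cycle --- is not the correct one, and is wrong in a direction that would break the paper's main application. The correct condition, isolated in Subsection \ref{sec:FreeGraphAlgebraOfUnoriented}, is that the set $A(\Lambda,\mu)=\set{\alpha}{\mu(\alpha)>\sum_{\beta\sim\alpha}n(\alpha,\beta)\mu(\beta)}$ be empty: a subharmonicity condition on the \emph{weighting}, not a topological condition on the graph. A finite tree such as $A_4$ with its Perron--Frobenius weighting satisfies $A(\Lambda,\mu)=\emptyset$, and the embedding $\cS(\Gamma)\hookrightarrow\cO_{\vec\Gamma}$ in that case is exactly what the paper needs for finite-depth principal graphs; conversely, an infinite graph with a badly chosen $\mu$ can have $A(\Lambda,\mu)\neq\emptyset$, in which case the paper shows $\cS(\Lambda,\mu)$ genuinely contains a nonzero compact (a rank-one spectral projection $r_\alpha$ of $\sum_i T_{\e_i}T_{\e_i}^*$ arising from an atom at $0$ of a free Poisson law). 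Establishing this dichotomy requires the amalgamated free product decomposition $\cS(\Lambda,\mu)=\Asterisk_{\cC_\Lambda}(\cS_\e,E)$ and the explicit single-edge distributional computations of Theorems \ref{thm:freegraphToeplitz} and \ref{thm:FreeGraph}, which feed into the structure result for the free graph von Neumann algebra $\cM(\Lambda,\mu)$ of \cite{MR3110503} (a II$_1$ or II$_\infty$ factor plus finitely supported scalar summands indexed by $A(\Lambda,\mu)$). None of this is sketched in your proposal beyond a pointer to ``recycling GJS-type computations,'' so the third assertion of the theorem remains unproved as written.
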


When $\Lambda=\Gamma$, we let $\mu$ be the quantum dimension weighting induced from the trace on $\cP_{\bullet}$, which satisfies the Frobenius-Perron condition for the modulus $\delta$.
In this case, $\cS(\Gamma)=\cS(\Gamma,\mu)$ arises as the compression of the semifinite GJS $C^*$-algebra corresponding to the compression of $\cX(\cP_\bullet)$ realizing the Cuntz-Krieger bimodule.
Moreover, $\cS(\Gamma)$ always passes injectively to the quotient Cuntz-Krieger algebra $\cO_{\vec{\Gamma}}$.

A further compression yields an honest Hilbert space $\FP{0}$ associated to a planar algebra originally due to Jones-Shlyakhtenko-Walker \cite{MR2645882}. 
The corresponding compression of $\OP{}$ is the Doplicher-Roberts algebra $\cO_\rho$, where $\rho$ corresponds the strand in $\Pro(\cP_\bullet)$, the rigid $C^*$-tensor category of projections of $\cP_\bullet$ \cite{MR2559686,1207.1923,1208.5505}. 
Hence we recover the main result of \cite{MR1183780} on compressing $\cO_{\vec{\Gamma}_\rho}$ to obtain $\cO_\rho$.
The compression of $\TP{}$ is a Toeplitz extension of $\cO_\rho$ by the compacts $\cK$ on $\FP{0}$.

In this case, the compression of the semifinite GJS $C^*$-algebra (and also the free graph algebra $\cS(\Gamma)$) is the zeroth GJS $C^*$-algebra  \cite{MR2732052,MR2645882,GJSCStar}.
When $\cP_\bullet=\NC_\bullet$,  the planar algebra of non-commuting polynomials in $n$ self-adjoint variables (see Definition \ref{defn:NC}), this second compression exactly recovers Voiculescu's free Gaussian functor. 

\begin{figure}[!ht]
\begin{center}
\begin{tabular}{|c||c|c|c|}
\hline
& $\cA=\cO$ & $\cA=\cT$ & $\cA=\cS$
\\\hline\hline
$\cA(\cP_\bullet)$
&
Cuntz-Pimsner
&
Pimsner-Toeplitz
&
semifinite GJS algebra
\\\hline
$\cA(\Gamma)$
&
Cuntz-Krieger $\cO_{\vec{\Gamma}}$
&
Toeplitz-Cuntz-Krieger $\cT_{\vec{\Gamma}}$
&
free graph algebra $\cS(\Gamma)$
\\\hline
$\cA_0(\cP_\bullet)$
&
Doplicher-Roberts $\cO_\rho$
&
Toeplitz extension by $\cK$
&
GJS algebra
\\\hline
$\cA_0(\NC_\bullet)$
&
Cuntz $\cO_n$
&
Toeplitz $\cT_n$
&
Free semicircular system
\\\hline
\end{tabular}
\end{center}
\caption{$C^*$-algebras arising from our $C^*$-Hilbert bimodule $\cX(\cP_\bullet)$.}\label{fig:AllTheAlgebras}
\end{figure}

We immediately see some basic properties of the above algebras.
First, for $\cA\in \{\cO,\cT,\cS\}$, we have $\cA(\cP_{\bullet})\cong \cA(\Gamma)\otimes \cK$, where $\cK$ is the compact operators on a separable, infinite dimensional Hilbert space.
This fact, together with Theorem \ref{thm:MainB} above, yields the following corollary.

\begin{cor*}
The semifinite GJS algebra $\SP{}$ and the free graph algebra $\cS(\Gamma)$ are subnuclear, and thus exact. 
\end{cor*}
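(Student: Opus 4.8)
\emph{Strategy.} The plan is to realize both $\cS(\Gamma)$ and $\SP{}$ as $C^*$-subalgebras of a single nuclear $C^*$-algebra, and then invoke two standard permanence facts: every nuclear $C^*$-algebra is exact, and exactness passes to $C^*$-subalgebras (Kirchberg). Recalling that a \emph{subnuclear} $C^*$-algebra is by definition one that embeds into a nuclear $C^*$-algebra, these two facts together say exactly that subnuclear $\Rightarrow$ exact; so once the embeddings are in hand the corollary is immediate, and it suffices to produce the relevant nuclear overalgebra and the inclusions into it.

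\emph{Main step.} First I would record that the Toeplitz--Cuntz--Krieger algebra $\cT_{\vec{\Gamma}}$ is nuclear. This is classical: the Cuntz--Krieger algebra $\cO_{\vec{\Gamma}}$ is nuclear \cite{MR561974}, and $\cT_{\vec{\Gamma}}$ is an extension of $\cO_{\vec{\Gamma}}$ by an algebra of compact operators (its universal Toeplitz extension, \cite{MR1722197}), hence is itself nuclear since nuclearity is closed under extensions; alternatively, $\cT_{\vec{\Gamma}}$ is the Pimsner--Toeplitz algebra of a Hilbert bimodule over the commutative, hence nuclear, coefficient algebra $C_0(V(\Gamma))$, cf.\ \cite{MR1426840}. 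Since the minimal tensor product of a nuclear $C^*$-algebra with $\cK$ is nuclear, the identification $\TP{}\cong\cT_{\vec{\Gamma}}\otimes\cK$ (the fact recorded just before the corollary) shows $\TP{}$ is nuclear. Now Theorem \ref{thm:MainB} exhibits $\cS(\Gamma)=\cS(\Gamma,\mu)$ as a $C^*$-subalgebra of $\cT_{\vec{\Gamma}}$, so $\cS(\Gamma)$ is subnuclear. For the semifinite GJS algebra, either tensor the inclusion $\cS(\Gamma)\hookrightarrow\cT_{\vec{\Gamma}}$ with $\cK$ and combine $\SP{}\cong\cS(\Gamma)\otimes\cK$ with $\TP{}\cong\cT_{\vec{\Gamma}}\otimes\cK$, or observe directly that $\SP{}=\cS(\cX_\bbR)$ is generated by the self-adjoint operators $L(\xi)+L(\xi)^*$ on $\FP{}$ and so is a $C^*$-subalgebra of $\TP{}$; either way $\SP{}$ embeds into the nuclear algebra $\TP{}$ and is subnuclear. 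Exactness of both then follows from the permanence facts of the first paragraph.

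\emph{Where the care is needed.} There is no real obstacle here; the single external input is the nuclearity of $\cT_{\vec{\Gamma}}$, which is well documented in the graph-algebra literature. The one point worth a line of verification is that the isomorphisms $\cA(\cP_\bullet)\cong\cA(\Gamma)\otimes\cK$ for $\cA\in\{\cO,\cT,\cS\}$ are compatible with the inclusions $\cS\hookrightarrow\cT$ on both sides, so that the embedding $\SP{}\hookrightarrow\TP{}$ is legitimate; this holds because all of these algebras arise by compressing the single bimodule $\cX(\cP_\bullet)$, and the compression realizing the Cuntz--Krieger bimodule carries the Fock-space picture, hence the containments, along with it. Finally, one should resist claiming nuclearity of $\SP{}$ itself: as a $C^*$-analog of the free-group-factor-like GJS von Neumann algebra it is in general \emph{not} nuclear, which is exactly why the statement reads ``subnuclear, and thus exact'' rather than ``nuclear.''
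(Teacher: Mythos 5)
Your argument is correct, but it reaches the conclusion through a different nuclear overalgebra than the paper does. The paper's route (end of Subsection \ref{sec:PAGraph}) embeds $\cS(\Gamma)$ into the Cuntz--Krieger algebra $\cO_{\vec{\Gamma}}$: because the quantum dimension weighting satisfies the Frobenius--Perron condition, the set $A(\Gamma,\mu)$ of Lemma \ref{lem:vNGraph} is empty, so $\cS(\Gamma)\cap\cK(\cF(Y(\vec{\Gamma})))=(0)$ and the canonical surjection $\cT_{\vec{\Gamma}}\to\cO_{\vec{\Gamma}}$ is injective on $\cS(\Gamma)$; nuclearity of $\cO_{\vec{\Gamma}}$ (Facts \ref{facts:CuntzKrieger}) then gives subnuclearity, and $\SP{}\cong\cS(\Gamma)\otimes\cK$ from Lemma \ref{lem:CompactTensor1} handles the semifinite algebra. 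You instead use the inclusion $\cS(\Gamma)\subset\cT_{\vec{\Gamma}}$, which holds by definition, and supply nuclearity of the Toeplitz--Cuntz--Krieger algebra --- either as an extension of $\cO_{\vec{\Gamma}}$ by the module compacts over the commutative coefficient algebra $\cC_\Gamma$ (here these really are a $c_0$-direct sum of honest compact operator algebras, one for each vertex), or directly as a Pimsner--Toeplitz algebra over a nuclear base. Your route is shorter: it bypasses the free-probability analysis of $A(\Gamma,\mu)$ needed to see that $\cS(\Gamma)$ survives the quotient, and for $\SP{}$ one can argue even more directly that $\SP{}\subset\TP{}$ by definition, with $\TP{}$ the Pimsner--Toeplitz algebra over the nuclear algebra $\cB\cong\bigoplus_{\alpha}\cK$, so no compression is needed at all. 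What the paper's approach buys in exchange is the stronger structural fact that $\cS(\Gamma)$ sits inside the simple, purely infinite algebra $\cO_{\vec{\Gamma}}$, which is used elsewhere (e.g., in the injectivity arguments of Theorems \ref{thm:functorial} and \ref{thm:OSfunctor}); for subnuclearity and exactness alone your Toeplitz route suffices. (The paper also notes a third route to exactness via \cite{MR2039095}, using the reduced amalgamated free product decomposition of Theorem \ref{thm:freegraphToeplitz}.)
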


In Part II of this series \cite{GJSCStar}, we use our functorial construction to analyze the structure of the GJS $C^*$-algebra $\SP{0}$.
In particular, we show it is simple, has unique tracial state, has stable rank 1, and has weakly unperforated $K_0$-group.
We will also show that $\SP{0}$ is strongly Morita equivalent to $\cS(\Gamma)$ and to $\SP{}$, and thus we determine its $K$-theory from Theorem \ref{thm:MainB}.

\subsection{Outline}

In Section \ref{sec:Background}, we give the background for our construction, including material on $C^*$-Hilbert bimodules, Cuntz-Krieger graph algebras, Doplicher-Roberts algebras, and planar algebras.
In Section \ref{sec:PAFockSpace}, we give the first construction: the GJSW-Doplicher-Roberts system based on work of Jones \cite{JonesSeveral}.
There are slight problems with functoriality of this construction, and it does not allow for the efficient computation of $K_*(\SP{0})$. 

Our operator-valued system in Section \ref{sec:Operator} alleviates these concerns, and we compress it in Section \ref{sec:Compression} to obtain numerous canonical $C^*$-algebras, including (Toeplitz-)Cuntz-Krieger algebras, as well as the GJSW-Doplicher-Roberts system of Section \ref{sec:PAFockSpace}.
Of particular importance is Subsection \ref{sec:FreeGraphAlgebraOfUnoriented} where we introduce the free graph algebra $\cS(\Lambda,\mu)$.

\subsection{Acknowledgements}

The authors would like to thank Ken Dykema, George Elliott, Vaughan Jones, Dima Shlyakhtenko, and Dan Voiculescu for many helpful conversations.
We would like to thank Vaughan Jones again for generously allowing us to develop his ideas from \cite{JonesSeveral}.
David Penneys was partially supported by the Natural Sciences and Engineering Research Council of Canada.
Both authors were supported by DOD-DARPA grant HR0011-12-1-0009.


\section{Background}\label{sec:Background}

In this section, we begin by studying canonical $C^*$-algebras associated to a Hilbert bimodule $\cY$.
We follow \cite{MR1426840}, defining the Pimsner-Toeplitz and Cuntz-Pimsner algebras.
If we have a real subspace $\cY_\bbR\subset \cY$, we also get a free semicircular algebra as in \cite{Germain}.
This construction allows for the computation of $K$-theory via \cite{Germain}.

We then define the Cuntz-Krieger algebras \cite{MR561974} via the Cuntz-Krieger bimodule \cite{MR1722197}.
Next, we look at the Doplicher-Roberts algebras $\cO_\rho$ associated to an object $\rho$ in a rigid $C^*$-tensor category $\sC$ \cite{MR1010160}.

Finally, we discuss planar algebras as the rest of this article will connect the above ideas to shaded and unshaded planar algebras.

\subsection{$C^*$-algebras associated to a Hilbert bimodule}\label{sec:Pimsner}

We now recall the construction of the Pimsner-Toeplitz and Cuntz-Pimsner algebras of a Hilbert $A-A$ bimodule $\cY$ from \cite{MR1426840}. We will assume $A$ acts by compact operators to simplify the definitions.
First, we recall the notion of an $A-B$ Hilbert-bimodule for $C^{*}$-algebras $A$ and $B$.

\begin{defn}
An $A-B$ Hilbert bimodule $\cY$ is a Banach space with an isometric left action of $A$ and a right action of $B$, together with a $B$-valued sesquilinear form $\langle \cdot | \cdot \rangle_{B}$ on $Y$ which is conjugate linear in the first variable.
It satisfies
\be
\item
$\|\xi\|_{\cY} = \|\langle \xi | \xi \rangle_{B}\|_B^{1/2}$ for all $\xi \in \cY$.  In particular, $\langle \cdot | \cdot \rangle_{B}$ is positive definite.
\item
$\langle \xi|\eta \rangle_{B} = \langle \eta| \xi \rangle_{B}^{*}$ for all $\xi, \eta \in \cY$
\item
$\langle x\xi|\eta \rangle_{B} = \langle \xi| x^{*}\eta \rangle$ for all $\xi, \eta \in \cY$ and $x \in A$
\item
$\langle \xi|\eta \rangle_{B}\cdot y = \langle \xi| \eta y \rangle_{B}$ for all $\xi, \eta \in \cY$ and $y \in B$
\ee

We denote by $\cL(\cY)$ the operators $T$ on $\cY$ which are bounded and adjointable, i.e., there exists $T^{*}$ such that $\langle T\xi|\eta \rangle_{B} = \langle \xi|T^{*}\eta \rangle_{B}$ for all $\xi, \eta \in \cY$.

For $\eta,\xi\in \cY$, we define the rank one operator $\ketbra{\eta}{\xi}$ by $\ketbra{\eta}{\xi} \zeta = \eta \langle \xi|\zeta\rangle_B$.
Note that each rank one operator is adjointable, and $\ketbra{\eta}{\xi}^*=\ketbra{\xi}{\eta}$.
The $\cY$-compact operators, denoted $\cK(\cY)$, is the $C^*$-algebra generated by the rank one operators.
Note that $\cK(\cY)$ is an ideal of $\cL(\cY)$.

We say $A$ acts by compact operators on $\cY$ if the left action of $A$ on $\cY$ is by operators in $\cK(\cY)$.
\end{defn}

\begin{defn}
Let $A,B,C$ be $C^{*}$-algebras, with $\cY$ an $A-B$ Hilbert bimodule and $\cZ$ a $B-C$ Hilbert bimodule.
Let $\cY \odot \cZ$ be the algebraic tensor product over $\bbC$.  Define a $C$-valued inner product on $\cY \odot \cZ$ by
$$
\langle \eta_{1} \odot \eta_{2} | \xi_{1} \odot \xi_{2} \rangle_{C} = \langle  \eta_2 | \langle \eta_1  | \xi_{1}\rangle_B \xi_{2} \rangle_C,
$$
and define $\cN = \set{ \xi \in \cY \odot \cZ}{ \langle \xi| \xi \rangle_{C} = 0}$.
Then $\cY \otimes_{B} \cZ$ is defined to be the completion of $(\cY \odot \cZ)/\cN$ with respect to the above $C$-valued inner product.
\end{defn}

\begin{defn}\label{def:Pimsner-Toeplitz}
Let $\cY$ be a Hilbert $A-A$ bimodule, where $A$ acts by compact operators on $\cY$.
The \underline{Pimsner-Fock space of $\cY$} is given by
$$
\cF(\cY) =  \bigoplus_{n=0}^{\infty} \bigotimes_{A}^{n} \cY,
$$
where $\bigotimes_{A}^{0} \cY = A$ and the completion of the direct sum is with respect to the $A$-valued inner product which is the $A$-linear extension of
$$
\langle \eta_1\otimes \cdots \otimes \eta_m | \xi_1 \otimes \cdots \otimes \xi_n\rangle_A = \delta_{m,n}
\langle \eta_n | \langle \eta_{n-1}| \cdots \langle \eta_2|\langle \eta_1| \xi_1\rangle_A \xi_2\rangle_A \cdots  \xi_{n-1}\rangle_A \xi_n\rangle_A.
$$
The \underline{Pimsner-Toeplitz algebra $\cT(\cY)$} is the $C^*$-subalgebra of $\cL(\cF(\cY))$  generated by $A$ and creation operators $L_{+}(x)$ for $x \in \cY$. The action of the creation operators is given by:
$$
L_{+}(x) (y_{1} \otimes \cdots \otimes y_{n}) = x \otimes y_{1} \otimes \cdots \otimes y_{n}.
$$
The operators $L_{+}(x)$ are bounded and adjointable with adjoint
$$
(L_{+}(x))^{*} (y_{1} \otimes \cdots \otimes y_{n}) = \langle x| y_{1}\rangle_{A}\cdot y_{2} \otimes \cdots \otimes y_{n}.
$$
Notice that $L_{+}(y)^{*}L_{+}(x) = \langle y|x \rangle_{A}$ so that $\|L_{+}(x)\|_{\cL(\cF(\cY))} = \|x\|_{A}$.

The \underline{Cuntz-Pimsner algebra $\cO(\cY)$} is given by $\cT(\cY)/\cK(\cF(\cY))$.
(Clearly $\cK(\cF(\cY))\subset \cT(\cY)$).
\end{defn}

\begin{defn}\label{defn:GaugeAction}
The algebra $\cT(\cY)$ has a canonical $C^*$-dynamical system.
There is a gauge action $\gamma : \bbT \to \Aut(\cT(\cY))$ satisfying $\gamma_z(L_+(\xi))=z L_+(\xi)$ for all $\xi\in \cY$,
and there is a dynamics $\sigma : \bbR \to \Aut(\cT(\cY))$ which is lifted from $\gamma$ via the map $t\mapsto e^{it}$.

Note that we have the (unbounded) number operator given by $N\xi = n\xi$ for $\xi\in \otimes_A^n \cY$.
Hence the map $\exp(it N)$ given by $\xi\mapsto e^{itn}\xi$ extends uniquely to a unitary on $\cF(\cY)$.
For $x\in \cT(\cY)$, we have $\sigma_t(x)=\exp(it N) x \exp(-it N)$ for $t\in\bbR$.

Recall that a state $\varphi$ is a $\KMS_\beta$ state for $(\cT(\cY),\sigma)$ for $\beta>0$ if for all $x,y\in \cT(\cY)$ with $y$ entire we have $\varphi(x \sigma_{i\beta}(y)) = \varphi(yx)$ \cite{MR548006,MR835762}.
The element $y\in \cT(\cY)$ is entire if $t\mapsto \sigma_t(y)$ extends to an entire function.
\end{defn}

\begin{defn}
Define the conditional expectation $E: \cT(\cY)\to \cT(\cY)^{\bbT}$ by
$$
E(x)=\int_\bbT \gamma_z(x)\,dz
$$
where we use the normalized Lebesgue measure on $\bbT$.
The image of $E$ is called the \underline{core of $\cT(\cY)$}.

Since every $A-A$ bilinear isomorphism of $\cY$ induces an automorphism of $\cO(\cY)$, we get a gauge action on $\cO(\cY)$ as well.
Again, we have a conditional expectation $E : \cO(\cY)\to \cO(\cY)^{\bbT}$, and the image is called the \underline{core of $\cO(\cY)$}.
\end{defn}

\begin{rem}\label{rem:KMSKernel}
It is of great interest to study KMS states on the Pimsner-Toeplitz and Cuntz-Pimsner algebras (e.g., see \cite{MR2056837}).
Recall that if $\varphi$ is a KMS state on $A$, then $\fN_\varphi=\set{x\in A}{\varphi(x^*x)=0}$ is a closed 2-sided ideal in $A$.
Since $\varphi$ is a state, $\fN_\varphi$ is a closed left ideal, as $\varphi(x^*x)=0$ implies $\varphi(xy^*yx)\leq \|y^*y\|\varphi(x^*x)=0$.
Using the KMS-condition and the Cauchy-Schwarz inequality,
for $y^*$ entire in $\cT(\cY)$,
$$
\varphi(y^*x^*xy)
=\varphi(x^*xy\sigma_{i\beta}(y^*))
\leq \|x^*x\|_2\|\|y\sigma_{i\beta}(y^*)\|_2
\leq \|x\|_\I \varphi(x^*x)^{1/2}\|y\sigma_{i\beta}(y^*)\|_2
=0.
$$
Hence by continuity, we see that $\fN_\varphi$ is a right ideal.
In many cases, $\fN_\varphi=\cK(\cY)$, and we will see such examples in Subsections \ref{sec:PACuntz} and 
\ref{sec:OperatorCuntz} (see also Subsection \ref{sec:KMS}).
\end{rem}

\begin{defn}
Let $\cY_{\R}$ be a closed real subspace of $\cY$ such that $\cY_{\R}\cdot A = \cY$.
The \underline{semicircular algebra $\cS(\cY_\bbR)$} of $\cY_\bbR$ is the $C^*$-subalgebra of $\cL(\cF(\cY))$ generated by $A$ together with the operators $\set{L_{+}(\eta) + L_{+}(\eta)^{*}}{ \eta \in \cY_{\R}}$.
\end{defn}

We have the following theorem from \cite{Germain}.

\begin{thm}\label{thm:Germain}
Suppose $A$ is a unital $C^{*}$ algebra, and $\cY$ is an $A-A$ Hilbert bimodule, with inner product $\langle \cdot | \cdot \rangle_{A}$.
Let $\cY_{\R}$ be a closed real subspace of $\cY$ such that $\cY_{\R}\cdot A = \cY$.
The canonical inclusions $i: A \rightarrow \cS(\cY_{\R})$ and $j: \cS(\cY_{\R}) \rightarrow \cT(\cY)$ are $KK$-equivalences.
\end{thm}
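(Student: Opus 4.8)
The plan is to treat the two assertions together, using that both inclusions are legs of the canonical unital inclusion $A\xrightarrow{i}\cS(\cY_\bbR)\xrightarrow{j}\cT(\cY)$, which genuinely makes sense: $L_+(\eta)+L_+(\eta)^*\in\cT(\cY)$ whenever $\eta\in\cY_\bbR$, and in fact $L_+(\eta a)=L_+(\eta)\,a$ on $\cF(\cY)$ (with $a$ acting by left multiplication), so the hypothesis $\cY_\bbR\cdot A=\cY$ already forces $A$ together with $\{L_+(\eta):\eta\in\cY_\bbR\}$ to generate all of $\cT(\cY)$. Since the $KK$-equivalences are precisely the invertible morphisms of the $KK$-category, they satisfy two-out-of-three for composition; so it is enough to prove (a) $j\circ i\colon A\to\cT(\cY)$ is a $KK$-equivalence and (b) $i\colon A\to\cS(\cY_\bbR)$ is a $KK$-equivalence, and then $[j]$ is automatically invertible.

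Part (a) is Pimsner's theorem \cite{MR1426840}: for any Hilbert $A$--$A$ bimodule, the inclusion of the coefficient algebra into its Pimsner--Toeplitz algebra is a $KK$-equivalence, the inverse class being carried by the Fock bimodule $\cF(\cY)$ regarded as an $A$--$\cT(\cY)$ correspondence (equivalently, $\cT(\cY)$ is assembled from $A$ by iterated extensions along ideals of $A$-compact operators, each split in $KK$). So (a) costs nothing.

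Part (b) is the substantive step; the plan is to present $\cS(\cY_\bbR)$ as a reduced amalgamated free product over $A$ and reduce to a one-variable calculation. Decompose $\cY_\bbR$ into an orthogonal family of $A$-sub-bimodules, each singly generated over $A$ by a real vector $\xi_\iota\in\cY_\bbR$ (if $\cY_\bbR$ is not finitely generated over $A$, run the argument on an exhausting net of finite partial sums and pass to the inductive limit at the end, invoking continuity of $KK$ for $\sigma$-unital $C^*$-algebras). The full Fock $A$-module of a direct sum is the operator-valued free product over $A$ of the Fock modules of the summands, and the vacuum conditional expectation onto $A$ is GNS-faithful; hence $\cS(\cY_\bbR)\cong\Asterisk_{A}\,\cS_\iota$, the reduced amalgamated free product over $A$, where $\cS_\iota=C^*\bigl(A,\,L_+(\xi_\iota)+L_+(\xi_\iota)^*\bigr)$. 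By the $KK$-theory of reduced (amalgamated) free products \cite{Germain}, if each $A\hookrightarrow\cS_\iota$ is a $KK$-equivalence then so is $A\hookrightarrow\Asterisk_A\cS_\iota=\cS(\cY_\bbR)$; thus everything reduces to a single building block. When $A=\bbC$ and $\xi_\iota$ is a unit vector, $\cS_\iota\cong C[-2,2]$ and $\bbC\hookrightarrow C[-2,2]$ is even a $C^*$-homotopy equivalence, via the path of $*$-endomorphisms $f\mapsto\bigl(x\mapsto f(tx)\bigr)$ with $t$ decreasing from $1$ to $0$. For general $A$, $\cS_\iota$ is the $C^*$-algebra of one operator-valued semicircular element over $A$, and one establishes the $KK$-equivalence $A\hookrightarrow\cS_\iota$ by producing the inverse class from $\cF(\cY_{\bbR,\iota})$ --- $\bbZ/2$-graded by the parity of the number operator --- as an $A$--$\cS_\iota$ Kasparov bimodule (or by a deformation contracting $\xi_\iota$ to $0$). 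The same decomposition applied to the creation operators $L_+(\xi_\iota)$ re-proves (a).

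The main obstacle is concentrated entirely in (b): first, establishing the reduced amalgamated free product decomposition of $\cS(\cY_\bbR)$ over a noncommutative $A$ --- i.e. that freeness of the one-variable pieces on the full Fock space survives amalgamation and that the relevant expectations onto $A$ are GNS-faithful, so that Germain's machinery applies --- and second, the one-variable base case over a general coefficient algebra, which, unlike the scalar $C[-2,2]$ contraction, genuinely requires constructing and checking a Kasparov $A$--$\cS_\iota$ bimodule. The two-out-of-three bookkeeping and Pimsner's theorem for (a) are routine in comparison.
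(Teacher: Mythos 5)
First, a point of comparison: the paper does not actually prove this statement. Theorem \ref{thm:Germain} is imported verbatim from \cite{Germain}, and the only commentary supplied is the remark that the composite $j\circ i$ being a $KK$-equivalence is originally Pimsner's \cite{MR1426840}. So your two-out-of-three reduction and your part (a) agree with the little the paper says; the substance of your proposal is an attempted reconstruction of Germain's argument for part (b), and that is where the problems lie.

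The concrete gap is the very first step of (b): a Hilbert $A$--$A$ bimodule $\cY$ with a distinguished real subspace need not decompose as an \emph{orthogonal} direct sum of sub-bimodules each singly generated over $A$ by a real vector. The sub-bimodule $\overline{A\xi_\iota A}$ generated by one real vector is closed, but closed submodules of Hilbert $C^*$-modules are not orthogonally complemented in general (already $\overline{\xi A}\subset A$ for $A=C[0,1]$ and $\xi$ vanishing at a point fails to be complemented), so you cannot peel off cyclic pieces one at a time; Kasparov stabilization only gives an external isomorphism $\cY\oplus H_A\cong H_A$, not an internal decomposition of $\cY$. Without that decomposition the presentation $\cS(\cY_\bbR)\cong\Asterisk_A\,\cS_\iota$ never gets off the ground. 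Even granting it, two further load-bearing steps are asserted rather than proved: (i) that the $KK$-theory of \emph{reduced amalgamated} free products over a noncommutative $A$ applies --- such results carry hypotheses on the conditional expectations and their GNS representations, and invoking them as a black box here is close to assuming a theorem of the same depth as the one being proved, since \cite{Germain} is the very reference you are reconstructing; and (ii) the one-variable base case over general $A$, where the scalar homotopy $f\mapsto f(t\,\cdot)$ on $C[-2,2]$ has no analogue: rescaling $\xi_\iota\mapsto t\xi_\iota$ does not obviously induce a continuous path of $*$-homomorphisms out of the fixed algebra $\cS_\iota$, because the isomorphism type of $C^*(A,\,L_+(t\xi_\iota)+L_+(t\xi_\iota)^*)$ can jump at $t=0$, so one genuinely has to construct and verify a Kasparov $A$--$\cS_\iota$ bimodule. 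In short, the strategy is a plausible sketch, but the decomposition step fails as stated and the two remaining essential steps are left unproved.
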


\begin{rem}
The proof that $j\circ i : A \to \cT(\cY)$ is a KK-equivalence is originally in \cite{MR1426840}.
\end{rem}

\begin{ex}[Voiculescu's free Gaussian functor \cite{MR799593}]\label{ex:FreeGaussian}
Let $H_\bbR$ be a real Hilbert space, and let $H$ be its complexification.
Consider $H$ as a $\C-\C$ Hilbert bimodule in the obvious way.
In this case, we get Voiculescu's free Gaussian functor.

The full Fock space is
$$
\cF(H) =\bigoplus_{n\geq 0} H^{\otimes n},
$$
where $H^{\otimes 0}$ denotes a one dimensional Hilbert space spanned by the vector $\Omega$.
The left creation and annihilation operators $L_+(\xi),L_+(\xi)^*$ for $\xi \in H$ are the usual left creation and annihilation operators on full Fock space.

We may also define right creation and annihilation operators, which commute with the left creation and annihilation operators up to the compacts. 
For all $\eta,\xi\in H$, we have the relations $[L_+(\eta),R_+(\xi)]=0$ and $[R_+(\eta)^*,L_+(\xi)]=\langle \xi,\eta\rangle p_\Omega$ and their adjoints.

When $\dim(H_\bbR)=n<\infty$, the Pimsner-Toeplitz and Cuntz-Pimsner algebras are the Toeplitz algebra $\cT_n$ and the Cuntz algebra $\cO_n$ respectively.
On the full Fock space $\cF(H)$,
\be
\item
$L_+(\xi)^*L_+(\xi)=\|\xi\|^2$ for all $\xi\in H_\bbR$, and
\item
if $\{\xi_1,\dots, \xi_n\}$ is an orthonormal basis for $H_\bbR$, then $\sum_{i=1}^n L_+(\xi_i)L_+(\xi_i)^*=1-p_\Omega$.
\ee
The representation of $\cT_n$ on $\cF(H)$ is irreducible, so $\cT_n$ contains the compact operators $\cK$ on $\cF(H)$. Moreover, $\cT_n/\cK\cong \cO_n$.
If we set $S_i=L_+(\xi_i)$ for $i=1, \dots, n$ for an orthonormal basis $\{\xi_1,\dots, \xi_n\}$ of $H_\bbR$, the $S_i$'s satisfying the Cuntz algebra relations
$$
\sum_{i=1}^n S_iS_i^* = 1 = S_j^*S_j \text{ for all }j=1,\dots, n.
$$
It is well known that $\cO_n$ is simple and purely infinite, and $K_0(\cO_n)=\bbZ/(n-1)\bbZ$ and $K_1(\cO_n)=(0)$ \cite{MR604046}.

The point of starting with a real Hilbert space is that we get Voiculescu's free semicircular $C^*$-algebra $\cS_n$ as the $C^*$-algebra generated by the elements $L_+(\xi)+L_+(\xi)^*$ for $\xi\in H_\bbR$.
The von Neumann completion of $\cS_n$ is isomorphic to $L(\F_n)$.

In this case, the $\bbR$-action satisfies $\sigma_t(S_j)=e^{it} S_j$ for all $j=1,\dots, n$, and the core of $\cO_n$ is the AF algebra $\bigotimes^\infty M_n(\bbC)$, which has a unique tracial state.
By \cite{MR500150}, there is a unique $\KMS$ state on $\cO_n$ for $2\leq n<\I$, where the only admissible $\beta$-value is $\ln(n)$.
The $\KMS_{\ln(n)}$-state $\varphi$ is given by
$$
\varphi(S_{i_k}\cdots S_{i_1} S_{j_1}^*\cdots S_{j_\ell}^*) =e^{-\ln(n)k}\delta_{k,\ell}\delta_{i_1,j_1}\cdots \delta_{i_k,j_k}.
$$
As $\fN_\varphi$ must be an ideal of $\cO_n$, we must have $\fN_\varphi=(0)$, i.e., $\varphi$ is faithful.

Moreover, there is a unique $\KMS_{\ln(n)}$-state on $\cT_n$ which factors through $\varphi$, so taking the GNS representation of $\cT_n$ implements the canonical surjection $\cT_n \to \cO_n$.
(However, note that for each $\beta>\ln (n)$, there is a $\KMS_\beta$-state on $\cT_n$ by \cite{MR3061018} which does not factor through $\cO_n$.)

The $K$-theory of $\cO_n$ was computed in \cite{MR604046}, and the $K$-theory of $\cS_n$ was computed in \cite{MR1220422}.
Note that by \cite{Germain}, we have KK-equivalences $i:\bbC \to \cS_n$ and $j:\cS_n\to \cT_n$.
\end{ex}

\subsection{Cuntz-Krieger graph algebras and Toeplitz extensions}

We define the Toeplitz-Cuntz-Krieger and Cuntz-Krieger \cite{MR561974} algebras associated to a directed graph $\Lambda$ as the Pimsner-Toeplitz and Cuntz-Pimsner algebras associated to the Cuntz-Krieger bimodule \cite{MR1722197} respectively.
This spatial approach is parallel to Voiculescu's free Gaussian functor \cite{MR799593} and to our operator valued system in Section \ref{sec:Operator}.

\begin{nota}
A directed graph $\Lambda=(V(\Lambda),E(\Lambda),s,t)$ consists of a countable vertex set $V(\Lambda)$, a countable edge set $E(\Lambda)$, and source and target maps $s,t:E(\Lambda)\to V(\Lambda)$.

We use the characters $\Lambda,\Gamma$ for graphs, and we will specify whether they are directed.
Usually the character $E$ is used in the graph algebra literature, but we reserve the character $E$ for our conditional expectations.
\end{nota}

\begin{defn}\label{defn:CuntzKriegerBimodule}
Given a directed graph $\Lambda=(V(\Lambda), E(\Lambda), s, t)$, let $\cC_\Lambda=C_0(V(\Lambda))$.
We define the \underline{Cuntz-Krieger bimodule $Y(\Lambda)$} (called $X(\Lambda)$ in \cite[Example 1.2]{MR1722197}, see also \cite[p. 193]{MR1426840}) as 
$$
Y(\Lambda) = \set{f: E(\Lambda) \to \bbC}{V(\Lambda)\ni \alpha \mapsto \sum_{t(\e)=\alpha} |f(\e)|^2 \text{ is in }\cC_\Lambda}
$$
together with the following $\cC_\Lambda-\cC_\Lambda$ Hilbert bimodule structure.

For $\epsilon \in E(\Lambda)$, let $\chi_{\epsilon}$ be the indicator function at $\e\in E(\Lambda)$, i.e., $\chi_{\epsilon}(\epsilon') = \delta_{\epsilon=\epsilon'}$.
Then $Y(\Lambda)$ is the completion of the space of finitely supported functions on $E(\Lambda)$ under the $\cC_\Lambda$-valued inner product
$$
\langle \chi_{\e'}| \chi_{\e} \rangle_{\cC_\Lambda} = \delta_{\e=\e'} p_{t(\epsilon)}
$$
where $p_{\alpha}$ is the indicator function at $\alpha \in V(\Lambda)$.
Note $Y(\Lambda)$ is naturally a $\cC_\Lambda-\cC_\Lambda$ Hilbert bimodule under the actions $p_{\alpha}\chi_{\epsilon} = \delta_{\alpha = s(\epsilon)}\chi_{\epsilon}$ and $\chi_{\epsilon}p_{\alpha} = \delta_{t(\epsilon) = \alpha}\chi_{\epsilon}$.

The \underline{Toeplitz-Cuntz-Krieger algebra $\cT_\Lambda$} is the Pimsner-Toeplitz algebra $\cT(Y(\Lambda))$.
The \underline{Cuntz-Krieger algebra $\cO_\Lambda$} is the Cuntz-Pimsner algebra $\cO(Y(\Lambda))$.
\end{defn}

\begin{rem}
If $\Lambda=(V(\Lambda),E(\Lambda),s,t)$ is a locally finite directed graph with no sinks, then these algebras can also be defined as universal $C^*$-algebras via generators and relations as follows.
\be
\item
The Toeplitz-Cuntz-Krieger algebra $\cT_\Lambda$ is the universal $C^*$-algebra generated by partial isometries $S_\epsilon$ for $\epsilon\in E(\Lambda)$ and projections $p_\alpha$ for $\alpha\in V(\Lambda)$ satisfying the relations
$$
S_{\e'}^{*}S_{\e} = \delta_{\epsilon,\epsilon'}p_{t(\epsilon)}
\text{ and }
\sum_{s(\e) = \alpha} S_{\e}S_{\e}^{*} \leq p_{\alpha}.
$$
\item
If we replace the `$\leq$' with `$=$' in the second relation above, then this universal $C^*$-algebra is the Cuntz-Krieger algebra $\cO_\Lambda$.

\ee
\end{rem}

Given a countable directed graph $\Lambda$, there are two matrices associated to it.

\begin{defn}
The \underline{edge matrix of $\Lambda$} is the $\{0,1\}$-matrix $A$ indexed by the edges of $\Lambda$ such that $A(\epsilon,\epsilon')=\delta_{t(\epsilon)=s(\epsilon')}$.

The \underline{vertex matrix of $\Lambda$} (also known as the \underline{adjacency matrix of $\Lambda$}) is the $\Z_{\geq 0}$-matrix $B$ indexed by the vertices of $\Lambda$ such that $B(\alpha,\beta)$ is the number of edges from $\alpha$ to $\beta$.
\end{defn}

\begin{facts}\label{facts:CuntzKrieger}
Suppose $\Lambda$ is locally finite, strongly connected, and the edge matrix is not a permutation matrix.
\be
\item
The $K$-theory of $\cO_\Lambda$ is given by $K_0(\cO_\Lambda)=\coker(1-A^T)\cong \coker(1-B^T)$ and $K_1(\cO_\Lambda)=\ker(1-A^T)\cong \ker(1-B^T)$ where $1-A^T$ acts on $\Z^{E(\Lambda)}$ and  $1-B^T$ acts on $\Z^{V(\Lambda)}$ \cite{MR608527,MR1409796}, \cite[Theorem 3.2]{MR2020023}.

\item
 $\cO_\Lambda$ is separable, nuclear, simple, purely infinite,
\cite{MR561974,MR1432596,MR1626528},
and in the bootstrap class
\cite[Proposition 2.6]{MR1738948} (the crossed product $\cO_\Lambda\rtimes_\alpha \bbT$ is AF), so it satisfies the UCT \cite{MR894590}.
Hence by \cite{MR1745197,MR1780426}, the stable isomorphism class of $\cO_\Lambda$ is determined by the $K$-theory of $\cO_\Lambda$, and the isomorphism class is determined by the $K$-theory and the class of the unit.
\ee
\end{facts}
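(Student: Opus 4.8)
The statement is a compendium of known facts about graph $C^*$-algebras, so the plan is to assemble each clause from the cited literature rather than to argue anything from scratch. Since $\cO_\Lambda = \cO(Y(\Lambda))$ by Definition \ref{defn:CuntzKriegerBimodule}, the two engines will be Pimsner's $K$-theory exact sequence for Cuntz-Pimsner algebras \cite{MR1426840} and the structure theory of Cuntz-Krieger and graph algebras \cite{MR561974,MR1432596,MR1626528,MR1738948}. As a preliminary I would record that, since $\Lambda$ is locally finite and strongly connected, it has no sinks or sources, so $\cC_\Lambda = C_0(V(\Lambda))$ acts on $Y(\Lambda)$ by compact operators and the whole Cuntz-Pimsner apparatus applies; moreover $K_0(\cC_\Lambda) \cong \bigoplus_{V(\Lambda)}\bbZ$ and $K_1(\cC_\Lambda) = 0$.

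For clause (1), I would feed $Y(\Lambda)$ into Pimsner's six-term sequence. Since $K_1(\cC_\Lambda) = 0$, it collapses to the identifications $K_0(\cO_\Lambda) \cong \coker(\id - [Y(\Lambda)])$ and $K_1(\cO_\Lambda) \cong \ker(\id - [Y(\Lambda)])$, where $\id - [Y(\Lambda)]$ is the endomorphism of $\bigoplus_{V(\Lambda)}\bbZ$ induced by the bimodule. Evaluating this endomorphism on the generators $[p_\alpha]$, and matching the finiteness conventions of \cite[Theorem 3.2]{MR2020023}, identifies it with $\id - B^T$, where $B$ is the vertex matrix; this gives the $B$-description. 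For the $A$-description one uses the incidence factorizations $A^T = CD$ and $B^T = DC$, where $C$ is the source-incidence matrix and $D$ the range-incidence matrix of $\Lambda$; then $D(\id - CD) = (\id - DC)D$ and $C(\id - DC) = (\id - CD)C$, while $CD \equiv \id$ and $DC \equiv \id$ modulo the images of $\id - CD$ and $\id - DC$ respectively, so $C$ and $D$ descend to mutually inverse isomorphisms between $\coker(1-A^T)$ and $\coker(1-B^T)$ and between $\ker(1-A^T)$ and $\ker(1-B^T)$. This is the elementary argument behind \cite{MR608527,MR1409796}.

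For clause (2): separability is immediate from countability of $V(\Lambda)$ and $E(\Lambda)$; nuclearity holds because $\cO_\Lambda$ is a Cuntz-Pimsner algebra over the abelian, hence nuclear, algebra $\cC_\Lambda$ \cite{MR1426840}, or directly by \cite{MR1432596}. Strong connectedness makes $\Lambda$ cofinal, and the hypothesis that the edge matrix is not a permutation matrix means $\Lambda$ is not a single simple cycle, so every cycle has an exit (condition (L)); hence $\cO_\Lambda$ is simple, and, being simple with a cycle, purely infinite \cite{MR561974,MR1432596,MR1626528} (in the finite Cuntz-Krieger case this is condition (II) of \cite{MR561974}). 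Since the gauge crossed product $\cO_\Lambda \rtimes_\alpha \bbT$ is AF by \cite[Proposition 2.6]{MR1738948}, $\cO_\Lambda$ lies in the bootstrap class and hence satisfies the UCT \cite{MR894590}; the Kirchberg--Phillips classification \cite{MR1745197,MR1780426} then shows that the stable isomorphism class of $\cO_\Lambda$ is determined by $(K_0,K_1)$ and, in the unital case, its isomorphism class by $(K_0,[1]_0,K_1)$.

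I expect the real work to be bookkeeping rather than anything deep. The error-prone spots are: (a) getting orientations and transposes right so that the invariant comes out as $1 - A^T$ (resp.\ $1-B^T$) and not $1-A$ (resp.\ $1-B$), and confirming that the connecting map in Pimsner's sequence is exactly $\id - [Y(\Lambda)]$ acting as the vertex matrix; (b) checking that ``strongly connected'' together with local finiteness is precisely the cofinality-and-condition-(L) (or (K)) hypothesis demanded by the cited simplicity and pure-infiniteness theorems; and (c) in the genuinely infinite-graph case, keeping straight whether the kernels and cokernels are formed on $\bigoplus_{V(\Lambda)}\bbZ$ or on $\prod_{V(\Lambda)}\bbZ$. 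None of these is hard, but they are exactly where a sign or transpose slip would hide.
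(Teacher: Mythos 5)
The paper states this as a \emph{Facts} environment and offers no proof of its own beyond the citations, so there is nothing internal to compare against; your assembly of Pimsner's six-term sequence, the $CD$/$DC$ trick relating the edge- and vertex-matrix descriptions, and the cofinality-plus-condition-(L) route to simplicity and pure infiniteness is a correct and faithful account of exactly the references the authors invoke. The bookkeeping points you flag (transposes, the translation of ``strongly connected and edge matrix not a permutation'' into cofinality and condition (L), and working over $\bigoplus_{V(\Lambda)}\bbZ$ in the infinite row-finite case) are the right ones, and each checks out under the paper's hypotheses.
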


\begin{rems}\label{rem:PAExamples}
\mbox{}
\be
\item
The examples in this article are inspired by (sub)factor planar algebras $\cP_\bullet$.
Hence each directed graph $\vec{\Lambda}$ in this article is obtained from an undirected graph $\Lambda$ by a simple procedure (see Definition \ref{defn:Directed}).
In the (sub)factor planar algebra case, this undirected graph is the principal graph $\Gamma$ of $\cP_\bullet$.

When our directed graphs come from principal graphs, they are pointed (there is a base vertex $\star$), locally finite, and strongly connected (any vertex may be reached from $\star$ along directed edges).
Moreover, since we assume $\delta>1$, $\Gamma$ is not the graph with one vertex and one loop nor the Coxeter-Dynkin diagram $A_2$, so the edge and vertex matrices will always be irreducible and not permutation matrices.
Hence we are in the fortunate situation of Facts \ref{facts:CuntzKrieger}.

\item
The Cuntz-Krieger algebras can also be realized as a quotient of the Toepltiz extensions acting on a Fock space of finite paths due to \cite{MR620461,MR650194}. 
In general, this will not yield the universal Toeplitz-Cuntz-Krieger algebra as the defect $p_\Omega$ must always be the same for each vertex, since their Fock space has only one vacuum vector $\Omega$.

\item
It might be interesting to extend our results beyond the finite dimensional case (allowing $\dim(\cP_n)=\infty$), where truly infinite matrices could appear.
In particular, one might want to work with a planar algebra for which $\cP_0$ is some infinite dimensional commutative $C^*$-algebra.
The work of Exel-Laca \cite{MR1703078} might be highly useful in this situation.

\item
Many authors have studied KMS states on Cuntz-Krieger algebras.
If $\Lambda$ is finite and the edge matrix is irreducible,  by \cite{MR759450}, $\cO_\Lambda$ has a unique KMS state, and the only admissible $\beta$-value is $\ln(\lambda)$, where $\lambda$ is the Frobenius-Perron eigenvalue of $\Lambda$.
See \cite{MR3061018} for the KMS states on $\cT_\Lambda$ in this case.
For infinite locally finite graphs, see also \cite{1306.5118}.

\ee
\end{rems}

\subsection{Doplicher-Roberts algebras}

In \cite{MR1010160}, Doplicher-Roberts obtain a duality theory for compact subgroups $G$ extending the Tannaka-Krein theory \cite{MR1173027}.
In \cite[Section 4]{MR1010160}, they associate a $C^*$-algebra $\cO_\rho$ to an object $\rho$ in a rigid $C^*$-tensor category $\sC$.
The category may be recovered from the algebra with some additional structure in special situations.

These algebras are similar to the Cuntz-Krieger algebras and Ocneanu's path/string algebras \cite{MR996454}.
Indeed, in \cite{MR1183780}, it was shown that $\cO_\rho$ is a corner of the Cuntz-Krieger algebra associated to an auxiliary bipartite graph associated to the fusion graph of $\rho$.
One can also view paths on this auxiliary bipartite graph as paths on a certain directed graph (see Definition \ref{defn:Directed} below).
Using the string algebra approach in \cite{MR1604162}, Izumi found inclusions of generalized Doplicher-Roberts algebras with finite Watatani indices.

Because we want undirected fusion graphs to define our free graph algebras in Subsection \ref{sec:FreeGraphAlgebraOfUnoriented}, we work with symmetrically self-dual objects. 
Note that although the $C^*$-algebra construction of \cite{MR1010160} does not require symmetrically self-dual objects, the main theorems therein require `special objects' in symmetric rigid $C^*$-tensor categories.
Theorem 3.4 of \cite{MR1010160}, which ensured the existence of enough `special objects', constructed objects of the form $X\oplus \overline{X}$, which are always symmetrically self-dual.

\begin{nota}
Let $\sC$ be a strict rigid $C^*$-tensor category, where we denote the objects of $\sC$ by $\rho,\sigma,\tau$.
We denote the tensor product of $\sigma$ and $\tau$ by $\sigma\tau$.
We denote the finite dimensional Hilbert space of maps from $\sigma$ to $\tau$ by $\Hom(\sigma,\tau)$.

Fix a simple symmetrically self-dual object $\rho\in \sC$.
Let $\Gamma_\rho$ be the fusion graph with respect to $\rho$, i.e.,
the vertices are the isomorphism classes of irreducible objects in $\sC$, and given simples $\sigma, \tau$, there exactly $\dim(\Hom(\sigma \rho, \tau))$ edges from $\sigma$ to $\tau$.
Since our distinguished object $\rho$ is symmetrically self-dual, our fusion graph is undirected.
\end{nota}

\begin{defn}
The \underline{Doplicher-Roberts algebra $\cO_\rho$} is the $C^*$-algebra defined as follows.
First, there is a canonical inclusion $\Hom(\rho^m,\rho^n)\hookrightarrow \Hom(\rho^{m+1},\rho^{n+1})$ given by $x\mapsto x\otimes 1_\rho$.
For $k\in\bbZ$, let $\sp{o}\cO_\rho^k = \varinjlim \Hom(\rho^n, \rho^{k+n})$, and let $\sp{o}\cO_\rho = \bigoplus_{k\in \bbZ} {\sp{o}\cO}_\rho^k$.
The multiplication on $\sp{o}\cO_\rho$ is induced from the composition in $\sC$.
If $y\in \Hom(\rho^m, \rho^{n+m})$ and $x\in \Hom(\rho^{n+m},\rho^{k+n+m})$, then $xy \in \Hom(\rho^m, \rho^{k+n+m})$, and
$xy\otimes 1_\rho = (x\otimes 1_\rho)(y\otimes 1_\rho)$.
We let $\cO_\rho$ be the universal enveloping algebra of $\sp{o}\cO_\rho$, which exists by \cite{MR1010160,MR1183780}.
\end{defn}

\begin{rem}
The Doplicher-Roberts algbera $\cO_\rho$ comes equipped with a canonical $\bbT$-action $\gamma$ and a canonical endomorphism $\widehat{\rho}$ implemented by $\rho$. 
They define $\widehat{\rho}$ on $\bigoplus_{k\in \bbZ} {\sp{o}\cO}_\rho^k$ as the map induced by 
$$
\Hom(\rho^m, \rho^{n+m})\ni x\mapsto 1_\rho\otimes x \in \Hom(\rho^{m+1},\rho^{n+m+1}),
$$
and $\widehat{\rho}$ extends to an endomorphism of $\cO_\rho$.
We get a faithful representation of the full subcategory $\sT_\rho\subset \sC$ whose objects are $\rho^n$ for $n\in \bbZ_{\geq 0}$ in $\sEnd(\cO_\rho)$, the $C^*$-tensor category of endomorphisms of $\cO_\rho$.
Moreover, the assignment $(\sT_\rho,\rho)$ to $(\cO_\rho,\gamma,\widehat{\rho})$ is functorial.

We give a diagrammatic construction of the Doplicher-Roberts algebra $\cO_\rho$ via Subsection \ref{sec:PACuntz} using the factor planar algebra $\cP_\bullet$ of the rigid $C^*$-tensor subcategory $\langle \rho\rangle \subset \sC$ and the choice of object $\rho$.
See also Remark \ref{rem:PAandTC} below.
\end{rem}

\begin{defn}\label{defn:Directed}
Suppose $\Lambda$ is an undirected graph.
The directed graph $\vec{\Lambda}=(V(\vec{\Lambda}),E(\vec{\Lambda}),s,t)$ is the graph with the following properties:
\be
\item
$V(\Lambda) = V(\vec{\Lambda})$.
\item
For each $\epsilon\in E(\Lambda)$ which is a loop, i.e., $\epsilon$ only connects to one vertex $\alpha$, we have a single edge $\epsilon\in E(\vec{\Lambda})$ such that $s(\epsilon)=t(\epsilon)=\alpha$.
\item
For each $\epsilon\in E(\Lambda)$ which is not a loop, i.e., $\epsilon$ connects to two distinct vertices $\alpha,\beta$, we have two edges $\epsilon',\epsilon''\in E(\vec{\Lambda})$ such that $s(\epsilon')=t(\epsilon'')=\alpha$ and $t(\epsilon')=s(\epsilon'')=\beta$.
\ee
There is an involution $\op$ on $E(\vec{\Lambda})$ given by $\epsilon^{\op}=\epsilon$ if $\epsilon$ is a loop, and $(\epsilon')^{\op}=\epsilon''$ if $\epsilon$ is not a loop.
\end{defn}

\begin{rem}\label{rem:OrientLambda}
As long as $\Lambda$ is not the graph with one vertex and one loop or the $A_{2}$ Coxeter-Dynkin diagram, $\vec{\Lambda}$ is strongly connected, and the vertex and edge matrices of $\vec{\Lambda}$ are not permutation matrices.
Moreover, $\vec{\Lambda}$ is locally finite if $\Lambda$ is.
\end{rem}

\begin{rem}\label{rem:DRIsCornerOfCK}
By \cite{MR1183780}, $\cO_\rho$ is isomorphic to a full corner of the Cuntz-Krieger algebra $\cO_{\vec{\Gamma}_\rho}$, where $\vec{\Gamma}_\rho$ is obtained from $\Gamma_\rho$ as in Definition \ref{defn:Directed}.
They showed how to realize $\cO_{\vec{\Gamma}_\rho}$ as a path/string algebra acting on a Hilbert space of infinite paths.
In this infinite path representation, $\cO_\rho$ is isomorphic to the cutdown $P_\star\cO_{\vec{\Gamma}} P_\star$ where $P_\star$ is the projection onto the paths that start at the distinguished vertex $\star$, which corresponds to the trivial object $1\in\cC$.
In particular, Facts \ref{facts:CuntzKrieger} all hold for $\cO_\rho$.
(See also \cite{MR1432596} on Cuntz-Krieger algebras of pointed graphs.)

We note that the method of \cite{MR1183780} using a bipartite graph is equivalent to looking at infinite paths on our directed graph $\vec{\Gamma}_\rho$ obtained from $\Gamma_\rho$ as in Definition \ref{defn:Directed}.
This approach is analogous to the finite path representation due to \cite{MR650194,MR759450}.
\end{rem}

\begin{rem}\label{rem:Izumi}
In \cite[Section 2.5]{MR1604162}, Izumi studies the Doplicher-Roberts algebras $\cO_{\rho\overline{\rho}}$ and $\cO_{\overline{\rho}\rho}$, where $\rho$ is the standard sector of a finite index, finite depth type III subfactor.
In the type II$_1$ language, $\rho$ corresponds to the standard bimodule $\sb{N}L^2(M)_M$, which is a 1-morphism in a 2-category.
Thus $\rho\overline{\rho}$ and $\overline{\rho}\rho$ are generating bimodules of the principal even half and the dual even half of the standard invariant respectively, and the Doplicher-Roberts construction can be performed in these tensor categories.
In this case, there are two canonical endomorphisms $\widehat{\rho}$ and $\widehat{\overline{\rho}}$ which switch back and forth between the two algebras $\cO_{\rho\overline{\rho}}$ and $\cO_{\overline{\rho}\rho}$.

Izumi notes in \cite[Proposition 3.1]{MR1604162} that $\cO_{\rho\overline{\rho}}$ is stably isomorphic to $\cO_{\Lambda\Lambda^T}$ where $\Lambda$ is the (bipartite!) adjacency matrix of $\Gamma$.
This follows from Remark \ref{rem:DRIsCornerOfCK} above, since $\Lambda\Lambda^T$ is the adjacency matrix of the fusion graph with respect to $\rho\overline{\rho}$.
\end{rem}

\subsection{Planar algebras}\label{sec:PAs}

Planar algebras have proven to be a useful tool for analyzing subfactors and tensor categories.

\begin{defn}
A planar algebra is a sequence of vector spaces $\cP_\bullet=(\cP_n)_{n\geq 0}$ together with an action of the planar operad, i.e., every planar tangle with $k_1,\dots, k_r$ points on the input disks and $k_0$ points on the output disk corresponds to a multilinear map
$\cP_{k_1}\times \cdots \times \cP_{k_r}\to \cP_0$. For example,
$$
\begin{tikzpicture}[baseline = .5cm]
	\clip (-.6,.6) circle (1.6cm);
	\draw (.8,1.6) circle (.6cm);
	\draw (0,.4) arc (0:90:.8cm);
	\draw (-.4,0) circle (.25cm);
	\draw (-1.2,2.2)--(-1.2,1.2)--(-2.2,1.2);
	\draw (0,-1)--(0,0)--(1,0);
	\draw (-1.4,0) circle (.3cm);
	\draw[ultra thick] (-.6,.6) circle (1.6);
	\ncircle{unshaded}{(-1.2,1.2)}{.4}{235}{}
	\ncircle{unshaded}{(0,0)}{.4}{235}{}
	\node at (-1.2,-.7) {$\star$};
\end{tikzpicture}
:\cP_{3}\times \cP_{5} \to \cP_{6}.
$$
We require
\begin{itemize}
\item \underline{isotopy invariance:} isotopic tangles produce the same multilinear maps,
\item \underline{identity:} the identity tangle (which only has radial strings) acts as the identity transformation, and
\item \underline{naturality:} the gluing of tangles corresponds to composition of multilinear maps.
When we glue tangles, we match up the points along the boundary disks making sure the distinguished intervals marked by $\star$ align.
\end{itemize}
\end{defn}

\begin{nota}
We will always draw our planar tangles in rectangular form in what follows.
$$
\begin{tikzpicture}[baseline = .5cm]
	\nbox{unshaded}{(-.6,.6)}{1.4}{0}{0}{}
	\draw (.8,1.4) arc (-90:-180:.6cm);
	\draw (0,.4) arc (0:90:.8cm);
	\draw (-.4,-.2) arc (270:90:.2cm);
	\draw (-1.2,2)--(-1.2,1.2)--(-2,1.2);
	\draw (0,-.8)--(0,0)--(.8,0);
	\draw (-1.4,0) circle (.3cm);
	\nbox{unshaded}{(-1.2,1.2)}{.4}{0}{0}{}
	\nbox{unshaded}{(0,0)}{.4}{0}{0}{}
	\node at (-.5,-.5) {$\star$};
	\node at (-1.6,.6) {$\star$};
	\node at (-2.2,-.6) {$\star$};
\end{tikzpicture}
$$
When we draw a planar tangle, we will often suppress the external rectangle, which is assumed to be large.
If we omit a $\star$, it is always assumed to be in the \underline{lower left corner}.
Finally, we draw one string labelled $k$ rather than $k$ parallel strings.
\end{nota}

\begin{defn}
 $P_\bullet$ is called a \underline{factor planar algebra} if $P_\bullet$ is
\begin{itemize}
\item
\underline{evaluable:} $\dim(P_n)<\infty$ for all $n\geq 0$ and $P_0\cong \C$ via the map that sends the empty diagram to $1\in\C$. 
By naturality, there is a scalar $\delta$ such that any labelled diagram containing a closed loop is equal to $\delta$ times the same diagram without the closed loop. 

\item
\underline{involutive:} for each $n\geq 0$, there is a map $*\colon P_n\to P_n$ with $*\circ *=\id$ which is compatible with the reflection of tangles, i.e., if $T$ is a planar tangle with $r$ input disks, then $T(x_1,\dots, x_r)^*=T^*(x_1^*,\dots,x_r^*)$ where $T^*$ is the reflection of $T$.

\item
\underline{spherical:} the value of a closed diagram is invariant under spherical isotopy. Equivalently, for all $x\in \cP_2$,  
$$
\tr(x)
=
\begin{tikzpicture}[baseline = -.1cm]
	\draw (0,.3) arc (180:0:.3cm)--(.6,-.3) arc (0:-180:.3cm);
	\nbox{unshaded}{(0,0)}{.3}{0}{0}{$x$}
\end{tikzpicture}
=
\begin{tikzpicture}[baseline = -.1cm, xscale=-1]
	\draw (0,.3) arc (180:0:.3cm)--(.6,-.3) arc (0:-180:.3cm);
	\nbox{unshaded}{(0,0)}{.3}{0}{0}{$x$}
\end{tikzpicture}\,.
$$

\item
\underline{unitary/positive:} for every $n\geq 0$, the map $\langle\cdot,\cdot\rangle \colon P_n\times P_n\to P_0\cong\C$ given by
$$
\langle x,y\rangle =
\begin{tikzpicture}[baseline=-.1cm]
	\draw (0,0)--(1,0);
	\node at (.5,.2) {{\scriptsize{$n$}}};
	\nbox{unshaded}{(0,0)}{.3}{0}{0}{$x$}
	\nbox{unshaded}{(1,0)}{.3}{0}{0}{$y^*$}
\end{tikzpicture}
$$
is a positive definite inner product. 
Hence by \cite{MR696688}, $\delta\in \set{2\cos(\pi/n)}{n\geq 3}\cup[2,\I)$.

\end{itemize}
A \underline{subfactor planar algebra} is a 2-shaded factor planar algebra. 
We refer the reader to \cite{MR2979509,MR2972458} for more details on subfactor planar algebras.
\end{defn}

\begin{exs}
Some examples of factor planar algebras are the Temperley-Lieb planar algebra $\TL_\bullet(\delta)$ and the planar algebra of non-commutative polynomials $\NC_\bullet$ (see Definition \ref{defn:NC}).
\end{exs}

\begin{defn}
The \underline{principal graph $\Gamma$} of $\cP_\bullet$ is the graph whose vertices are the isomorphism classes of simple projections in $\cP_\bullet$, and if $p\in \cP_{2n}$ and $q\in \cP_{2(n+1)}$ are simple projections, then the vertices $[p]$ and $[q]$ are connected by $\dim(q \cP_{2(n+1)}(p\otimes \id))$ edges.

Given a vertex $[p]$ of $\Gamma$, the number $\tr(p)$ is independent of the choice of representative of $[p]$. 
The vector $(\tr(p))_{[p]\in V(\Gamma)}$ defines the \underline{quantum dimension weight vector} on the vertices of $\Gamma$ satisfying the Frobenius-Perron condition:
$$
\delta \tr(p) = \sum_{[q]\in V(\Gamma)} N_{[p],[q]} \tr(q)
$$
where $N_{[p],[q]}$ is the number of edges connecting $[p]$ and $[q]$ in $\Gamma$.
Note that the quantum dimension weight vector is normalized so that the trace of the empty diagram is 1.
\end{defn}

\begin{rem}\label{rem:PAandTC}
In \cite{MR2559686,1208.5505}, it was shown how to associate to a factor planar algebra $\cP_\bullet$ its rigid $C^*$-tensor category of projections $\Pro(\cP_\bullet)$, and from a rigid $C^*$-tensor category $\cC$ and a choice of symmetrically self-dual object $\rho\in \cC$, how to obtain a factor planar algebra $\PA(\cC,\rho)_\bullet$.
In this case, $\PA(\Pro(\cP_\bullet),\id)_\bullet=\cP_\bullet$, and $\Pro(\PA(\cC,\rho)_\bullet)$ is equivalent to the subcategory of $\cC$ generated by $\rho$.
Moreover, the fusion graph $\Gamma_\rho$ corresponds to the principal graph $\Gamma$.
Also see \cite{1207.1923} for more details.
\end{rem}


\section{The GJSW-Doplicher-Roberts system}\label{sec:PAFockSpace}

In this subsection, following Guionnet-Jones-Shlyakhtenko-Walker \cite{MR2732052,MR2645882}, we define a canonical Hilbert space $\cF_0(\cP_\bullet)$ associated to a planar algebra $\cP_\bullet$ which shares some properties of a full Fock space.
Following Jones \cite{JonesSeveral}, we define the Toeplitz and Cuntz algebras in this scenario, where the latter is isomorphic to the Doplicher-Roberts algebra $\cO_\rho$ where $\rho$ is the strand in $\cP_\bullet$.

The work of Jones \cite{JonesSeveral} and Shelly \cite{Shelly} defined $\cO_\rho$ as the image of the Toeplitz algebra in the GNS representation of a KMS$_{\ln(\delta)}$-state.
We show this is equivalent to taking the quotient by the compact operators $\cK\subset B(\FP{0})$ in Subsection \ref{sec:KMS}.

We will see the GJS algebra $\SP{0}$ arises as the analog of Voiculescu's free semicircular algebra in Subsection \ref{sec:SP}.

\subsection{The Fock space and Toeplitz algebra}\label{sec:PAToeplitz}

\begin{defn}
Let $\FP{0}$, the \underline{full Fock space of $\cP_\bullet$}, be the Hilbert space of $\Gr_0 = \bigoplus_{k=0}^{\infty}\cP_{k}$ with the usual inner product, i.e., the completion of $\Gr_0$ under the inner product given by the extension of
$$
\langle x, y\rangle =
\begin{cases}
0&\text{if }m\neq n\\
\begin{tikzpicture}[baseline=-.1cm]
	\draw (0,0)--(1,0);
	\node at (.5,.2) {\scriptsize{$n$}};
	\nbox{unshaded}{(0,0)}{.3}{0}{0}{$x$}
	\nbox{unshaded}{(1,0)}{.3}{0}{0}{$y^*$}
\end{tikzpicture}
& \text{if }m=n
\end{cases}
$$
for $x\in\cP_n$ and $y\in \cP_m$.

For $x\in \cP_n$ and $0\leq \ell,\err$ with $\ell+\err=n$, we define the creation-annihilation operator $L_\err(x)$ by its action on a $y\in\cP_m$:  
$$
L_r(x)y =
\left(
\begin{tikzpicture}[baseline=.1cm]
	\draw (.2,0)--(.2,.8);
	\draw (-.2,0)--(-.2,.8);
	\node at (-.4,.6) {\scriptsize{$\ell$}};
	\node at (.4,.6) {\scriptsize{$\err$}};
	\nbox{unshaded}{(0,0)}{.4}{0}{0}{$x$}
	\draw[fill=red] (0,.4) circle (.05cm);
\end{tikzpicture}
\right)
\begin{tikzpicture}[baseline=.1cm]
	\draw (0,0)--(0,.8);
	\node at (.2,.6) {\scriptsize{$m$}};	
	\nbox{unshaded}{(0,0)}{.4}{0}{0}{$y$}
\end{tikzpicture}
=
\begin{cases}
0&\text{if }m< \err\\
\begin{tikzpicture}[baseline=-.1cm]
	\draw (-.2,0)--(-.2,.8);
	\draw (1.2,0)--(1.2,.8);
	\draw (.2,.4) arc (180:0:.3cm);
	\node at (-.4,.6) {\scriptsize{$\ell$}};
	\node at (.5,.8) {\scriptsize{$\err$}};
	\node at (1.7,.6) {\scriptsize{$m-\err$}};
	\nbox{unshaded}{(0,0)}{.4}{0}{0}{$x$}
	\nbox{unshaded}{(1,0)}{.4}{0}{0}{$y$}
\end{tikzpicture}
& \text{if }\err\leq m.
\end{cases}
$$
We use the dot on the top of the $n$-box $x$ to denote $x\in \cP_n$ as the operator $L_{\err}(x)$ to distinguish it from the vector $x\in\FP{0}$.
Note that the omitted $\star$ is still on the bottom left corner.
The operator $L_{\err}(x)$ is bounded by \cite{MR2645882}.

We define the \underline{$\cP_\bullet$-Toepltiz algebra $\TP{0}$} as the unital $C^*$-algebra acting on $\FP{0}$ generated by
$$
\set{L_{\err}(x)}{x\in \cP_n,\, n\geq 0,\text{ and }0\leq \err \leq n}.
$$

It is straightforward to show that the product $L_{\err_1}(x)\cdot L_{\err_2}(y)\in \TP{0}$ for $x\in \cP_n$ and $y\in \cP_m$ is given by
$$
L_{r_1}(x) \cdot L_{r_2}(y) =
\left(
\begin{tikzpicture}[baseline=.1cm]
	\draw (.2,0)--(.2,.8);
	\draw (-.2,0)--(-.2,.8);
	\node at (-.4,.6) {\scriptsize{$\ell_1$}};
	\node at (.4,.6) {\scriptsize{$\err_1$}};
	\nbox{unshaded}{(0,0)}{.4}{0}{0}{$x$}
	\draw[fill=red] (0,.4) circle (.05cm);
\end{tikzpicture}
\right)
\cdot
\left(
\begin{tikzpicture}[baseline=.1cm]
	\draw (.2,0)--(.2,.8);
	\draw (-.2,0)--(-.2,.8);
	\node at (-.4,.6) {\scriptsize{$\ell_2$}};
	\node at (.4,.6) {\scriptsize{$\err_2$}};
	\nbox{unshaded}{(0,0)}{.4}{0}{0}{$y$}
	\draw[fill=red] (0,.4) circle (.05cm);
\end{tikzpicture}
\right)
=
\begin{cases}
\begin{tikzpicture}[baseline=.2cm]
	\nbox{unshaded}{(.5,.2)}{.8}{.4}{.6}{}
	\draw (-.2,0)--(-.2,1.4);
	\draw (1,0)--(1,1.4);
	\draw (1.4,0)--(1.4,1.4);
	\draw (.2,.4) arc (180:0:.3cm);
	\node at (-.4,1.2) {\scriptsize{$\ell_1$}};
	\node at (.1,.6) {\scriptsize{$\err_1$}};
	\node at (.5,1.2) {\scriptsize{$\ell_2-\err_1$}};
	\node at (1.6,1.2) {\scriptsize{$\err_2$}};
	\nbox{unshaded}{(0,0)}{.4}{0}{0}{$x$}
	\nbox{unshaded}{(1,0)}{.4}{0}{.2}{$y$}
	\draw[fill=red] (1.2,1) circle (.05cm);
\end{tikzpicture}
&\text{if }r_1\leq \ell_2\\
\begin{tikzpicture}[baseline=.2cm,xscale=-1]
	\nbox{unshaded}{(.5,.2)}{.8}{.4}{.6}{}
	\draw (-.2,0)--(-.2,1.4);
	\draw (1,0)--(1,1.4);
	\draw (1.4,0)--(1.4,1.4);
	\draw (.2,.4) arc (180:0:.3cm);
	\node at (-.4,1.2) {\scriptsize{$\err_2$}};
	\node at (.05,.6) {\scriptsize{$\ell_2$}};
	\node at (.5,1.2) {\scriptsize{$\err_1-\ell_2$}};
	\node at (1.6,1.2) {\scriptsize{$\ell_1$}};
	\nbox{unshaded}{(0,0)}{.4}{0}{0}{$x$}
	\nbox{unshaded}{(1,0)}{.4}{0}{.2}{$y$}
	\draw[fill=red] (1.2,1) circle (.05cm);
\end{tikzpicture}
&\text{if }\ell_2< r_1,
\end{cases}
$$
and the adjoint of $L_\err(x)$ is $L_\ell(x^*)$:
$$
\left(
\begin{tikzpicture}[baseline=.1cm]
	\draw (.2,0)--(.2,.8);
	\draw (-.2,0)--(-.2,.8);
	\node at (-.4,.6) {\scriptsize{$\ell$}};
	\node at (.4,.6) {\scriptsize{$\err$}};
	\nbox{unshaded}{(0,0)}{.4}{0}{0}{$x$}
	\draw[fill=red] (0,.4) circle (.05cm);
\end{tikzpicture}
\right)^*
=
\begin{tikzpicture}[baseline=.1cm,xscale = -1]
	\draw (.2,0)--(.2,.8);
	\draw (-.2,0)--(-.2,.8);
	\node at (-.4,.6) {\scriptsize{$\ell$}};
	\node at (.4,.6) {\scriptsize{$\err$}};
	\nbox{unshaded}{(0,0)}{.4}{0}{0}{$x^*$}
	\draw[fill=red] (0,.4) circle (.05cm);
\end{tikzpicture}.
$$
Define the \underline{Toeplitz core of $\cP_\bullet$}, denoted $\coreTP{0}$, as the subalgebra of $\TP{0}$ generated by $\set{L_n(x)}{x\in \cP_{2n},\, n\geq 0}$.
\end{defn}

\begin{rem}
The core of $\TP{0}$ with the multiplication defined above is \underline{not} the usual core of the standard invariant defined by Popa in \cite{MR1278111}, i.e., the inductive limit algebra $\varinjlim \cP_{2n}$ under the right inclusion. See Remark \ref{rem:CuntzCore} for the appearance of the usual core, and see Example \ref{ex:ToeplitzAF} for the AF structure of the core of $\TP{0}$.
\end{rem}

The Toeplitz algebra $\TP{0}$ appears briefly in \cite{JonesSeveral}, but without a distinguished name.
Jones observes that $\TP{0}$ contains the compact operators $\cK$ as follows from the next two lemmas.

\begin{lem}[\cite{JonesSeveral}]\label{lem:Irreducible}
The action of $\TP{0}$ on $\FP{0}$ is irreducible.
\end{lem}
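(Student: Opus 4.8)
The strategy is the standard one for proving irreducibility of a $C^*$-algebra acting on a Hilbert space: show that the commutant $\TP{0}'$ consists only of scalars, or equivalently, show that the only closed subspaces of $\FP{0}$ invariant under all $L_\err(x)$ are $(0)$ and $\FP{0}$. I would work with the commutant. Let $T\in \TP{0}'$; the goal is $T = \lambda\cdot 1$ for some $\lambda\in\bbC$.

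\textbf{Step 1: Reduce to the vacuum.} The vacuum vector is the empty diagram $\Omega\in \cP_0\cong \bbC$, sitting inside $\FP{0}$ as the degree-zero summand. The key observation is that every vector in $\Gr_0 = \bigoplus_k \cP_k$ is obtained from $\Omega$ by applying a creation operator: for $x\in \cP_n$, the operator $L_n(x)$ (i.e. $\err = n$, $\ell = 0$, pure creation) sends $\Omega\mapsto x$, reading the formula for $L_\err(x)$ with $m = 0$. Hence if I can show $T\Omega = \lambda\Omega$, then for any $x\in\cP_n$ I get $Tx = T L_n(x)\Omega = L_n(x) T\Omega = \lambda L_n(x)\Omega = \lambda x$, and since $\Gr_0$ is dense in $\FP{0}$, $T = \lambda\cdot 1$.

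\textbf{Step 2: Show $T\Omega\in\bbC\Omega$.} For this I use the annihilation operators. Observe that for $x\in\cP_n$ with $n\geq 1$, the pure annihilation operator $L_0(x^*) = L_n(x)^*$ kills the vacuum: $L_0(x^*)\Omega = 0$ (the formula with $\ell = n$, $\err = 0$ acting on $m = 0$ gives $0$ unless there's nothing to annihilate, but more precisely $L_0(x^*)$ pairs the $n$ strands of $x^*$ against the input, so on $\Omega\in\cP_0$ it vanishes for $n\geq 1$; in fact $L_0(x^*)$ on a vector $y\in\cP_m$ gives something in $\cP_{m - n}$... wait, $L_0$ has $\err = 0$ so it shifts degree up by $\ell = n$). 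Let me instead use $L_n(x)^* = L_0(x^*)$ which lowers degree: $L_n(x)^*$ maps $\cP_m\to \cP_{m-n}$ and annihilates $\cP_k$ for $k < n$. So $L_n(x)^*\Omega = 0$ for all $n\geq 1$. Now write $T\Omega = \sum_k \xi_k$ with $\xi_k\in\cP_k$. For any $n\geq 1$ and $x\in\cP_n$, the component of $T\Omega$ in degree $n$ is detected by pairing: $\langle T\Omega, x\rangle = \langle T\Omega, L_n(x)\Omega\rangle = \langle L_n(x)^* T\Omega, \Omega\rangle = \langle T L_n(x)^*\Omega,\Omega\rangle = 0$. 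Hence $\xi_n = 0$ for all $n\geq 1$, so $T\Omega = \xi_0\in\cP_0 = \bbC\Omega$.

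\textbf{Main obstacle.} The computation is essentially routine once one has the right picture; the only real care needed is the bookkeeping of the degree-shifting conventions in the definition of $L_\err(x)$ — which index ($\ell$ or $\err$) raises degree and which lowers it — and verifying that $L_n(x)^*$ genuinely annihilates all lower-degree vectors, which follows directly from the stated formula for the adjoint together with the composition rule. I'd also want to note explicitly that $L_n(x)$ for $x$ ranging over a basis of $\cP_n$ produces enough vectors to span $\cP_n\subset\FP{0}$ (immediate, since $L_n(x)\Omega = x$), which is what legitimizes the density argument in Step 1. No functional-analytic subtlety arises because $\Gr_0$ is a genuine dense subspace and all operators in play are bounded.
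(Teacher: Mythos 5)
Your argument is correct in substance, but it takes the dual route to the paper's. The paper proves irreducibility by exhibiting transitivity on homogeneous vectors: for $x\in\cP_\err\setminus\{0\}$ and $y\in\cP_\ell$, the single operator $\|x\|_2^{-2}L_\err(y\wedge x)$ --- the box with $y$ on the left acting by creation and $x$ on the right acting by annihilation --- sends the vector $x$ to the vector $y$; combined with the projections onto the graded pieces (differences of the operators $L_n(\cup_n)$ appearing in the proof of Lemma \ref{lem:Compact}, which lie in $\TP{0}$), this shows any nonzero closed invariant subspace is all of $\FP{0}$. You instead show the commutant is trivial, using that $\Omega$ is cyclic for the pure creation operators and that their adjoints kill $\Omega$; this is the standard full-Fock-space argument and is equally valid. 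The paper's mixed operator $L_\err(y\wedge x)$ does creation and annihilation in one diagram and so never needs to isolate homogeneous components of $T\Omega$, whereas your version uses only the generators with $\ell=0$ or $\err=0$ and makes the role of the vacuum explicit.

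One bookkeeping correction you should make: in the paper's convention the subscript of $L_\err(x)$ counts the strands \emph{annihilated}, so the pure creation operator sending $\Omega$ to $x\in\cP_n$ is $L_0(x)$, not $L_n(x)$ (the displayed formula gives $L_n(x)\Omega=0$ for $n\geq 1$ since $m=0<\err=n$), and its adjoint is the pure annihilation operator $L_n(x^*)$, which kills $\cP_m$ for $m<n$. Your text flags this confusion and partially untangles it mid-proof; the argument is sound once the labels are fixed, but Step 1 as written asserts the wrong identity.
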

\begin{proof}
Let $x\in \cP_\err\setminus\{0\}$ and $y\in \cP_\ell$. Then the operator
$$
\frac{1}{\|x\|_2^2} L_\err(y\wedge x)=
\frac{1}{\|x\|_2^2}\,
\begin{tikzpicture}[baseline=.1cm]
	\nbox{unshaded}{(0,0)}{.5}{.4}{.4}{}
	\draw (.4,0)--(.4,.9);
	\draw (-.4,0)--(-.4,.9);
	\node at (-.6,.7) {\scriptsize{$\ell$}};
	\node at (.6,.7) {\scriptsize{$\err$}};
	\nbox{unshaded}{(-.4,0)}{.3}{0}{0}{$y$}
	\nbox{unshaded}{(.4,0)}{.3}{0}{0}{$x$}
	\draw[fill=red] (0,.5) circle (.05cm);
\end{tikzpicture}
$$
maps $x\in\FP{0}$ to $y\in \FP{0}$. The rest is straightforward.
\end{proof}

\begin{lem}[\cite{JonesSeveral}]\label{lem:Compact}
The Toeplitz algebra $\TP{0}$ contains a finite rank operator, and thus all the compact operators $\cK$ by Lemma \ref{lem:Irreducible}.
\end{lem}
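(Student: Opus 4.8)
The plan is to reduce, via Lemma~\ref{lem:Irreducible} together with the standard fact that an irreducibly acting $C^{*}$-algebra containing a nonzero compact operator contains all of $\cK$, to exhibiting a single nonzero finite-rank operator in $\TP{0}$. The natural candidate is the rank-one vacuum projection $p_\Omega$ onto $\cP_0\cong\C\subseteq\FP{0}$: if it lies in $\TP{0}$ we are finished.

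To produce $p_\Omega$, I would examine the single operator $L_1(\id_1)\in\TP{0}$, where $\id_1\in\cP_2$ is the diagram consisting of one strand (equivalently, the unique Temperley-Lieb diagram in $\cP_2$); in fact $L_1(\id_1)$ already lies in the Toeplitz core $\coreTP{0}$, since $\id_1\in\cP_{2}=\cP_{2\cdot1}$. Reading off the definition of $L_\err$: on $\Omega\in\cP_0$ it vanishes, because $m=0<1=\err$; and on any $y\in\cP_m$ with $m\geq1$, the ``$\err\leq m$'' case of the defining picture contracts the single right strand of $\id_1$ with the first strand of $y$ and leaves all other strands free, which --- because $\id_1$ is merely a strand --- simply routes that strand of $y$ through a trivial arc and yields $L_1(\id_1)y=y$, creating no closed loop. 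Thus $L_1(\id_1)$ is the orthogonal projection onto $\bigoplus_{k\geq1}\cP_k$, so that $p_\Omega=\id-L_1(\id_1)$ is a rank-one operator in $\TP{0}$, which by the reduction above completes the proof.

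The only genuinely delicate point is the diagrammatic bookkeeping in that last step: one must check that evaluating $L_1(\id_1)y$ for $m\geq1$ introduces no factor of the modulus $\delta$, and this is exactly where it matters that $\id_1$ is the trivial strand rather than a more complicated $2$-box (as a consistency check one can verify from the product formula that $L_1(\id_1)^2=L_1(\id_1)=L_1(\id_1)^{*}$, as a projection must be). As alternatives: the same idea applied to $L_k(\id_k)$ with $\id_k\in\cP_{2k}$ the identity $k$-box produces the finite-rank projection $p_{\cP_0\oplus\cdots\oplus\cP_{k-1}}$ onto the low-degree part; and one could instead manufacture a finite-rank operator from the operators $L_\err(y\wedge x)$ appearing in the proof of Lemma~\ref{lem:Irreducible}. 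But the $k=1$ choice is cleanest, since it yields the rank-one $p_\Omega$ outright.
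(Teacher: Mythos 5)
Your proposal is correct and is essentially the paper's own argument: the paper exhibits $L_n(\cup_n)$, where $\cup_n\in\cP_{2n}$ is exactly your identity $n$-box bent into a cup, as the projection onto $\bigoplus_{k\geq n}\cP_k$, so that $1-L_n(\cup_n)$ is finite rank; your computation with $L_1(\id_1)$ producing the rank-one vacuum projection $p_\Omega$ is precisely the $n=1$ instance, and the reduction via irreducibility (Lemma \ref{lem:Irreducible}) is the same.
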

\begin{proof}
Note that the projection onto $\oplus_{k\geq n} \cP_k$ is given by
$$
L_n(\cup_{n})
=
\begin{tikzpicture}[baseline=.1cm]
	\draw (-.2,.7)--(-.2,.1) arc (-180:0:.2cm)--(.2,.7);
	\node at (-.4,.5) {\scriptsize{$n$}};
	\node at (.4,.5) {\scriptsize{$n$}};
	\nbox{}{(0,0)}{.3}{.1}{.1}{}
	\draw[fill=red] (0,.3) circle (.05cm);
\end{tikzpicture}.
$$
Hence $1-L_n(\cup_{n})$ is the finite rank projection onto $\bigoplus_{k\leq n} \cP_k$.
\end{proof}

\subsection{A useful result for weights on Toeplitz algebras}\label{sec:KMS}

Before we study the $\bbT$-action, the KMS state, and the Cuntz algebra of $\cP_\bullet$, we take a brief interlude to prove a useful result for weights on Toeplitz algebras.

Suppose $T$ is a $C^*$-algebra together with:
\begin{itemize}
\item a closed 2-sided ideal $K\subset T$ of ``compact" operators,
\item a $\|\cdot\|$-continuous action $\sigma: \bbT\to \Aut(T)$,
\item a conditional expectation $E: T\to C$ given by $E(x)=\int_\bbT \gamma_z(x) \, dz$, where the measure is normalized Lebesgue measure,
\item the core $C$ is AF, with distinguished finite dimensional algebras $C_k$ for $k\geq 0$ such that $C=\overline{\bigcup_{k\geq 0} C_k}^{\|\cdot\|}$ where the inclusion $C_k\hookrightarrow C_{k+1}$ is not necessarily unital,
\item
a lower semi-continuous tracial weight $\Tr : C^+\to [0,\infty]$, which induces a lower semi-continuous weight $\varphi= \Tr \circ E$ on $T$,
\item
the set $\fN_\varphi=\set{x\in T}{\varphi(x^*x)=0}$ is a 2-sided ideal of $T$.
\end{itemize}

\begin{rems}
\mbox{}
\be
\item
$\fN_\varphi$ is automatically closed by lower semi-continuity.
\item
If $\varphi$ is a KMS state, then $\fN_\varphi$ is automatically a 2-sided ideal by Remark \ref{rem:KMSKernel}.
\ee
\end{rems}

\begin{lem}\label{lem:ExpectCompact}
Suppose $x\in T^+$. Then $x\in K$ if and only if $E(x)\in K$.
\end{lem}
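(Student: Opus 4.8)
The plan is to reduce the statement to faithfulness of the canonical averaging conditional expectation, after passing to the quotient $T/K$. The easy implication is $x\in K\Rightarrow E(x)\in K$: here I would use that $K$ is invariant under the $\bbT$-action $\gamma$ appearing in the definition of $E$ — automatic in the cases of interest, where $\gamma$ is spatially implemented by unitaries on a Hilbert space $H$, so that $K=\cK(H)$ is visibly preserved — together with the fact that $z\mapsto\gamma_z(x)$ is norm-continuous. Then $E(x)=\int_\bbT\gamma_z(x)\,dz$ is a Bochner integral with respect to a probability measure, hence lies in the closed convex hull of $\{\gamma_z(x):z\in\bbT\}\subset K$; since $K$ is a norm-closed subspace, $E(x)\in K$. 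This step uses no positivity.

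For the converse I would first push $E$ forward to the quotient. Writing $\pi:T\to T/K$ for the quotient map, $\gamma$-invariance of $K$ lets $\gamma$ descend to an action $\bar\gamma:\bbT\to\Aut(T/K)$, and the paragraph above gives $E(K)\subseteq K$, so $E$ descends to a well-defined positive map $\bar E:T/K\to T/K$ characterized by $\bar E\circ\pi=\pi\circ E$. Applying $\pi$ inside the integral identifies $\bar E$ with the averaging map $a\mapsto\int_\bbT\bar\gamma_z(a)\,dz$ onto the fixed-point algebra $\pi(C)=(T/K)^{\bar\gamma}$. The heart of the proof is that $\bar E$ is faithful: if $\bar E(a^*a)=0$ then, for every state $\omega$ on $T/K$, the nonnegative continuous function $z\mapsto\omega(\bar\gamma_z(a^*a))$ has integral zero, hence vanishes identically; evaluating at $z=1$ gives $\omega(a^*a)=0$, and varying $\omega$ forces $a=0$. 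Now, given $x\in T^+$ with $E(x)\in K$, I set $a=\pi(x)^{1/2}$, which makes sense since $\pi(x)\geq 0$; then $\bar E(a^*a)=\bar E(\pi(x))=\pi(E(x))=0$, so $a=0$, hence $\pi(x)=0$, i.e., $x\in K$.

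I expect the only real obstacle to be recognizing and justifying the $\gamma$-invariance of $K$, which is precisely what lets $E$ descend to $T/K$; once that is in hand, the rest is the standard fact that averaging onto a compact-group fixed-point algebra is a faithful conditional expectation. Note that for this particular statement one does not use the tracial weight $\Tr$, the AF filtration $(C_k)$ of $C$, or the hypothesis that $\fN_\varphi$ is a two-sided ideal — those enter only the later results of the subsection.
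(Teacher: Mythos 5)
Your proof is correct, but it takes a genuinely different route from the paper's. The forward direction is essentially the same in both (the Bochner integral of a norm-continuous $K$-valued function lies in the closed subspace $K$); note that, exactly like the paper's Riemann-sum phrasing, this silently requires $\gamma_z(K)\subseteq K$, which is not listed among the standing hypotheses of the subsection but holds in every application since $\gamma$ is unitarily implemented -- you are right to flag it. For the converse, the paper stays inside $T$: since $K$ is a hereditary ideal and $0\le \int_0^s\sigma_t(x)\,dt\le \mathrm{const}\cdot E(x)\in K$ (this is where positivity of $x$ enters), each partial average $E_s(x)$ lies in $K$, and then $x=\lim_{h\to 0}\tfrac1h\int_0^h\sigma_t(x)\,dt$ exhibits $x$ as a norm limit of elements of $K$. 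You instead pass to the quotient $T/K$, descend $E$ to the averaging map $\bar E$ over the induced action $\bar\gamma$, and invoke faithfulness of averaging over a compact group, which you prove by the standard state-plus-nonnegative-continuous-integrand argument. Both proofs use positivity of $x$ at the decisive step, but through different mechanisms: heredity of the ideal versus separation by states. Your version is the more standard and transportable argument (faithfulness of the canonical expectation onto the fixed-point algebra of any point-norm-continuous compact group action); the paper's is a short direct differentiation trick that avoids forming the quotient. Your closing remark that the lemma uses none of $\Tr$, the filtration $(C_k)$, or the ideal property of $\fN_\varphi$ is accurate.
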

\begin{proof}
Suppose $x\in K$. Then $E(x)$ is a limit of Riemann sums of elements of $K$, so $E(x)\in K$.
Now suppose $E(x)\in K$.
Since $K$ is hereditary, if $E(x)\in K$, then so is
$$
E_s(x)=\int_0^s \sigma_t(x)\, dt,
$$
and thus so is the difference quotient
\begin{align*}
x&=\lim_{h\to 0} \frac{E_{0+h}(x)-E_0(x)}{h}=\lim_{h\to 0} \frac{1}{h}\int_0^h \sigma_t(x)\,dt.
\qedhere
\end{align*}
\end{proof}

We now prove the following main theorem of this subsection.
\begin{thm}\label{thm:KMS}
Suppose that $K\subseteq \fN_\varphi$ and $C_k^+\cap \fN_\varphi \subset K$.
Then $\fN_\varphi=K$.
\end{thm}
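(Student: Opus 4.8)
The plan is to show the reverse inclusion $\fN_\varphi \subseteq K$, since $K \subseteq \fN_\varphi$ is assumed. Because $\fN_\varphi$ is a closed $2$-sided ideal and hence hereditary, it suffices to check that every $x \in \fN_\varphi^+$ lies in $K$. Fix such an $x$; then $\varphi(x) = \Tr(E(x)) = 0$, and by positivity of $\Tr$ (it is a weight on $C^+$) together with the fact that $E(x) \in C^+$, we get that $E(x)$ lies in $\fN_{\Tr}$, the kernel ideal of the tracial weight $\Tr$ on $C$. The key point will be that $E(x) \in \fN_\varphi$ as well: indeed $\varphi(E(x)^* E(x)) = \varphi(E(x)^2)$, and $E(x)^2 \le \|E(x)\| E(x)$ in $C^+$, so $\varphi(E(x)^2) \le \|E(x)\|\,\varphi(E(x)) = 0$. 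Thus $E(x) \in \fN_\varphi \cap C^+$.

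Next I would approximate $E(x)$ within the AF core. Write $C = \overline{\bigcup_k C_k}$. Given $\e > 0$, choose $k$ and $c \in C_k$ with $\|E(x) - c\| < \e$; by standard functional calculus one may arrange $c \in C_k^+$ (replace $c$ by a positive element of $C_k$ within $2\e$ of $E(x)$, e.g. using that $C_k$ is finite dimensional and the positive cone is closed). The difficulty is that $c$ need not lie in $\fN_\varphi$, so one cannot directly invoke the hypothesis $C_k^+ \cap \fN_\varphi \subseteq K$. To fix this, I would instead use the spectral projection trick: since $E(x) \in \fN_\varphi$ and $\varphi|_C = \Tr$ is a trace, $E(x)$ is "small" in the $\Tr$-sense, and I want to extract from $E(x)$ itself, via its own spectral projections inside the finite-dimensional $C_k$'s, a piece that is genuinely in $C_k^+ \cap \fN_\varphi$.

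More precisely, the cleanest route: $\Tr$ restricted to each $C_k$ is an ordinary finite trace on a finite-dimensional algebra, so $\fN_\Tr \cap C_k$ is a (hereditary, in fact ideal) subalgebra of $C_k$ consisting of the "support-zero" elements; and by hypothesis $\fN_\Tr \cap C_k = \fN_\varphi \cap C_k \subseteq K$ once we know $\fN_\varphi \cap C = \fN_\Tr$ (which follows from $\varphi = \Tr \circ E$ and $E|_C = \id$). Now apply Lemma~\ref{lem:ExpectCompact}: it says $x \in K \iff E(x) \in K$. So it suffices to show $E(x) \in K$, i.e. to show every element of $\fN_\varphi \cap C^+$ lies in $K$. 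For $y \in \fN_\varphi \cap C^+$, approximate $y$ by $y_k \in C_k^+$ with $\|y - y_k\| \to 0$; using that $y \in \fN_\Tr$ and lower semicontinuity, one shows $\Tr(y_k) \to 0$, and then by replacing $y_k$ with its spectral projection above level $\e$ (staying in the finite-dimensional $C_k$) and controlling $\Tr$ of that projection, one produces elements of $C_k^+ \cap \fN_\varphi \subseteq K$ converging to $y$. Since $K$ is closed, $y \in K$, hence $E(x) \in K$, hence $x \in K$ by Lemma~\ref{lem:ExpectCompact}.

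The main obstacle I anticipate is the bookkeeping in the last step: passing from a norm-approximant $y_k \in C_k^+$ of $y \in \fN_\varphi \cap C^+$ to an element actually in $\fN_\varphi$, while keeping it close to $y$. The trace $\Tr$ may be unbounded (only lower semicontinuous), so one must be careful that $\Tr(y_k)$ is finite and tends to $0$; this should follow from $y \in \fN_\varphi$, lower semicontinuity of $\Tr$, and the inequality $\Tr(y_k) \le \Tr(y) + (\text{error})$ obtained via $y_k \le y + \|y_k - y\|\cdot(\text{support projection})$ — but the support projection of $y$ need not be in any $C_k$, so one instead truncates: let $p$ be the spectral projection of $y_k$ for $[\e, \infty)$, note $p \le \e^{-1} y_k \le \e^{-1}(y + \e') \le \e^{-1} y + (\text{small})$, so $\Tr(p) \le \e^{-1}\Tr(y) + (\text{small}) = (\text{small})$, meaning $p$ is "nearly" in $\fN_\varphi$; then argue that the genuinely-zero-trace part of $p$ inside $C_k$ (again a spectral projection, this time of $p$ itself, which is a projection so equals $p$ once $\Tr(p)$ is controlled relative to the minimal nonzero trace value in $C_k$) — here one uses finite-dimensionality of $C_k$ to say trace values are bounded below. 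This is the step requiring the most care, and it is exactly where the hypothesis $C_k^+ \cap \fN_\varphi \subseteq K$ gets used.
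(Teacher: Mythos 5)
Your reduction --- pass to positive elements, show $E(x)\in C^+\cap\fN_\varphi$, and invoke Lemma \ref{lem:ExpectCompact} to reduce everything to proving $C^+\cap\fN_\varphi\subseteq K$ --- is exactly the paper's skeleton. The divergence, and the gap, is in how you handle a fixed $y\in C^+\cap\fN_\varphi$. Two specific problems. First, ``$\Tr(y_k)\to 0$ by lower semicontinuity'' is backwards: lower semicontinuity only gives $\Tr(y)\le\liminf_k\Tr(y_k)$, and norm convergence gives no upper bound on $\Tr(y_k)$ for an unbounded weight. Second, your replacement estimate $\Tr(p)\le\e^{-1}\Tr(y)+(\text{small})$ for the spectral projection $p$ of $y_k$ above level $\e$ is not justified: the error term is $\e^{-1}\|y-y_k\|$ times the trace of a unit or support projection in $C_k$, which need not be small (or even finite); and the fallback comparison with the minimal nonzero trace value $\delta_k$ of $C_k$ is circular, since making $\|y-y_k\|$ small forces $k$ to grow, and $\delta_k$ may tend to $0$ with $k$. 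The step can be repaired without any of this by compressing by $p$ itself: from $\e p\le py_kp\le pyp+\|y-y_k\|\,p$ one gets $(1-\e^{-1}\|y-y_k\|)\,p\le\e^{-1}pyp$, and traciality gives $\Tr(pyp)=\Tr(y^{1/2}py^{1/2})\le\Tr(y)=0$ (note $\Tr(y)=0$ because $y^{1/2}\in\fN_\varphi$, a point your first paragraph also glosses over when passing from $\varphi(x^*x)=0$ to $\varphi(x)=0$). Hence $\Tr(p)=0$ exactly, so $p\in C_k^+\cap\fN_\varphi\subseteq K$, and then $y_kp\in K$ with $\|y-y_kp\|\le\|y-y_k\|+\e$, giving $y\in K$.

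The paper sidesteps the approximation issue entirely: $C\cap\fN_\varphi$ is a closed ideal of the AF algebra $C$, hence has real rank zero and is densely spanned by its projections; any projection $p\in C\cap\fN_\varphi$ lies within distance $<1$ of a projection $q$ in some $C_n$, so $p=uqu^*$ for a unitary $u$, whence $\varphi(q)=\varphi(u^*pu)=0$, $q\in C_n^+\cap\fN_\varphi\subseteq K$, and $p\in K$. That route uses only standard AF facts and needs no trace estimates; yours, once repaired as above, is more hands-on but does go through.
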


\begin{proof}
First, we note that $E(\fN_\varphi)=C\cap \fN_\varphi$ since $\varphi=\varphi\circ E$.
We prove the following claim:
\begin{claim}
If $p\in C\cap \fN_\varphi$ is a projection, then $p\in K$.
\end{claim}
\begin{proof}[Proof of Claim]
Suppose $p\in C$ is a projection with $\varphi(p)=0$.
Since $C$ is AF, there is an $n>0$ and a projection $q\in C_n$ such that $\|p-q\|<1$.
Hence $p$ is homotopic in $C$ to $q$, and thus there is a unitary $u\in C$ such that $p=uqu^*$ \cite[Section 2.2]{MR1783408}.
Now $\varphi(q)=\varphi(u^*pu)=0$ since $\fN_\varphi$ is an ideal, and thus $q\in C_n^+\cap \fN_\varphi \subset K$.
We conclude $p\in K$.
\end{proof}

Now we know the ideal $E(\fN_\varphi)=C\cap \fN_\varphi$ is a hereditary subalgebra of the AF algebra $C$, and thus has real rank zero.
By the claim, every projection in $E(\fN_\varphi)$ is in $K$, so $E(\fN_\varphi)\subseteq K$.
Using Lemma \ref{lem:ExpectCompact}, we see every positive element in $\fN_\varphi$ is also in $K$.
Hence $\fN_\varphi\subseteq K$.
\end{proof}

\subsection{The $\bbT$-action, the KMS state, and the Cuntz algebra}\label{sec:PACuntz}

We now give an action of $\bbT$ on $\TP{0}$, and we construct a $\KMS_{\ln(\delta)}$-state $\varphi$.

\begin{defn}[\cite{JonesSeveral}]
Consider the number operator $N : \Gr_0 \to \FP{0}$ by
$$
\begin{tikzpicture}[baseline=-.1cm]
	\draw (0,0)--(0,.8);
	\node at (.2,.6) {\scriptsize{$n$}};	
	\nbox{unshaded}{(0,0)}{.4}{0}{0}{$x$}
\end{tikzpicture}
\longmapsto
n\,
\begin{tikzpicture}[baseline=-.1cm]
	\draw (0,0)--(0,.8);
	\node at (.2,.6) {\scriptsize{$n$}};	
	\nbox{unshaded}{(0,0)}{.4}{0}{0}{$x$}
\end{tikzpicture}\,.
$$
Note that $N$ is an unbounded, closable operator which preserves the grading of $\FP{0}$.
There is a unique bounded operator $\exp(it N)$ given by the extension of
$$
\begin{tikzpicture}[baseline=-.1cm]
	\draw (0,0)--(0,.8);
	\node at (.2,.6) {\scriptsize{$n$}};	
	\nbox{unshaded}{(0,0)}{.4}{0}{0}{$x$}
\end{tikzpicture}
\longmapsto
e^{i t n} \,
\begin{tikzpicture}[baseline=-.1cm]
	\draw (0,0)--(0,.8);
	\node at (.2,.6) {\scriptsize{$n$}};	
	\nbox{unshaded}{(0,0)}{.4}{0}{0}{$x$}
\end{tikzpicture}\,.
$$
Define an $\bbR$-action $\sigma : \bbR \to \Aut(\TP{0})$ by $\sigma_t = \Ad(\exp(itN))$:
$$
\sigma_t\left(
\begin{tikzpicture}[baseline=.1cm]
	\draw (.2,0)--(.2,.8);
	\draw (-.2,0)--(-.2,.8);
	\node at (-.4,.6) {\scriptsize{$\ell$}};
	\node at (.4,.6) {\scriptsize{$\err$}};
	\nbox{unshaded}{(0,0)}{.4}{0}{0}{$x$}
	\draw[fill=red] (0,.4) circle (.05cm);
\end{tikzpicture}
\right) = e^{i t (\ell-\err)}
\begin{tikzpicture}[baseline=.1cm]
	\draw (.2,0)--(.2,.8);
	\draw (-.2,0)--(-.2,.8);
	\node at (-.4,.6) {\scriptsize{$\ell$}};
	\node at (.4,.6) {\scriptsize{$\err$}};
	\nbox{unshaded}{(0,0)}{.4}{0}{0}{$x$}
	\draw[fill=red] (0,.4) circle (.05cm);
\end{tikzpicture}.
$$
Note that the $\bbR$-action induces a $\bbT$-action $\gamma$ via the map $t\mapsto e^{it}$.
\end{defn}

\begin{defn}[\cite{JonesSeveral,Shelly}]
For each $x\in \TP{0}$, the map $t\mapsto \sigma_t(x)$ is norm continuous, so we define the conditional expectation $E : \TP{0} \to \coreTP{0}$ by
$$
E(x) = \int_\bbT \gamma_z(x)\, dz,
$$
where we use the normalized Lebesgue measure on $\bbT$.
We define a normalized (non-faithful) trace $\tau$ on $\coreTP{0}$ by the extension of
$$
\tau\left(
\begin{tikzpicture}[baseline=-.1cm]
	\draw (.2,0)--(.2,.8);
	\draw (-.2,0)--(-.2,.8);
	\node at (-.4,.6) {\scriptsize{$n$}};
	\node at (.4,.6) {\scriptsize{$n$}};
	\nbox{unshaded}{(0,0)}{.4}{0}{0}{$x$}
	\draw[fill=red] (0,.4) circle (.05cm);
\end{tikzpicture}
\right)
=
\delta^{-n}\,
\begin{tikzpicture}[baseline=-.1cm]
	\draw (-.2,0)--(-.2,.4) arc (180:0:.2cm) -- (.2,0);
	\node at (0,.7) {\scriptsize{$n$}};
	\nbox{unshaded}{(0,0)}{.4}{0}{0}{$x$}
	\draw[fill=red] (0,.4) circle (.05cm);
\end{tikzpicture}\,.
$$
Finally, define the state $\varphi : \TP{0} \to \C$ by $\varphi= \tau\circ E$.
\end{defn}

\begin{lem}[\cite{JonesSeveral,Shelly}]\label{lem:KMSstate}
$\varphi$ is a $\KMS_{\ln(\delta)}$ state.
\end{lem}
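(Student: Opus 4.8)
The plan is to verify the $\KMS_{\ln(\delta)}$-condition directly on a spanning set of $\TP{0}$, namely the monomials in the creation--annihilation operators $L_{\err}(x)$, and then extend by density and continuity. First I would record that $\sigma_t = \Ad(\exp(itN))$ acts on a generator by $\sigma_t(L_{\err}(x)) = e^{it(\ell - \err)}L_{\err}(x)$, so each such generator is analytic (indeed entire), with $\sigma_z(L_{\err}(x)) = e^{iz(\ell-\err)}L_{\err}(x)$ for all $z \in \bbC$; in particular $\sigma_{i\ln(\delta)}(L_{\err}(x)) = \delta^{\err - \ell}L_{\err}(x)$. Since products of generators are again entire with the weight $\ell - \err$ replaced by the total difference of the two legs, it suffices to check
$$
\varphi(a\,\sigma_{i\ln(\delta)}(b)) = \varphi(ba)
$$
for $a,b$ each a monomial in the $L_{\err}(x)$'s, and by linearity of $\varphi$ on the core and the formula $E(x) = \int_\bbT \gamma_z(x)\,dz$, both sides are zero unless the total ``imbalance'' (number of strings entering from the left minus from the right) vanishes; so we may assume $a$ and $b$ have opposite imbalances $-k$ and $+k$.

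Next I would compute $\varphi(ab) = \tau(E(ab))$ and $\varphi(ba) = \tau(E(ba))$ explicitly using the diagrammatic product formula for $L_{\err_1}(x)\cdot L_{\err_2}(y)$ given in the excerpt, together with the trace formula
$$
\tau\left(L_n(x)\right) = \delta^{-n}\,(\text{closure of }x),\qquad x \in \cP_{2n}.
$$
The key point is that when one cyclically permutes $a$ past $b$ — equivalently, slides the ``cap'' in $E(\cdot) = $ (closure) around the diagram — one reattaches strings on the opposite side, and the diagrammatic bookkeeping produces exactly a factor of $\delta$ to a power equal to the number of strings that had to be dragged around the back of the sphere. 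That power is precisely $\err - \ell$ accumulated over $b$, i.e. the exponent appearing in $\sigma_{i\ln(\delta)}(b) = \delta^{\err-\ell}b$. Concretely, I would reduce to the case where $b = L_{\err}(y)$ is a single generator (using that $\varphi$ is a state and iterating), show
$$
\varphi(a L_{\err}(y)) = \delta^{\,r-\ell}\,\varphi(L_{\err}(y)\,a),
$$
by writing both sides as closed $\cP_\bullet$-diagrams and using sphericality (from the definition of factor planar algebra) to move the relevant strand from one side of the sphere to the other, which is exactly where the $\delta^{r-\ell}$ is created. Then $\delta^{r-\ell}\varphi(L_\err(y)a) = \varphi(\sigma_{i\ln(\delta)}(L_\err(y))\,a)$, giving the KMS identity with the roles as in the definition.

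The main obstacle I expect is the careful diagrammatic verification of the single-generator identity $\varphi(aL_\err(y)) = \delta^{r-\ell}\varphi(L_\err(y)a)$: one must track how the red dot (the operator-vs-vector marker), the positions $\ell, \err$, and the $\star$ in the lower-left corner interact when the expectation $E$ caps off the diagram, and confirm that the only discrepancy between the two cyclic orders is the expected power of $\delta$ coming from strands that wrap around the sphere — no stray loops, no sign or normalization errors. A clean way to organize this is to first treat the ``core'' case $a \in \coreTP{0}$ (imbalance zero, where $\sigma_{i\ln\delta}$ is trivial and one just needs $\tau$ to be a trace on the core — which follows from sphericality and the definition of $\tau$), and then bootstrap to general $a$ by splitting $a = E(a) + (a - E(a))$ and observing that the off-diagonal part contributes to neither side after applying $\varphi = \tau \circ E$. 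Once the identity holds on monomials, $\|\cdot\|$-density of their span in $\TP{0}$ together with norm-continuity of $\sigma$ and of $\varphi$ extends it to all entire $b$, completing the proof.
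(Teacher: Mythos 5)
Your proposal is correct and follows essentially the same route as the paper: the paper simply verifies $\varphi(L_{\err_1}(x)\,L_{\err_2}(y)) = \delta^{\ell_1-\err_1}\varphi(L_{\err_2}(y)\,L_{\err_1}(x))$ by a direct diagrammatic computation for a pair of generators (which suffices, since the product formula shows any product of such operators is again of the form $L_{\err}(w)$), with sphericality used to identify the two closed diagrams. The one small imprecision in your write-up is the attribution of the factor of $\delta$: spherical isotopy itself is an equality of closed diagrams and creates no powers of $\delta$; the factor $\delta^{\ell-\err}$ arises because $E(AB)$ and $E(BA)$ land in different graded pieces $\cP_{2n}$ of the core and hence pick up different normalizations $\delta^{-n}$ from $\tau$.
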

\begin{proof}
For $x\in\cP_{\ell_1+r_1}$ and $y\in \cP_{\ell_2+r_2}$, where without loss of generality $\err_1\leq \ell_2$,
\begin{align*}
\varphi
\left(
\begin{tikzpicture}[baseline=-.1cm]
	\draw (.2,0)--(.2,.8);
	\draw (-.2,0)--(-.2,.8);
	\node at (-.4,.6) {\scriptsize{$\ell_1$}};
	\node at (.4,.6) {\scriptsize{$\err_1$}};
	\nbox{unshaded}{(0,0)}{.4}{0}{0}{$x$}
	\draw[fill=red] (0,.4) circle (.05cm);
\end{tikzpicture}
\cdot
\begin{tikzpicture}[baseline=-.1cm]
	\draw (.2,0)--(.2,.8);
	\draw (-.2,0)--(-.2,.8);
	\node at (-.4,.6) {\scriptsize{$\ell_2$}};
	\node at (.4,.6) {\scriptsize{$\err_2$}};
	\nbox{unshaded}{(0,0)}{.4}{0}{0}{$y$}
	\draw[fill=red] (0,.4) circle (.05cm);
\end{tikzpicture}
\right)
&=
\delta_{\ell_1-\err_1=\err_2-\ell_2}
\delta^{-\ell_1}\,
\begin{tikzpicture}[baseline=.2cm]
	\nbox{unshaded}{(.5,.2)}{.8}{.4}{.8}{}
	\draw (-.2,0)--(-.2,1) arc (180:90:.5cm) -- (1.1,1.5) arc (90:0:.5cm)-- (1.6,0);
	\draw (1,0)--(1,1) arc (180:0:.2cm) -- (1.4,0);
	\draw (.2,.4) arc (180:0:.3cm);
	\node at (-.4,1.2) {\scriptsize{$\ell_1$}};
	\node at (.1,.6) {\scriptsize{$\err_1$}};
	\node at (.5,1.2) {\scriptsize{$\ell_2-\err_1$}};
	\node at (1.8,1.2) {\scriptsize{$\ell_{1}$}};
	\nbox{unshaded}{(0,0)}{.4}{0}{0}{$x$}
	\nbox{unshaded}{(1,0)}{.4}{0}{.4}{$y$}
	\draw[fill=red] (1.2,1) circle (.05cm);
\end{tikzpicture}
\\&=
\delta_{\ell_1-\err_1=\err_2-\ell_2}
\delta^{-\ell_1}\,
\begin{tikzpicture}[baseline=.2cm,xscale=-1]
	\nbox{unshaded}{(.5,.2)}{.8}{.4}{.8}{}
	\draw (-.2,0)--(-.2,1) arc (180:90:.5cm) -- (1.1,1.5) arc (90:0:.5cm)-- (1.6,0);
	\draw (1,0)--(1,1) arc (180:0:.2cm) -- (1.4,0);
	\draw (.2,.4) arc (180:0:.3cm);
	\node at (-.4,1.2) {\scriptsize{$\err_1$}};
	\node at (.05,.6) {\scriptsize{$\ell_1$}};
	\node at (.5,1.2) {\scriptsize{$\ell_2-\err_1$}};
	\node at (1.8,1.2) {\scriptsize{$\err_1$}};
	\nbox{unshaded}{(0,0)}{.4}{0}{0}{$x$}
	\nbox{unshaded}{(1,0)}{.4}{0}{.4}{$y$}
	\draw[fill=red] (1.2,1) circle (.05cm);
\end{tikzpicture}
\\&=
\delta^{\ell_1-\err_1}
\varphi\left(
\begin{tikzpicture}[baseline=.1cm]
	\draw (.2,0)--(.2,.8);
	\draw (-.2,0)--(-.2,.8);
	\node at (-.4,.6) {\scriptsize{$\ell_2$}};
	\node at (.4,.6) {\scriptsize{$\err_2$}};
	\nbox{unshaded}{(0,0)}{.4}{0}{0}{$y$}
	\draw[fill=red] (0,.4) circle (.05cm);
\end{tikzpicture}
\cdot
\begin{tikzpicture}[baseline=.1cm]
	\draw (.2,0)--(.2,.8);
	\draw (-.2,0)--(-.2,.8);
	\node at (-.4,.6) {\scriptsize{$\ell_1$}};
	\node at (.4,.6) {\scriptsize{$\err_1$}};
	\nbox{unshaded}{(0,0)}{.4}{0}{0}{$x$}
	\draw[fill=red] (0,.4) circle (.05cm);
\end{tikzpicture}
\right).\qedhere
\end{align*}
\end{proof}

Recall from Remark \ref{rem:KMSKernel} that $\fN_\varphi$ is a closed 2-sided ideal of $\TP{0}$.
Also note that the finite rank operators from the proof of Lemma \ref{lem:Compact} are in the kernel of the KMS state $\varphi$, since we are using the normalized trace, and $\cup_n$ is the image of $1_0$ inside $\cP_{2n}$ under the right inclusion.
Thus $\cK\subset \fN_\varphi$.
To show that $\cK=\fN_\varphi$, by Theorem \ref{thm:KMS}, it suffices to show the following lemma.

\begin{lem}\label{lem:FiniteRank}
Suppose $x\in \coreTP{0}$ is a finite sum of elements of the form $L_n(x_n)$ where $x_n\in \cP_{2n}$.
If $x\geq 0$ and $\varphi(x)=0$, then $x$ has finite rank.
\end{lem}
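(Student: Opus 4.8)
The plan is to analyze the core algebra $\coreTP{0}$ concretely using the multiplication formula for operators of the form $L_n(x_n)$, identify it as an inductive limit of finite-dimensional algebras (matched with the structure in the hypotheses of Theorem~\ref{thm:KMS}), and then show directly that an element $x\geq 0$ in $\coreTP{0}$ with $\varphi(x)=0$ must have finite rank by analyzing how $x$ acts on the Fock space $\FP{0}$.

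First I would compute the product $L_m(x_m)\cdot L_n(x_n)$ using the general product formula for $L_{r_1}(x)\cdot L_{r_2}(y)$ already given: with $r_1 = m$, $\ell_1 = m$, $r_2 = n$, $\ell_2 = n$, the two cases $m\leq n$ and $n < m$ show that $L_m(x_m) L_n(x_n)$ is of the form $L_{\max(m,n)}(z)$ for a suitable $z\in\cP_{2\max(m,n)}$ obtained by capping off. Consequently, for fixed $n$, the span of $\{L_k(z) : z\in\cP_{2k},\ k\leq n\}$ is a finite-dimensional $*$-algebra $C_n$ (using that $L_n(x_n)^* = L_n(x_n^*)$, which follows from the adjoint formula), and $\coreTP{0} = \overline{\bigcup_n C_n}$. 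I would also compute that $L_n(\cup_n)$ is the projection onto $\bigoplus_{k\geq n}\cP_k$ (this is exactly Lemma~\ref{lem:Compact}), so each $C_n$ contains the finite-rank projection $1 - L_n(\cup_n)$ onto $\bigoplus_{k< n}\cP_k$, and more generally every element of $C_n$ acts as a finite-rank operator plus a "tail" governed by the strand-insertion pattern.

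Next, for $x = \sum_{n=0}^{N} L_n(x_n)\geq 0$, I would decompose $x$ according to how it acts on the top graded piece of $\FP{0}$. The key observation: the operator $L_N(x_N)$ is the only summand that does not annihilate vectors in $\cP_k$ for $k$ large in a way that "sees" the top; more precisely, compressing $x$ to the subspace $\bigoplus_{k\geq M}\cP_k$ for $M$ large, the lower-order terms $L_n(x_n)$ with $n<N$ contribute operators that are asymptotically "block-diagonal" and whose diagonal blocks are, up to capping, multiplication-type operators on $\cP_{k-\text{something}}$. The trace condition $\varphi(x) = \tau(E(x)) = 0$, unwound via the definition of $\tau$ (which involves $\delta^{-n}$ times the closure of $x_n$), forces $\tr_{\cP_{2n}}$-type quantities of the $x_n$ to vanish; combined with positivity of $x$ one should be able to push this through all graded pieces and conclude that $x$ acts as $0$ on $\bigoplus_{k\geq M}\cP_k$ for some finite $M$, hence has finite rank.

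I expect the main obstacle to be the last step: carefully controlling the interaction between the different summands $L_n(x_n)$ on the high-degree pieces of $\FP{0}$ and extracting, from the single scalar equation $\varphi(x)=0$ together with $x\geq 0$, that \emph{all} the "tail parts" vanish simultaneously. The natural way to handle this is to use positivity to write $x = \sum_j a_j^* a_j$ with $a_j\in\coreTP{0}$ and then show $\varphi(x) = 0$ forces each $a_j$ to kill the vacuum-like tail vectors; alternatively, one diagonalizes the action of $x$ on each graded block $\cP_k$ for $k$ large (where the block stabilizes in an appropriate sense) and shows the eigenvalue contributions to $\tau$ are non-negative and sum to zero, hence each vanishes. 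Either route requires a somewhat delicate bookkeeping of the cap-insertion combinatorics, but no genuinely new idea beyond the product and trace formulas already established.
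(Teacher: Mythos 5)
Your general strategy --- show that $x$ annihilates $\bigoplus_{m\geq M}\cP_m$ for some finite $M$ --- is the right one, and your preliminary observations (the product formula closing the span of the $L_k(z)$ with $k\leq n$ into a finite-dimensional $*$-algebra, and $1-L_n(\cup_n)$ being a finite-rank projection) are correct. But the proof as proposed has a genuine gap exactly where you flag it: you never actually extract, from the single scalar equation $\varphi(x)=0$ together with $x\geq 0$, that the ``tail'' of $x$ vanishes. Neither of your two suggested routes closes this. Writing $x=\sum_j a_j^*a_j$ and arguing that $\varphi(x)=0$ forces each $a_j$ to kill the tail vectors goes nowhere directly, because $\varphi$ is not faithful on $\coreTP{0}$ (the finite-rank projections of Lemma \ref{lem:Compact} already lie in its kernel), so $\varphi(a_j^*a_j)=0$ by itself carries no information about $a_j$. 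The ``diagonalize on each graded block where the block stabilizes'' route is morally correct, but the stabilization is precisely the content you would need to prove, and you leave it as ``delicate bookkeeping.''

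The missing idea, which is small but essential, is the following. Let $N$ be the top index and let $\widetilde{x}\in\cP_{2N}$ be the sum of the images of the $x_n$ under the (trace-preserving) right inclusions $\cP_{2n}\hookrightarrow\cP_{2N}$. A direct diagrammatic computation shows that $L_N(\widetilde{x})$ and $x=\sum_n L_n(x_n)$ act identically on every $\cP_m$ with $m\geq N$. Compressing by the projection $p$ onto $\bigoplus_{m\geq N}\cP_m$ then gives $\langle \widetilde{x}\xi,\xi\rangle=\langle xp\xi,p\xi\rangle\geq 0$, so $\widetilde{x}\geq 0$ as an element of the finite-dimensional $C^*$-algebra $\cP_{2N}$; and since the right inclusion preserves the normalized trace, $\varphi(L_N(\widetilde{x}))=\varphi(x)=0$. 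Now one uses that $\varphi$ restricted to $\cP_{2N}$ (i.e., to elements of the form $L_N(y)$) \emph{is} faithful --- this is where the positivity axiom of the planar algebra enters --- to conclude $\widetilde{x}=0$, hence $x$ vanishes on $\bigoplus_{m\geq N}\cP_m$ and has finite rank. Without isolating this single positive element of a finite-dimensional algebra on which the trace is faithful, the argument does not close.
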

\begin{proof}
Suppose
$$
x=\sum_{n=0}^N
\begin{tikzpicture}[baseline=-.1cm]
	\draw (.2,0)--(.2,.8);
	\draw (-.2,0)--(-.2,.8);
	\node at (-.4,.6) {\scriptsize{$n$}};
	\node at (.4,.6) {\scriptsize{$n$}};
	\nbox{unshaded}{(0,0)}{.4}{0}{0}{$x_n$}
	\draw[fill=red] (0,.4) circle (.05cm);
\end{tikzpicture}\,,
$$
$x\geq 0$, and $\varphi(x)=0$.
Consider the element $\widetilde{x}$ given by
$$
\widetilde{x}=\sum_{n=0}^N
\begin{tikzpicture}[baseline=.1cm]
	\draw (.5,0)--(.5,1.4);
	\draw (-.5,0)--(-.5,1.4);
	\draw (-.2,1.4) -- (-.2,.9) arc (-180:0:.2cm) -- (.2,1.4);
	\node at (-.7,1.2) {\scriptsize{$n$}};
	\node at (.7,1.2) {\scriptsize{$n$}};
	\node at (0,.6) {\scriptsize{$N-n$}};
	\nbox{}{(0,.2)}{.8}{.1}{.1}{}
	\nbox{unshaded}{(0,0)}{.4}{.3}{.3}{$x_n$}
	\draw[fill=red] (0,1) circle (.05cm);
\end{tikzpicture}\,.
$$
Note that for all $m\geq N$ and all $y\in \cP_M$, we have
\begin{equation}\label{eq:SameAsX}
\widetilde{x} y=
\sum_{n=0}^N
\begin{tikzpicture}[baseline=.2cm]
	\nbox{unshaded}{(.5,.2)}{.8}{.4}{.6}{}
	\draw (-.2,0)--(-.2,1.4);
	\draw (1,0)--(1,1.4);
	\draw (1.4,0)--(1.4,1.4);
	\draw (.2,.4) arc (180:0:.3cm);
	\node at (-.4,1.2) {\scriptsize{$n$}};
	\node at (.1,.6) {\scriptsize{$n$}};
	\node at (.5,1.2) {\scriptsize{$N-n$}};
	\node at (2,1.2) {\scriptsize{$m-N$}};
	\nbox{unshaded}{(0,0)}{.4}{0}{0}{$x_n$}
	\nbox{unshaded}{(1,0)}{.4}{0}{.2}{$y$}
\end{tikzpicture}
=xy.
\end{equation}
Hence if $\xi \in H$, and $p$ is the projection onto $\bigoplus_{n\geq N} \cP_n$, we have
$$
\langle \widetilde{x} \xi, \xi\rangle
=
\langle \widetilde{x} p\xi,p\xi\rangle
=
\langle x p\xi,p\xi\rangle
\geq 0,
$$
so $\widetilde{x}\geq 0$.
With a slight abuse of notation, we may identify $\widetilde{x}$ with $L_N(\widetilde{x})$ for $\widetilde{x}\in \cP_{2N}$.
Now even though $\varphi$ is not faithful on $\coreTP{0}$, $\varphi|_{\cP_{2N}}$ is faithful when we identify each $y\in\cP_{2N}$ with $L_N(y)\in\coreTP{0}$.
Thus
$$
0=\varphi(x)=\varphi(\widetilde{x})=\varphi|_{\cP_{2N}}(\widetilde{x}),
$$
and thus $\widetilde{x}=0$.
Thus $x$ is finite rank since it agrees with $\widetilde{x}$ on $\cP_m$ for $m\geq N$.
\end{proof}

\begin{cor}\label{cor:FiniteRank}
Suppose $x\in \TP{0}$ is a finite sum of elements of the form $L_{\err_j}(x_j)$ where $x_j\in \cP_{\err_j+\ell_j}$.
If $x\geq 0$ and $\varphi(x)=0$, then $x\in\cK$.
\end{cor}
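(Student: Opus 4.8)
The strategy is to push everything through the conditional expectation $E$, reduce to the diagonal (core) case already settled in Lemma~\ref{lem:FiniteRank}, and then transfer compactness back to $x$ itself via Lemma~\ref{lem:ExpectCompact}. So the first move is to write $x=\sum_j L_{\err_j}(x_j)$ with $x_j\in\cP_{\err_j+\ell_j}$ and to compute $E(x)=\int_\bbT \gamma_z(x)\,dz$.

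\textbf{Reduction to the core.} By the definition of the $\bbR$-action (lifted to the $\bbT$-action $\gamma$ via $t\mapsto e^{it}$), we have $\gamma_z\bigl(L_{\err_j}(x_j)\bigr)=z^{\ell_j-\err_j}L_{\err_j}(x_j)$, and integrating against normalized Lebesgue measure on $\bbT$ kills every term with $\ell_j\neq\err_j$ and fixes the others. Hence $E(x)=\sum_{j:\ \ell_j=\err_j}L_{\err_j}(x_j)$, which is a \emph{finite} sum of operators of the form $L_n(x_n)$ with $x_n\in\cP_{2n}$ --- exactly the class of elements handled by Lemma~\ref{lem:FiniteRank}. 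Since $E$ is a conditional expectation it is positive, so $x\geq 0$ forces $E(x)\geq 0$; and since $\varphi=\tau\circ E$ with $E$ idempotent (equivalently, since $\varphi$ is $\gamma$-invariant), we get $\varphi(E(x))=\varphi(x)=0$. Lemma~\ref{lem:FiniteRank} now applies to $E(x)$ and yields that $E(x)$ has finite rank; in particular $E(x)\in\cK$.

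\textbf{Transfer back to $x$.} We are in the abstract situation of Subsection~\ref{sec:KMS} with $T=\TP{0}$, $K=\cK$, core $\coreTP{0}$, dynamics $\sigma$, expectation $E$, trace $\tau$, and weight $\varphi$; in particular $\cK\subset\TP{0}$ is a closed two-sided (hence hereditary) ideal by Lemma~\ref{lem:Compact}, and $\fN_\varphi$ is a two-sided ideal since $\varphi$ is a $\KMS_{\ln(\delta)}$-state (Lemma~\ref{lem:KMSstate}, Remark~\ref{rem:KMSKernel}). Applying Lemma~\ref{lem:ExpectCompact} to the positive element $x$ and using $E(x)\in\cK$, we conclude $x\in\cK$, as desired.

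\textbf{Where the work is.} The only genuinely new step is the short computation of $E(x)$ together with the observation that it lands in the scope of Lemma~\ref{lem:FiniteRank}; there is no real obstacle here. The substantive finite-rank argument was already carried out in Lemma~\ref{lem:FiniteRank} (via the ``fattening'' trick and faithfulness of $\varphi|_{\cP_{2N}}$), and the equivalence ``$x$ compact $\iff E(x)$ compact'' is Lemma~\ref{lem:ExpectCompact}; the corollary merely glues the two together using positivity of $x$ and $\gamma$-invariance of $\varphi$.
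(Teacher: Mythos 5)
Your proof is correct and follows essentially the same route as the paper: apply $E$ to land in the core, invoke Lemma~\ref{lem:FiniteRank} to get that $E(x)$ has finite rank, and transfer compactness back to $x$ via Lemma~\ref{lem:ExpectCompact}. The extra detail you supply (the explicit computation of $\gamma_z$ on $L_{\err_j}(x_j)$ and the positivity of $E$) is just an unpacking of what the paper leaves implicit.
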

\begin{proof}
Note that $E(x)\in \coreTP{0}$ is a finite sum of elements of the form $L_n(x_n)$ where $x_n\in \cP_{2n}$, $E(x)\geq 0$, and $\varphi(E(x))=\varphi(x)=0$.
By Lemma \ref{lem:FiniteRank}, $E(x)$ has finite rank, so by Lemma \ref{lem:ExpectCompact}, $x\in\cK$.
\end{proof}

By Theorem \ref{thm:KMS}, where $C_n$ is the $C^*$-algebra generated by $\cP_{2k}$ for $0\leq k\leq n$, we now have the following proposition.

\begin{prop}
$\cK=\fN_\varphi$.
Hence $\TP{0}/\cK$ is isomorphic to the $C^*$-algebra generated by $\TP{0}$ in the GNS representation of the $\KMS_{\ln(\delta)}$-state $\varphi$.
\end{prop}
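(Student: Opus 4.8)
The plan is to read this off from Theorem~\ref{thm:KMS} once we have checked that $\TP{0}$ fits the abstract setup of Subsection~\ref{sec:KMS}. So first I would take $T=\TP{0}$, $K=\cK$ (a closed $2$-sided ideal by Lemmas~\ref{lem:Irreducible} and~\ref{lem:Compact}), $\sigma$ the norm-continuous gauge $\bbR$-action, $E\colon\TP{0}\to\coreTP{0}$ the conditional expectation, $C=\coreTP{0}$, and $\Tr=\tau$ the tracial weight (which is bounded, hence lower semicontinuous), so that $\varphi=\tau\circ E$. For the filtration, let $C_n$ be the $C^*$-subalgebra generated by the $L_k(x)$ with $x\in\cP_{2k}$ and $0\le k\le n$; a direct check with the product formula for the $L_r(\cdot)$'s shows that the linear span of these generators is already closed under multiplication and adjoints, hence equals $C_n$, and $C_n$ is finite dimensional since each $\cP_{2k}$ is. Thus $C_n\subseteq C_{n+1}$, $\coreTP{0}=\overline{\bigcup_n C_n}$, and $\coreTP{0}$ is AF (see also Example~\ref{ex:ToeplitzAF}). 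Finally $\varphi$ is a $\KMS_{\ln(\delta)}$ state by Lemma~\ref{lem:KMSstate}, so $\fN_\varphi$ is a $2$-sided ideal by Remark~\ref{rem:KMSKernel}.

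Next I would verify the two hypotheses of Theorem~\ref{thm:KMS}. For $\cK\subseteq\fN_\varphi$: the finite-rank projection $p_n=1-L_n(\cup_n)$ of Lemma~\ref{lem:Compact} satisfies $\varphi(p_n)=1-\tau(L_n(\cup_n))=1-\delta^{-n}\delta^{n}=0$, since $\cup_n$ is the image of the empty diagram under the right inclusion $\cP_0\hookrightarrow\cP_{2n}$; hence $p_n\in\fN_\varphi$, and as $\fN_\varphi$ is a $2$-sided ideal we get $p_nKp_n\in\fN_\varphi$ for every $K\in\cK\subseteq\TP{0}$. Since any finite-rank operator $K$ on $\FP{0}$ equals $p_nKp_n$ for $n$ large enough, all finite-rank operators, hence all of $\cK$, lie in the closed set $\fN_\varphi$. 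For $C_n^+\cap\fN_\varphi\subseteq\cK$: every element of $C_n$ is a finite sum of operators $L_k(x_k)$ with $x_k\in\cP_{2k}$, so Lemma~\ref{lem:FiniteRank} (or Corollary~\ref{cor:FiniteRank}) shows that a positive element of $C_n$ killed by $\varphi$ has finite rank, and therefore lies in $\cK$. Theorem~\ref{thm:KMS} now gives $\fN_\varphi=\cK$.

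For the second assertion, recall that for the GNS representation $\pi_\varphi$ of the state $\varphi$ one has $\pi_\varphi(a)=0$ if and only if $a\TP{0}\subseteq\fN_\varphi$. Since $\TP{0}$ is unital and $\fN_\varphi$ is a $2$-sided ideal, this set of $a$ is exactly $\fN_\varphi=\cK$, so $\TP{0}/\cK=\TP{0}/\ker\pi_\varphi\cong\pi_\varphi(\TP{0})$, which is the $C^*$-algebra generated by $\TP{0}$ in the GNS representation of $\varphi$.

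I do not expect a genuine obstacle here: essentially everything is bookkeeping against Subsection~\ref{sec:KMS} and the lemmas already proved. The only points that take a moment are that the natural filtration $C_n$ of the core really is an AF filtration (closure of the span under the $L$-product, giving finite dimensionality) and that the condition ``positive element of $C_n$ lying in $\fN_\varphi$'' is precisely what Lemma~\ref{lem:FiniteRank} handles, so the proposition amounts to an assembly of the preceding results rather than anything new.
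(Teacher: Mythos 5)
Your proposal is correct and follows exactly the paper's route: the paper proves this proposition in one line by invoking Theorem \ref{thm:KMS} with $C_n$ the $C^*$-algebra generated by $\cP_{2k}$ for $0\leq k\leq n$, having already noted $\cK\subseteq\fN_\varphi$ via the normalized trace of $L_n(\cup_n)$ and established $C_n^+\cap\fN_\varphi\subseteq\cK$ via Lemma \ref{lem:FiniteRank} and Corollary \ref{cor:FiniteRank}. You simply spell out the same bookkeeping (plus the standard GNS observation for the second assertion) in more detail than the paper does.
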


\begin{defn}
Define the \underline{$\cP_\bullet$-Cuntz algebra} $\OP{0}=\TP{0}/\cK = \TP{0}/\fN_\varphi$.
Define the \underline{(Cuntz) core of $\cP_\bullet$} as the fixed points of $\OP{0}$ under the $\bbT$-action. 
Note that for each $z\in \bbT$, the action of $\gamma_z$ descends to $\OP{0}=\TP{0}/\cK$ since $\gamma_z$ preserves $\cK$ as it is unitarily implemented.
\end{defn}

\begin{rems}\label{rem:CuntzCore}
\mbox{}
\be
\item
The core of $\OP{0}$ is the usual core which appears in \cite{MR1278111}, i.e., $\coreOP{0} = \varinjlim \cP_{2k}$ under the right inclusion $\cP_{2k}\hookrightarrow \cP_{2k+2}$ given by
$$
\begin{tikzpicture}[baseline=0cm]
	\draw (.2,0)--(.2,.8);
	\draw (-.2,0)--(-.2,.8);
	\node at (-.4,.6) {\scriptsize{$n$}};
	\node at (.4,.6) {\scriptsize{$n$}};
	\nbox{unshaded}{(0,0)}{.4}{0}{0}{$x$}
	\draw[fill=red] (0,.4) circle (.05cm);
\end{tikzpicture}
\longmapsto
\begin{tikzpicture}[baseline=.1cm]
	\draw (.4,0)--(.4,1.2);
	\draw (-.4,0)--(-.4,1.2);
	\draw (-.2,1.2) -- (-.2,.7) arc (-180:0:.2cm) -- (.2,1.2);
	\node at (-.6,1) {\scriptsize{$n$}};
	\node at (.6,1) {\scriptsize{$n$}};
	\nbox{}{(0,.2)}{.6}{.1}{.1}{}
	\nbox{unshaded}{(0,.05)}{.3}{.2}{.2}{$x$}
	\draw[fill=red] (0,.8) circle (.05cm);
\end{tikzpicture}\,.
$$
Hence the AF structure of the core of $\OP{0}$ is the usual one encoded by the principal graph $\Gamma$.

\item
Jones \cite{JonesSeveral} and Shelly \cite{Shelly} use the $\bbT$-action to show that the conditional expectation from $\OP{0}$ onto $\coreOP{0}$ is positive and faithful, since it is just integration with respect to Haar measure.
The KMS state then arises naturally as the composite of this faithful conditional expectation along with the faithful trace on $\coreOP{0}$ inherited from $\cP_\bullet$.

\ee
\end{rems}

\begin{facts}
\mbox{}
\be
\item
The algebra $\OP{0}$ is isomorphic to the Doplicher-Roberts algebra $\cO_\rho$ \cite{MR1010160} where $\rho$ is the strand in the rigid $C^*$-tensor category $\Pro(\cP_\bullet)$ \cite{1208.5505}.
Thus $\OP{0}$ is isomorphic to a full corner of the Cuntz-Krieger algebra $\cO_{\vec{\Gamma}}$ (see Remark \ref{rem:DRIsCornerOfCK}).

In Remark \ref{rem:OP}, we provide another proof that $\OP{0}$ is a compression of $\cO_{\vec{\Gamma}}$.
Our approach has the added benefit that it realizes $\TP{0}$ and $\SP{0}$ as reduced compressions of the universal Toeplitz-Cuntz-Krieger algebra $\cT_{\vec{\Gamma}}$ and the newly introduced free graph algebra $\cS(\Gamma)$ respectively.
(See Subsection \ref{sec:SP} for the definition of $\SP{0}$ and Subsection \ref{sec:FreeGraphAlgebraOfUnoriented} for the definition of the free graph algebra $\cS(\Gamma)$.)

\item
When the strand in $\cP_\bullet$ is $\rho\overline{\rho}$ where $\rho={\sb{N}L^2(M)}_M$ is the standard bimodule for a finite index II$_1$-subfactor $N\subset M$, $\OP{0}$ is isomorphic to $\cO_{\rho\overline{\rho}}$ from \cite[Section 2.5]{MR1604162}.

\item
Jones \cite{JonesSeveral} and Shelly \cite{Shelly} originally define the Cuntz algebra $\OP{0}$ as the image of $\TP{0}$ in the GNS representation of the KMS state $\varphi$.
In fact, by \cite[Propositions 3.1.3 and 3.3.2]{Shelly}, for finite depth $\cP_\bullet$, $\varphi$ is the unique KMS state for $\OP{0}$, and the only admissible $\beta$-value is $\ln(\delta)$.
(This is not the case for infinite depth.
For example, there is a 1-parameter family of traces on the AF core of $\cO_0(\TL_\bullet)$.)
\ee
\end{facts}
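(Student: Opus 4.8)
The plan is to invoke Theorem~\ref{thm:KMS} with $T=\TP{0}$, $K=\cK$, $C=\coreTP{0}$, the dynamics $\sigma$, the conditional expectation $E\colon\TP{0}\to\coreTP{0}$, and the weight $\varphi=\tau\circ E$. First I would check that the standing hypotheses of Subsection~\ref{sec:KMS} are all in place: $\cK$ is a closed two-sided ideal of $\TP{0}$ by Lemmas~\ref{lem:Irreducible} and~\ref{lem:Compact}, $\sigma$ is norm-continuous, $\varphi$ is a state (hence lower semicontinuous), and $\fN_\varphi$ is a closed two-sided ideal by Remark~\ref{rem:KMSKernel} since $\varphi$ is a $\KMS_{\ln(\delta)}$ state by Lemma~\ref{lem:KMSstate}. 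For the AF filtration I would take $C_n$ to be the $*$-subalgebra generated by $\{L_k(x):x\in\cP_{2k},\ 0\le k\le n\}$: the multiplication and adjoint formulae for the $L_\err(x)$ show that $L_{k_1}(x)L_{k_2}(y)$ is again of the form $L_{\max(k_1,k_2)}(z)$ with $z\in\cP_{2\max(k_1,k_2)}$, so each $C_n$ is the finite-dimensional span of these diagrams and $\coreTP{0}=\overline{\bigcup_n C_n}$ is AF (cf.\ Example~\ref{ex:ToeplitzAF}).

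It then remains to verify the two hypotheses of Theorem~\ref{thm:KMS}. For $\cK\subseteq\fN_\varphi$: the finite-rank projection $1-L_n(\cup_n)$ of Lemma~\ref{lem:Compact} lies in $\ker\varphi$, since $E$ fixes $L_n(\cup_n)$ and $\tau(L_n(\cup_n))=\delta^{-n}\delta^{n}=1$ (capping $n$ nested cups with $n$ nested caps yields $n$ closed loops), so $\varphi(1-L_n(\cup_n))=1-1=0$; as $\fN_\varphi$ is a closed two-sided ideal containing a nonzero finite-rank projection and $\cK$ is simple, $\cK\cap\fN_\varphi=\cK$, i.e.\ $\cK\subseteq\fN_\varphi$. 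For $C_n^+\cap\fN_\varphi\subseteq\cK$: by the description of $C_n$ above, any such element is a positive finite sum $\sum_{k=0}^{n}L_k(x_k)$ with $x_k\in\cP_{2k}$ on which $\varphi$ vanishes, so Lemma~\ref{lem:FiniteRank} shows it has finite rank and hence lies in $\cK$. Theorem~\ref{thm:KMS} then yields $\fN_\varphi=\cK$. The ``Hence'' is immediate: $\fN_\varphi$ is a two-sided ideal, so it is precisely the kernel of the GNS representation $\pi_\varphi$ of $\varphi$, whence $\pi_\varphi(\TP{0})\cong\TP{0}/\fN_\varphi=\TP{0}/\cK$.

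I do not expect a genuine obstacle: the two substantive inputs --- the faithfulness computation packaged in Lemma~\ref{lem:FiniteRank} and the homotopy/real-rank-zero argument of Theorem~\ref{thm:KMS} --- are already proved, so the argument is an assembly. The one point that needs care is the structural identification of the filtration algebra $C_n$ with the span of the diagrams $L_k(z)$, $0\le k\le n$; it is exactly this that lets Lemma~\ref{lem:FiniteRank} apply verbatim to elements of $C_n^+\cap\fN_\varphi$ and that certifies the core has the AF filtration the set-up of Subsection~\ref{sec:KMS} demands.
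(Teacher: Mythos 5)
Your proposal is correct and follows essentially the same route as the paper: show $\varphi$ is a $\KMS_{\ln(\delta)}$ state (Lemma \ref{lem:KMSstate}) so that $\fN_\varphi$ is a closed two-sided ideal, get $\cK\subseteq\fN_\varphi$ from the finite-rank projections $1-L_n(\cup_n)$ lying in $\ker\varphi$, verify $C_n^+\cap\fN_\varphi\subseteq\cK$ via Lemma \ref{lem:FiniteRank}, and apply Theorem \ref{thm:KMS} with $C_n$ the $C^*$-algebra generated by $\cP_{2k}$ for $0\le k\le n$. The only cosmetic difference is that the paper also records Corollary \ref{cor:FiniteRank}, which extends Lemma \ref{lem:FiniteRank} off the core but is not actually needed for the application of Theorem \ref{thm:KMS}.
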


\subsection{The generalized semicircular algebra of $\cP_\bullet$}\label{sec:SP}

We now see that the algebras arising from Voiculescu's free Gaussian functor arise from the factor planar algebra $\NC_\bullet$ of non-commuting polynomials.
This motivates the definition of a generalized semicircular algebra, which is also known as the (zeroth) Guionnet-Jones-Shlyakhtenko $C^*$-algebra.

We first recall the construction of the factor planar algebra $\NC_\bullet$.

\begin{defn}\label{defn:NC}
Consider the algebra $\C\langle X_1,\dots, X_n\rangle$ of non-commuting polynomials in self-adjoint variables.
The factor planar algebra $\NC_\bullet$ is uniquely characterized by the following properties:
\begin{itemize}
\item
$\NC_n$ is the $\C$-span of the monomials of degree $n$, and $(X_{i_1}\cdots X_{i_n})^*=X_{i_n}\cdots X_{i_1}$.
\item
$\NC_\bullet$ is generated by the 1-boxes
$
\begin{tikzpicture}[baseline=0cm]
	\draw (0,0)--(0,.6);
	\nbox{unshaded}{(0,0)}{.3}{0}{0}{$X_i$}
\end{tikzpicture}
$\,, which satisfy
$
\begin{tikzpicture}[baseline=-.1cm]
	\draw (0,0)--(1,0);
	\nbox{unshaded}{(0,0)}{.3}{0}{0}{$X_i$}
	\nbox{unshaded}{(1,0)}{.3}{0}{0}{$X_j$}
\end{tikzpicture}
=
\delta_{i,j}.
$
\item
The strand is equal to $\sum_{i=1}^n X_i^2$. In diagrams,  
$
\D
\begin{tikzpicture}[baseline=-.1cm]
	\draw (0,-.9) -- (0,.9);
\end{tikzpicture}
=
\sum_{i=1}^n
\begin{tikzpicture}[baseline=-.1cm]
	\draw (0,.4) -- (0,.9);
	\draw (0,-.4) -- (0,-.9);
	\nbox{unshaded}{(0,.4)}{.3}{0}{0}{$X_i$}
	\nbox{unshaded}{(0,-.4)}{.3}{0}{0}{$X_i$}
\end{tikzpicture}\,.
$
\end{itemize}
It is not hard to show that the modulus of $\NC_\bullet$ is $\delta=n$, and
the principal graph $\Gamma$ is the $n$-bouquet, i.e., one vertex with $n$ self-loops.

In more detail, the planar operad acts on $\NC_\bullet$ by contracting indices.
For a vector of indices $\vec{i}=(i_1,\dots, i_n)$, let $X_{\vec{i}}=X_{i_1}\cdots X_{i_n}$.
For example, if $\vec{i}=(i_1,i_2,i_3)$ and $\vec{j}=(j_1,j_2,j_3,j_4, j_5)$, then
\begin{align*}
\begin{tikzpicture}[baseline = .5cm]
	\nbox{unshaded}{(-.6,.6)}{1.4}{0}{0}{}
	\draw (.8,1.4) arc (-90:-180:.6cm);
	\draw (0,.4) arc (0:90:.8cm);
	\draw (-.4,-.2) arc (270:90:.2cm);
	\draw (-1.2,2)--(-1.2,1.2)--(-2,1.2);
	\draw (0,-.8)--(0,0)--(.8,0);
	\draw (-1.4,0) circle (.3cm);
	\nbox{unshaded}{(-1.2,1.2)}{.4}{0}{0}{}
	\nbox{unshaded}{(0,0)}{.4}{0}{0}{}
	\node at (-.5,-.5) {$\star$};
	\node at (-1.6,.6) {$\star$};
	\node at (-2.2,-.6) {$\star$};
\end{tikzpicture}
\,(X_{\vec{i}},X_{\vec{j}})
&=
\begin{tikzpicture}[baseline = .5cm]
	\nbox{unshaded}{(-.6,.6)}{1.4}{0}{0}{}
	\draw  (.8,1.4) arc (-90:-180:.6cm);
	\draw  (0,.4) arc (0:90:.8cm);
	\draw  (-.4,-.2) arc (270:90:.2cm);
	\draw (-1.2,2)--(-1.2,1.2)--(-2,1.2);
	\draw (0,-.8)--(0,0);
	\draw (0, 0)--(.8,0);
	\draw (-1.4,0) circle (.3cm);
	\nbox{unshaded}{(-1.2,1.2)}{.4}{0}{0}{}
	\nbox{unshaded}{(0,0)}{.4}{0}{0}{}
	\node at (-.5,-.5) {$\star$};
	\node at (-1.6,.6) {$\star$};
	\node at (-2.2,-.6) {$\star$};
	\node at (-1,1.8) {{\scriptsize{$i_{2}$}}};
	\node at (-1.8,1) {{\scriptsize{$i_{1}$}}};
	\node at (.4,1.4) {{\scriptsize{$k$}}};
	\node at (-.6,1.35) {{\scriptsize{$i_{3}$}}};
	\node at (.2,.6) {{\scriptsize{$j_{3}$}}};
	\node at (-.6,.35) {{\scriptsize{$j_{2}$}}};
	\node at (-.65,-.25) {{\scriptsize{$j_{1}$}}};
	\node at (.6,.2) {{\scriptsize{$j_{4}$}}};
	\node at (.2,-.6) {{\scriptsize{$j_{5}$}}};
\end{tikzpicture} 
\,(X_{\vec{i}},X_{\vec{j}})
\\
&= n\cdot \delta_{i_{3},j_{3}}\cdot \delta_{j_{1}, j_{2}}\cdot X_{i_{1}}X_{i_{2}}\left(\sum_{k=1}^{n}X_{k}^{2}\right)X_{j_{4}}X_{j_{5}}.
\end{align*}
\end{defn}

\begin{fact}
When $\delta=n$, $\cT_0 (\NC_\bullet)$ is isomorphic to $\cT_n$ and $\cO_0 (\NC_\bullet)$ is isomorphic to $\cO_n$ (see Example \ref{ex:FreeGaussian}).
\end{fact}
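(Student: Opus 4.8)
The plan is to realize $\cT_0(\NC_\bullet)$ and $\cO_0(\NC_\bullet)$ concretely on the full Fock space of $H=\C^n$ and then invoke Example~\ref{ex:FreeGaussian}. Let $H_\bbR=\spann_\bbR\{X_1,\dots,X_n\}$, with complexification $H$, and let $\cF(H)=\bigoplus_{k\geq 0}H^{\otimes k}$. By the construction of $\NC_\bullet$ in Definition~\ref{defn:NC}, $\NC_k$ is spanned by the degree-$k$ monomials $X_{j_1}\cdots X_{j_k}$, and these monomials form an orthonormal basis of $\NC_k$ for the inner product defining $\cF_0(\NC_\bullet)$ (this follows from the relation $\langle X_i,X_j\rangle=\delta_{i,j}$; the graded trace on $\NC_\bullet$ is the free semicircular one). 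So the assignment $X_{j_1}\cdots X_{j_k}\mapsto X_{j_1}\otimes\cdots\otimes X_{j_k}$, sending $1\in\NC_0$ to the vacuum $\Omega$, extends to a unitary $U\colon\cF_0(\NC_\bullet)\to\cF(H)$.

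Next I would transport the generators of $\cT_0(\NC_\bullet)$ across $U$. The one genuine computation: for a monomial $X_{\vec\imath}$ of degree $\ell+r$, with $\vec\imath=(i_1,\dots,i_{\ell+r})$, unwinding the diagrammatic definition of $L_\err$ — cap the rightmost $r$ strands of $X_{\vec\imath}$ against the leftmost $r$ strands of the argument — and using $X_i^*=X_i$ together with $\langle X_i,X_j\rangle=\delta_{i,j}$, one obtains, for a monomial $X_{\vec\jmath}$ of degree $m$, that $L_\err(X_{\vec\imath})X_{\vec\jmath}$ equals $\delta_{i_{\ell+1},j_1}\cdots\delta_{i_{\ell+r},j_r}\,X_{i_1}\cdots X_{i_\ell}X_{j_{r+1}}\cdots X_{j_m}$ if $r\leq m$, and $0$ otherwise. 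Comparing with the formulas for the left creation and annihilation operators on $\cF(H)$, this says precisely that $U\,L_\err(X_{\vec\imath})\,U^*=L_+(X_{i_1})\cdots L_+(X_{i_\ell})\,L_+(X_{i_{\ell+r}})^*\cdots L_+(X_{i_{\ell+1}})^*$; in particular $U\,L_0(X_i)\,U^*=L_+(X_i)$ and $U\,L_1(X_i)\,U^*=L_+(X_i)^*$ for $i=1,\dots,n$.

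Since $x\mapsto L_\err(x)$ is linear and the monomials span $\NC_k$, this shows $U\,\cT_0(\NC_\bullet)\,U^*$ is the $C^*$-subalgebra of $\cL(\cF(H))$ generated by $\{L_+(X_i),L_+(X_i)^*:1\leq i\leq n\}$, i.e. the $C^*$-algebra generated by $\{L_+(X_i)\}_{i=1}^n$ for the orthonormal basis $\{X_1,\dots,X_n\}$ of $H$; by Example~\ref{ex:FreeGaussian} this is the Toeplitz algebra $\cT_n$. Hence $\cT_0(\NC_\bullet)\cong\cT_n$. For the Cuntz algebra, $\cO_0(\NC_\bullet)=\cT_0(\NC_\bullet)/\cK$ by definition, where $\cK\subset\cT_0(\NC_\bullet)$ is the algebra of compacts on $\cF_0(\NC_\bullet)$, contained in $\cT_0(\NC_\bullet)$ by Lemma~\ref{lem:Compact}. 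Being spatial, $U$ carries $\cK$ onto the compacts on $\cF(H)$, and $\cT_n/\cK\cong\cO_n$ again by Example~\ref{ex:FreeGaussian}, so $\cO_0(\NC_\bullet)\cong\cO_n$.

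The main obstacle is purely bookkeeping: matching the diagrammatic conventions in the definition of $L_\err$ (placement of $\star$, which strands get capped, how the monomial $X_{i_1}\cdots X_{i_{\ell+r}}$ is drawn as a box) with the explicit formulas for the creation and annihilation operators on $\cF(H)$, and checking that the graded inner product on $\NC_\bullet$ makes $\{X_{j_1}\cdots X_{j_k}\}$ orthonormal in the expected order (with no reversal). Once the identification in the second step is verified, everything else is formal.
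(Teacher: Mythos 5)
Your proposal is correct and is exactly the argument the paper intends (the Fact is stated without proof, but the identification via the unitary $X_{j_1}\cdots X_{j_k}\mapsto X_{j_1}\otimes\cdots\otimes X_{j_k}$ and the matching of $L_0(X_i),L_1(X_i)$ with $L_+(X_i),L_+(X_i)^*$ is precisely what the reference to Example \ref{ex:FreeGaussian} and the discussion in Subsection \ref{sec:SP} point to). The only quibble is that planarity forces the capped strands to nest, so the deltas pair as $\delta_{i_{\ell+r},j_1}\cdots\delta_{i_{\ell+1},j_r}$ rather than in the parallel order you wrote; this only permutes the order of the annihilation factors in your word and does not change the generated $C^*$-algebra, so the conclusion stands.
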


Voiculescu's free semicircular algebra $\cS_n$ naturally arises in this situation.
The generators $X_i$ are in $\cP_1$, and thus can be viewed as creation or annihilation operators depending upon the dot placement:
$$
\begin{tikzpicture}[baseline=-.1cm]
	\draw (-.2,0)--(-.2,.8);
	\node at (-.3,.6) {\scriptsize{$1$}};
	\nbox{unshaded}{(0,0)}{.4}{0}{0}{$X_i$}
	\draw[fill=red] (0,.4) circle (.05cm);
\end{tikzpicture}
\text{ and }
\begin{tikzpicture}[baseline=-.1cm]
	\draw (.2,0)--(.2,.8);
	\node at (.3,.6) {\scriptsize{$1$}};
	\nbox{unshaded}{(0,0)}{.4}{0}{0}{$X_i$}
	\draw[fill=red] (0,.4) circle (.05cm);
\end{tikzpicture}\,.
$$
Hence summing over all placements of the dot above gives the free semi-circular generators:
$$
L_+(X_i)+L_+(X_i)^*=
\begin{tikzpicture}[baseline=-.1cm]
	\draw (-.2,0)--(-.2,.8);
	\node at (-.3,.6) {\scriptsize{$1$}};
	\nbox{unshaded}{(0,0)}{.4}{0}{0}{$X_i$}
	\draw[fill=red] (0,.4) circle (.05cm);
\end{tikzpicture}
+
\begin{tikzpicture}[baseline=-.1cm]
	\draw (.2,0)--(.2,.8);
	\node at (.3,.6) {\scriptsize{$1$}};
	\nbox{unshaded}{(0,0)}{.4}{0}{0}{$X_i$}
	\draw[fill=red] (0,.4) circle (.05cm);
\end{tikzpicture}\,.
$$
In the general (sub)factor planar algebra setting, it has proven instrumental to look at the algebra generated by summing over all placements of the dot.

\begin{defn}
The \underline{$\cP_\bullet$-semicircular algebra $\SP{0}$} is the $C^*$-algebra generated by
$$
\set{
\sum_{\ell+\err=n}\,
\begin{tikzpicture}[baseline=.1cm]
	\draw (.2,0)--(.2,.8);
	\draw (-.2,0)--(-.2,.8);
	\node at (-.4,.6) {\scriptsize{$\ell$}};
	\node at (.4,.6) {\scriptsize{$\err$}};
	\nbox{unshaded}{(0,0)}{.4}{0}{0}{$x$}
	\draw[fill=red] (0,.4) circle (.05cm);
\end{tikzpicture}
}{x\in \cP_{n} \text{ and } n\geq 0}.
$$
\end{defn}

\begin{facts}
\mbox{}
\be
\item
$\cS_0(\NC_\bullet)$ is isomorphic to Voiculescu's free semicircular algebra $\cS_n$.
\item
In \cite{GJSCStar}, we will see that $\SP{0}$ is isomorphic to the (zeroth) Guionnet-Jones-Shlyakhtenko $C^*$-algebra \cite{MR2732052,MR2645882}.
Thus $\SP{0}$ completes to an interpolated free group factor \cite{MR2807103,MR3110503}, just as $\cS_n$ completes to $L(\bbF_n)$ \cite{MR799593}.
\item
Since $\SP{0}\cap \cK=(0)$, the inclusion $\SP{0} \hookrightarrow \TP{0}$ descends to an injection $\SP{0} \hookrightarrow \OP{0}$.
\ee
\end{facts}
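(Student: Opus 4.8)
When $\cP_\bullet=\NC_\bullet$ the Fock space $\FP{0}=\bigoplus_k\NC_k$ is, by the construction of $\NC_\bullet$, exactly the full Fock space $\cF(\C^n)$: the degree-$k$ monomials $X_{i_1}\cdots X_{i_k}$ form an orthonormal basis of $\NC_k$, so $X_{i_1}\cdots X_{i_k}\mapsto e_{i_1}\otimes\cdots\otimes e_{i_k}$ is a unitary $\FP{0}\to\cF(\C^n)$. Under this identification $L_0(X_i)=L_+(X_i)$ is the usual left creation operator and $L_1(X_i)=L_+(X_i)^*$ its adjoint, so the degree-one generator $L_0(X_i)+L_1(X_i)$ of $\cS_0(\NC_\bullet)$ is exactly Voiculescu's semicircular element $s_i$; hence $\cS_n=C^*(s_1,\dots,s_n)\subseteq\cS_0(\NC_\bullet)$. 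For the reverse inclusion I would show every generator $\sum_{\ell+\err=k}L_\err(x)$ of $\cS_0(\NC_\bullet)$ lies in $C^*(s_1,\dots,s_n)$; by linearity it suffices to take $x=X_{i_1}\cdots X_{i_k}$, and the product formula for the $L_\err$'s yields the Wick recursion
$$
\sum_{\ell+\err=k}L_\err(X_{i_1}\cdots X_{i_k})
= s_{i_1}\!\sum_{\ell+\err=k-1}\!L_\err(X_{i_2}\cdots X_{i_k})
\;-\;\delta_{i_1,i_2}\!\sum_{\ell+\err=k-2}\!L_\err(X_{i_3}\cdots X_{i_k}),
$$
so that by induction on $k$ each such generator is a noncommutative polynomial in $s_1,\dots,s_n$, giving $\cS_0(\NC_\bullet)=\cS_n$. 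The only work is the diagrammatic case analysis establishing the displayed identity: the ``creation half'' $L_0(X_{i_1})$ of $s_{i_1}$ prepends the letter $X_{i_1}$, the ``annihilation half'' $L_1(X_{i_1})$ either contracts it against the front letter of the word (producing the $\delta_{i_1,i_2}$ term) or passes through, and the two pure-annihilation terms $L_k$ contributed by the two halves cancel.

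\textbf{For part (2).} There is nothing to prove inside Part~I: the isomorphism of $\SP{0}$ with the zeroth Guionnet--Jones--Shlyakhtenko $C^*$-algebra is established in the companion paper \cite{GJSCStar} (it amounts to identifying the operators obtained by summing over dot placements with the GJS field operators and comparing moments against the planar-algebra trace), and the von Neumann completion is then an interpolated free group factor by \cite{MR2807103} in finite depth and \cite{MR3110503} in infinite depth, parallel to $\cS_n''\cong L(\F_n)$ \cite{MR799593}. I would simply record this as a forward reference.

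\textbf{For part (3).} Once $\SP{0}\cap\cK=(0)$ is known the statement is immediate: the composite $\SP{0}\hookrightarrow\TP{0}\twoheadrightarrow\TP{0}/\cK=\OP{0}$ is a $*$-homomorphism with kernel $\SP{0}\cap\cK=(0)$, hence injective. So the content is the disjointness. I would argue: the vacuum vector $\Omega=1\in\cP_0$ is cyclic for $\SP{0}$ on $\FP{0}$, since the generator $\sum_{\ell+\err=n}L_\err(x)$ sends $\Omega$ to $x\in\cP_n$ and so the orbit of $\Omega$ already spans $\Gr_0$; hence the identity representation of $\SP{0}$ on $\FP{0}$ is the GNS representation of the vacuum state, and $M:=\SP{0}''\subseteq B(\FP{0})$ is exactly the von Neumann completion of part~(2), a ${\rm II}_1$ factor. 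A ${\rm II}_1$ factor acting faithfully on any Hilbert space contains no nonzero compact operator: a nonzero compact element of $M$ would produce, via a finite-rank spectral projection $p\in M$ of its modulus, a corner $pMp$ that is simultaneously a ${\rm II}_1$ factor and a subalgebra of $B(p\FP{0})\cong M_d(\C)$, which is absurd. Therefore $\SP{0}\subseteq M$ has no nonzero compact operator, i.e.\ $\SP{0}\cap\cK=(0)$. (Alternatively: once Part~II shows $\SP{0}$ is simple and unital, $\SP{0}\cap\cK$ is a proper closed two-sided ideal of $\SP{0}$, hence $(0)$ since $1\notin\cK$; or, since we showed $\cK=\fN_\varphi$, the disjointness is equivalent to faithfulness of $\varphi$ on $\SP{0}$.)

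\textbf{Main obstacle.} Part~(1) is routine diagram bookkeeping and part~(2) is a pointer to \cite{GJSCStar}. The genuine input is in part~(3): the disjointness $\SP{0}\cap\cK=(0)$ rests on the free-probabilistic fact that $\SP{0}$ generates a ${\rm II}_1$ factor in standard form on $\FP{0}$ — equivalently, that the vacuum state restricts to a faithful trace on $\SP{0}$ — which is the substance of the GJS theory and is developed, in the present operator-valued framework, in Part~II; I would cite it there rather than reprove it here.
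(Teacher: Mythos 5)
The paper states this block as ``Facts'' without an in-line proof, so there is nothing to match line by line; the question is whether your arguments are sound and how they relate to what the paper actually establishes elsewhere. They are sound, and your route is partly the paper's and partly your own. For (1), your Chebyshev-type recursion $W_k = s_{i_1}W_{k-1} - \delta_{i_1,i_2}W_{k-2}$ is correct and is exactly the mechanism the paper uses in its Proposition on generation of $\SP{}$ (multiplying a degree-one sum against a degree-$(k-1)$ sum produces the degree-$k$ sum plus a single contraction term); one small slip: your prose says the two ``pure-annihilation $L_k$ terms cancel,'' but in fact only the annihilation half produces an $L_k$ term and it does not cancel --- it is precisely the term needed to complete $W_k$, since the creation half only supplies $L_0,\dots,L_{k-1}$. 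The displayed identity is right, so this is cosmetic. Part (2) is, as you say, a forward reference, matching the paper. For (3), your argument (vacuum vector cyclic, $\SP{0}''$ a ${\rm II}_1$ factor in standard form, hence no nonzero compacts) is valid but imports the GJS factoriality result from Part~II; the paper's own justification comes later and by a different route: it compresses $\SP{}$ to the free graph algebra $\cS(\Gamma,\mu)$, computes the laws of $T_\epsilon^*T_\epsilon$ explicitly, and shows that when the Frobenius--Perron condition holds the set $A(\Gamma,\mu)$ is empty, so $\cS(\Gamma,\mu)$ meets the compacts trivially; combined with the identification $\SP{0}\cong p_\star\SP{}p_\star$ this gives $\SP{0}\cap\cK=(0)$ without invoking factoriality. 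Your approach is shorter but less self-contained within Part~I; the paper's is longer but yields the quantitative free-probability information (the explicit free-Poisson laws) that it needs anyway for the structure of $\cS(\Lambda,\mu)$.
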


\begin{rems}\label{rems:NotFunctor}
\mbox{}
\be
\item
Germain's Theorem \ref{thm:Germain} does not apply to give us the $K$-theory of $\TP{0}$ or $\SP{0}$.
The Fock space $\FP{0}$ is not generated by $\cP_1$, so $\TP{0}$ is not generated by elements of degree 1.

\item
This construction is not functorial.
Consider the canonical planar algebra inclusion $\Phi:\TL_\bullet \hookrightarrow \NC_\bullet$.
While $\Phi$ induces an inclusion of Hilbert spaces $\cF_0(\TL_\bullet)\hookrightarrow \cF_{0}(\NC_\bullet)$, $\Phi$ does not induce a map of Toeplitz algebras.
Consider the element
$$
x_{\cP_\bullet}=\,
\begin{tikzpicture}[baseline=.1cm]
	\draw (-.2,.7)--(-.2,.1) arc (-180:0:.2cm)--(.2,.7);
	\nbox{}{(0,0)}{.3}{.1}{.1}{}
	\draw[fill=red] (0,.3) circle (.05cm);
\end{tikzpicture}
\,-
\begin{tikzpicture}[baseline=.1cm]
	\draw (-.2,.7)--(-.2,.1) arc (-180:0:.2cm)--(.2,.7);
	\node at (-.4,.5) {\scriptsize{$2$}};
	\node at (.4,.5) {\scriptsize{$2$}};
	\nbox{}{(0,0)}{.3}{.1}{.1}{}
	\draw[fill=red] (0,.3) circle (.05cm);
\end{tikzpicture}
\in \TP{0}.
$$
Note that $x_{\TL_\bullet}=0$ in $\cT_{0}(\TL_\bullet)$ since $\TL_1=(0)$, but $x_{\NC_\bullet}\neq 0$ in $\cT_{0}(\NC_\bullet)$, as it is the projection onto $\NC_1$.

However, we will see in Theorem \ref{thm:OSfunctor} that the assignments $\cP_\bullet$ to $\OP{0}$ and $\SP{0}$ are functorial.
\ee
\end{rems}


\section{The operator-valued system}\label{sec:Operator}

The Fock space analogy of the previous section, originally due to Jones, brought together work of Doplicher-Roberts \cite{MR1010160}, Guionnet-Jones-Shlyakhtenko-Walker \cite{MR2732052,MR2645882,JonesSeveral}, Izumi \cite{MR1604162}, Shelly \cite{Shelly}, and Voiculescu \cite{MR799593}.
However, we saw some problems with functoriality.

In this section, we construct a canonical $C^*$-Hilbert bimodule $\cX=\cX(\cP_\bullet)$ associated to a (sub)factor planar algebra.
We will see in Subsection \ref{sec:FurtherCompression} that the Cuntz and free semicircular algebras of $\cP_\bullet$ of the previous subsection are full hereditary subalgebras of the Cuntz-Pimsner and $\cB$-valued free semicircular algebras associated to $\cX$.
In general, the $\cP_\bullet$-Toeplitz algebra may be a reduced compression of the Pimsner-Toeplitz algebra of $\cX$.

This $C^*$-Hilbert bimodule construction is functorial.
Additionally, it allows for the efficient computation of the $K$-theory of the free semicircular algebra $\SP{}$ via Theorem \ref{thm:Germain} from \cite{Germain}.
We postpone the proof to Corollary \ref{cor:KTheory}, but the interested reader can perform the calculation at the end of Subsection \ref{sec:PimsnerToeplitz}.

For this section, $\cP_\bullet$ is a fixed factor planar algebra.

\subsection{Operator-valued Fock space}\label{sec:FockOp}

\begin{defn}
For $n\geq 0$, let $\cB_{n}=\cB_n(\cP_\bullet)$ be the external direct sum
$$
\cB_n=\bigoplus_{l, r = 0}^{n} \cP_{l, r},
$$
where $\cP_{l, r} = \cP_{l + r}$, and we picture an element of $b\in \cP_{l, r}$ as
$
\begin{tikzpicture}[baseline = -.1 cm]
	\draw (-.6, 0)--(.6, 0);
	\nbox{unshaded}{(0,0)}{.3}{0}{0}{$b$}
	\node at (-.5, .2) {{\scriptsize{$l$}}};
	\node at (.5, .2) {{\scriptsize{$r$}}};
\end{tikzpicture}
$.

We define the following multiplication and (non-normalized) trace $\Tr$ on $\cB_{n}$ by
$$
\begin{tikzpicture}[baseline = 0cm]
	\draw (-.8, 0)--(.8, 0);
	\nbox{unshaded}{(0,0)}{.4}{0}{0}{$a$}
	\node at (-.6, .2) {{\scriptsize{$l$}}};
	\node at (.6, .2) {{\scriptsize{$r$}}};
\end{tikzpicture}
\, \cdot \,
\begin{tikzpicture}[baseline = 0cm]
	\draw (-.8, 0)--(.8, 0);
	\nbox{unshaded}{(0,0)}{.4}{0}{0}{$b$}
	\node at (-.6, .2) {{\scriptsize{$l'$}}};
	\node at (.6, .2) {{\scriptsize{$r'$}}};
\end{tikzpicture}\,
= \, \delta_{r, l'} \,
\begin{tikzpicture}[baseline = 0cm]
	\draw (-.8, 0)--(2, 0);
	\nbox{unshaded}{(0,0)}{.4}{0}{0}{$a$}
	\node at (-.6, .2) {{\scriptsize{$l$}}};
	\node at (.6, .2) {{\scriptsize{$r$}}};
    \nbox{unshaded}{(1.2,0)}{.4}{0}{0}{$b$}
	\node at (1.8, .2) {{\scriptsize{$r'$}}};
\end{tikzpicture}
\, \, \text{ and } \, \,
\Tr(b) = \delta_{l, r}\,
\begin{tikzpicture}[baseline = -.2cm]
	\draw (-.4, 0)--(.4, 0) arc(90:-90:.4cm)--(-.4, -.8) arc(270:90:.4cm);
	\nbox{unshaded}{(0,0)}{.4}{0}{0}{$b$}
	\node at (-.6, .2) {{\scriptsize{$l$}}};
	\node at (.6, .2) {{\scriptsize{$r$}}};
\end{tikzpicture}\,.
$$
We equip $\cB_{n}$ with involution $\dagger$ defined as follows.
If $b \in \cP_{l, r}$, then
$$
b^{\dagger} =
\begin{tikzpicture}[baseline = 0cm]
	\draw (-.8, 0)--(.8, 0);
	\nbox{unshaded}{(0,0)}{.4}{0}{0}{$b^{*}$}
	\node at (-.6, .2) {{\scriptsize{$r$}}};
	\node at (.6, .2) {{\scriptsize{$l$}}};
\end{tikzpicture}\,
\in  \cP_{r, l},
$$
where the $*$ is the standard involution on $\cP_\bullet$.
Notice that $\cB_{n}$ is a finite dimensional $C^{*}$-algebra under $\dagger$, so $\cB_n$ has a unique $C^{*}$-norm.
\end{defn}

\begin{defn}
Let $\Gamma(n)$ be the truncation of $\Gamma$ to depth $n$, and let $V(\Gamma(n))$ be its vertices.
\end{defn}

\begin{rem}\label{rem:Bnmatrix}
Note that
$$
\cB_{n} \cong \bigoplus_{\alpha \in V(\Gamma_{n})} M_{n(\alpha)}(\C),
$$
where $n(\alpha)$ represents the multiplicity of $p_{\alpha}$ in $\cB_{n}$, and $p_\alpha$ is a representative of the vertex $\alpha\in V(\Gamma(n))$.
\end{rem}

Observe that $\cB_{n}$ is canonically a non-unital, hereditary subalgebra of $\cB_{n+1}$.
Furthermore, the uniqueness of the $C^{*}$-norm on each $\cB_{n}$ implies that there is a unique $C^{*}$-norm on $\cup_{n} \cB_{n}$.

\begin{defn}
Define the $C^*$-algebra $\cB=\cB(\cP_\bullet) = \varinjlim \cB_n = \overline{\bigcup_{n\geq 0} \cB_{n}}^{\|\cdot\|}$.
\end{defn}

\begin{rem}
From the preceding discussion, as a $C^{*}$-algebra, $\cB \cong \bigoplus_{\alpha \in V(\Gamma)} \cK$.
\end{rem}

In what follows, we construct $\cB-\cB$ Hilbert bimodules. 

\begin{defn}
Consider the external direct sum
$$
X_n=\bigoplus_{l, r = 1}^{\infty} \cP_{l, n, r},
$$
where diagrammatically, an element $x \in \cP_{l, n, r}$ can be seen as 
$
x = \begin{tikzpicture}[baseline = -.1 cm]
	\draw (-.6, 0)--(.6, 0);
	\draw (0, 0)--(0, .6);
	\nbox{unshaded}{(0,0)}{.3}{0}{0}{$x$}
	\node at (-.5, .2) {{\scriptsize{$l$}}};
	\node at (.5, .2) {{\scriptsize{$r$}}};
	\node at (.15, .45) {{\scriptsize{$n$}}};
\end{tikzpicture}
$.
We use the convention that $X=X_1$.
We have an involution $\dagger$ on $X_n$ defined as follows.
If $x \in \cP_{l, n, r}$, then
$$
x^\dagger = \begin{tikzpicture}[baseline = 0cm]
	\draw (-.8, 0)--(.8, 0);
	\draw (0, 0)--(0, .8);
	\nbox{unshaded}{(0,0)}{.4}{0}{0}{$x^{*}$}
        \node at (.2, .6) {{\scriptsize{$n$}}};
	\node at (-.6, .2) {{\scriptsize{$r$}}};
	\node at (.6, .2) {{\scriptsize{$l$}}};
\end{tikzpicture} \in \cP_{r, n, l}\,.
$$
Moreover, $X_n$ has a $\cB$-valued inner product given by the sesquilinear extension of
$$
\langle x| y \rangle_{\cB}
= \delta_{l, l'}\,
\begin{tikzpicture}[baseline = 0cm]
	\draw (-.8, 0)--(2, 0);
	\draw (0, .4) arc (180:90:.3cm) -- (.9,.7) arc (90:0:.3cm);
	\node at (.6, .9) {\scriptsize{$n$}};
	\nbox{unshaded}{(0,0)}{.4}{0}{0}{$x^{*}$}
	\node at (-.6, .2) {{\scriptsize{$r$}}};
	\node at (.6, .2) {{\scriptsize{$l$}}};
	\nbox{unshaded}{(1.2,0)}{.4}{0}{0}{$y$}
	\node at (1.8, .2) {{\scriptsize{$r'$}}};
\end{tikzpicture}
$$
for $x\in \cP_{l,n,r}$ and $y\in\cP_{l',n,r'}$.
However, note that so far, the $\cB$-valued inner product only takes values in $\cup_{n\geq 0}\cB_n$.

Define a norm on $X_n$ by $\|x\|_{X_n}^2=\| \langle x| x\rangle_\cB\|_\cB$.  
\end{defn}

Before we continue, we need to show that the involution $\dagger$ is continuous on $X_{n}$.  
The following two propositions accomplish just that.

\begin{prop}\label{prop:expectation}
Consider the capping map $R_{k}:\cup_{n\geq 0}\cB_n \rightarrow \cup_{n\geq 0}\cB_n$ which is given by linear extension of
$$
R_{k}\left(
\begin{tikzpicture}[baseline = -.1 cm]
	\draw (-.8, 0)--(.8, 0);
	\nbox{unshaded}{(0,0)}{.4}{0}{0}{$b$}
	\node at (-.6, .2) {{\scriptsize{$l$}}};
	\node at (.6, .2) {{\scriptsize{$r$}}};
\end{tikzpicture}
\right)
\,= \, 
\begin{cases}
\begin{tikzpicture}[baseline = -.1 cm]
	\draw (-.8, 0)--(.8, 0);
	\draw (-.4, .2) arc(270:90:.2cm) -- (.4, .6) arc(90:-90:.2cm);
	\nbox{unshaded}{(0,0)}{.4}{0}{0}{$b$}
	\node at (-1, .2) {{\scriptsize{$l - k$}}};
	\node at (1, .2) {{\scriptsize{$r - k$}}};
	\node at (0, .8) {{\scriptsize{$k$}}};
\end{tikzpicture} & \text{ if } l, r \geq k \\
0 & \text{ otherwise.} 
\end{cases}
$$
Then $\|R_{k}(b)\|_\cB \leq \delta^{k}\|b\|_\cB$, so $R_{k}$ extends continuously to $\cB$.
\end{prop}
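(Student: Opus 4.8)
The plan is to reduce the inequality to the case $k=1$, establish that $R_1$ is a completely positive map on $\bigcup_n\cB_n$, and then read off its norm from its value on the unit of $\cB_n$. For the reduction: capping the $k$ topmost strand pairs one at a time yields exactly the nested thickness-$k$ cup appearing in the definition of $R_k$, so $R_k=R_1\circ\cdots\circ R_1$ ($k$ factors), and each $R_1$ sends $\cB_n$ into $\cB_{n-1}\subseteq\cB_n$. Thus it suffices to prove $\|R_1(b)\|_\cB\le\delta\|b\|_\cB$ on $\bigcup_n\cB_n$; iterating gives $\|R_k(b)\|_\cB\le\delta^k\|b\|_\cB$, and since $\delta^k$ does not depend on $n$ and $\bigcup_n\cB_n$ is dense in $\cB$, $R_k$ then extends uniquely to a bounded (completely positive) map on $\cB$ with norm at most $\delta^k$.

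To see that $R_1$ is completely positive --- the familiar principle that ``capping a strand'' is a Stinespring-type operation --- I would introduce the linear map $\Phi$ on $\bigcup_n\cB_n$ that, on $\cP_{l,r}$ with $r\ge1$, bends the topmost right boundary strand of the box around its bottom into a new left strand, so $\Phi$ carries $\cP_{l,r}$ into $\cP_{l+1,\,r-1}$ and annihilates $\cP_{l,0}$. A direct diagrammatic check should give $\Phi(x)^\dagger\Phi(y)=R_1(x^\dagger y)$ whenever $x\in\cP_{a,l}$ and $y\in\cP_{a,r}$ share the same left index $a$: in $\Phi(x)^\dagger\Phi(y)$ the $a$ original strands link $x^*$ to $y$ exactly as in $x^\dagger y$, while the two bent strands fuse into the single arc over the top which is the cap that $R_1$ inserts (and when $x=y=1_{\cP_{l,l}}$ that arc closes into one loop, producing $\delta\,1_{\cP_{l-1,l-1}}$). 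Consequently, writing any $b=c^\dagger c$ with $c=\sum_{a,l}c_{a,l}\in\cB_n$ in homogeneous components $c_{a,l}\in\cP_{a,l}$, one gets $b=\sum_a\sum_{l,r}c_{a,l}^\dagger c_{a,r}$ and hence
\[
R_1(b)=\sum_a\Bigl(\sum_l\Phi(c_{a,l})\Bigr)^\dagger\Bigl(\sum_r\Phi(c_{a,r})\Bigr)\ \ge\ 0,
\]
the sum taken in $\bigcup_m\cB_m$ (the $\Phi(c_{a,l})$ may land in $\cB_m$ for $m>n$, which is harmless since positivity is intrinsic). Running the same computation with matrix amplifications shows $R_1$ is completely positive, in particular $2$-positive.

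It then remains to compute $R_1$ on the unit. Writing $1_l$ for the unit of the block $\cP_{l,l}$ (the identity tangle on $l$ strands), we have $1_{\cB_n}=\sum_{l=0}^n 1_l$; capping one through-strand of $1_l$ turns it into a closed loop, so $R_1(1_l)=\delta\,1_{l-1}$ for $l\ge1$ and $R_1(1_0)=0$, giving $R_1(1_{\cB_n})=\delta\sum_{j=0}^{n-1}1_j=\delta\cdot1_{\cB_{n-1}}$, an element of $\cB$ of norm $\delta$. Since $\cB_n$ is unital and $R_1|_{\cB_n}$ is $2$-positive, for self-adjoint $a\in\cB_n$ the inequalities $-\|a\|_\cB\,1_{\cB_n}\le a\le\|a\|_\cB\,1_{\cB_n}$ force $\|R_1(a)\|_\cB\le\|a\|_\cB\,\|R_1(1_{\cB_n})\|_\cB=\delta\|a\|_\cB$, and the general case follows by applying $2$-positivity to the positive $2\times2$ matrix with diagonal entries $\|a\|_\cB\,1_{\cB_n}$ and off-diagonal entries $a$ and $a^*$. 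This yields $\|R_1|_{\cB_n}\|\le\delta$ for every $n$, which by the reduction completes the argument.

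The step I expect to be the main obstacle is the diagrammatic identity $\Phi(x)^\dagger\Phi(y)=R_1(x^\dagger y)$: the only genuinely delicate point is keeping track of exactly which boundary strand gets bent and of how the reflection $\dagger$ interacts with that bend. An alternative that sidesteps the choice of $\Phi$ is to note that on each diagonal block $\cP_{l,l}$ the map $R_1$ is $\delta$ times the trace-preserving conditional expectation $\cP_{l,l}\to\cP_{l-1,l-1}$, which is visibly completely positive; but one must still treat the off-diagonal blocks to obtain $2$-positivity of $R_1$ on all of $\cB_n$, whereas the Stinespring description handles every block at once and delivers the norm bound simultaneously.
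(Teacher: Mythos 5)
Your proof is correct in outline and takes a genuinely different route from the paper's. The paper's argument is shorter: it embeds $\cB_n$ unitally into the corner $(\sum_{j=k}^{n+k}1_j)\cB(\sum_{j=k}^{n+k}1_j)$ by stacking $k$ horizontal strands on top of each box, observes that $R_k$ is exactly $\delta^k$ times the $\Tr$-preserving conditional expectation $E_{n,k}$ onto the image of that embedding, and concludes $\|R_k(b)\|\le\delta^k\|b\|$ from $\|E_{n,k}\|=1$. You instead factor $R_k=R_1^{\circ k}$, prove complete positivity of $R_1$ by exhibiting the Stinespring-type factorization $R_1(x^\dagger y)=\Phi(x)^\dagger\Phi(y)$, and read the norm off $R_1(1_{\cB_n})=\delta\,1_{\cB_{n-1}}$ using $2$-positivity. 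Both work: the paper's route gets the bound for all $k$ at once by invoking the existence and contractivity of trace-preserving conditional expectations on finite-dimensional $C^*$-algebras, while yours is more self-contained (positivity is verified by hand) at the cost of the diagrammatic bookkeeping for $\Phi$. The two are close cousins, since $E_{n,k}$ is itself completely positive, so the paper's identity is another packaging of the positivity you establish via $\Phi$.

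One correction to the step you yourself flagged as delicate: the bend in $\Phi$ must go \emph{over the top} of the box, not around the bottom. For $y\in\cP_{a,r}$ the topmost right strand should be rerouted over the top so that it becomes the \emph{topmost} of the $a+1$ left strands of $\Phi(y)$, the original $a$ left strands shifting down by one. With that convention, in $\Phi(x)^\dagger\Phi(y)$ the topmost contracted middle strand joins the two bent strands, and the resulting arc runs over the top of $x^*$ and over the top of $y$, which is exactly the cap that $R_1$ inserts, while the remaining $a$ contracted strands reproduce $x^\dagger y$. If instead the new strand is placed at the bottom, the contracted diagram caps the bottommost boundary strands rather than the topmost (and routing the topmost right strand ``around the bottom'' past the other right strands is not planar), so the identity $\Phi(x)^\dagger\Phi(y)=R_1(x^\dagger y)$ would fail. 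Everything else --- the factorization $R_k=R_1^{\circ k}$, the computation $R_1(1_l)=\delta 1_{l-1}$, and the passage from $2$-positivity to $\|R_1|_{\cB_n}\|\le\|R_1(1_{\cB_n})\|=\delta$ --- is sound.
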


\begin{proof}
Let $b \in \cB_{n}$ for some $n$.  We may assume that $b \in \bigoplus_{l, r \geq k } \cP_{l, r}$.  Let 
$$
\iota_{n,k}: \cB_{n} \rightarrow \left(\sum_{j=k}^{n+k} 1_{j} \right)\cB\left(\sum_{j = k}^{n+k} 1_{j} \right)
\text{ by }
a\mapsto 
\begin{tikzpicture}[baseline = .1 cm]
	\draw (-.8, 0)--(.8, 0);
	\draw (-.8, .6)--(.8, .6);
	\nbox{unshaded}{(0,0)}{.4}{0}{0}{$a$}
	\node at (0, .8) {{\scriptsize{$k$}}}; 	
\end{tikzpicture}\,.
$$
Note that $\iota_{n, k}$ is a unital inclusion.  Let 
$$
E_{n, k}: \left(\sum_{j=k}^{n+k} 1_{j} \right)\cB\left(\sum_{j = k}^{n+k} 1_{j} \right) \rightarrow \iota_{n, k}(\cB_{n})
$$ 
be the $\Tr$-preserving conditional expectation.  
Then $E_{n, k}$ has norm 1.  
It is a straightforward diagrammatic argument to see for all $b$ as above, $R_{k}(b) = \delta^{k}E_{n, k}(b)$.
This gives the desired bound.
\end{proof}

\begin{prop}\label{prop:cont.dagger}
For $x \in X_{n}$, $\|x\|_{X_n} \leq \delta^{n/2}\|x^{\dagger}\|_{X_n}$.
\end{prop}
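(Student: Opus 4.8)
The plan is to reduce the statement to the norm inequality $\|\langle x|x\rangle_\cB\|_\cB \le \delta^n\|\langle x^\dagger|x^\dagger\rangle_\cB\|_\cB$; taking square roots and recalling $\|x\|_{X_n}^2=\|\langle x|x\rangle_\cB\|_\cB$ then gives Proposition~\ref{prop:cont.dagger}. Since $X_n=\bigoplus_{l,r}\cP_{l,n,r}$ it suffices to treat a homogeneous $x\in\cP_{l,n,r}$, so that $\langle x|x\rangle_\cB\in\cP_{r,r}$ and $\langle x^\dagger|x^\dagger\rangle_\cB\in\cP_{l,l}$ are positive elements of the $C^*$-algebra $\cB$, and we must compare their norms.

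The key point is that both of these arise from a single element of $\cB$ by the capping map $R_n$ of Proposition~\ref{prop:expectation}. Regard $x$ as an element $b$ of $\cB$ in the summand $\cP_{l,\,r+n}$, obtained by rotating the $n$ through-strings of $x$ around to join the $r$ right-strings, placing them on the outside so that they are the strings on which $R_n$ acts. A direct diagrammatic check then shows two things. First, $b^\dagger\cdot_\cB b\in\cP_{r+n,\,r+n}$ is exactly the diagram defining $\langle x|x\rangle_\cB$ except that the outer $n$-strings of $x^*$ and of $x$ are left uncapped; hence $R_n(b^\dagger\cdot_\cB b)=\langle x|x\rangle_\cB$. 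Second, $b\cdot_\cB b^\dagger\in\cP_{l,l}$ is literally $\langle x^\dagger|x^\dagger\rangle_\cB$: the ``straight-across'' gluing of the outer $n$-strings of $b$ with those of $b^\dagger$, once unfolded, is precisely the over-the-top cap appearing in that inner product. Neither gluing creates a closed loop, so there are no stray powers of $\delta$.

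Granting these two identifications, the argument is immediate. Using that $\dagger$ is the $C^*$-involution on $\cB$ (so $\|b^\dagger\cdot_\cB b\|_\cB=\|b\|_\cB^2=\|b\cdot_\cB b^\dagger\|_\cB$) together with Proposition~\ref{prop:expectation} applied with $k=n$,
\begin{align*}
\|x\|_{X_n}^2
&=\|\langle x|x\rangle_\cB\|_\cB
=\|R_n(b^\dagger\cdot_\cB b)\|_\cB
\le \delta^n\|b^\dagger\cdot_\cB b\|_\cB\\
&=\delta^n\|b\|_\cB^2
=\delta^n\|b\cdot_\cB b^\dagger\|_\cB
=\delta^n\|\langle x^\dagger|x^\dagger\rangle_\cB\|_\cB
=\delta^n\|x^\dagger\|_{X_n}^2 ,
\end{align*}
and taking square roots finishes the proof. (Equivalently one may peel the through-strings off one at a time, applying $R_1$ a total of $n$ times.)

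The only real work is the bookkeeping behind the two identifications: one must fix the cyclic order of the left/top/right strings of an element of $\cP_{l,n,r}$ and of the corresponding element of $\cB$, so that $R_n$ recaptures exactly the $n$ through-strings and so that the multiplication in $\cB$ reproduces the gluings in the two inner products. These are routine planar isotopy checks, of the same flavor as the ``straightforward diagrammatic argument'' already invoked in the proof of Proposition~\ref{prop:expectation}, so I do not expect a genuine obstacle here — just careful diagram chasing.
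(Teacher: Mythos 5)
Your core argument is exactly the paper's: embed $x$ into $\cB$ by rotating the through-strings onto one side, use the $C^*$-identity $\|b^\dagger\cdot_\cB b\|_\cB=\|b\cdot_\cB b^\dagger\|_\cB$ together with the bound $\|R_n(\,\cdot\,)\|_\cB\leq\delta^{n}\|\cdot\|_\cB$ from Proposition \ref{prop:expectation}, and observe that $b^\dagger\cdot_\cB b$ and $b\cdot_\cB b^\dagger$ recover the two inner products. (The paper rotates the through-strings to the left rather than the right, so it literally derives the mirror inequality $\|x^\dagger\|_{X_n}\leq\delta^{n/2}\|x\|_{X_n}$, which is equivalent since $\dagger$ is an involution.) Your two diagrammatic identifications are correct and are precisely the ones used in the paper.

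The one genuine gap is the opening reduction: ``it suffices to treat a homogeneous $x\in\cP_{l,n,r}$'' is not justified. A general element of $X_n$ is a finite sum $x=\sum_{l,r}x_{l,r}$, and the $\cB$-valued inner product has nonvanishing cross terms $\langle x_{l,r}|x_{l,r'}\rangle_\cB\in\cP_{r,r'}$ whenever $r\neq r'$ (only the left indices are forced to agree), so $\|x\|_{X_n}^2=\big\|\sum_{l,r,r'}\langle x_{l,r}|x_{l,r'}\rangle_\cB\big\|_\cB$ is not determined by the norms of the homogeneous pieces. Consequently, knowing $\|x_{l,r}\|_{X_n}\leq\delta^{n/2}\|x_{l,r}^\dagger\|_{X_n}$ for each $(l,r)$ does not formally yield the inequality for the sum: an estimate of the form $\|v\|\leq C\|Tv\|$ on a spanning set does not transfer to linear combinations. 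The repair is immediate and is what the paper actually does: form the single element $b=\sum_{l,r}b_{l,r}\in\cB$ for the whole finite sum at once. Both of your identifications survive, because the products $b_{l,r}^\dagger\cdot_\cB b_{l',r'}$ and $b_{l,r}\cdot_\cB b_{l',r'}^\dagger$ vanish under exactly the same index conditions as the corresponding cross terms of the two inner products, so $R_n(b^\dagger\cdot_\cB b)=\langle x|x\rangle_\cB$ and $b\cdot_\cB b^\dagger=\langle x^\dagger|x^\dagger\rangle_\cB$ hold for the full sum, and your displayed chain of (in)equalities then goes through verbatim.
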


\begin{rem}
Note that this proposition also implies $\|x\|_{X_n} \geq \delta^{-n/2}\|x^{\dagger}\|_{X_n}$.
\end{rem}

\begin{proof}
Let $x \in X_{n}$, and write $x$ as a finite sum
$$
x = \sum_{l, r} \begin{tikzpicture}[baseline = 0cm]
	\draw (-.8, 0)--(.8, 0);
	\draw (0, 0)--(0, .8);
	\nbox{unshaded}{(0,0)}{.4}{0}{0}{$x_{l, r}$}
        \node at (.2, .6) {{\scriptsize{$n$}}};
	\node at (-.6, .2) {{\scriptsize{$l$}}};
	\node at (.6, .2) {{\scriptsize{$r$}}};
\end{tikzpicture}\, .
$$
Let $\tilde{x}_{l, r}$ be the following element of $\cB$:
$$
\tilde{x}_{l, r} = \begin{tikzpicture}[baseline = 0cm]
	\draw (-.8, 0)--(.8, 0);
	\nbox{unshaded}{(0,0)}{.4}{0}{0}{$x_{l, r}$}
        \node at (-.9, .2) {{\scriptsize{$l + n$}}};
	\node at (.6, .2) {{\scriptsize{$r$}}};
\end{tikzpicture}\, ,
$$
and let $\tilde{x} = \sum_{l, r}\tilde{x}_{l, r}$.  
By drawing the right diagrams, $\|x\|_{X_n} = \|\tilde{x}^{\dagger}\cdot\tilde{x}\|_{\cB}^{1/2}$, and
$$
\|x^{\dagger}\|_{X_n} 
= 
\|R_{n}(\tilde{x}\cdot\tilde{x}^{\dagger})\|_\cB^{1/2} 
\leq 
\delta^{n/2} \|\tilde{x}\cdot\tilde{x}^{\dagger}\|_\cB^{1/2} 
= 
\delta^{n/2}\cdot\|\tilde{x}^{\dagger}\cdot\tilde{x}\|_\cB^{1/2} 
= 
\delta^{n/2}\|x\|_{X_n}.
$$
In the string of inequalities, we used that $\cB$ is a $C^{*}$-algebra under the involution $\dagger$.
\end{proof}

\begin{defn}
Let $\cX_n=\cX_n(\cP_\bullet)$ be the completion of $X_n$ with respect to $\|\cdot\|_{X_n}$,
where again, we use the convention that $\cX=\cX_1$.
We also use the convention that $\cX_0=\cB$.  
By Proposition \ref{prop:cont.dagger}, the involution $\dagger$ extends continuously to each $\cX_{n}$.

For each $m$, there are natural left and right actions of $\cB_m$ on $\cX_n$, namely the extensions of the actions on $X_n$ given by
\begin{align*}
\begin{tikzpicture}[baseline = -.1 cm]
	\draw (-.8, 0)--(.8, 0);
	\nbox{unshaded}{(0,0)}{.4}{0}{0}{$b$}
	\node at (-.6, .2) {{\scriptsize{$l$}}};
	\node at (.6, .2) {{\scriptsize{$r$}}};
\end{tikzpicture}
\,\cdot \,
\begin{tikzpicture}[baseline = -.1 cm]
	\draw (-.8, 0)--(.8, 0);
	\draw (0, 0)--(0, .8);
	\nbox{unshaded}{(0,0)}{.4}{0}{0}{$x$}
	\node at (-.6, .2) {{\scriptsize{$l'$}}};
	\node at (.6, .2) {{\scriptsize{$r'$}}};
\end{tikzpicture}\,
&= \delta_{r, l'} \cdot
\begin{tikzpicture}[baseline = -.1 cm]
	\draw (-.8, 0)--(2, 0);
	\draw (1.2, 0)--(1.2, .8);
	\nbox{unshaded}{(0,0)}{.4}{0}{0}{$b$}
	\node at (-.6, .2) {{\scriptsize{$l$}}};
	\node at (.6, .2) {{\scriptsize{$r$}}};
	\node at (1.8, .2) {{\scriptsize{$r'$}}};
	\nbox{unshaded}{(1.2,0)}{.4}{0}{0}{$x$};
\end{tikzpicture}\, \text{ and }
\\
\begin{tikzpicture}[baseline = -.1 cm]
	\draw (-.8, 0)--(.8, 0);
    \draw (0, 0)--(0, .8);
	\nbox{unshaded}{(0,0)}{.4}{0}{0}{$x$}
	\node at (-.6, .2) {{\scriptsize{$l'$}}};
	\node at (.6, .2) {{\scriptsize{$r'$}}};
\end{tikzpicture}\, \cdot \, \begin{tikzpicture}[baseline = -.1 cm]
	\draw (-.8, 0)--(.8, 0);
	\nbox{unshaded}{(0,0)}{.4}{0}{0}{$b$}
	\node at (-.6, .2) {{\scriptsize{$l$}}};
	\node at (.6, .2) {{\scriptsize{$r$}}};
\end{tikzpicture}\,
&=
\delta_{r', l} \cdot
\begin{tikzpicture}[baseline = -.1 cm]
	\draw (-.8, 0)--(2, 0);
	\draw (0, 0)--(0, .8);
	\nbox{unshaded}{(1.2,0)}{.4}{0}{0}{$b$}
	\node at (-.6, .2) {{\scriptsize{$l'$}}};
	\node at (.6, .2) {{\scriptsize{$l$}}};
	\node at (1.8, .2) {{\scriptsize{$r$}}};
	\nbox{unshaded}{(0,0)}{.4}{0}{0}{$x$};
\end{tikzpicture}\, .
\end{align*}
Moreover, the operator norm of the left action of $b\in \cB_m$ on $\cX_n$ is equal to $\|b\|_\cB$ since $\cB_m$ has a unique $C^*$-norm, so we get an isometric embedding of $\cB$ into $\cL(\cX_n)$.
\end{defn}

\begin{prop}\label{prop:OperatorValuedFock}
$\cX_n \cong \bigotimes_{\cB}^n \cX$.
\end{prop}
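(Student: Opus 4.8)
The plan is, for each $n\ge 1$, to build a ``horizontal concatenation'' map $\Phi_n$ on the algebraic tensor power $X^{\odot n}$ of $X=X_1$ over $\C$, to check that it is $\cB$-bilinear and $\cB$-balanced and that it satisfies the inner-product identity displayed below, which together let it extend by continuity, descend through the null space, and define an isometric $\cB$-$\cB$-bimodule map $\bigotimes_{\cB}^{n}\cX\to\cX_n$; the last step will be to see that this map has dense range. On an elementary tensor of boxes $x_i\in\cP_{l_i,1,r_i}$, declare $\Phi_n(x_1\odot\cdots\odot x_n)=0$ unless $r_i=l_{i+1}$ for all $1\le i\le n-1$, and otherwise let it be the element of $\cP_{l_1,n,r_n}$ obtained by placing the $x_i$ side by side from left to right and joining the $r_i$ right legs of $x_i$ to the $l_{i+1}$ left legs of $x_{i+1}$, so that the $n$ individual top legs become the $n$ top strands. (For $n=1$ this is $\id_{\cX}$, and for $n=0$ one reads $\bigotimes_{\cB}^{0}\cX=\cB=\cX_0$.)

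That $\Phi_n$ is bilinear is immediate, and it is $\cB$-balanced because gluing a diagram $b\in\cB$ to the right of $x_i$ produces the very same planar picture as gluing it to the left of $x_{i+1}$. The essential point is the identity
$$
\langle\,\Phi_n(x_1\odot\cdots\odot x_n)\mid\Phi_n(y_1\odot\cdots\odot y_n)\,\rangle_{\cB}
=\langle x_n| \langle x_{n-1}| \cdots \langle x_2|\langle x_1| y_1\rangle_{\cB}\, y_2\rangle_{\cB} \cdots  y_{n-1}\rangle_{\cB}\, y_n\rangle_{\cB},
$$
whose right-hand side is the $\cB$-linear extension of the inner product defining $\bigotimes_{\cB}^{n}\cX$, exactly as in Definition \ref{def:Pimsner-Toeplitz}. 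Both sides vanish unless the evident index-matching conditions hold, and when they do one checks, first on finitely supported elements by a diagram manipulation and then by continuity, that each side equals the single closed-up diagram obtained by placing the reflection $\Phi_n(x_1\odot\cdots\odot x_n)^{\dagger}$ to the left of $\Phi_n(y_1\odot\cdots\odot y_n)$ and joining the two blocks of $n$ top strands by the obvious nested system of $n$ arcs: on the left this is precisely the inner product of the two concatenations computed in $\cX_n$, while on the right those arcs get produced one at a time as the iterated formula is unwound and each successive left action on $\cX$ is carried out. This identity shows both that $\Phi_n$ is bounded and that it annihilates the null space $\cN$ and is isometric for the quotient norm, so it extends to an isometric $\cB$-$\cB$-bimodule map $\Phi_n\colon\bigotimes_{\cB}^{n}\cX\to\cX_n$.

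For density of the range it suffices, since $X_n$ is dense in $\cX_n$ and spanned by the summands $\cP_{l,n,r}$ with $l,r\ge 1$, to realize each such generator in the image of $\Phi_n$. Given $x\in\cP_{l,n,r}$, regard the same planar element as $\hat x\in\cP_{l,1,(n-1)+r}\subset X$ by treating the top strands $2,\dots,n$ of $x$ as additional right strands (legitimate since $\cP_{l+n+r}=\cP_{l+1+(n-1+r)}$ as a vector space), and for $1\le j\le n-1$ let $\mathfrak e_j\in\cP_{(n-j)+r,\,1,\,(n-j-1)+r}$ be the planar ``peeling'' tangle whose topmost left leg bends up to its single top strand while its other legs pass straight across in order; each $\mathfrak e_j$ lies in $X$ because all of its left- and right-indices are $\ge 1$. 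A short planar isotopy then shows $\Phi_n(\hat x\odot\mathfrak e_1\odot\cdots\odot\mathfrak e_{n-1})=x$, so the range of $\Phi_n$ contains $X_n$, hence is dense. (One could instead induct on $n$ via $\bigotimes_{\cB}^{n}\cX=\bigl(\bigotimes_{\cB}^{n-1}\cX\bigr)\otimes_{\cB}\cX$ and the single ``peel one strand'' step.) An isometric map has closed range, so a dense range is all of $\cX_n$; thus $\Phi_n$ is a unitary isomorphism of Hilbert $\cB$-$\cB$-bimodules, which is the assertion $\cX_n\cong\bigotimes_{\cB}^{n}\cX$.

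I expect the inner-product identity of the middle step to be the only real obstacle: it is a bookkeeping-heavy diagram chase that must match the single nested family of $n$ caps in the $\cX_n$-form against the $n$ caps produced successively by the iterated $\cX$-form, keeping track of which strands are joined at each left action. The remaining ingredients --- the bilinearity and balancing, the planar isotopy used in the density step, and the small-$n$ conventions --- are routine, though the vanishing conditions on both sides of the identity (they are zero unless consecutive left/right indices agree) should be stated carefully so the identity is meaningful termwise.
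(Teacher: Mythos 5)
Your proposal is correct and follows essentially the same route as the paper: the same horizontal-concatenation map (the paper's $U_n$), the same key step of matching the iterated Pimsner inner product against the single nested-arc diagram (which the paper verifies by induction on $n$), and the same density argument via peeling off top strands one at a time. The only cosmetic difference is that in the density step the paper factors the generator as an element of $\cB$ acting on $n$ peeling elements $c_k$, whereas you absorb the first peel into $\hat x\in\cX$ and use $n-1$ of them.
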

\begin{proof}
We first prove by induction on $n$ that the $\cB$ valued inner product on $\bigotimes_{\cB}^n \cX$ is given on simple tensors by
$$ \langle x_{1} \otimes \cdots \otimes x_{n} | y_{1} \otimes \cdots \otimes y_{n} \rangle_{\cB} =
 \begin{tikzpicture}[baseline = 0cm]
	\draw (-1.2, 0)--(.4, 0);
	\draw (-.4, .4) arc(180:90:.6cm) -- (4.6, 1) arc(90:0:.6cm);
	\nbox{unshaded}{(-.4,0)}{.4}{0}{0}{$x_{n}^{*}$}
    \node at (.7, 0) {$\scriptsize{\cdots}$};
	\draw (1, 0)--(2.6, 0);
	\draw (1.8, .4) arc(180:90:.2cm) -- (2.8, .6) arc(90:0:.2cm);
	\nbox{unshaded}{(1.8,0)}{.4}{0}{0}{$x_{1}^{*}$}
    \draw (2.6, 0)--(3.8, 0);
	\nbox{unshaded}{(3,0)}{.4}{0}{0}{$y_{1}$}
    \node at (4.1, 0) {$\scriptsize{\cdots}$};
	\draw (4.4, 0)--(6, 0);
	\nbox{unshaded}{(5.2,0)}{.4}{0}{0}{$y_{n}$}
\end{tikzpicture} \, .
$$
For $n=1$, this is a tautology.  Assume the result holds for $\bigotimes_{\cB}^{n-1} \cX$.  Then on $\bigotimes_{\cB}^n \cX$, we have
$$
\langle x_{1} \otimes x_{2} \otimes \cdots \otimes x_{n} | y_{1} \otimes y_{2} \otimes \cdots \otimes y_{n} \rangle_{\cB} = \langle x_{2} \otimes \cdots \otimes x_{n}|\,  \langle x_{1}|y_{1} \rangle_{\cB} \cdot y_{2} \otimes \cdots \otimes y_{n}\rangle_{\cB}.
$$
By the induction hypothesis, this inner product is
$$
\begin{tikzpicture}[baseline = 0cm]
	\draw (-1.2, 0)--(.4, 0);
	\draw (-.4, .4) arc(180:90:.6cm) -- (4.6, 1) arc(90:0:.6cm);
	\nbox{unshaded}{(-.4,0)}{.4}{0}{0}{$x_{n}^{*}$}
	\node at (.7, 0) {$\scriptsize{\cdots}$};
	\draw (1, 0)--(2.6, 0);
	\draw (1.8, .4) arc(180:90:.2cm) -- (2.8, .6) arc(90:0:.2cm);
	\nbox{unshaded}{(1.8,0)}{.4}{0}{0}{$x_{1}^{*}$}
	\draw (2.6, 0)--(3.8, 0);
	\nbox{unshaded}{(3,0)}{.4}{0}{0}{$y_{1}$}
	\node at (4.1, 0) {$\scriptsize{\cdots}$};
	\draw (4.4, 0)--(6, 0);
	\nbox{unshaded}{(5.2,0)}{.4}{0}{0}{$y_{n}$}
\end{tikzpicture} \, .
$$
We now define a map $U_{n}: \bigotimes_{\cB}^n \cX \rightarrow \cX_{n}$ by
$$
U_{n}(x_{1} \otimes \cdots \otimes x_{n}) =
\begin{tikzpicture}[baseline = 0cm]
	\draw (-1.2, 0)--(.4, 0);
	\draw (-.4, .0)  -- (-.4, .8);
	\nbox{unshaded}{(-.4,0)}{.4}{0}{0}{$x_{1}$}
	\node at (.7, 0) {$\scriptsize{\cdots}$};
	\draw (1, 0)--(2.6, 0);
	\draw (1.8, 0) -- (1.8, .8);
	\nbox{unshaded}{(1.8,0)}{.4}{0}{0}{$x_{n}$}
\end{tikzpicture}\, .
$$
Note that $U_{n}$ is a $\cB-\cB$ bimodule map which is $\cB$ middle linear and preserves the $\cB$ valued inner product.  Therefore it is an isometry.  Let $x \in \cP_{l, n, r}$, and define $\widehat{x}$ and $c_{k} \in \cP_{k+1, k}$ as follows:
$$
\widehat{x} =
\begin{tikzpicture}[baseline = -.1 cm]
	\draw (-.8, 0)--(.8, 0);
	\nbox{unshaded}{(0,0)}{.4}{0}{0}{$x$}
	\node at (-.6, .2) {{\scriptsize{$l$}}};
	\node at (.8, .2) {{\scriptsize{$r+n$}}};
\end{tikzpicture}\,
\text{ and } c_{k} =
\begin{tikzpicture}[baseline = -.1cm]
	\nbox{unshaded}{(0,0)}{.4}{0}{0}{}
	\draw (-.4, -.1)--(.4, -.1);
	\draw (-.4, .1) arc(-90:0:.3cm);
	\node at (0, -.25) {{\scriptsize{$k$}}};
\end{tikzpicture}\,.
$$
Then $\widehat{x} \in \cB$, $c_{k} \in \cX$, and $U_{n}(\widehat{x}c_{n + r - 1} \otimes c_{n+r-2} \otimes \cdots \otimes c_{r}) = x$.
Therefore, $U_{n}$ has dense range, and since it is an isometry, it is surjective.
\end{proof}

\begin{defn}
The \underline{Pimsner-Fock space} is given by $\FP{}=\bigoplus_{n\geq 0}\cX_n\cong \bigoplus_{n\geq 0} \bigotimes_{\cB}^{n} \cX$.
\end{defn}

\begin{rem}
The involution $\dagger$ defined on each $\cX_{n}$ does \emph{not} extend continuously to $\FP{}$.
Indeed, if we set $C_{k}$ to be the diagram
$
\begin{tikzpicture}[baseline = -.1cm]
	\nbox{unshaded}{(0,0)}{.4}{0}{0}{}
	\draw (-.4, 0) arc(-90:0:.4cm);
	\node at (0, 0) {{\scriptsize{$k$}}};
\end{tikzpicture}\,,
$
then $\|C_{k}\|_{\cB} = \delta^{n/2}$ and $\|C_{k}^{\dagger}\|_{\cB} = 1$.  An involution on $\FP{}$ is not required in the sections that follow.  
We will only use that $\cX$ has a well defined involution.
\end{rem}

Before we define our Pimsner-Toeplitz algebra, we need to know that $\cB$ acts on $\cX$ by compact operators. 

\begin{prop}
Let $p_{\alpha}$ be a representative of the vertex $\alpha \in V(\Gamma)$.  
Then $p_{\alpha}$ is a linear combination of rank 1 operators. 
Hence $\cB \subset \cK(\cX)$.
\end{prop}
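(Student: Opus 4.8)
The plan is to realize each $p_\alpha$ as a short linear combination of rank-one operators $\ketbra{\eta}{\xi}$ on $\cX$ --- I expect a single one will do --- and then to deduce $\cB\subseteq\cK(\cX)$ formally. This is the operator-valued analog of the relation $p_\alpha=\sum_{s(\epsilon)=\alpha}\ketbra{\chi_\epsilon}{\chi_\epsilon}$ for the Cuntz-Krieger bimodule of Definition~\ref{defn:CuntzKriegerBimodule}, where finiteness came from local finiteness of the graph; here it will come from $\dim\cP_n<\infty$.

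Write $p_\alpha\in\cP_{l_0,r_0}\subseteq\cB$. A vertex of $\Gamma$ of depth $d\ge 1$ has a representative in $\cP_{d,d}$, so $l_0=r_0\ge 1$; the base vertex $\star$ has representative $p_\star\in\cP_{0,0}$, which acts as $0$ on $\cX$ because every summand $\cP_{l,1,r}$ of $X$ has $l,r\ge 1$, so there the statement is trivial. Hence I may assume $l_0,r_0\ge 1$. I would then build two vectors in $X\subseteq\cX$. Let $\xi_\alpha\in\cP_{r_0,1,r_0+1}$ be the Temperley-Lieb diagram whose reflection $\xi_\alpha^{\dagger}\in\cP_{r_0+1,1,r_0}$ carries the single top strand to the outermost input strand and takes the remaining $r_0$ input strands straight across to the $r_0$ output strands. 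Let $\eta_\alpha\in\cP_{l_0,1,r_0+1}$ be the image of $p_\alpha$ under the planar tangle that leaves the $l_0+r_0$ legs of $p_\alpha$ as the $l_0$ left legs and the first $r_0$ right legs of $\eta_\alpha$, and adjoins one new strand joining a last right endpoint to the top endpoint --- informally, ``$p_\alpha$ with a cap trailing off of it.''

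The heart of the matter is a diagrammatic computation. For $\zeta\in X$ we have $\ketbra{\eta_\alpha}{\xi_\alpha}\zeta=\eta_\alpha\langle\xi_\alpha|\zeta\rangle_{\cB}$, and I would check that $\langle\xi_\alpha|\zeta\rangle_{\cB}$ is exactly $\zeta$ with its middle strand bent around to become a new outermost input strand; in particular it vanishes unless $\zeta$ has left index $r_0$, which is precisely where $p_\alpha$ is supported. Feeding ``$\zeta$ bent to the side'' into the right slot of ``$p_\alpha$ with a trailing cap'' then glues $p_\alpha$ onto the $r_0$ input strands of $\zeta$ while the trailing cap reconnects the bent strand up to the top, producing exactly the diagram for the left action $p_\alpha\cdot\zeta$; no closed loop appears, so there is no stray factor of $\delta$. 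Since $X$ is dense in $\cX$ and both operators are bounded, $\ketbra{\eta_\alpha}{\xi_\alpha}=p_\alpha$ in $\cL(\cX)$, which proves the first assertion.

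For the ``hence,'' $\cK(\cX)$ is a closed two-sided ideal of $\cL(\cX)$, the algebra $\cB$ embeds isometrically in $\cL(\cX)$, and $\cB=\overline{\bigcup_n\cB_n}$ with $\cB_n\cong\bigoplus_{\alpha\in V(\Gamma(n))}M_{n(\alpha)}(\C)$ by Remark~\ref{rem:Bnmatrix}. The element $p_\alpha$ above is a minimal projection in the $\alpha$-block of $\cB_n$, every minimal projection of $\cB_n$ is a $\cB_n$-unitary conjugate of one of these, and every matrix unit of $\cB_n$ is a product of such a projection with partial isometries from $\cB_n$; since $\cK(\cX)$ absorbs all of these, $\cB_n\subseteq\cK(\cX)$ for each $n$, hence $\cB\subseteq\cK(\cX)$. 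The step I expect to be the main obstacle is the middle one: fixing the left/right indexing and the sense of $\dagger$ on $\cX$ so that $\langle\xi_\alpha|\zeta\rangle_{\cB}$ really equals ``$\zeta$ bent to the side'' for \emph{every} $\zeta$, and checking that the reassembled diagram has no hidden closed loop that would turn $p_\alpha$ into $\delta^{k}p_\alpha$. The rest is routine.
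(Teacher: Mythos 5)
Your proof is correct in outline, but it takes a genuinely different route from the paper's. The paper decomposes $p_\alpha\cX$ into the finitely many edge spaces $p_\alpha\cX p_\beta$ for $\beta\sim\alpha$, chooses an orthogonal basis $\{g(\beta)_i\}$ of each, normalized so that $\langle g(\beta)_i|g(\beta')_j\rangle_{\cB}=\delta_{i,j}\delta_{\beta,\beta'}\,p_\beta$, and verifies $p_\alpha=\sum_{\beta\sim\alpha}\sum_i\ketbra{g(\beta)_i}{g(\beta)_i}$ on elements of the form $g(\beta)_i\cdot b$ --- the exact operator-valued analog of the Cuntz--Krieger relation, with finiteness of the sum coming from local finiteness of $\Gamma$. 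You instead exhibit $p_\alpha$ as a \emph{single} rank-one operator, exploiting the bending isomorphism $\cP_{l,1,r}\cong\cP_{l+1,r}$ to see that $p_\alpha\cX$ is the cyclic right $\cB$-module $\overline{\eta_\alpha\cB}$; this works precisely because each block of $\cB$ is a full copy of $\cK$ (it would fail for the Cuntz--Krieger bimodule over $C_0(V(\vec{\Gamma}))$, where one genuinely needs one rank-one operator per edge), and it shows that compactness of the left action does not hinge on local finiteness of $\Gamma$. What the paper's longer sum buys is reuse: the normalized vectors $g(\beta)_i$ are exactly the edge elements that reappear in Section \ref{sec:Compression} to identify the compression of $\cX$ with $Y(\vec{\Gamma})$, whereas your $\eta_\alpha,\xi_\alpha$ serve no further purpose. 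Two small cautions. First, the alignment issue you flag is real but resolvable: planarity forces both the bent strand of $\langle\xi_\alpha|\zeta\rangle_{\cB}$ and the cap strand of $\eta_\alpha$ to sit in the outermost position on their respective sides, so they meet, and no closed loop can form since every strand of $\xi_\alpha^\dagger$ and of the cap joins distinct boundary points of the composite. Second, your dismissal of $\star$ relies on reading the lower limit $l,r=1$ in the definition of $X_n$ literally; if that limit is meant to be $0$ (as the later identification $p_\star\FP{}p_\star=\FP{0}$ requires), your construction still covers $\star$ by taking $r_0=0$ and $\xi_\star=\eta_\star$ equal to the single cup in $\cP_{0,1,1}$.
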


\begin{proof}
We write $\beta \sim \alpha$ if $\alpha$ and $\beta$ are the endpoints of at least one edge in $E(\Gamma)$.   
For each $\beta \sim \alpha$, we observe that $\dim(p_{\alpha}\cX p_{\beta})$ is $n_{\beta}$ where there are $n_{\beta}$ edges with endpoints $\alpha$ and $\beta$.  
Pick an orthogonal basis $\{g(\beta)_{1}, \dots, g(\beta)_{n_{\beta}}\}$ for $p_{\alpha}\cX p_{\beta}$ under the inner product $\Tr ( \langle \cdot|\cdot \rangle_{\cB})$.  
We note that as $p_{\beta}$ is minimal in $\cB$, $\langle g(\beta)_{i}| g(\beta')_{j} \rangle_{\cB} = k\delta_{i, j}\delta_{\beta, \beta'} p_{\beta}$ for $k$ a nonzero constant. 
Therefore, we can choose the $g$'s so that $k = 1$.  
(See Subsection \ref{sec:Induced} for more details about these bases.) 

We claim
$$
p_{\alpha} = \sum_{\beta \sim \alpha} \sum_{i=1}^{n_{\beta}} |g(\beta)_{i}\rangle\langle g(\beta)_{i}|.
$$ 
Indeed, if $x \in p_{\gamma}\cX$ and $\gamma \neq \alpha$, then both sides of the equation send $x$ to zero.  
Otherwise, we may assume $x \in p_{\alpha}\cX$ is of the form
$$
x = 
\begin{tikzpicture}[baseline = -.1cm]
	\draw (-.8, 0)--(2, 0);
	\draw (0, 0)--(0, .8);
	\nbox{unshaded}{(0,0)}{.4}{0}{0}{\scriptsize{$g(\beta)_{i}$}}
	\nbox{unshaded}{(1.2,0)}{.4}{0}{0}{$b$}
\end{tikzpicture}
$$
for $\beta \sim \alpha$ and $b \in \cB$.  Clearly, $p_{\alpha}\cdot x = x$.  As for $y = \sum_{i=1}^{n_{\beta}} |g_{i}\rangle\langle g_{i}|$, the orthogonality relations give
$$
y\cdot x = g(\beta)_{i}\langle g(\beta)_{i}|g(\beta)_{i}\rangle_{\cB}\cdot b = (g(\beta)_{i}p_{\beta})b = g(\beta_{i})\cdot b = x,
$$
so we have the desired equality.
\end{proof}

\subsection{The Pimsner-Toeplitz and free semicircular algebras}\label{sec:PimsnerToeplitz}

On $\FP{}$, we have creation and annihilation operators as follows.

\begin{defn}\label{defn:CreationDiagram}
Let $x \in \cP_{l,1,r}\subset \cX$.
We define the $x$ creation operator $L_+(x)$ on $\FP{}$ by the linear extension of its action on $y\in \cP_{l',n,r'}$:
$$
L_+(x)y=
\left(
\begin{tikzpicture}[baseline=-.1cm]
	\draw (-.2,0)--(-.2,.8);
	\draw (-.8, 0)--(.8, 0);
	\node at (-.6,.2) {\scriptsize{$l$}};
	\node at (.6,.2) {\scriptsize{$r$}};
	\nbox{unshaded}{(0,0)}{.4}{0}{0}{$x$}
	\draw[fill=red] (0,.4) circle (.05cm);
\end{tikzpicture}
\right)
\begin{tikzpicture}[baseline=-.1cm]
	\draw (0,0)--(0,.8);
	\draw (-.8, 0)--(.8, 0);
	\node at (-.6,.2) {\scriptsize{$l'$}};
	\node at (.6,.2) {\scriptsize{$r'$}};
	\node at (.2,.6) {\scriptsize{$n$}};	
	\nbox{unshaded}{(0,0)}{.4}{0}{0}{$y$}
\end{tikzpicture}
=
\delta_{r,l'}\,
\begin{tikzpicture}[baseline = -.1cm]
	\draw (-.8, 0)--(2, 0);
	\draw (0, 0)--(0, .8);
	\draw (1.2, 0)--(1.2, .8);
	\node at (-.6,.2) {\scriptsize{$l$}};
	\node at (.6,.2) {\scriptsize{$r$}};
	\node at (1.8,.2) {\scriptsize{$r'$}};
	\node at (1.4,.6) {\scriptsize{$n$}};
	\nbox{unshaded}{(0,0)}{.4}{0}{0}{$x$}
	\nbox{unshaded}{(1.2,0)}{.4}{0}{0}{$y$}
\end{tikzpicture}\,.
$$
We define the $x$ annihilation operator $L_-(x)$ on $\FP{}$ by the extension of
$$
L_-(x)y=
\left(
\begin{tikzpicture}[baseline=-.1cm]
	\draw (.2,0)--(.2,.8);
	\draw (-.8, 0)--(.8, 0);
	\node at (-.6,.2) {\scriptsize{$l$}};
	\node at (.6,.2) {\scriptsize{$r$}};
	\nbox{unshaded}{(0,0)}{.4}{0}{0}{$x$}
	\draw[fill=red] (0,.4) circle (.05cm);
\end{tikzpicture}
\right)
\begin{tikzpicture}[baseline=-.1cm]
	\draw (0,0)--(0,.8);
	\draw (-.8, 0)--(.8, 0);
	\node at (-.6,.2) {\scriptsize{$l'$}};
	\node at (.6,.2) {\scriptsize{$r'$}};
	\node at (.2,.6) {\scriptsize{$n$}};	
	\nbox{unshaded}{(0,0)}{.4}{0}{0}{$y$}
\end{tikzpicture}
=
\delta_{r,l'}\,
\begin{tikzpicture}[baseline = -.1cm]
	\draw (-.8, 0)--(2, 0);
	\draw (0.2, .4) arc (180:0:.4cm);
	\draw (1.2, 0)--(1.2, .8);
	\node at (-.6,.2) {\scriptsize{$l$}};
	\node at (.6,.2) {\scriptsize{$r$}};
	\node at (1.8,.2) {\scriptsize{$r'$}};
	\node at (1.6,.6) {\scriptsize{$n-1$}};
	\nbox{unshaded}{(0,0)}{.4}{0}{0}{$x$}
	\nbox{unshaded}{(1.2,0)}{.4}{0}{0}{$y$}
\end{tikzpicture}\,.$$
Note that $(L_{+}(x))^{*} = L_{-}(x^{\dagger})$.
\end{defn}

The following proposition shows these operators generate a Pimsner-Toeplitz algebra:

\begin{prop}\label{prop:OperatorValuedToeplitz}
The operators $L_{+}(x)$ and $L_{-}(x)$ acting on $\FP{}$ are bounded.  
Furthermore, together with $\cB$, the $C^{*}$-algebra they generate is isomorphic to the Pimsner-Toeplitz algebra $\cT(\cX)$. \end{prop}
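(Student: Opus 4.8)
The plan is to transport the diagrammatic operators $L_+(x),L_-(x)$ through the isomorphism of Proposition~\ref{prop:OperatorValuedFock} and recognize them as the abstract creation and annihilation operators of the Hilbert bimodule $\cX$, so that the generated $C^*$-algebra is literally Pimsner's $\cT(\cX)$ from Definition~\ref{def:Pimsner-Toeplitz}. First I would package the maps $U_n$ built in the proof of Proposition~\ref{prop:OperatorValuedFock}: each $U_n:\bigotimes_\cB^n\cX\to\cX_n$ is a surjective $\cB$--$\cB$ bimodule map preserving the $\cB$-valued inner product (with $U_0=\id_\cB$), so $U:=\bigoplus_{n\geq 0}U_n$ is a unitary of Hilbert $\cB$-modules from $\cF(\cX)=\bigoplus_{n\geq 0}\bigotimes_\cB^n\cX$ onto $\FP{}=\bigoplus_{n\geq 0}\cX_n$. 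Conjugation by $U$ is then a $*$-isomorphism $\cL(\cF(\cX))\to\cL(\FP{})$ which restricts to the identity on $\cB$, since $\cB$ acts by left multiplication on both modules and the $U_n$ are left $\cB$-linear.

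Next I would identify the operators on the dense submodule of simple tensors. Reading off the diagrammatic formula for $L_+(x)$ with $x\in\cP_{l,1,r}$ together with the formula for $U_n$, one checks that
$$L_+(x)\,U_n(x_1\otimes\cdots\otimes x_n)=U_{n+1}(x\otimes x_1\otimes\cdots\otimes x_n),$$
i.e., under $U$ the diagrammatic creation operator is exactly the abstract creation operator of Definition~\ref{def:Pimsner-Toeplitz}. Since the latter is bounded with norm $\|x\|_\cX$, so is $L_+(x)$; moreover $x\mapsto L_+(x)$ is isometric and hence extends continuously from $X$ to all of $\cX$. Taking adjoints, $(L_+(x))^*$ becomes, under $U$, the abstract annihilation operator $y_1\otimes\cdots\otimes y_n\mapsto\langle x|y_1\rangle_\cB\cdot y_2\otimes\cdots\otimes y_n$; comparing with the diagrammatic definition of $L_-$, which caps the distinguished strand of $x^\dagger$ against the first strand of its argument, one sees it coincides with $L_-(x^\dagger)$. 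This simultaneously verifies the identity $(L_+(x))^*=L_-(x^\dagger)$ recorded in Definition~\ref{defn:CreationDiagram} and shows $L_-(x)$ is bounded.

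Finally, the $C^*$-subalgebra of $\cL(\FP{})$ generated by $\cB$ together with $\{L_+(x),L_-(x):x\in\cX\}$ equals the one generated by $\cB$ and $\{L_+(x):x\in\cX\}$ alone, since $L_-(x)=(L_+(x^\dagger))^*$. By the previous paragraph, the $*$-isomorphism induced by $U$ carries the $C^*$-subalgebra of $\cL(\cF(\cX))$ generated by $\cB$ and $\{L_+(x)\}$ onto this subalgebra, and the former is $\cT(\cX)$ by definition. Hence the algebra generated by $\cB$, $L_+(x)$, $L_-(x)$ on $\FP{}$ is (spatially) isomorphic to $\cT(\cX)$, as claimed.

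I expect the only real work to be bookkeeping: matching the order of the tensor factors, the position of the distinguished strand, and the $\cB$-balancing appearing in $U_n$ when verifying both the creation identity and the adjoint/annihilation identity on simple tensors. The conceptual inputs — boundedness and the structure of the abstract Pimsner--Toeplitz algebra, and the fact that $U$ is a Hilbert-module unitary — are already available from Definition~\ref{def:Pimsner-Toeplitz} and Proposition~\ref{prop:OperatorValuedFock}.
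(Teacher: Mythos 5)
Your proposal is correct and follows essentially the same route as the paper: the paper's proof likewise assembles the maps $U_n$ from Proposition \ref{prop:OperatorValuedFock} into a unitary $U:\bigoplus_{n\geq 0}\bigotimes_\cB^n\cX\to\FP{}$ and observes that $U$ intertwines the abstract creation operators of Definition \ref{def:Pimsner-Toeplitz} with the diagrammatic ones of Definition \ref{defn:CreationDiagram}. You simply spell out more of the bookkeeping (the adjoint identity $(L_+(x))^*=L_-(x^\dagger)$ and the resulting boundedness of $L_-$) that the paper leaves implicit.
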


\begin{proof}
Consider the map $U: \bigoplus_{n\geq 0} \bigotimes_{\cB}^{n} \cX \rightarrow \FP{}$ which is defined by $U(y_{1} \otimes \cdots \otimes y_{n}) = U_{n}(y_{1} \otimes \cdots \otimes y_{n})$ with $U_{n}$ as in the proof of Proposition \ref{prop:OperatorValuedFock}.  
The operator $U$ is unitary, and it is easy to verify that $U$ intertwines the action of the two definitions of $L_+(x)$ from Definitions \ref{def:Pimsner-Toeplitz} and \ref{defn:CreationDiagram}.
\end{proof}

\begin{defn}
We refer to the $C^{*}$-algebra generated by $\cB$ and $\set{L_{+}(x)}{x \in \cX}$ as $\TP{}$.
\end{defn}

\begin{rem}\label{rem:dot}
One can think of $\TP{}$ as being generated by operators of the form
$$
L_{\err}(x) =
\begin{tikzpicture}[baseline = -.1cm]
	\draw (-.8, 0)--(.8, 0);
	\draw (-.2, 0)--(-.2, .8);
	\draw (.2, 0)--(.2, .8);
	\node at (-.6, .2) {{\scriptsize{$l$}}};
	\node at (.6, .2) {{\scriptsize{$r$}}};
	\node at (-.4, .6) {{\scriptsize{$\ell$}}};
	\node at (.4, .6) {{\scriptsize{$\err$}}};
	\nbox{unshaded}{(0,0)}{.4}{0}{0}{$x$}
	\draw[fill=red] (0,.4) circle (.05cm);
\end{tikzpicture}
$$
where
$$
\begin{tikzpicture}[baseline = -.1cm]
	\draw (-.8, 0)--(.8, 0);
	\draw (-.2, 0)--(-.2, .8);
	\draw (.2, 0)--(.2, .8);
    \node at (-.6, .2) {{\scriptsize{$l$}}};
	\node at (.6, .2) {{\scriptsize{$r$}}};
	\node at (-.4, .6) {{\scriptsize{$\ell$}}};
	\node at (.4, .6) {{\scriptsize{$\err$}}};
	\nbox{unshaded}{(0,0)}{.4}{0}{0}{$x$}
	\draw[fill=red] (0,.4) circle (.05cm);
\end{tikzpicture}
\left(
\begin{tikzpicture}[baseline = -.1cm]
	\draw (-.8, 0)--(.8, 0);
	\draw (0, 0)--(0, .8);
	\node at (-.2, .6) {{\scriptsize{$k$}}};
	\nbox{unshaded}{(0,0)}{.4}{0}{0}{$\xi$}
\end{tikzpicture}
\right)
=
\begin{cases}
0 &\text{ if } k < \err \\
\begin{tikzpicture}[baseline = -.1cm]
	\draw (-.8, 0)--(2, 0);
	\draw (-.2, 0)--(-.2, .8);
	\draw (.2, 0)--(.2, .4) arc(180:0:.4cm);
	\draw (1.4, 0)--(1.4, .8);
    \node at (-.6, .2) {{\scriptsize{$l$}}};
	\node at (.6, .2) {{\scriptsize{$r$}}};
	\node at (-.4, .6) {{\scriptsize{$\ell$}}};
	\node at (.6, .6) {{\scriptsize{$\err$}}};
	\node at (1.8, .6) {{\scriptsize{$k-\err$}}};
	\nbox{unshaded}{(0,0)}{.4}{0}{0}{$x$}
	\nbox{unshaded}{(1.2,0)}{.4}{0}{0}{$\xi$}
\end{tikzpicture}
& \text{if } \err \geq k.
\end{cases}
$$
Indeed, if $c_{k}$ is as in Proposition \ref{prop:OperatorValuedFock}, then
$$
L_{\err}(x) =L_{+}(c_{\ell}^{\dagger}) \cdots L_{+}(c_{l + \ell - 2}^{\dagger})L_{+}(c_{l + \ell - 1}^{\dagger})(b)L_{-}(c_{r + \err - 1})L_{-}(c_{r + \err - 2})\cdots L_{-}(c_{\err})
$$
where
$
b=
\begin{tikzpicture}[baseline = -.1 cm]
	\draw (-.6, 0)--(.6, 0);
	\nbox{unshaded}{(0,0)}{.3}{0}{0}{$x$}
	\node at (-.7, .2) {{\scriptsize{$l+\ell$}}};
	\node at (.7, .2) {{\scriptsize{$r+\err$}}};
\end{tikzpicture}
\in\cB
$.
\end{rem}

\begin{defn}
Let $\cX_{\R}$ be the closed real subspace $\set{x \in \cX}{x = x^{\dagger}}\subset \cX$.  
We define the \underline{$\cB$-valued semicircular algebra $\SP{}$} to be the $C^{*}$-subalgebra of $\TP{}$ spanned by the (self-adjoint) elements $L_{+}(\xi) + L_{-}(\xi)$ for $\xi \in \cX_{\R}$.
\end{defn}

Note that $\SP{}$ is isomorphic to the algebra generated by $\set{L_{+}(x) + L_{+}(x)^{*}}{x \in \cX_{\R}}$ and $\cB$  as operators on $\bigoplus_{n\geq 0} \bigotimes_{\cB}^{n} \cX$.  
We may also view generators of $\SP{}$, just as in the presentation of $\SP{0}$ is Subsection \ref{sec:SP}, where we sum over all placements of the dot on the top.

\begin{prop}\label{prop:cyclic}
$\SP{}$ is generated by $\cB$ and 
$\D
\set{
\sum_{\ell+ \, \err \, = n}
\begin{tikzpicture}[baseline = .1cm]
	\draw (-.8, 0)--(.8, 0);
	\draw (-.2, 0)--(-.2, .8);
	\draw (.2, 0)--(.2, .8);
	\node at (-.6, .2) {{\scriptsize{$l$}}};
	\node at (.6, .2) {{\scriptsize{$r$}}};
	\node at (-.4, .6) {{\scriptsize{$\ell$}}};
	\node at (.4, .6) {{\scriptsize{$\err$}}};
	\nbox{unshaded}{(0,0)}{.4}{0}{0}{$x$}
	\draw[fill=red] (0,.4) circle (.05cm);
\end{tikzpicture}
}{
n\geq 0 \text{ and }x\in\cP_{l,n,r}
}$.
\end{prop}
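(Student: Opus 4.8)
The plan is to prove the two inclusions of $C^*$-algebras; the substance is that every symmetrized operator $s(x):=\sum_{\ell+\err=n}L_\err(x)$, for $x\in\cP_{l,n,r}\subset\cX_n$, lies in $\SP{}$. The inclusion $\SP{}\subseteq C^*(\cB,\{s(x)\})$ is immediate: for $\xi\in\cX=\cX_1$, reading off Definition \ref{defn:CreationDiagram} and Remark \ref{rem:dot} with $n=1$ gives $L_0(\xi)=L_+(\xi)$ and $L_1(\xi)=L_-(\xi)$, so $s(\xi)=L_+(\xi)+L_-(\xi)$; since $s$ is $\C$-linear and bounded on $\cX_1$ and $\cB\subset\SP{}$, all the generators $L_+(\xi)+L_-(\xi)$, $\xi\in\cX_\R$, of $\SP{}$ lie in $C^*(\cB,\{s(x)\})$.

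For the reverse inclusion I would show $s(x)\in\SP{}$ for every $x\in\cP_{l,n,r}$ by induction on $n$, using that $s$ is $\C$-linear. For $n=0$, $s(x)=L_0(x)$ is left multiplication by $x\in\cB\subseteq\SP{}$; for $n=1$, decomposing $x=\tfrac12(x+x^\dagger)+\tfrac{1}{2i}(x-x^\dagger)$ into elements of $\cX_\R$ and using that $s(\xi)=L_+(\xi)+L_-(\xi)\in\SP{}$ there, linearity gives $s(x)\in\SP{}$. For $n\ge2$, Proposition \ref{prop:OperatorValuedFock}—concretely the exact identity $U_n(\widehat x\,c_{n+r-1}\otimes c_{n+r-2}\otimes\cdots\otimes c_r)=x$ in its proof—lets me write $x$ as the side-by-side diagram $\xi_1\smile\cdots\smile\xi_n$ of finitely many $\xi_i\in\cX$. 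Writing $w(\xi):=L_+(\xi)+L_-(\xi)$, the key step is the recursion
\[
s(\xi_1\smile\cdots\smile\xi_n)=w(\xi_1)\,s(\xi_2\smile\cdots\smile\xi_n)-\langle\xi_1^\dagger|\xi_2\rangle_\cB\,s(\xi_3\smile\cdots\smile\xi_n),
\]
with $s$ of the empty diagram read as the identity. Granting it, $w(\xi_1)=s(\xi_1)\in\SP{}$ and $\langle\xi_1^\dagger|\xi_2\rangle_\cB\in\cB\subseteq\SP{}$ by the cases already handled, while $s(\xi_2\smile\cdots\smile\xi_n)$ and $s(\xi_3\smile\cdots\smile\xi_n)$ lie in $\SP{}$ by the inductive hypothesis (they involve $n-1$ and $n-2$ top strands), so $s(x)\in\SP{}$; since $\cB\subseteq\SP{}$ as well, the $C^*$-algebra generated by $\cB$ and all $s(x)$ is contained in $\SP{}$, giving equality.

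The one place where real work is needed is proving the recursion, which is the operator-valued, diagrammatic analogue of the recursion for Wick products. From $L_+(\eta)^*=L_-(\eta^\dagger)$ and Definition \ref{def:Pimsner-Toeplitz} one has the single normal-ordering identity $L_-(\eta)L_+(\zeta)=\langle\eta^\dagger|\zeta\rangle_\cB\in\cB$. Applying $w(\xi_1)=L_+(\xi_1)+L_-(\xi_1)$ term by term to $s(\xi_2\smile\cdots\smile\xi_n)=\sum_{j=0}^{n-1}L_j(\xi_2\smile\cdots\smile\xi_n)$, the creation part $\sum_j L_+(\xi_1)L_j(\cdots)$ collapses to $\sum_{j=0}^{n-1}L_j(\xi_1\smile\cdots\smile\xi_n)$ (prepending $\xi_1$ as the leftmost created strand), while in the annihilation part $L_-(\xi_1)L_j(\xi_2\smile\cdots\smile\xi_n)$ one separates the case $j\le n-2$, where the outermost strand of the image is $\xi_2$'s created strand and the normal-ordering identity contracts it against $\xi_1$ to yield $\langle\xi_1^\dagger|\xi_2\rangle_\cB L_j(\xi_3\smile\cdots\smile\xi_n)$, from the boundary case $j=n-1$, where all of $\xi_2,\dots,\xi_n$ are annihilated, nothing contracts, and one gets $L_n(\xi_1\smile\cdots\smile\xi_n)$; summing and recombining $\sum_{j=0}^{n-1}L_j+L_n=s$ produces the identity. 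The things to watch are purely the left/right conventions in Remark \ref{rem:dot} (which strand of $L_\err(\cdot)$ is created versus annihilated, and the order in which creations and annihilations stack) and the $j=n-1$ endpoint; everything else—$\C$-linearity and boundedness of $s$, the polarization for $n=1$, and the reduction to side-by-side diagrams—is routine, the last being exactly Proposition \ref{prop:OperatorValuedFock}.
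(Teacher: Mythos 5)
Your proof is correct and follows essentially the same route as the paper's: both argue by induction on the number of top strands, reduce a general $x\in\cP_{l,n,r}$ to a concatenation of $1$-strand boxes via Proposition \ref{prop:OperatorValuedFock}, and rest on the same key identity $\bigl(L_+(\xi_1)+L_-(\xi_1)\bigr)\,s(\xi_2\smile\cdots\smile\xi_n)=s(\xi_1\smile\cdots\smile\xi_n)+(\text{contraction term with }n-2\text{ strands})$, with the $n=1$ case handled by the same polarization. The only (cosmetic) difference is that you write the contraction term explicitly as $\langle \xi_1^\dagger|\xi_2\rangle_\cB\, s(\xi_3\smile\cdots\smile\xi_n)$, whereas the paper leaves it as a capped diagram and absorbs it directly into the induction hypothesis.
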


\begin{proof}
Let $\cA$ be the $C^*$-algebra generated be the sums above.  It is clear that $\SP{} \subset \cA$.  
To show the other inclusion, will prove by induction on $n$ that each sum is in $\SP{}$.

If $n = 1$, we see that 
$$
\begin{tikzpicture}[baseline = -.1 cm]
	\draw (-.8, 0)--(.8, 0);
	\draw (.2, 0)--(.2, .8);
	\nbox{unshaded}{(0,0)}{.4}{0}{0}{$x$}
	\draw[fill=red] (0,.4) circle (.05cm);
\end{tikzpicture}
\, + \, 
\begin{tikzpicture}[baseline = -.1 cm]
	\draw (-.8, 0)--(.8, 0);
	\draw (-.2, 0)--(-.2, .8);
	\nbox{unshaded}{(0,0)}{.4}{0}{0}{$x$}
	\draw[fill=red] (0,.4) circle (.05cm);
\end{tikzpicture} 
= 
\frac{1}{2}[L_{+}(x + x^{\dagger}) + L_{-}(x + x^{\dagger}) + i(L_{+}(-i(x - x^{\dagger})) + L_{-}(-i(x - x^{\dagger})))].
$$

We now assume that the result holds for up to $n-1$ marked points at the top of the box.  We need only show that 
$$
\sum_{m + k = n}
\begin{tikzpicture}[baseline = .1cm]
	\draw (-.8, 0)--(.8, 0);
	\draw (-.2, 0)--(-.2, .8);
	\draw (.2, 0)--(.2, .8);
	\node at (-.4, .6) {{\scriptsize{$\ell$}}};
	\node at (.4, .6) {{\scriptsize{$\err$}}};
	\nbox{unshaded}{(0,0)}{.4}{0}{0}{$x$}
	\draw[fill=red] (0,.4) circle (.05cm);
\end{tikzpicture}\, \in \SP{}
$$ 
when $x$ is of the form
$$
x = 
\begin{tikzpicture}[baseline = 0cm]
	\draw (-1.2, 0)--(1.6, 0);
	\draw (-.4, 0)--(-.4, .8);
	\nbox{unshaded}{(-.4,0)}{.4}{0}{0}{$x_{1}$}
	\draw (.8, 0)--(.8, .8);
	\nbox{unshaded}{(.8,0)}{.4}{0}{0}{$y$}
	\node at (1.3, .6) {{\scriptsize{$n-1$}}};
\end{tikzpicture}\,.
$$
We see that 
$$
\left(
\begin{tikzpicture}[baseline = .1 cm]
	\draw (-.8, 0)--(.8, 0);
	\draw (.2, 0)--(.2, .8);
	\nbox{unshaded}{(0,0)}{.4}{0}{0}{$x_1$}
	\draw[fill=red] (0,.4) circle (.05cm);
\end{tikzpicture}
\, + \, 
\begin{tikzpicture}[baseline = .1 cm]
	\draw (-.8, 0)--(.8, 0);
	\draw (-.2, 0)--(-.2, .8);
	\nbox{unshaded}{(0,0)}{.4}{0}{0}{$x_1$}
	\draw[fill=red] (0,.4) circle (.05cm);
\end{tikzpicture}
\right) 
\cdot \sum_{\ell \,+ \, \err \, = n-1}\, 
\begin{tikzpicture}[baseline = .1cm]
	\draw (-.8, 0)--(.8, 0);
	\draw (-.2, 0)--(-.2, .8);
	\draw (.2, 0)--(.2, .8);
	\node at (-.4, .6) {{\scriptsize{$\ell$}}};
	\node at (.4, .6) {{\scriptsize{$\err$}}};
	\nbox{unshaded}{(0,0)}{.4}{0}{0}{$y$}
	\draw[fill=red] (0,.4) circle (.05cm);
\end{tikzpicture} = \sum_{\ell \, + \, \err\, = n-2} 
\begin{tikzpicture}[baseline=.1cm]
	\nbox{unshaded}{(.5,.1)}{.6}{.4}{.6}{}
	\draw (.9,0)--(.9,1.1);
	\draw (-.8,0) -- (2,0);
	\draw (1.3,0) -- (1.3,1.1);
	\draw (.1,.3) arc (180:0:.3cm);
	\node at (.7,.9) {\scriptsize{$\ell$}};
	\node at (1.45,.9) {\scriptsize{$\err$}};
	\nbox{unshaded}{(0,0)}{.3}{0}{0}{$x$}
	\nbox{unshaded}{(.9,0)}{.3}{0}{.2}{$y$}
	\draw[fill=red] (1.1,.7) circle (.05cm);
\end{tikzpicture}
+ \sum_{\ell \, + \, \err \, = n}
\begin{tikzpicture}[baseline = .1cm]
	\draw (-.8, 0)--(.8, 0);
	\draw (-.2, 0)--(-.2, .8);
	\draw (.2, 0)--(.2, .8);
	\node at (-.4, .6) {{\scriptsize{$\ell$}}};
	\node at (.4, .6) {{\scriptsize{$\err$}}};
	\nbox{unshaded}{(0,0)}{.4}{0}{0}{$x$}
	\draw[fill=red] (0,.4) circle (.05cm);
\end{tikzpicture}\,.
$$
By the inductive hypothesis, the left hand side and the first summation on the right hand side is in $\SP{}$.  Therefore, all terms are in $\SP{}$.
\end{proof}

It is important to note that this proposition proves the following:
\begin{prop}\label{prop:cyclic2}
$\SP{}$ is generated by $\cB$ and $\set{L_{+}(x) + L_{-}(x)}{x \in \cX}$.
\end{prop}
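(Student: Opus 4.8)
The plan is to read Proposition~\ref{prop:cyclic2} off from the definition of $\SP{}$ together with the base case of the proof of Proposition~\ref{prop:cyclic}, the point being that the map $x\mapsto L_+(x)+L_-(x)$ is complex-linear on all of $\cX$. Write $\cA'$ for the $C^*$-subalgebra of $\TP{}$ generated by $\cB$ and $\set{L_+(x)+L_-(x)}{x\in\cX}$; the goal is to show $\cA'=\SP{}$.

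First I would record the linear-algebra observation. Since $\dagger$ is conjugate-linear, any $x\in\cX$ splits as $x=\xi_1+i\xi_2$ with $\xi_1=\tfrac12(x+x^\dagger)$ and $\xi_2=\tfrac1{2i}(x-x^\dagger)$, and both $\xi_1,\xi_2$ lie in $\cX_{\R}$. Moreover $L_+$ is complex-linear by definition and $L_-(x)=L_+(x^\dagger)^*$ is complex-linear as well, because the conjugate-linearities of $\dagger$ and of the operator adjoint cancel; hence
$$
L_+(x)+L_-(x)=\bigl(L_+(\xi_1)+L_-(\xi_1)\bigr)+i\bigl(L_+(\xi_2)+L_-(\xi_2)\bigr),
$$
a complex-linear combination of the defining self-adjoint generators of $\SP{}$. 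Together with $\cB\subseteq\SP{}$ this gives the inclusion $\cA'\subseteq\SP{}$. The reverse inclusion $\SP{}\subseteq\cA'$ is immediate once one notes $\cX_{\R}\subseteq\cX$, so the generators $L_+(\xi)+L_-(\xi)$ with $\xi\in\cX_{\R}$ of $\SP{}$ already appear among the generators of $\cA'$. Equivalently, one can invoke Proposition~\ref{prop:cyclic}: for $x\in\cP_{l,1,r}\subseteq\cX$ the $n=1$ dot-sum $\sum_{\ell+\err=1}L_\err(x)=L_0(x)+L_1(x)$ is exactly $L_+(x)+L_-(x)$, and the inductive step in the proof of Proposition~\ref{prop:cyclic} writes every $n$-point dot-sum as a product of a $1$-point dot-sum with an $(n-1)$-point one, minus an $(n-2)$-point one; so by induction every dot-sum lies in $\cA'$, whence $\SP{}=C^*(\cB\cup\{\text{dot-sums}\})\subseteq\cA'$.

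I expect no genuine obstacle here; the statement is really being harvested from structure already set up. The only points needing a moment's care are: checking that $L_-$ is complex-linear rather than conjugate-linear (the two conjugations cancel, as above); and, if one prefers the route through Proposition~\ref{prop:cyclic}, confirming that its inductive step uses only $1$-point dot-sums and $\cB$, and that passing from the dense subspace $\bigoplus_{l,r}\cP_{l,1,r}$ to all of $\cX$ is harmless because $x\mapsto L_\pm(x)$ is bounded (Proposition~\ref{prop:OperatorValuedToeplitz}), so the generated $C^*$-algebras are unchanged.
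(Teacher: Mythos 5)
Your proposal is correct and matches the paper's (largely implicit) argument: the paper derives Proposition~\ref{prop:cyclic2} from the base case of the proof of Proposition~\ref{prop:cyclic}, which is exactly your decomposition $x=\xi_1+i\xi_2$ with $\xi_1=\tfrac12(x+x^\dagger)$, $\xi_2=\tfrac1{2i}(x-x^\dagger)\in\cX_\bbR$ together with the complex-linearity of $x\mapsto L_+(x)+L_-(x)$, plus the trivial reverse inclusion $\cX_\bbR\subseteq\cX$. Your cautionary remarks (linearity of $L_-$ via the cancelling conjugations, and density/boundedness to pass from $\bigoplus_{l,r}\cP_{l,1,r}$ to $\cX$) are exactly the right points to check and present no obstacle.
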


\begin{rem}\label{rem:GJS}
Proposition \ref{prop:cyclic} shows that we may regard $\SP{}$ as generated by operators $x \in \cP_{l, n, r}$.  The action of $x$ on $y \in \FP{}$ is given by:
$$
x \cdot 
\left(
\begin{tikzpicture}[baseline = .1cm]
	\draw (-.8, 0)--(.8, 0);
	\draw (0, 0)--(0, .8);
    \node at (-.2, .6) {\scriptsize{$m$}};
	\nbox{unshaded}{(0,0)}{.4}{0}{0}{$\xi$}
\end{tikzpicture}
\right) = \sum_{k=0}^{\min\{n, m\}}\, \begin{tikzpicture} [baseline = .1cm]
\draw (-.8, 0)--(2, 0);
\draw (-.2, 0)--(-.2, .8);
\draw (.2,.4) arc(180:0:.4cm);
\draw (1.4, 0)--(1.4, .8);
\node at (.6, .6) {\scriptsize{$k$}};
\nbox{unshaded}{(0,0)}{.4}{0}{0}{$x$}
\nbox{unshaded}{(1.2,0)}{.4}{0}{0}{$y$}
\end{tikzpicture}
$$
This diagram also shows how to multiply $x$ and $y$ when they are viewed in the algebra $\SP{}$.  
This is exactly the product that appears in \cite{MR2645882}.   
We will show $\SP{}$ is isomorphic to the semifinite GJS algebra in \cite[Lemma 3.3]{GJSCStar}.
\end{rem}


\subsection{The Cuntz-Pimsner algebra}\label{sec:OperatorCuntz}

As $\TP{}$ is isomorphic the the Pimsner-Toeplitz algebra acting on $\FP{}$, it contains $\KP{} = \cK(\FP{})$.  
This allows us to define our Cuntz-Pimsner algebra over $\cX$.

\begin{defn}\label{defn:XCuntz}
The Cuntz-Pimnser algebra $\OP{}$ is the $C^{*}$-algebra $\TP{}/\KP{}$.
\end{defn}

\begin{defn}\label{defn:Xcore}
From Definition \ref{defn:GaugeAction}, there is a canonical $\bbR$-action $\sigma: \bbR\to \Aut(\TP{})$ given by the extension of
$$
\sigma_{t}\left( 
\begin{tikzpicture}[baseline = .1cm]
	\draw (-.8, 0)--(.8, 0);
	\draw (-.2, 0)--(-.2, .8);
	\draw (.2, 0)--(.2, .8);
	\node at (-.4, .6) {{\scriptsize{$\ell$}}};
	\node at (.4, .6) {{\scriptsize{$\err$}}};
	\nbox{unshaded}{(0,0)}{.4}{0}{0}{$x$}
	\draw[fill=red] (0,.4) circle (.05cm);
\end{tikzpicture}
\right)
=
e^{i(\ell-\err)t} 
\begin{tikzpicture}[baseline = .1cm]
	\draw (-.8, 0)--(.8, 0);
	\draw (-.2, 0)--(-.2, .8);
	\draw (.2, 0)--(.2, .8);
	\node at (-.4, .6) {{\scriptsize{$\ell$}}};
	\node at (.4, .6) {{\scriptsize{$\err$}}};
	\nbox{unshaded}{(0,0)}{.4}{0}{0}{$x$}
	\draw[fill=red] (0,.4) circle (.05cm);
\end{tikzpicture}\, ,
$$  
which again induces a $\bbT$-action.
We define the \underline{core of $\TP{}$} as the fixed points under the $\bbT$-action, i.e., $\TP{}^\bbT$.
Note that just as in Subsection \ref{sec:PAToeplitz}, $\sigma_{t}$ is norm continuous in $t$, so we define the conditional expectation $E: \TP{} \rightarrow \coreTP{}$ by
$$
E(x) = \int_{\T} \gamma_{z}(x)dz
$$
where the measure is normalized Lebesgue measure.
\end{defn}

We have a $\cB$-valued expectation $\phi$ on $\coreTP{}$ given by
$$
\phi(L_n(x))
=
\phi\left(
\begin{tikzpicture}[baseline = .1cm]
	\draw (-.8, 0)--(.8, 0);
	\draw (-.2, 0)--(-.2, .8);
	\draw (.2, 0)--(.2, .8);
	\node at (-.4, .6) {{\scriptsize{$n$}}};
	\node at (.4, .6) {{\scriptsize{$n$}}};
	\nbox{unshaded}{(0,0)}{.4}{0}{0}{$x$}
	\draw[fill=red] (0,.4) circle (.05cm);
\end{tikzpicture}
\right) 
= \delta^{-n}\cdot 
\begin{tikzpicture}[baseline = .1cm]
	\draw (-.8, 0)--(.8, 0);
	\draw (-.2, 0)--(-.2, .4) arc(180:0:.2cm);
	\node at (0, .8) {{\scriptsize{$n$}}};
	\nbox{unshaded}{(0,0)}{.4}{0}{0}{$x$}
\end{tikzpicture}\,.
$$
Note that $\phi$ extends continuously to $\coreTP{}$ and is faithful on elements of the form $\phi(L_n(x))$ as above.
We therefore obtain a $\cB$-valued conditional expectation $\psi = \phi\circ E$ defined on $\TP{}$.
 
The map $\psi$ induces the weight $\Psi = \Tr\circ \psi$ which is defined on $\TP{}^{+}$ and takes values in $[0, \infty]$.  
The techniques of Subsections \ref{sec:KMS} and  \ref{sec:PACuntz} can be used to show that $\Phi$ is KMS with inverse temperature $\ln(\delta)$, and the ideal generated by
$$
\set{x \in \TP{}^{+}}{\Psi(x) = 0}
$$
is $\cK(\cP_{\bullet})$.  
Therefore, $\Psi$ drops to a faithful KMS weight on $\OP{}$.

\subsection{Functoriality of the construction}\label{sec:Functorial}

In this subsection, we will show that if $\cP_{\bullet}$ and $\cQ_{\bullet}$ are factor planar algebras, and $i: \cQ_{\bullet} \rightarrow \cP_{\bullet}$ is a planar algebra homomorphism, then $i$ indues a map between the corresponding Pimnser-Toeplitz, Cuntz-Pimsner, and free semicircular algebras.  
Note that any such $i$ must be injective due the fact that the inner product tangle is positive definite, and $\cP_{0} \cong \C \cong \cQ_{0}$.

To begin, recall that
\begin{align*}
\cB_{n}(\cP_\bullet) &=  \bigoplus_{l, r = 1}^{n} \cP_{l, r}
\text{ and } 
\cB(\cP_\bullet) = \varinjlim \cB_n(\cP_\bullet)\\
\cB_{n}(\cQ_\bullet) &= \bigoplus_{l, r = 1}^{n} \cQ_{l, r} 
\text{ and }
\cB(\cQ_\bullet) = \varinjlim \cB_n(\cQ_\bullet)
\end{align*}
with $C^*$-algebra structure as described in Subsection \ref{sec:FockOp}.
Note that $i$ induces a $C^{*}$-algebra inclusion $\cB_{n}(\cQ_\bullet) \hookrightarrow \cB_{n}(\cP_\bullet)$ and hence an inclusion $i_\cB: \cB(\cQ_\bullet)\hookrightarrow \cB(\cP_\bullet)$.   
Recall that 
$$
\cX(\cP_\bullet) = \overline{ \bigoplus_{l, r = 1}^{\I} \cP_{l ,1, r}}^{\|\cdot\|} 
\text{ and }
\cX(\cQ_\bullet) = \overline{ \bigoplus_{l, r = 1}^{\I} \cQ_{l, 1, r}}^{\|\cdot\|} 
$$
with $\cB(\cP_\bullet)-\cB(\cP_\bullet)$ (respectively $\cB(\cQ_\bullet)-\cB(\cQ_\bullet)$) bimodule structure as described in Subsection \ref{sec:FockOp}.
Furthermore we see that the map $i$ induces a bimodule map from $i_\cX: \cX(\cQ_\bullet)\to \cX(\cP_\bullet)$ which satisfies $i_\cX(b_{1}\cdot\xi\cdot b_{2}) \mapsto i_\cB(b_{1})\cdot i_\cX(\xi) \cdot i_\cB(b_{2})$.

We are now ready for the main theorem of this subsection.

\begin{thm}\label{thm:functorial}
The assignment $\cP_{\bullet}$ to $\TP{}$, $\SP{}$, and $\OP{}$ is functorial in the following sense.
If $i: \cQ_{\bullet} \rightarrow \cP_{\bullet}$ is an inclusion of factor planar algebras, then $i$ induces an injective $C^{*}$-algebra homomorphism $i_{\cA}: \cA(\cQ_{\bullet}) \rightarrow \cA(\cP_{\bullet})$ with $\cA \in \{\cT, \cO, \cS\}$. 
The map $i_{\cT}$ satisfies and is uniquely determined by
\begin{itemize}
\item 
$i_{\cT}(b) = i_\cB(b)$ for $b \in \cB(\cQ_\bullet)$ and

\item 
$i_{\cT}(L_{+}(\xi)) = L_{+}(i_\cX(\xi))$.

\end{itemize}
\end{thm}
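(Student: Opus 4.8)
The plan is to build the map $i_{\cT}$ at the spatial level, using the fact that the Pimsner-Toeplitz algebra of a bimodule acts concretely on its Pimsner-Fock space. First I would use the bimodule inclusion $i_\cX : \cX(\cQ_\bullet) \to \cX(\cP_\bullet)$ covering $i_\cB : \cB(\cQ_\bullet) \to \cB(\cP_\bullet)$ to construct an isometry $W$ of Hilbert-module type between the Fock spaces. Concretely, on simple tensors set $W(\xi_1 \otimes \cdots \otimes \xi_n) = i_\cX(\xi_1) \otimes \cdots \otimes i_\cX(\xi_n)$, together with $W|_{\cB(\cQ_\bullet)} = i_\cB$ on the degree-zero part. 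One checks that $W$ is $\cB(\cQ_\bullet)$-$\cB(\cQ_\bullet)$ bilinear (for the right $\cB(\cQ_\bullet)$-action restricted along $i_\cB$), that it is $\cB(\cQ_\bullet)$-middle-linear, and that it preserves the $\cB(\cQ_\bullet)$-valued inner product: using the formula for the inner product on $\bigotimes_{\cB}^n \cX$ from Proposition \ref{prop:OperatorValuedFock} together with the fact that $i$ is a planar algebra homomorphism (so it commutes with all the cappings and cups appearing in the diagrammatic inner product) and that $\langle i_\cX(\xi)|i_\cX(\eta)\rangle_{\cB(\cP_\bullet)} = i_\cB(\langle \xi|\eta\rangle_{\cB(\cQ_\bullet)})$, the identity $\langle W\zeta|W\zeta'\rangle = i_\cB(\langle \zeta|\zeta'\rangle)$ follows by induction on the number of tensor legs, exactly as in the proof of Proposition \ref{prop:OperatorValuedFock}.

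Next I would verify the intertwining relations. Since $W$ is an isometric $\cB(\cQ_\bullet)$-module map, it intertwines creation operators up to $i_\cX$: $W L_+(\xi) = L_+(i_\cX(\xi)) W$ for $\xi \in \cX(\cQ_\bullet)$, and $W i_\cB$-equivariantly intertwines the degree-zero action, i.e. $W b = i_\cB(b) W$ for $b \in \cB(\cQ_\bullet)$. Both are immediate from the definitions of the creation operators (Definition \ref{def:Pimsner-Toeplitz} / \ref{defn:CreationDiagram}) once $W$ is known to be a bimodule isometry. Taking adjoints (noting $(L_+(\xi))^* = L_-(\xi^\dagger)$ and that $i_\cX$ commutes with $\dagger$, which holds because $i$ commutes with the planar-algebra $*$) gives $W^* L_+(i_\cX(\xi))^* = L_+(\xi)^* W^*$, hence $W^* \cdot (\text{generators of } \TP{}(\cP_\bullet)) \cdot W$ lands inside $\TP{}(\cQ_\bullet)$. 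Conversely, conjugation by $W$ followed by $W^*$ shows that the $*$-homomorphism $x \mapsto W x W^*$ on $B(\FP{}(\cQ_\bullet))$... more precisely, I would define $i_{\cT}: \TP{}(\cQ_\bullet) \to \TP{}(\cP_\bullet)$ by $i_{\cT}(x) = \overline{W} x \overline{W}^*$ after extending the target appropriately, or cleaner: observe $W$ is a partial isometry with $WW^* = $ the projection onto $\overline{\operatorname{im} W}$ and $W^*W = 1$, so $x \mapsto W x W^*$ is a $*$-homomorphism on $\TP{}(\cQ_\bullet)$ sending generators to generators (times the range projection), and one checks this range projection is central enough — actually the cleanest route is: $i_{\cT}$ is the unique $*$-homomorphism determined on generators by the two displayed formulas, and $W$ implements a spatial realization proving it is well-defined and injective. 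Injectivity then follows because $W$ is an isometry with $W^*W = 1$, so $W^* i_{\cT}(x) W = x$ recovers $x$. Functoriality in the categorical sense ($i_{\cT}$ respects composition, identities) is then a formal consequence of uniqueness of the generator-determined homomorphism. The statements for $\cO$ and $\cS$ follow: $i_{\cT}$ maps $\KP{}(\cQ_\bullet)$ into $\KP{}(\cP_\bullet)$ (it maps the finite-rank operators of Lemma-type arguments, e.g. the defect projections $1 - L_n(\cup_n)$-analogues, into compacts, since $W$ is an isometry onto an orthogonally-complemented submodule), hence descends to $i_{\cO}$ on Cuntz-Pimsner quotients; and $i_{\cT}$ clearly carries $L_+(\xi) + L_-(\xi)$ for $\xi \in \cX_\bbR(\cQ_\bullet)$ to $L_+(i_\cX(\xi)) + L_-(i_\cX(\xi))$ with $i_\cX(\xi) \in \cX_\bbR(\cP_\bullet)$, so it restricts to $i_{\cS}: \SP{}(\cQ_\bullet) \to \SP{}(\cP_\bullet)$.

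The main obstacle I anticipate is not the algebra but the analysis of whether $i_{\cT}$ maps compacts to compacts, which is needed to get $i_{\cO}$ well-defined. The subtlety is that $W$ need not be surjective, so $WW^*$ is a proper projection in $\cL(\FP{}(\cP_\bullet))$; one must check that conjugating a rank-one module operator $|\eta\rangle\langle\xi|$ on $\FP{}(\cQ_\bullet)$ by $W$ yields $|W\eta\rangle\langle W\xi|$, which is rank-one hence compact in $\cK(\FP{}(\cP_\bullet))$ — this uses that $W$ is adjointable with $W^*W = 1$, so it is genuinely fine, but it requires care that $\operatorname{im} W$ is complemented as a Hilbert $\cB(\cP_\bullet)$-submodule, which I would get from $W$ being an isometric module map with $WW^*$ adjointable (automatic here since everything is happening inside $\cL(\FP{}(\cP_\bullet))$ with a genuine adjoint). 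A secondary point requiring attention is the well-definedness of $i_{\cT}$ purely algebraically: a priori there might be relations among the generators $\{b, L_+(\xi)\}$ in $\TP{}(\cQ_\bullet)$ not obviously preserved; the spatial construction via $W$ sidesteps this entirely, which is why I would lead with it rather than with a generators-and-relations argument.
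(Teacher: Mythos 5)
Your construction of the intertwining isometry, the verification that it preserves the $\cB$-valued inner products, and the resulting injectivity argument all match the paper's proof in substance. But there is a genuine gap at the step you dismiss as "sidestepped": the \emph{existence} of $i_{\cT}$ as a $*$-homomorphism into $\TP{}$ with $i_{\cT}(L_+(\xi)) = L_+(i_\cX(\xi))$. Your spatial map $x\mapsto WxW^*$ lands in the corner $WW^*\cL(\FP{})WW^*$, not in $\TP{}$: one computes $WL_+(\xi)W^* = L_+(i_\cX(\xi))\,WW^*$, and the range projection $WW^*$ (the projection onto the closed span $\cX_{\cQ_\bullet}$ of boxes from $i(\cQ_\bullet)$) is neither central nor absorbed by $L_+(i_\cX(\xi))$. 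What the spatial picture actually produces is a $*$-homomorphism in the \emph{wrong direction} — from the subalgebra of $\TP{}$ generated by the images \emph{onto} $\cT(\cQ_\bullet)$, by compressing to the reducing subspace $\cX_{\cQ_\bullet}$. To invert it, i.e.\ to know that a $*$-polynomial in the generators that vanishes on $\cF(\cQ_\bullet)$ also vanishes on all of $\FP{}$ (including vectors orthogonal to $\cX_{\cQ_\bullet}$), you need an actual argument. The paper supplies one: it verifies that $b\mapsto i_\cB(b)$, $\xi\mapsto L_+(i_\cX(\xi))$ is a Toeplitz representation of the bimodule $\cX(\cQ_\bullet)$ (the three displayed relations, the key one being $L_+(i_\cX(\eta))^*L_+(i_\cX(\xi)) = i_\cB(\langle\eta|\xi\rangle_{\cB(\cQ_\bullet)})$) and then invokes Pimsner's universality theorem, using that $i_\cB$ is injective. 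This is not a formality — Remarks \ref{rems:NotFunctor} exhibit exactly such a generator-to-generator assignment failing to be well defined for $\TP{0}$, so the generators-and-relations step you chose to avoid is precisely where the content lies.

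A second, smaller omission: the theorem asserts injectivity of $i_{\cO}$ as well. Knowing that $i_{\cT}$ carries $\cK(\cQ_\bullet)$ into $\cK(\cP_\bullet)$ gives a well-defined map on Cuntz--Pimsner quotients, but not its injectivity (for that you would need $i_{\cT}^{-1}(\cK(\cP_\bullet)) = \cK(\cQ_\bullet)$). The paper gets injectivity for free from the simplicity of $\cO(\cQ_\bullet)$, established via Theorem \ref{thm:CuntzGraph} and Lemma \ref{lem:CompactTensor1}; your proposal does not address this point.
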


\begin{proof}
We consider the $C^*$-subalgebra of $\TP{}$ which is generated by $i_\cB(\cB(\cQ_\bullet))$ and the elements $L_{+}(\xi)$ for $\xi \in i_\cX(\cX(\cQ_\bullet))$.  
We note that the following relations hold:
\be
\item 
$a_{1}L_{+}(i_\cX(\xi_{1})) + a_{2}L_{+}(i_\cX(\xi_{2})) = L_{+}(a_{1}i_\cX(\xi_{1}) + a_{2}i_\cX(\xi_{2}))$ for $a_{i} \in \C$ and $\xi_{i} \in \cX(\cQ_\bullet)$.

\item 
$i_\cB(a) \cdot L_{+}(i_\cX(\xi))\cdot i_\cB(b) = L_{+}(i_\cB(a) \cdot \xi \cdot i_\cB(b))$ for $\xi \in \cX(\cQ_{\bullet})$ and $a,b \in \cB(\cQ_\bullet)$.

\item 
$L_{+}(i_\cX(\eta))^{*}L_{+}(i_\cX(\xi)) = \langle i_\cX(\eta) | i_\cX(\xi) \rangle_{\cB(\cP_\bullet)} = i_\cB(\langle \eta | \xi \rangle_{\cB(\cQ_\bullet)})$.

\ee
The last equality holds since the $\cB(\cP_\bullet)$ and $\cB(\cQ_\bullet)$ valued inner products are induced from the planar operad.   
Since $i_\cB : \cB(\cQ_\bullet) \rightarrow \cB(\cP_\bullet)$ is an injection, the universality of Pimsner-Toeplitz algebras \cite{MR1426840} implies that $i$ induces a $C^{*}$-algebra homomorphism $i_{\cT}: \cT(\cQ_{\bullet}) \rightarrow \TP{}$.  
The map $i_{\cT}$ satisfies and is uniquely determined by the equations claimed above.

We now show that $i_{\cT}$ is injective.  
Let $\cX_{\cQ_{\bullet}}$ be the closure of the subspace of $\FP{}$ spanned by boxes in $i(\cQ_{\bullet})$, and note that $\cX_{\cQ_{\bullet}}$ is naturally an $i_\cB(\cB(\cQ_{\bullet}))-i_\cB(\cB(\cQ_{\bullet}))$ Hilbert bimodule.  
We define a map $U : \cF(\cQ_{\bullet}) \rightarrow \cX_{\cQ_{\bullet}}$ which is initially defined  on diagrams and is given by linear extension of the formula
$$
U( \xi) = i(\xi) \text{ for } \xi \in \cQ_{l, n, r}.
$$ 
We see that $U(b_{1} \cdot \xi \cdot b_{2}) = i_\cB(b_{1})\cdot U(\xi)\cdot i_\cB(b_{2})$, and $\langle U(\eta)| U(\xi) \rangle_{\cB(\cP_\bullet)} = i_\cB(\langle \eta| \xi\rangle_{\cB(\cQ_\bullet)}),$ and by construction, $U$ has dense range.  
Therefore, $U$ extends to be a unitary satisfying
$$
UL_{+}(\xi)U^{*} = L_{+}(i_\cX(\xi)).
$$
This shows that $U$ intertwines the action of $\cT(\cQ_\bullet)$ on $\FP{}$ and the action of $i_{\cT}(\cT(\cQ_\bullet))$ on $\cX_{\cQ_{\bullet}}$.  
Therefore $i_{\cT}$ is injective.

The map $i_{\cT}$ sends $\cS(\cQ_{\bullet})$ into $\SP{}$, so the assignment $\cP_{\bullet}$ to $\SP{}$ is functorial as well.
Finally, note that if $x \in \cK(\cQ_{\bullet})$ then $i_{\cT}(x) \in \cK(\cP_{\bullet})$.  
This shows that $i_{\cT}$ induces a map $\cO(\cQ_{\bullet}) \rightarrow  \OP{}$.  Theorem \ref{thm:CuntzGraph} and Lemma \ref{lem:CompactTensor1} below show that $\cO(\cQ_{\bullet})$ is simple, implying that the map is injective.
\end{proof}

\begin{ex}
A common example of an inclusion of planar algebras is the inclusion of the Temperley-Lieb planar subalgebra $\TL_\bullet \hookrightarrow \cP_\bullet$.
\end{ex}

\begin{ex}
Another common example arises from equivariantization, i.e., when $\cQ_\bullet$ is the fixed points of $\cP_\bullet$ under the action of some finite group $G$ of planar algebra automorphisms.
In this case, one can show that each $g\in G$ induces an automorphism of $\cA(\cP_\bullet)$, and $\cA(\cP_\bullet)^G \cong \cA(\cQ_\bullet)$ for $\cA\in \{\cT,\cO,\cS\}$.
\end{ex}

\begin{rem}
Theorem \ref{thm:OSfunctor} below will show that the assignments $\cP_{\bullet}$ to $\OP{0}$ and $\SP{0}$ are functorial.
\end{rem}


\section{Compressions of the operator-valued system}\label{sec:Compression}

In this section, we investigate a compression of $\cB$ which induces a compression of $\cX$, leading to subsequent compressions of $\FP{}$, $\TP{}$, $\OP{}$, and $\SP{}$.
While the compressing projections from $\Gamma$ are not in general canonical, the resulting compressions are isomorphic to canonical objects associated to an oriented graph $\vec{\Gamma}$ obtained from $\Gamma$.

First, we get a spatial isomorphism from the compression of $\cX$ to the Cuntz-Krieger bimodule of $\vec{\Gamma}$ from \cite[Example 1.2]{MR1722197}.
This spatial isomorphism implements isomorphisms between
\begin{itemize}
\item
the compression of $\FP{}$ and the Fock space of the Cuntz-Krieger bimodule \cite[Example 1.4]{MR1722197},
\item
the compression of $\TP{}$ and the universal Toepltiz-Cuntz-Krieger algebra $\cT_{\vec{\Gamma}}$ \cite{MR1722197},
\item
the compression of $\OP{}$ and the Cuntz-Krieger algebra $\cO_{\vec{\Gamma}}$ \cite{MR561974}, and
\item
the compression of $\SP{}$ and the free graph algebra $\cS(\Gamma)$.
\end{itemize}
The free graph algebra $\cS(\Gamma)$ is analogous to the free graph von Neumann algebras associated to arbitrary unoriented weighted graphs appearing in \cite{MR2807103, MR3016810, MR3110503}.
In particular, in Subsection \ref{sec:FreeGraphAlgebraOfUnoriented}, we discuss the free semicircular graph algebra $\cS(\Lambda,\mu)$ associated to an arbitrary unoriented weighted graph $(\Lambda,\mu)$ (which is not necessarily a principal graph) and how it sits inside the (Toeplitz-)Cuntz-Krieger algebra.

In Subsection \ref{sec:FurtherCompression}, we discuss a further compression by $p_\star$ which recovers the GJSW-Doplicher-Roberts system of Section \ref{sec:PAFockSpace}.
Finally, in Subsection \ref{sec:ShadedCase}, we briefly sketch the case of a shaded subfactor planar algebra.

\subsection{Compressing $\cB$ and $\cX$}

For each $\alpha \in V(\Gamma)$, choose a representative $p_{\alpha} \in \cP_{2\depth(\alpha)}\subset 1_{\depth(\alpha)} \cB 1_{\depth(\alpha)}$.
Set $P_{n} = \sum_{\alpha \in V(\Gamma(n))} p_{\alpha}$, and notice that $P_{n}\cB P_{n}$ is naturally a subspace of $P_{n+1}\cB P_{n+1}$.
In fact, $P_n \cB P_n \cong C(V(\Gamma(n)))$, and the inclusion $P_n \cB P_n \hookrightarrow P_{n+1} \cB P_{n+1}$ is the natural inclusion $C(V(\Gamma(n)))\hookrightarrow C(V(\Gamma(n+1)))$.
Let
$$
\cC=\cC(\cP_\bullet)=\varinjlim P_n \cB P_n \cong C_0(V(\Gamma)),
$$
which is our compression of $\cB$.

Note that $\cC$ induces a compression of $\cX$ as follows.
Let
$$
\cY=\cY(\cP_\bullet)=\overline{\cC\cX\cC}^{\|\cdot\|_\cX}=\varinjlim P_n \cX P_n.
$$
Moreover, $\cY$ carries a $\cB$-valued inner product, which is actually just a $\cC$-valued inner product.
Hence $\cY$ is a Hilbert $\cC-\cC$ bimodule.

\begin{rem}
If $\Gamma$ is simply laced and acyclic, then there is a unique choice for each $p_\alpha$, and $\cY$ is canonical.

If $\Gamma$ is not simply laced or not acyclic, it is clear from the above definition that if we chose different representatives $p_\alpha'$ for the vertices $\alpha$, the resulting projections $P_n'$ are equivalent in $\cB$ to the $P_n$.
Such a partial isometry implementing the equivalence is unique up to a choice of phase for each $\alpha$, since minimal projections in a matrix algebra are equivalent by a unique partial isometry up to a phase.
\end{rem}

We now identify $\cY$ with the Cuntz-Krieger bimodule $Y(\vec{\Gamma})$ from Definition \ref{defn:CuntzKriegerBimodule}.

\begin{defn}
For the principal graph $\Gamma$ of $\cP_{\bullet}$, let $\vec{\Gamma}$ be as in Definition \ref{defn:Directed}.
Let
$$
E(\alpha\to \beta)=\set{\epsilon\in E(\vec{\Gamma})}{s(\epsilon)=\alpha\text{ and }t(\epsilon)=\beta}.
$$
\end{defn}

We note that $\cY$ is spanned by elements of the form $p_{\alpha}xp_{\beta}$ for $x \in \cX$.
Diagrammatically, these elements look like
$$
\begin{tikzpicture}[baseline = -.1cm]
	\draw (-.6, 0)--(2.6, 0);
	\draw (1, 0)--(1, .6);
	\nbox{unshaded}{(0,0)}{.3}{0}{0}{$p_{\alpha}$}
	\nbox{unshaded}{(1,0)}{.3}{0}{0}{$x$}
	\nbox{unshaded}{(2,0)}{.3}{0}{0}{$p_{\beta}$}
	\node at (-.5, .2) {{\scriptsize{$l$}}};	
	\node at (.5, .2) {{\scriptsize{$l$}}};	
	\node at (1.5,.2) {{\scriptsize{$r$}}};
	\node at (2.5,.2) {{\scriptsize{$r$}}};
\end{tikzpicture}\,
$$
where there is only one strand on top. 
We note that $p_{\alpha}\cY p_{\beta}$ is finite dimensional, and $\dim(p_{\alpha}\cY p_{\beta}) = \left|E(\alpha \to \beta)\right|$.
This observation will help us define our edge elements.

\begin{defn}[Edge elements]
For each edge $\e \in E(\vec{\Gamma})$, we define an \underline{edge element} $g_{\e}$.
\be
\item
Choose an orthonormal basis $\set{g_{\e}}{\epsilon\in E(\alpha\to \alpha)}$ of $p_{\alpha}\cY p_{\alpha}$ under the inner product
$$
\langle x|y \rangle = \,
\begin{tikzpicture}[baseline = -.1cm]
	\draw (-.3, 0) arc(90:270:.3cm)-- (1.3, -.6) arc(-90:90:.3cm)--(-.3, 0);
	\draw (0, 0.3) arc (180:90:.3cm) --(.7, .6) arc(90:0:.3cm);
	\nbox{unshaded}{(0,0)}{.3}{0}{0}{$x^{\dagger}$}
	\nbox{unshaded}{(1,0)}{.3}{0}{0}{$y$}
\end{tikzpicture}\,,
$$
where each $g_{\e}$ is self-adjoint.

\item
Suppose $\alpha \neq \beta$.
Choose an orthonormal basis $\set{g_{\e}}{\epsilon \in E(\alpha \to \beta)}$ of $p_{\alpha}\cY p_{\beta}$.
We note that $\set{g_{\e}^{\dagger}}{\epsilon\in E(\alpha\to \beta)}$ gives an orthonormal basis of $p_{\beta}\cY p_{\alpha}$.
Thus, for $\e\in E(\beta \rightarrow \alpha)$, we define $g_{\e} = g_{\e^{\op}}^{\dagger}$.
Therefore, if $\e$ is not a loop, then $g_{\e} \in p_{s(\e)}\cY p_{t(\e)}$ and $g_{\e}^{\dagger} = g_{\e^{\op}}$.

\ee
\end{defn}

\begin{rem}
If $\epsilon \in E(\alpha \to \beta)$, we visualize the edge element as
$$
g_\epsilon
=
\begin{tikzpicture}[baseline = -.1cm]
	\draw (-.6, 0)--(2.6, 0);
	\draw (1, 0)--(1, .6);
	\nbox{unshaded}{(0,0)}{.3}{0}{0}{$p_{\alpha}$}
	\nbox{unshaded}{(1,0)}{.3}{0}{0}{$g_\epsilon$}
	\nbox{unshaded}{(2,0)}{.3}{0}{0}{$p_{\beta}$}
	\node at (-.5, .2) {{\scriptsize{$l$}}};	
	\node at (.5, .2) {{\scriptsize{$l$}}};	
	\node at (1.5,.2) {{\scriptsize{$r$}}};
	\node at (2.5,.2) {{\scriptsize{$r$}}};
\end{tikzpicture}
$$
with the source projection on the left and the target projection on the right.
\end{rem}

We now identify $\cY$ with $Y(\vec{\Gamma})$ from Definition \ref{defn:CuntzKriegerBimodule}.
We need the following lemma.

\begin{lem}\label{lem:edgeinnerproduct}
\mbox{}
\be
\item If $\e, \e' \in E(\vec{\Gamma})$, then $\langle g_{\e'}|g_{\e} \rangle_{\cC} = \displaystyle \frac{\delta_{\epsilon,\epsilon'}}{\Tr(p_{t(\e)})}\cdot p_{t(\e)}$.

\item $p_{\alpha}g_{\e} = \delta_{s(\e) = \alpha}g_{\e}$ and $g_{\e}p_{\alpha} = \delta_{t(\e) = \alpha}g_{\e}$.

\ee
\end{lem}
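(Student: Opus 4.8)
\emph{Proof plan.} I would prove (2) first, since the fact that each edge element is supported in a single ``matrix block'' of $\cY$ also underlies the computation in (1). The representatives $p_\alpha$ are pairwise orthogonal projections in $\cB$: two vertices at the same depth are non-isomorphic simple projections in some $\cP_{2n}$ and so lie in distinct matrix summands, while two vertices at different depths lie in distinct summands $\cP_{l,r}$ of $\cB$; in either case $p_\alpha p_\beta=\delta_{\alpha,\beta}p_\alpha$. Next, $g_\e\in p_{s(\e)}\cY p_{t(\e)}$ for every $\e\in E(\vec{\Gamma})$: this is immediate for loops and for the directly chosen edges, and for a reversed edge it follows from $p_\gamma^\dagger=p_\gamma$ together with $(b_1\xi b_2)^\dagger=b_2^\dagger\xi^\dagger b_1^\dagger$, which give $g_\e=g_{\e^{\op}}^\dagger\in(p_{t(\e)}\cY p_{s(\e)})^\dagger=p_{s(\e)}\cY p_{t(\e)}$. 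Combining these two facts, $p_\alpha g_\e=(p_\alpha p_{s(\e)})g_\e=\delta_{s(\e),\alpha}g_\e$ and $g_\e p_\alpha=\delta_{t(\e),\alpha}g_\e$, which is (2).

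For (1), I would first reduce to the case $s(\e)=s(\e')$ and $t(\e)=t(\e')$: using the bimodule identities for $\langle\cdot|\cdot\rangle_\cC$, part (2), and $p_\gamma^\dagger=p_\gamma$,
\[
\langle g_{\e'}|g_\e\rangle_\cC=\delta_{s(\e),s(\e')}\,p_{t(\e')}\,\langle g_{\e'}|g_\e\rangle_\cC\,p_{t(\e)},
\]
which vanishes unless $s(\e)=s(\e')=:\alpha$ and $t(\e)=t(\e')=:\beta$, and in that case lies in $p_\beta\cC p_\beta$. As $p_\beta$ is a minimal projection of $\cC\cong C_0(V(\Gamma))$, we have $p_\beta\cC p_\beta=\C p_\beta$, so in every case $\langle g_{\e'}|g_\e\rangle_\cC=c_{\e,\e'}p_{t(\e)}$ for a scalar $c_{\e,\e'}$ (with $c_{\e,\e'}=0$ whenever $t(\e)\ne t(\e')$). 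Applying the unnormalized trace gives $c_{\e,\e'}\Tr(p_{t(\e)})=\Tr(\langle g_{\e'}|g_\e\rangle_\cC)$, so it remains to show that the scalar pairing $\langle x,y\rangle:=\Tr(\langle x|y\rangle_\cC)$ on $\cY$ --- the one used to normalize the edge elements --- satisfies $\langle g_{\e'},g_\e\rangle=\delta_{\e,\e'}$ for all $\e,\e'$.

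When $\e,\e'$ are loops, or lie in a directly chosen orthonormal basis of some $p_\alpha\cY p_\beta$, this is exactly the orthonormality built into the construction. The only remaining case is $\e,\e'$ both reversed edges, where $g_\e=g_{\e^{\op}}^\dagger$ and $g_{\e'}=g_{(\e')^{\op}}^\dagger$ with $g_{\e^{\op}},g_{(\e')^{\op}}$ lying in a directly chosen orthonormal basis; here I would use
\[
\Tr(\langle g_{(\e')^{\op}}^\dagger\,|\,g_{\e^{\op}}^\dagger\rangle_\cC)=\Tr(\langle g_{\e^{\op}}\,|\,g_{(\e')^{\op}}\rangle_\cC)=\langle g_{\e^{\op}},g_{(\e')^{\op}}\rangle=\delta_{\e^{\op},(\e')^{\op}}=\delta_{\e,\e'}.
\]
The first equality is the crux of the argument: $\Tr(\langle x^\dagger|y^\dagger\rangle_\cC)$ and $\Tr(\langle y|x\rangle_\cC)$ are two closed labelled diagrams, and one is carried to the other by an isotopy on the $2$-sphere (sliding the cap on the distinguished strand around the back of the sphere), so they have the same value by sphericality of $\cP_\bullet$. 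This sphericality input --- equivalently, the statement that $\dagger$ carries an orthonormal basis of $p_\alpha\cY p_\beta$ to an orthonormal basis of $p_\beta\cY p_\alpha$, the one point asserted without proof in the construction of the edge elements --- is the step I expect to be the main obstacle; everything else is bookkeeping with bimodule-linearity and the minimality of the $p_\gamma$.
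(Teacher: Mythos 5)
Your proposal is correct and follows essentially the same route as the paper's proof: show $\langle g_{\e'}|g_{\e}\rangle_{\cC}$ vanishes unless sources and targets agree and otherwise lies in $\C p_{t(\e)}$ by minimality, then determine the scalar by applying $\Tr$ and invoking orthonormality of the edge elements. The only addition is your sphericality/isotopy argument for $\Tr(\langle x^{\dagger}|y^{\dagger}\rangle_{\cC})=\Tr(\langle y|x\rangle_{\cC})$, which supplies the justification for the paper's unproved assertion (made in the definition of the edge elements) that $\dagger$ carries the chosen orthonormal basis of $p_{\alpha}\cY p_{\beta}$ to one of $p_{\beta}\cY p_{\alpha}$ --- a worthwhile detail, but not a different method.
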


\begin{proof}
\mbox{}
\be
\item
First, we note that the $\cC$-valued inner product must be zero unless $s(\e') = s(\e)$.
Let $\alpha = t(\e') = s((\e')^{\op})$ and $\beta = t(\e)$.
Then we have
$$
\langle g_{\e'}|g_{\e} \rangle_{\cC}
=
\begin{tikzpicture}[baseline = -.1cm]
	\draw (-.6, 0)--(1.6, 0);
	\draw (0, 0.3) arc (180:90:.3cm) --(.7, .6) arc(90:0:.3cm);
	\nbox{unshaded}{(0,0)}{.3}{0}{0}{$g_{\e'}^{\dagger}$}
	\nbox{unshaded}{(1,0)}{.3}{0}{0}{$g_{\e}$}
	\node at (.5, .2) {{\scriptsize{$l$}}};	
	\node at (-.5,.2) {{\scriptsize{$r'$}}};
	\node at (1.5,.2) {{\scriptsize{$r$}}};
\end{tikzpicture}
\in p_\alpha \cC p_\beta
=\begin{cases}
(0) &\text{if }\alpha \neq \beta \\
\spann\{p_\alpha\} & \text{if }\alpha=\beta.
\end{cases}
$$
Assuming $\alpha=\beta$, by the orthonormality of the edge elements in $p_{s(\epsilon)} \cY p_\alpha$,
we observe
$$
\begin{tikzpicture}[baseline = -.1cm]
	\draw (-.3, 0) arc(90:270:.3cm)-- (1.3, -.6) arc(-90:90:.3cm)--(-.3, 0);
	\draw (0, 0.3) arc (180:90:.3cm) --(.7, .6) arc(90:0:.3cm);
	\nbox{unshaded}{(0,0)}{.3}{0}{0}{$g_{\e'}^{\dagger}$}
	\nbox{unshaded}{(1,0)}{.3}{0}{0}{$g_{\e}$}
	\node at (.5, -.4) {{\scriptsize{$r$}}};
	\node at (.5, .2) {{\scriptsize{$l$}}};	
\end{tikzpicture}\,
= \delta_{\epsilon,\epsilon'},
$$
and the result follows.

\item
This is immediate from the definition of $g_{\e}$.
\qedhere
\ee
\end{proof}

\begin{prop}\label{prop:CKBim}
There is a spatial isomorphism $\cY \cong Y(\vec{\Gamma})$ of Hilbert $\cC-\cC$ bimodules.
\end{prop}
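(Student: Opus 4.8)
The plan is to write down the evident candidate unitary on spanning vectors and check that it is an isometric $\cC$--$\cC$ bimodule map with dense range, quoting Lemma~\ref{lem:edgeinnerproduct} for the compatibilities and the preceding dimension count for surjectivity. Recall that $V(\vec{\Gamma})=V(\Gamma)$, so the coefficient algebra $\cC_{\vec{\Gamma}}=C_0(V(\vec{\Gamma}))$ of $Y(\vec{\Gamma})$ is identified with $\cC\cong C_0(V(\Gamma))$.

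First I would define a map $\Phi$ on the dense subspace of finitely supported functions in $Y(\vec{\Gamma})$ by $\Phi(\chi_\epsilon)=\Tr(p_{t(\epsilon)})^{1/2}\,g_\epsilon$ for $\epsilon\in E(\vec{\Gamma})$, extended linearly. The normalizing scalar is forced: by Lemma~\ref{lem:edgeinnerproduct}(1),
$$
\langle \Phi(\chi_{\epsilon'})|\Phi(\chi_\epsilon)\rangle_{\cC}=\Tr(p_{t(\epsilon')})^{1/2}\Tr(p_{t(\epsilon)})^{1/2}\,\langle g_{\epsilon'}|g_\epsilon\rangle_{\cC}=\delta_{\epsilon,\epsilon'}\,p_{t(\epsilon)}=\langle \chi_{\epsilon'}|\chi_\epsilon\rangle_{\cC},
$$
using that $\Tr(p_{t(\epsilon)})$ is a positive real; by sesquilinearity $\Phi$ then preserves the $\cC$-valued inner product on all finitely supported functions. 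Next, Lemma~\ref{lem:edgeinnerproduct}(2) gives $p_\alpha g_\epsilon=\delta_{s(\epsilon)=\alpha}g_\epsilon$ and $g_\epsilon p_\alpha=\delta_{t(\epsilon)=\alpha}g_\epsilon$, which are exactly the formulas for the $\cC$-actions on $\chi_\epsilon$ in $Y(\vec{\Gamma})$; since finite linear combinations of the $p_\alpha$ are dense in $\cC$, it follows that $\Phi$ is a $\cC$--$\cC$ bimodule map. As $\Phi$ preserves the inner product it is isometric for the Hilbert-module norms $\|\xi\|^2=\|\langle\xi|\xi\rangle_{\cC}\|_{\cC}$, so it extends to an isometric $\cC$--$\cC$ bimodule map $\overline{\Phi}\colon Y(\vec{\Gamma})\to\cY$ with closed range.

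It remains to see $\overline{\Phi}$ is onto. Recall $\cY$ is the closed linear span of the elements $p_\alpha x p_\beta$ with $x\in\cX$ and $\alpha,\beta\in V(\Gamma)$, that each corner $p_\alpha\cY p_\beta$ is finite dimensional, and that by construction $\{g_\epsilon\mid\epsilon\in E(\alpha\to\beta)\}$ is an (orthonormal) basis of $p_\alpha\cY p_\beta$. Hence $\spann\set{g_\epsilon}{\epsilon\in E(\vec{\Gamma})}=\bigoplus_{\alpha,\beta}p_\alpha\cY p_\beta$ is dense in $\cY$ and lies in the range of $\overline{\Phi}$, so $\overline{\Phi}$ is surjective, hence a spatial (unitary) isomorphism of Hilbert $\cC$--$\cC$ bimodules. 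The only points needing care are the choice of the normalizing scalar and the density/dimension bookkeeping, both of which are already contained in Lemma~\ref{lem:edgeinnerproduct} and the dimension formula $\dim(p_\alpha\cY p_\beta)=|E(\alpha\to\beta)|$ recorded just before it; I do not expect a genuine obstacle.
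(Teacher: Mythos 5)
Your proposal is correct and follows essentially the same route as the paper: the authors also define the map on indicator functions by $\chi_\epsilon \mapsto \sqrt{\Tr(p_{t(\epsilon)})}\,g_\epsilon$ and invoke Lemma \ref{lem:edgeinnerproduct} for preservation of the $\cC$-valued inner product, the bimodule structure, and dense range. Your write-up merely spells out the normalization check and the density/dimension bookkeeping that the paper leaves implicit.
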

\begin{proof}
We construct a unitary $U: Y(\vec{\Gamma}) \rightarrow \cY$.
We initially define $U$ on finitely supported functions by the formula $U(\delta_{\e}) = \sqrt{\Tr(p_{t(\e)})}\cdot g_{\e}$.
Lemma \ref{lem:edgeinnerproduct} shows that $U$ preserves the $\cC$-valued inner products, the $\cC-\cC$ bimodule structure, and has dense range.
Therefore, $U$ can be extended to $Y(\vec{\Gamma})$, giving the isomorphism.
\end{proof}

\subsection{Induced isomorphisms}\label{sec:Induced}

\begin{defn}\label{defn:CompressedFock}
Define $\cF(\Gamma) = \overline{\cC\FP{}\cC}^{\|\cdot\|_{\FP{}}}$ and $\cY_{m} =  \overline{\cC\cX_{m}\cC}^{\|\cdot\|_{\cX_m}}$. 
Using the convention that $\cY = \cY_{1}$, we have $\cF(\Gamma) = \bigoplus_{m=0}^{\infty} \cY_{m}$.
\end{defn}

We now show $\cF(\Gamma)$ (as a $\cC-\cC$ bimodule) retains the Fock space structure of $\FP{}$.

\begin{prop}\label{prop:CompressedFock}
$\cY_{n} \cong \bigotimes_{\cC}^{n}\cY (\cong \bigotimes_{\cC}^{n}Y(\vec{\Gamma}))$, so $\cF(\Gamma) \cong \cF(Y(\vec{\Gamma}))$.
\end{prop}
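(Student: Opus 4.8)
The plan is to bootstrap from the two structural results already in hand: Proposition~\ref{prop:OperatorValuedFock}, which identifies $\cX_n$ with $\bigotimes_\cB^n\cX$ via an explicit $\cB$--$\cB$ bimodule unitary $U_n$ (horizontal concatenation of boxes), and Proposition~\ref{prop:CKBim}, which identifies $\cY=\overline{\cC\cX\cC}^{\|\cdot\|}$ with the Cuntz--Krieger bimodule $Y(\vec\Gamma)$. The missing link is a ``compression commutes with interior tensor powers'' lemma: $\overline{\cC\bigl(\bigotimes_\cB^n\cX\bigr)\cC}^{\|\cdot\|}\cong\bigotimes_\cC^n\cY$. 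Granting this, the chain $\cY_n=\overline{\cC\cX_n\cC}^{\|\cdot\|}\xrightarrow{\,U_n\,}\overline{\cC(\bigotimes_\cB^n\cX)\cC}^{\|\cdot\|}\cong\bigotimes_\cC^n\cY\cong\bigotimes_\cC^n Y(\vec\Gamma)$ finishes it: the first arrow because $U_n$ intertwines the $\cB$-actions, hence restricts to the compressions $P_N(-)P_N$ and their inductive limit; the last isomorphism by functoriality of the interior tensor product applied to Proposition~\ref{prop:CKBim}. Summing over $n$ then gives $\cF(\Gamma)=\bigoplus_n\cY_n\cong\bigoplus_n\bigotimes_\cC^n Y(\vec\Gamma)=\cF(Y(\vec\Gamma))$.

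For the compression--tensor lemma I would first record that the $\cB$-valued inner product on $\cX$ restricts to a $\cC$-valued inner product on $\cY$: since each $p_\alpha$ is a self-adjoint projection, a short computation from the Hilbert bimodule axioms gives $\langle P_N\xi P_N\mid P_N\eta P_N\rangle_\cB\in P_N\cB P_N$ for $\xi,\eta\in\cX$, so on $\cY$ the form takes values in $\cC$ and agrees with the ambient $\cB$-valued form (this is consistent with Lemma~\ref{lem:edgeinnerproduct}). Consequently the nested inner-product formula for $n$-fold interior tensor products yields the same element whether evaluated in $\bigotimes_\cC^n\cY$ or in $\bigotimes_\cB^n\cX$, so the natural map $\Phi_n\colon\bigotimes_\cC^n\cY\to\overline{\cC(\bigotimes_\cB^n\cX)\cC}^{\|\cdot\|}$, $\eta_1\otimes_\cC\cdots\otimes_\cC\eta_n\mapsto\eta_1\otimes_\cB\cdots\otimes_\cB\eta_n$, is well defined and isometric.

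The one substantive point is that $\Phi_n$ has dense range, and here I would use the resolution of the identity $p_\alpha=\sum_{\beta\sim\alpha}\sum_{i}\ketbra{g(\beta)_i}{g(\beta)_i}$ in $\cK(\cX)$ from the proposition establishing $\cB\subset\cK(\cX)$, together with the observation that the vectors $g(\beta)_i$ there are, up to the normalization $\sqrt{\Tr(p_{t(\e)})}$, precisely the edge elements of $\cY$. Given $\xi_1,\dots,\xi_n\in\cX$, I insert this resolution of $p_{\alpha}$ to the left of $\xi_1$, then the corresponding resolution to the left of $\langle g(\beta_1)_{i_1}\mid\xi_1\rangle_\cB\,\xi_2$, and so on down the tensor; because $\Gamma$ is locally finite each of these sums is \emph{finite}, so $p_\alpha\,\xi_1\otimes_\cB\cdots\otimes_\cB\xi_n\,p_\beta$ becomes a finite linear combination of elementary tensors $g_{\e_1}\otimes_\cB\cdots\otimes_\cB g_{\e_n}$ of edge elements, hence lies in the range of $\Phi_n$. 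Since such elements span a dense subspace of $\overline{\cC(\bigotimes_\cB^n\cX)\cC}^{\|\cdot\|}\cong\cY_n$, the map $\Phi_n$ is onto, and therefore unitary. I expect this density step to be the only real obstacle; an alternative that sidesteps the abstract lemma is to define $U_n^\cC\colon\bigotimes_\cC^n\cY\to\cY_n$ directly by concatenating boxes carrying the source/target projections on their sides (a literal restriction of $U_n$), verify it preserves the $\cC$-valued inner product exactly as in the proof of Proposition~\ref{prop:OperatorValuedFock}, and establish surjectivity by the same resolution-of-identity argument.
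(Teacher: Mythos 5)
Your proposal is correct and follows essentially the same route as the paper: the paper likewise proves surjectivity of the natural map $\cY\otimes_\cC\cY\to\cY_2$ by writing $x\in p_\alpha\cY_2 p_\beta$ as a concatenation of two boxes $y,z\in\cX$ (via Proposition \ref{prop:OperatorValuedFock}) and inserting a finite partition of the middle identity $\sum_{\gamma,j}v_j^\gamma p_\gamma(v_j^\gamma)^*$ into minimal projections equivalent to the chosen representatives $p_\gamma$, then inducting on $n$. Your iterated insertion of the rank-one resolution $p_\alpha=\sum\ketbra{g(\beta)_i}{g(\beta)_i}$ is the same partition-of-unity device in a slightly different guise, and the remaining points you spell out (isometry from compatibility of the $\cC$- and $\cB$-valued inner products, local finiteness of $\Gamma$ guaranteeing the sums are finite) are exactly what the paper leaves implicit.
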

\begin{proof}
We show $\cY_{2} \cong \cY \otimes_{\cC} \cY$, and the proof of the general statement follows by induction.
Let $x \in p_{\alpha}\cY_{2} p_{\beta}$.
By Proposition \ref{prop:OperatorValuedFock} and the discussion thereafter, we see that
$$
x =
\begin{tikzpicture}[baseline = -.1cm]
	\draw (-.6, 0)--(1.6, 0);
	\draw (0, 0)--(0, .6);
	\draw (1, 0)--(1, .6);
	\nbox{unshaded}{(0,0)}{.3}{0}{0}{$y$}
	\nbox{unshaded}{(1,0)}{.3}{0}{0}{$z$}
	\node at (1.5, .2) {{\scriptsize{$r$}}};
	\node at (0.5, .2) {\scriptsize{$n$}};
	\node at (-.5, .2) {{\scriptsize{$l$}}};	
\end{tikzpicture}
$$
for some (non-unique) $y,z\in \cX$.
There is some projection $P\in \cP_{2n}$ such that
$$
x =
\begin{tikzpicture}[baseline = -.1cm]
	\draw (-.6, 0)--(2.6, 0);
	\draw (0, 0)--(0, .6);
	\draw (2, 0)--(2, .6);
	\nbox{unshaded}{(0,0)}{.3}{0}{0}{$y$}
	\nbox{unshaded}{(1,0)}{.3}{0}{0}{$P$}
	\nbox{unshaded}{(2,0)}{.3}{0}{0}{$z$}
	\node at (2.5, .2) {{\scriptsize{$r$}}};
	\node at (1.5, .2) {\scriptsize{$n$}};
	\node at (0.5, .2) {\scriptsize{$n$}};
	\node at (-.5, .2) {{\scriptsize{$l$}}};	
\end{tikzpicture}
$$
Now $P$ is a sum of projections $P_{\gamma}\in \cP_{2n}$ where the $\gamma$'s are vertices connected to both $\alpha$ and $\beta$.
Thus each $P_\gamma$ is a sum of minimal projections equivalent to $p_\gamma$ in $\cB$.
For each $\gamma$, choose partial isometries $v^{\gamma}_{1}, ..., v^{\gamma}_{N(\gamma)}$ such that
$$
(v^{\gamma}_{j})^{*}v^{\gamma}_{j} = p_{\gamma} \text{ and }
\sum_{j=1}^{N(\gamma)}v^{\gamma}_{j}(v^{\gamma}_{j})^{*}
\left(=
\sum_{j=1}^{N(\gamma)}v^{\gamma}_{j}p_{\gamma}(v^{\gamma}_{j})^{*}
\right)
=
P_{\gamma}.
$$
Inserting the latter expression in for each $P_{\gamma}$ gives the result.
\end{proof}

We now define our compressed Toeplitz algebra.

\begin{defn}
We define the Toeplitz algebra of $\Gamma$ by $\cT(\Gamma) = \overline{\cC \TP{} \cC}^{\|\cdot\|}$.  
Notice that these are operators on $\cF(\Gamma)$.
\end{defn}

The following proposition, whose proof directly follows from Propositions \ref{prop:CKBim} and \ref{prop:CompressedFock}, shows that the definition of $\cT(\Gamma)$ makes sense.

\begin{prop}\label{prop:CompressedToeplitz1}
$\cT(\Gamma)$ is generated as a $C^{*}$-algebra by $\cC$ and $\set{L_{+}(y)}{y \in \cY}$.
\end{prop}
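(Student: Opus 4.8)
The plan is to unwind the definition $\cT(\Gamma) = \overline{\cC\,\TP{}\,\cC}^{\|\cdot\|}$ and push the compression $\cC = \varinjlim P_n\cB P_n$ through the generators of $\TP{}$, which by Definition are $\cB$ together with the creation operators $L_+(x)$ for $x\in\cX$. First I would recall from Remark \ref{rem:dot} that a general element of $\TP{}$ is a norm-limit of linear combinations of the operators $L_{\err}(x)$, which in turn factor as products $L_+(\cdot)\cdots L_+(\cdot)\,b\,L_-(\cdot)\cdots L_-(\cdot)$ with $b\in\cB$; so it suffices to understand the compression $P_n L_+(x) P_m$ for $x\in\cX$ and the compressions $P_n b P_m$ for $b\in\cB$. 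The latter land in $\cC$ by construction of $\cC$ as $\varinjlim P_n\cB P_n$.

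The key computation is that $P_n L_+(x) P_m = L_+(P_n\, x\, P_m)$ as operators, where $P_n\,x\,P_m\in\cC\cX\cC\subseteq\cY$; this is immediate from the diagrammatic formula for $L_+(x)$ in Definition \ref{defn:CreationDiagram}, since the left-hand projection $P_n$ caps the ``output'' strand/boxes and the creation operator is $\cB$-bilinear in the appropriate sense ($\cB(a)L_+(x)\cB(b) = L_+(a\cdot x\cdot b)$, a relation already used in the proof of Theorem \ref{thm:functorial}). Using a bounded approximate identity argument for $\cC$ (note $\cC\cong C_0(V(\Gamma))$, so $\{P_n\}$ is an approximate identity), any element $z\in\overline{\cC\,\TP{}\,\cC}$ is a norm-limit of $P_n z P_n$, and expanding $z$ as a limit of sums of the $L_{\err}$-type words and sliding the $P_n$'s inward shows $z$ lies in the $C^*$-algebra generated by $\cC$ and $\{L_+(y): y\in\cY\}$. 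Conversely that $C^*$-algebra is visibly contained in $\overline{\cC\,\TP{}\,\cC}$, since each $L_+(y)$ with $y\in\cY = \overline{\cC\cX\cC}$ is a limit of $P_n L_+(x) P_m \in \cC\,\TP{}\,\cC$, and $\cC\subseteq\overline{\cC\,\TP{}\,\cC}$.

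I expect the main obstacle to be bookkeeping rather than anything deep: one must be careful that the compressions $P_n(\cdot)P_m$ interact correctly with the annihilation operators $L_-$ appearing inside the $L_{\err}$ words (so that $P_n L_{\err}(x) P_m$ is again a product of $L_+(\cdot)$'s over $\cY$, a $\cC$-element, and $L_+(\cdot)^*$'s over $\cY$), and that the operators in question, viewed on $\FP{}$, genuinely restrict to $\cF(\Gamma) = \overline{\cC\FP{}\cC}$ — this last point is exactly what Propositions \ref{prop:CKBim} and \ref{prop:CompressedFock} guarantee, since they identify $\cF(\Gamma)$ with the Fock space of $\cY$ and hence ensure $\cC$ and the $L_+(y)$ preserve it. Given those two propositions, the remaining argument is the routine ``compress each generator and take closures'' verification sketched above, so I would present it compactly: compression sends generators of $\TP{}$ to generators of the claimed algebra, and the approximate identity $\{P_n\}\subset\cC$ lets one recover a dense subset of $\overline{\cC\,\TP{}\,\cC}$ from those compressed generators.
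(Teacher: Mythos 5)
The paper offers no written proof here (it asserts the result ``directly follows'' from Propositions \ref{prop:CKBim} and \ref{prop:CompressedFock}), so I can only judge your sketch on its own terms. Your easy direction is fine, and the identity $P_n L_+(x) P_m = L_+(P_n x P_m)$, which follows from $a L_+(x) b = L_+(a\cdot x\cdot b)$, is correct. The gap is in the hard inclusion $\overline{\cC\,\TP{}\,\cC}\subseteq C^*(\cC, L_+(\cY))$, at exactly the step you describe as ``sliding the $P_n$'s inward.'' Compressing a word such as $P_n L_+(x_1)L_+(x_2)\cdots b\cdots L_+(x_k')^* P_m$ only affects the \emph{outermost} tensor legs: you get $L_+(P_n x_1)L_+(x_2)\cdots$, where $P_n x_1\in\cC\cX$ but the inner vectors $x_2,\dots$ and the middle coefficient $b\in\cB$ remain uncompressed, so none of the inner factors lies in $\cY=\overline{\cC\cX\cC}$. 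You cannot repair this by inserting further $P_m$'s between consecutive factors, since that changes the operator, and you cannot let $P_m\to 1$ because $\{P_n\}$ is an approximate unit for $\cC$ only, not for $\cB$ (strictly, $P_n\to\sum_\alpha p_\alpha$, a proper projection in $M(\cB)$). Consequently your assertion that $P_n L_{\err}(x)P_m$ ``is again a product of $L_+(\cdot)$'s over $\cY$, a $\cC$-element, and $L_+(\cdot)^*$'s over $\cY$'' is false as stated; it is only a norm limit of finite \emph{sums} of such products, and producing that decomposition is the entire content of the proposition, not bookkeeping.

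The missing idea is the fullness of $\cC$ in $\cB$: each block of $\cB\cong\bigoplus_\alpha\cK$ has matrix units $e^\alpha_{ij}$ with $e^\alpha_{11}=p_\alpha\in\cC$, so $e^\alpha_{ii}=e^\alpha_{i1}p_\alpha e^\alpha_{1i}$ and $\overline{\cB\cC\cB}=\cB$. Combining this with the identity $L_+(\xi b)L_+(\eta)^*=L_+(\xi)L_+(\eta b^*)^*$ (move $b\in\cB$ across the middle), one rewrites a compressed Wick-ordered word $L^{(m)}_+(\zeta)L^{(n)}_+(\eta)^*$ with $\zeta\in\cC\cX_m$, $\eta\in\cC\cX_n$ as a norm limit of sums $\sum L^{(m)}_+(\zeta e^\alpha_{i1})L^{(n)}_+(\eta e^\alpha_{i1})^*$ with both entries now in $\cY_m$ and $\cY_n$; then Proposition \ref{prop:CompressedFock} (i.e.\ $\cY_m\cong\bigotimes_\cC^m\cY$, proved by resolving a middle projection $P\in\cP_{2k}$ as $\sum_j v_j p_\gamma v_j^*$) converts these multi-fold creation operators into limits of sums of products $L_+(y_1)\cdots L_+(y_m)$ with $y_i\in\cY$. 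You do cite Proposition \ref{prop:CompressedFock}, but only to justify that the operators preserve $\cF(\Gamma)$ (which is not actually needed for the statement, since everything lives in $\cL(\FP{})$); its real role is to supply the insertion-of-$p_\gamma$ decomposition that your sketch omits.
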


This proposition, together with Proposition \ref{prop:CKBim}, shows that $\cT(\Gamma) \cong \cT(Y(\vec{\Gamma}))$.  Since the edge elements generate $\cY$ as a Banach space, the following proposition immediately follows from Proposition \ref{prop:CompressedToeplitz1}.

\begin{prop}\label{prop:CompressedToeplitz2}
$\cT(\Gamma)$ is generated as a $C^{*}$ algebra by $\cC$ and $\set{L_{+}(g_{\epsilon})}{\epsilon \in E(\vec{\Gamma})}$.
\end{prop}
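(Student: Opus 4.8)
The plan is to obtain this as an immediate corollary of Proposition~\ref{prop:CompressedToeplitz1}, the only extra ingredient being that the edge elements span a dense $\C$-subspace of $\cY$. By Proposition~\ref{prop:CompressedToeplitz1}, $\cT(\Gamma)$ is the $C^*$-algebra generated by $\cC$ together with $\set{L_+(y)}{y\in\cY}$. Let $\cA$ denote the $C^*$-subalgebra of $\cT(\Gamma)$ generated by $\cC$ and $\set{L_+(g_\epsilon)}{\epsilon\in E(\vec\Gamma)}$; since each $g_\epsilon\in\cY$ we automatically have $\cA\subseteq\cT(\Gamma)$, so it suffices to show that $L_+(y)\in\cA$ for every $y\in\cY$.

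First I would record that $\bigcup_n P_n\cX P_n$ is dense in $\cY$, since $\cY=\varinjlim P_n\cX P_n$ by definition. Because $\Gamma$ is locally finite and $\Gamma(n)$ is its truncation to depth $n$, the vertex set $V(\Gamma(n))$ is finite, and $P_n\cX P_n=\bigoplus_{\alpha,\beta\in V(\Gamma(n))}p_\alpha\cX p_\beta$ is a finite direct sum of finite-dimensional spaces (each $p_\alpha\cY p_\beta=p_\alpha\cX p_\beta$ is finite dimensional of dimension $|E(\alpha\to\beta)|$). By construction, for each pair $\alpha,\beta$ the edge elements $\set{g_\epsilon}{\epsilon\in E(\alpha\to\beta)}$ form an orthonormal basis of $p_\alpha\cY p_\beta$; this is exactly where Lemma~\ref{lem:edgeinnerproduct} and the dimension count are used. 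Consequently every element of $\bigcup_n P_n\cX P_n$ is a finite $\C$-linear combination of edge elements, so the $\C$-linear span of $\set{g_\epsilon}{\epsilon\in E(\vec\Gamma)}$ is dense in $\cY$.

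Next I would invoke that the map $x\mapsto L_+(x)$ is $\C$-linear and isometric from $\cY$ into $\cL(\cF(\Gamma))$, the isometry coming from $L_+(y)^*L_+(y)=\langle y | y\rangle_\cC$ as in Definition~\ref{def:Pimsner-Toeplitz}. Hence for a finite combination $y=\sum_j c_j g_{\epsilon_j}$ we get $L_+(y)=\sum_j c_j L_+(g_{\epsilon_j})\in\cA$, and by norm-continuity of $L_+$ together with closedness of $\cA$ this extends to $L_+(y)\in\cA$ for all $y\in\cY$. Therefore $\cA$ contains $\cC$ and $\set{L_+(y)}{y\in\cY}$, so $\cA=\cT(\Gamma)$, as claimed.

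There is no substantive obstacle here: the real work has already been done in showing that the edge elements are orthonormal bases of the finite-dimensional corners $p_\alpha\cY p_\beta$ and that these corners exhaust $\cY$. The single point that merits a sentence rather than a bare assertion is the norm-continuity (in fact isometry) of $y\mapsto L_+(y)$, which is what licenses passing from the dense $\C$-span of edge elements to all of $\cY$.
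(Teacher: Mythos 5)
Your proof is correct and follows the same route as the paper: the paper derives the proposition immediately from Proposition \ref{prop:CompressedToeplitz1} together with the observation that the edge elements generate $\cY$ as a Banach space. You have merely spelled out the details the paper leaves implicit (the finite-dimensional corners $p_\alpha\cY p_\beta$ spanned by edge elements, and the isometry $\|L_+(y)\|=\|y\|_\cY$ that lets one pass from the dense span to all of $\cY$), which is exactly the right justification.
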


We will now explore the Toeplitz-structure of $\cT(\Gamma)$.

\begin{defn}
For $\e \in E(\vec{\Gamma})$, set $S_{\e} = \sqrt{\Tr(p_{t(\epsilon)})}L_{+}(g_{\epsilon})$.
\end{defn}

\begin{prop}\label{prop:EdgeRelations}
\mbox{}
\be
\item
If $\e,\e'\in E(\vec{\Gamma})$, then $S_{\e'}^{*}S_{\e} = \delta_{\epsilon,\epsilon'}p_{t(\epsilon)}$.

\item
$\D
\sum_{s(\e) = \alpha} S_{\e}S_{\e}^{*} = p_{\alpha} - | p_{\alpha}\rangle\langle p_{\alpha} |.
$
\ee
\end{prop}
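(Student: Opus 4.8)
The plan is to establish both relations by direct computation on the Fock space $\cF(\Gamma) = \bigoplus_{m\ge 0}\cY_m$, using three ingredients that are already available: the Pimsner--Toeplitz identities for the Fock-space action of $L_{+}$ and its adjoint from Definition \ref{def:Pimsner-Toeplitz} (in particular $L_{+}(\eta)^{*}L_{+}(\xi) = \langle \eta|\xi\rangle_{\cC}$), the inner-product formula of Lemma \ref{lem:edgeinnerproduct}, and the explicit rank-one decomposition $p_{\alpha} = \sum_{\beta \sim \alpha}\sum_{i}\ketbra{g(\beta)_{i}}{g(\beta)_{i}}$ of $p_{\alpha}$ (as an operator on $\cX$) obtained in the proof that $\cB \subset \cK(\cX)$, together with the tensor-product description $\cY_{m}\cong \cY\otimes_{\cC}\bigl(\bigotimes_{\cC}^{m-1}\cY\bigr)$ from Proposition \ref{prop:CompressedFock}.

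For (1) I would simply expand $S_{\e'}^{*}S_{\e} = \sqrt{\Tr(p_{t(\e')})\Tr(p_{t(\e)})}\,L_{+}(g_{\e'})^{*}L_{+}(g_{\e}) = \sqrt{\Tr(p_{t(\e')})\Tr(p_{t(\e)})}\,\langle g_{\e'}|g_{\e}\rangle_{\cC}$ and substitute Lemma \ref{lem:edgeinnerproduct}(1). The Kronecker delta $\delta_{\e,\e'}$ forces $t(\e) = t(\e')$, so the two square roots combine to $\Tr(p_{t(\e)})$, which cancels the denominator, leaving exactly $\delta_{\e,\e'}\,p_{t(\e)}$.

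For (2) I would argue graded summand by graded summand. Writing $S_{\e}S_{\e}^{*} = \Tr(p_{t(\e)})\,L_{+}(g_{\e})L_{+}(g_{\e})^{*}$, the formula for $L_{+}(x)^{*}$ in Definition \ref{def:Pimsner-Toeplitz} shows this operator annihilates the degree-zero part $\cY_{0} = \cC$ and, on $\cY_{m}\cong \cY\otimes_{\cC}\bigl(\bigotimes_{\cC}^{m-1}\cY\bigr)$ for $m\ge 1$, acts as $\bigl(\Tr(p_{t(\e)})\ketbra{g_{\e}}{g_{\e}}\bigr)\otimes\id$ on the leftmost tensorand. Summing over the finitely many edges $\e$ with $s(\e)=\alpha$, I would note that for each $\beta\sim\alpha$ the rescaled family $\{\sqrt{\Tr(p_{t(\e)})}\,g_{\e} : \e\in E(\alpha\to\beta)\}$ is precisely an admissible choice of orthonormal basis of $p_{\alpha}\cX p_{\beta}$ in the sense of that compactness proof — the factor $\sqrt{\Tr(p_{t(\e)})}$ is exactly what converts the $\cC$-valued normalization of Lemma \ref{lem:edgeinnerproduct}(1) into $\langle g(\beta)_{i}|g(\beta')_{j}\rangle_{\cB} = \delta_{ij}\delta_{\beta\beta'}p_{\beta}$ — so the decomposition $p_{\alpha} = \sum_{\beta\sim\alpha}\sum_{i}\ketbra{g(\beta)_{i}}{g(\beta)_{i}}$ gives that $\sum_{s(\e)=\alpha}S_{\e}S_{\e}^{*}$ acts as left multiplication by $p_{\alpha}$ on every $\cY_{m}$ with $m\ge 1$ and as $0$ on $\cY_{0}$. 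Finally I would identify the defect: the rank-one operator $\ketbra{p_{\alpha}}{p_{\alpha}}$ on $\cF(\Gamma)$, with $p_{\alpha}$ viewed as a vector in $\cY_{0} = \cC$, kills $\cY_{m}$ for all $m\ge 1$ by the grading and, on $\cY_{0}$, equals left multiplication by $p_{\alpha}$ because there $\langle a|b\rangle_{\cC}=a^{*}b$ and $p_{\alpha}$ is a projection; hence $p_{\alpha} - \ketbra{p_{\alpha}}{p_{\alpha}}$ and $\sum_{s(\e)=\alpha}S_{\e}S_{\e}^{*}$ agree on every graded summand, which is (2).

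I do not expect a genuinely hard step here; the only thing requiring care is the bookkeeping — keeping the grading of $\cF(\Gamma)$ straight so that the term $\ketbra{p_{\alpha}}{p_{\alpha}}$ is recognized as exactly accounting for the failure of the creation--annihilation sum to reach the vacuum, and checking that the normalizing constants $\Tr(p_{t(\e)})$ built into the definition of $S_{\e}$ are precisely those that turn Lemma \ref{lem:edgeinnerproduct} and the $\cB$-compact decomposition of $p_{\alpha}$ into the stated Cuntz--Krieger relations.
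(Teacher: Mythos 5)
Your proof is correct and follows essentially the same route as the paper: part (1) is the identity $L_{+}(y)^{*}L_{+}(x)=\langle y|x\rangle_{\cC}$ combined with Lemma \ref{lem:edgeinnerproduct}, and part (2) is a graded check on $\cF(\Gamma)$, with both sides vanishing on $\cY_{0}$ and agreeing on degree $\geq 1$. The only cosmetic difference is that you package the degree-$\geq 1$ computation via the rank-one decomposition of $p_{\alpha}$ from the proof that $\cB\subset\cK(\cX)$ (correctly matching normalizations), whereas the paper verifies the same orthogonality relations directly on spanning vectors of the form $g_{\e'}\otimes\eta$.
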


\begin{proof}
Using the identity $L_{+}(y)^{*}L_{+}(x) = \langle y|x \rangle_{\cC}$, (1) follows from (1) of Lemma \ref{lem:edgeinnerproduct}.
For part (2), we first note that if $\xi \in \cC$, then applying either side of the equation to $\xi$ produces zero.  We may now assume that
$$
\xi =
\begin{tikzpicture}[baseline = -.1cm]
	\draw (-.6, 0)--(1.6, 0);
	\draw (0, 0)--(0, .6);
    \draw (1, 0)--(1, .6);
	\nbox{unshaded}{(0,0)}{.3}{0}{0}{$g_{\e'}$}
	\nbox{unshaded}{(1,0)}{.3}{0}{0}{$\eta$}
\end{tikzpicture}\, .
$$
If $\e'$ is not any edge represented in the sum, then $p_{\alpha}g_{\e'} = 0$ and part (1) implies that the sum annihilates $g_{\e'}$ as well.  If $\e'$ is an edge represented in the sum, then by parts (1) and (2) and the proof of Lemma \ref{lem:edgeinnerproduct},
$$
\left(\sum_{s(\e)
=
\alpha}S_{\e}S_{\e}^{*}\right)\cdot \xi
=
S_{\e'}S_{\e'}\xi
=
\begin{tikzpicture}[baseline = -.1cm]
	\draw (-.6, 0)--(3.2, 0);
	\draw (0, 0.3) arc (180:90:.3cm) --(.7, .6) arc(90:0:.3cm);
	\nbox{unshaded}{(0,0)}{.3}{0}{0}{$g_{\e'}$}
	\nbox{unshaded}{(1,0)}{.3}{0}{.6}{$p_{t(\e')}$}
	\nbox{unshaded}{(2.6,0)}{.3}{0}{0}{$\eta$}
\end{tikzpicture}\, = \xi.
$$
We also note that $p_{\alpha}\xi = \xi$ and $p_{\alpha}\langle p_{\alpha} | \xi \rangle = 0$.  This completes the proof.
\end{proof}

Note that $\cT(\Gamma)$ is isomorphic to the universal Toeplitz-Cuntz-Krieger $C^*$-algebra $\cT_{\vec{\Gamma}}$ \cite{MR1722197}.
Let $\cK(\Gamma) = \cK(\cF(\Gamma))$ be the $C^{*}$ algebra generated by the ``rank one" operators $| \xi \rangle\langle \eta |$.  
As in Section \ref{sec:OperatorCuntz}, we see that $\cT(\Gamma)$ contains $\cK(\Gamma)$ as an ideal.  
We define $\cO(\Gamma) = \cT(\Gamma)/\cK(\Gamma)$.  
We have the following theorem:

\begin{thm}\label{thm:CuntzGraph}
$\cO(\Gamma)$ is isomorphic to the Cuntz-Krieger graph $C^{*}$-algebra $\cO_{\vec{\Gamma}}$.
\end{thm}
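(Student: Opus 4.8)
The plan is to obtain the isomorphism directly from the spatial picture already in place. Recall that Proposition \ref{prop:CKBim} gives a unitary of Hilbert $\cC$-$\cC$ bimodules $Y(\vec{\Gamma})\cong\cY$, and that, combined with Proposition \ref{prop:CompressedFock}, this assembles into a Hilbert-space unitary $U:\cF(Y(\vec{\Gamma}))\to\cF(\Gamma)$ intertwining the Pimsner-Toeplitz representation of $\cT(Y(\vec{\Gamma}))=\cT_{\vec{\Gamma}}$ on $\cF(Y(\vec{\Gamma}))$ with the compressed representation of $\cT(\Gamma)$ on $\cF(\Gamma)$; this is exactly the identification $\cT(\Gamma)\cong\cT(Y(\vec{\Gamma}))$ noted before the statement, i.e. $\Ad(U^*)$ carries $\cT(\Gamma)$ onto $\cT(Y(\vec{\Gamma}))$, $\cC$ onto $C_0(V(\vec{\Gamma}))$, and $L_+(g_\epsilon)$ onto the corresponding creation operator. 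First I would observe that $\Ad(U^*)$, being a $*$-isomorphism between algebras of bounded operators on Hilbert spaces, sends the intrinsically-defined ideal of compacts $\cK(\Gamma)=\cK(\cF(\Gamma))$ precisely onto $\cK(\cF(Y(\vec{\Gamma})))$. Hence $\Ad(U^*)$ descends to an isomorphism of quotients
$$
\cO(\Gamma)=\cT(\Gamma)/\cK(\Gamma)\;\cong\;\cT(Y(\vec{\Gamma}))/\cK(\cF(Y(\vec{\Gamma})))=\cO(Y(\vec{\Gamma})),
$$
and the algebra on the right is $\cO_{\vec{\Gamma}}$ by Definition \ref{defn:CuntzKriegerBimodule}. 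This already proves the theorem.

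To make contact with the Cuntz-Krieger algebra in its original generators-and-relations form \cite{MR561974} --- recall that $\vec{\Gamma}$ is strongly connected and locally finite by Remark \ref{rem:OrientLambda}, in particular with no sinks --- I would then note that the images $\overline{S_\epsilon}$ and $\overline{p_\alpha}$ in $\cO(\Gamma)$ of the elements of Proposition \ref{prop:EdgeRelations} satisfy the Cuntz-Krieger relations: relation (1) there passes verbatim to the quotient, while $|p_\alpha\rangle\langle p_\alpha|$ is a rank-one operator, hence in $\cK(\Gamma)$, so relation (2) becomes $\sum_{s(\epsilon)=\alpha}\overline{S_\epsilon}\,\overline{S_\epsilon}^{\,*}=\overline{p_\alpha}$. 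The universal property of $\cO_{\vec{\Gamma}}$ then yields a surjective $*$-homomorphism $\pi:\cO_{\vec{\Gamma}}\to\cO(\Gamma)$ which is readily checked to invert the spatial isomorphism above; alternatively, injectivity of $\pi$ follows from the gauge-invariant uniqueness theorem, using that the gauge $\bbT$-action on $\cT(\Gamma)$ is unitarily implemented (by $\mathrm{diag}(z^m)$ on $\cF(\Gamma)=\bigoplus_m\cY_m$, as in Definition \ref{defn:Xcore}), hence preserves $\cK(\Gamma)$ and descends to $\cO(\Gamma)$, that $\pi$ is $\bbT$-equivariant, and that each $\overline{p_\alpha}\neq0$ --- which holds because $p_\alpha\cF(\Gamma)=\bigoplus_m p_\alpha\cY_m$ is infinite dimensional (there is a cycle through $\alpha$ since $\vec{\Gamma}$ is strongly connected), so $p_\alpha\notin\cK(\Gamma)$.

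The only point requiring genuine care is already subsumed in the references: that $\cK(\cF(Y(\vec{\Gamma})))$ is exactly the kernel of the canonical surjection $\cT_{\vec{\Gamma}}\to\cO_{\vec{\Gamma}}$, equivalently that it coincides with the ideal generated by the defects $p_\alpha-\sum_{s(\epsilon)=\alpha}S_\epsilon S_\epsilon^{*}$. This is standard for Cuntz-Pimsner algebras of graph bimodules over locally finite graphs with no sinks (here $\cC$ acts on $\cY$ by compacts, established in Subsection \ref{sec:FockOp}), so in the write-up I would simply cite \cite{MR1722197,MR1426840}. With that in hand the argument is purely formal; I do not anticipate any serious obstacle, since the substantive work was done in Propositions \ref{prop:CKBim}, \ref{prop:CompressedFock}, and \ref{prop:EdgeRelations}.
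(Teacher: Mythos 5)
Your proposal is correct, and its second half is essentially the paper's proof: the paper simply notes that the images of $\cC$ and of the $S_{\e}$ in $\cO(\Gamma)$ satisfy the Cuntz--Krieger relations (Proposition \ref{prop:EdgeRelations}, the rank-one defect $| p_{\alpha}\rangle\langle p_{\alpha} |$ dying in the quotient), that no nonzero element of $\cC$ lies in $\cK(\Gamma)$, and then invokes the uniqueness theorems of \cite{MR561974} and \cite{MR1409796}. Your first paragraph is a genuine shortcut the paper does not take: because the paper \emph{defines} $\cO_{\vec{\Gamma}}$ as the Cuntz--Pimsner quotient $\cT(Y(\vec{\Gamma}))/\cK(\cF(Y(\vec{\Gamma})))$, the unitary assembled from Propositions \ref{prop:CKBim} and \ref{prop:CompressedFock} conjugates module-compacts onto module-compacts and the isomorphism of quotients is immediate. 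What that route buys is brevity; what it costs is that the identification of this concrete quotient with the universal generators-and-relations algebra of \cite{MR561974} --- which is what Facts \ref{facts:CuntzKrieger}, the simplicity claims, and the $K$-theory computations actually rely on --- is outsourced entirely to the cited literature, whereas the paper's argument establishes that link directly; your second paragraph closes exactly this gap, so nothing is missing. Two small points of care: $\cF(\Gamma)$ is a Hilbert $\cC$-module rather than a Hilbert space, so ``compact'' means module-compact throughout (the conclusion survives, since a module unitary carries rank-one module operators to rank-one module operators), and the non-compactness of $p_{\alpha}$ is best phrased as the existence of infinitely many mutually orthogonal nonzero subprojections coming from $p_{\alpha}\cY_{m}\neq 0$ for all $m$ (which needs only that $\vec{\Gamma}$ has no sinks).
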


\begin{proof}
We note that $\cO(\Gamma)$ is generated by the homomorphic images of $\cC$ and the elements $S_{\e}$ for $\e \in E(\vec{\Gamma})$, and we will abuse notation by calling the images $\cC$ and $S_{\e}$ respectively.  No nonzero element of $\cC$ lies in $\cK(\Gamma)$, so $\cC$ is faithfully represented inside $\cO(\Gamma)$.  Also note that by Proposition \ref{prop:EdgeRelations},
$$
S_{\e}^{*}S_{\e} = p_{t(\e)} \text{ and } \sum_{s(\e) = \alpha} S_{\e}S_{\e}^{*} = p_{\alpha}
$$
in $\cO(\Gamma)$.  By \cite[Theorem 2.13]{MR561974} and \cite[Theorem 2.1.8]{MR1409796}, we have $\cO(\Gamma) \cong \cO_{\vec{\Gamma}}$.
\end{proof}

\begin{rem}\label{rem:OP}
We will observe in Theorem \ref{thm:OSfunctor} below that  $\OP{0}$ is naturally isomorphic to  $p_{\star}\cO(\Gamma)p_{\star}$.
As a result, we may now infer many properties of $\OP{0}$ since it is stably isomorphic to $\cO_{\vec{\Gamma}}$.
Hence Facts \ref{facts:CuntzKrieger} all hold for $\OP{0}$, so in particular, $\OP{0}$ is simple.
\end{rem}

\subsection{The free graph algebra of an unoriented graph}\label{sec:FreeGraphAlgebraOfUnoriented}

This subsection defines the third canonical algebra associated to the compressed Fock space.
We show this algebra appears as a subalgebra of Cuntz-Krieger graph algebras of certain directed graphs provided certain natural weighting conditions are satisfied.  
To begin, we provide some useful notation for the remainder of the article.

\begin{nota}
Let $\cA$ be a $C^{*}$-algebra equipped with a lower semicontinuous tracial weight $\tr$ (which may or may not be finite).  We write
$$
\cA = \overset{p_{1}}{\underset{\mu_{1}}{\cA_{1}}} \oplus \cdots \oplus \overset{p_{n}}{\underset{\mu_{n}}{\cA_{n}}} \oplus \cD
$$
if $p_{k}$ is a projection in $\cA$ which serves as the identity of the $C^{*}$-algebra $\cA_{k}$ and $\tr(p_{k}) = \mu_{k}$.  
The algebra $\cD$ may be unital or not.  
\end{nota}

We start with a connected undirected graph $\Lambda$ with weighting $\mu : V(\Lambda)\to \bbR_{>0}$.
Let $\vec{\Lambda}$ be as in Definition \ref{defn:Directed}.
Set $\cC_{\Lambda} = C_{0}(V(\Lambda)) = C_{0}(V(\vec{\Lambda}))$.
We form the $\cC_{\Lambda}-\cC_{\Lambda}$ Hilbert bimodule $Y(\vec{\Lambda})$ and its Fock space $\cF(Y(\vec{\Lambda}))$.
For notational convenience, we will rewrite tensors of the form $\delta_{\e_{1}} \otimes \cdots \otimes\delta_{\e_{n}}$ as $\e_{1} \otimes \cdots \otimes \e_{n}$.
This means that we can think of $Y(\vec{\Lambda})$ being generated by elements $\e$ for $\e \in E(\vec{\Lambda})$ satisfying the relations
$$
p_{\alpha}\e 
= \delta_{s(\e), \alpha}\e, 
\, \, 
\e p_{\alpha} 
= \delta_{t(\e), \alpha}\e, 
\, \,\text{ and } 
\langle \e'|\e \rangle_{\cC_{\Lambda}} 
= \delta_{\e, \e'} p_{t(\e')}.
$$

\begin{defn}\label{defn:FreeGraphAlgebra}
Suppose $\e \in E(\Lambda)$.

\be
\item
If $\e$ is a loop, then we can think of $\e \in E(\vec{\Lambda})$, and we define $T_{\e} = L_{+}(\e) + L_{+}(\e)^{*}$.
\item
If $\e$ is not a loop, let $\e'$ and $\e''$ be the two edges associated to $\e$ in Definition \ref{defn:Directed}.
Set $a = \sqrt[4]{\frac{\mu(s(\e'))}{\mu(t(\e'))}}$.  
We define $T_{\e}$ as follows:
$$
T_{\e} = a L_{+}(\e') + a^{-1} L_{+}(\e'')^{*} + a^{-1} L_{+}(\e'') + a L_{+}(\e')^{*}.
$$
\ee
Notice that the elements $T_{\e}$ are self-adjoint.
\end{defn}

\begin{defn}
The \underline{free graph algebra $\cS(\Lambda,\mu)$} is the $C^{*}$-subalgebra of $\cT_{\vec{\Lambda}}$ generated by $\cC_{\Lambda}$ together with $\set{T_{\e}}{\e \in E(\Lambda)}$.
\end{defn}

We now determine the structure of the free graph algebra.

\begin{thm}\label{thm:freegraphToeplitz}
Define a map $E: \cS(\Lambda,\mu) \rightarrow \cC_\Lambda$ by $E(x) = \sum_{\alpha \in V(\Lambda)} \langle p_{\alpha}| xp_{\alpha} \rangle_{\cC_{\Lambda}}$.
Notice that this sum always converges in norm.

For a fixed $\e \in E(\Lambda)$, let $\cS_{\e}$ denote the $C^{*}$-subalgebra of $\cS(\Lambda,\mu)$ generated by $\cC_{\Lambda}$ and $T_{\e}$.
\be
\item
$E$ is faithful on each $\cS_{\e}$.

\item
The $\cS_{\e}$ are free with amalgamation over $\cC_{\Lambda}$, implying $E$ is faithful on $\cS(\Lambda,\mu)$, and
$$
\cS(\Lambda,\mu) = \underset{\cC_{\Lambda}}{\Asterisk} (\cS_{\e}, E)_{\e \in E(\Lambda)}
$$
where the free product is reduced.
\ee
\end{thm}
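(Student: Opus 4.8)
The plan is to analyze a single subalgebra $\cS_\e$ first, compute its structure relative to the conditional expectation $E$, and then invoke an operator-valued (amalgamated) freeness argument over $\cC_\Lambda$. For part (1), fix an edge $\e\in E(\Lambda)$. There are two cases. If $\e$ is a loop at a vertex $\alpha$, then $T_\e = L_+(\e)+L_+(\e)^*$ is a single ``semicircular'' element supported at $\alpha$, acting on the corner $p_\alpha\cF(Y(\vec\Lambda))p_\alpha$, which is naturally identified with the full Fock space over the one-dimensional Hilbert space spanned by $\e$ (tensored with a bookkeeping corner). There $E$ is just the vacuum state composed with the trace on $\cC_\Lambda$, and faithfulness of $E$ on $\cS_\e$ reduces to the classical fact that the vacuum state on the $C^*$-algebra generated by a single free creation-plus-annihilation operator on full Fock space is faithful (this is the $n=1$ case of Example \ref{ex:FreeGaussian}, where $\cS_1$ is already faithfully represented and $E$ is the faithful state coming from $\sigma$, as in Remark \ref{rem:CuntzCore}(2)). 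If $\e$ is not a loop, connecting distinct vertices $s(\e')=\alpha$, $t(\e')=\beta$, then $T_\e$ lives in the corner $(p_\alpha+p_\beta)\cF p_\alpha+\cdots$ and, after the rescaling by $a=\sqrt[4]{\mu(\alpha)/\mu(\beta)}$ in Definition \ref{defn:FreeGraphAlgebra}, $T_\e$ together with $p_\alpha,p_\beta$ generates (a corner of) the free graph algebra on the single edge $\Lambda_\e$; this is exactly the $A_2$-type calculation appearing in \cite{MR2807103,MR3110503}, and $E$ restricted to $\cS_\e$ is faithful because it agrees with the (faithful) trace-weighted vacuum expectation on the relevant Fock space corner. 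So (1) is a direct computation in each of these two small model cases.

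For part (2), the key point is amalgamated freeness of the family $(\cS_\e)_{\e\in E(\Lambda)}$ over $\cC_\Lambda$ with respect to $E$. I would prove this by the standard Fock-space freeness criterion: the Fock space $\cF(Y(\vec\Lambda))\cong\bigoplus_n\bigotimes^n_{\cC_\Lambda}Y(\vec\Lambda)$ decomposes, as a $\cC_\Lambda$-bimodule, so that each $\cS_\e$ acts ``diagonally'' only touching tensor legs labeled by edges over $\e$ (i.e.\ $\e'$ or $\e''$), and distinct $\cS_\e$'s involve disjoint sets of edge-labels. Concretely, one checks that for $x_1\in\cS_{\e_1}^\circ,\dots,x_k\in\cS_{\e_k}^\circ$ (centered: $E(x_j)=0$) with consecutive $\e_j$ distinct, the product $x_1\cdots x_k$ applied to the vacuum $p_\alpha$ has zero ``$\cC_\Lambda$-component'', i.e.\ $E(x_1\cdots x_k)=0$. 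This follows because each centered $x_j$ shifts the relevant tensor length and/or changes the leading edge label in an alternating way that cannot cancel — the creation parts push, the annihilation parts pull, and the orthogonality relations $\langle\e'|\e\rangle_{\cC_\Lambda}=\delta_{\e,\e'}p_{t(\e')}$ from Lemma \ref{lem:edgeinnerproduct} ensure no collapse between legs labeled by different undirected edges. Once freeness with amalgamation over $\cC_\Lambda$ is established, the identification $\cS(\Lambda,\mu)=\Asterisk_{\cC_\Lambda}(\cS_\e,E)_{\e\in E(\Lambda)}$ with the \emph{reduced} free product is automatic, since $\cS(\Lambda,\mu)$ is by definition generated by $\cC_\Lambda$ and the $T_\e$, hence by the $\cS_\e$, and it is concretely represented on $\cF(Y(\vec\Lambda))$ — which is precisely the free-product Hilbert module of the corners. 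Faithfulness of $E$ on all of $\cS(\Lambda,\mu)$ then follows from faithfulness on each $\cS_\e$ (part (1)) together with the general fact that the reduced amalgamated free product conditional expectation is faithful when each factor's expectation is \cite{Germain} (or by the standard freeness-plus-faithfulness argument directly on the Fock space).

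The main obstacle I expect is the bookkeeping in the freeness computation for part (2): verifying that an alternating centered word $x_1\cdots x_k$ is sent by $E$ to $0$ requires carefully tracking, for each $x_j$, its decomposition into a creation part, an annihilation part, and a ``diagonal'' ($\cC_\Lambda$-multiplier) part, and then showing that all resulting terms in the expansion of $(x_1\cdots x_k)p_\alpha$ either have strictly positive tensor degree or vanish by the edge-orthogonality relations — the phase/weight constants $a^{\pm 1}$ from Definition \ref{defn:FreeGraphAlgebra} do not interfere but must be carried along. An alternative, cleaner route avoiding some of this combinatorics: observe that $Y(\vec\Lambda)=\bigoplus_{\e\in E(\Lambda)}Y_\e$ as a direct sum of $\cC_\Lambda$-subbimodules, where $Y_\e$ is spanned by $\e'$ (and $\e''$ if $\e$ is not a loop); each $\cS_\e$ is generated by $\cC_\Lambda$ and an element built only from creation/annihilation operators on the summand $Y_\e$; and then apply the known result that Toeplitz-type algebras attached to a direct sum of Hilbert bimodules are free with amalgamation over the base \cite{MR1426840,Germain}. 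This reduces (2) to citing a structural freeness theorem plus the single-edge computations in (1), which I would present as the primary argument, relegating the explicit Fock-space check to a remark.
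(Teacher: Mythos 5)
Your handling of part (2) and of the loop case of part (1) is essentially the paper's: the loop case is reduced to Voiculescu's free Gaussian functor exactly as in the paper, and the paper proves freeness by the same Fock-space bookkeeping you describe, expanding each centered element as a sum of words $L_{+}(\e_{1})\cdots L_{+}(\e_{\ell})L_{+}(\e'_{1})^{*}\cdots L_{+}(\e'_{r})^{*}$ with $\ell,r$ not both zero and invoking the identity $L_{+}(\e_{i}^{t})^{*}L_{+}(\e_{i+1}^{s})=0$ for $s,t\in \{', ''\}$ when $\e_{i}\neq\e_{i+1}$, so that no $\cC_{\Lambda}$-summand survives in an alternating centered product. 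Your alternative route via the decomposition $Y(\vec{\Lambda})=\bigoplus_{\e}Y_{\e}$ and a structural freeness theorem for direct sums of bimodules is a reasonable variant, though not the one the paper takes.

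The genuine gap is the non-loop case of part (1). You assert that $E$ is faithful on $\cS_{\e}$ ``because it agrees with the (faithful) trace-weighted vacuum expectation,'' citing von Neumann-algebra computations. This is circular at the $C^{*}$-level: what must be shown is precisely that the vacuum expectation is faithful on the concrete $C^{*}$-algebra $\cS_{\e}\subset\cL(\cF(Y(\vec{\Lambda})))$, and faithfulness in the GNS representation of the trace does not transfer back unless one already knows that the Fock representation of $\cS_{\e}$ factors faithfully through that GNS representation. That this is not automatic is shown by the ambient algebra itself: $E$ kills the nonzero positive element $S_{\e}\,| p_{t(\e)}\rangle\langle p_{t(\e)} |\,S_{\e}^{*}$ of $\cT_{\vec{\Lambda}}$, so some property special to $\cS_{\e}$ must be used. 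The paper supplies it with a commutant argument: it constructs bounded (non-adjointable) right creation--annihilation operators $R_{\e'},R_{\e''}$ on $\cF(Y(\vec{\Lambda}_{\e}))$, with the weights $a^{\pm1}$ swapped relative to $T_{\e}$, shows that the algebra $\cR$ generated by these together with the right action of $\cC_{\Lambda_{\e}}$ commutes with $\cS(\Lambda_{\e},\mu_{\e})$ and has $p_{\alpha}+p_{\beta}$ as a cyclic vector, and concludes that $p_{\alpha}+p_{\beta}$ is separating: if $y\geq0$ and $E(y)=0$ then $y^{1/2}p_{\alpha}=y^{1/2}p_{\beta}=0$, hence $y=0$. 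Without this step (or an explicit determination of $\cS_{\e}$ as in Theorem \ref{thm:FreeGraph}, which the paper only carries out afterwards), your argument for the non-loop case does not close.
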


\begin{proof}
\mbox{}
\be

\item
Assume first that $\e$ is a loop at $\alpha$.  Let $\Lambda_{\e}$ be the subgraph of $\Lambda$ consisting of the vertex $\alpha$ and the loop $\e$.  We need only show that the mapping $E_{\e}: \cS(\Lambda_{\e},\mu_\epsilon) \rightarrow C^{*}(p_{\alpha}) \cong \C$ given by  $E_{\e}(x) = \langle p_{\alpha}| xp_{\alpha} \rangle_{p_{\alpha}\cC_{\Lambda}p_{\alpha}}$ is faithful.   This immediately follows from Voiculescu's free Gaussian functor construction since $\cS(\Lambda_{\e}) = C^{*}(p_{\alpha}, L_{+}(\e) + L_{-}(\e))$.

Now assume that $\e$ has two distinct edges $\alpha$ and $\beta$ as endpoints, and let $s(\e') = \alpha$ and $t(\e') = \beta$.  Let ${\Lambda}_{\e}$ be the subgraph of ${\Lambda}$ whose vertices are $\alpha$ and $\beta$ and edge $\epsilon$.
Let $\mu_\epsilon=\mu|_{V(\Lambda_\epsilon)}$.

To show that $E$ is faithful on $\cS_{\e}$, it is enough to show that the mapping $E_{\e}: \cS(\Lambda_{\e},\mu_\epsilon) \rightarrow C^{*}(p_{\alpha}, p_{\beta})$ given by
$$
E_{\e}(x) = \langle p_{\alpha}| x p_{\alpha}\rangle + \langle p_{\beta}| xp_{\beta}\rangle
$$
is faithful.
We define bounded (non-adjointable) operators $R_{\e'}$ and $R_{\e''}$ on $\cF(Y(\vec{\Lambda}_\epsilon))$, where $\epsilon',\epsilon''$ are as in Definition \ref{defn:Directed}.
The operators $R_{\e'}$ and $R_{\e''}$ are given on tensors by the formulas
\begin{align*}
R_{\e'}(\e_{1}\otimes \cdots \otimes \e_{n})
&=
a^{-1}\e_{1} \otimes \cdots \otimes \e_{n} \otimes \e' + a\e_{1} \otimes \cdots \otimes \e_{n-1}\langle \e_{n}^{\op}|\e' \rangle
\\
R_{\e''}(\e_{1}\otimes \cdots \otimes \e_{n})
&=
a\e_{1} \otimes \cdots \otimes \e_{n} \otimes \e'' + a^{-1}\e_{1} \otimes \cdots \otimes \e_{n-1}\langle \e_{n}^{\op}|\e'' \rangle.
\end{align*}
A standard induction argument shows that $p_{\alpha} + p_{\beta}$ is cyclic for the algebra $\cR$ generated by $R_{\e'}$, $R_{\e''}$, and $\cC_{\Lambda_{\e}}$ acting on the right, and it is a straightforward argument to show that $\cS(\Lambda_\epsilon,\mu_\epsilon)$ and $\cR$ commute.
Therefore, if there is an $x \in \cS(\Lambda_{\e},\mu_\epsilon)$ so that $xp_{\alpha}$ = 0 and $xp_{\beta} = 0$, then $x(p_{\alpha} + p_{\beta}) = 0$ and since $x$ commutes with $\cR$, $x$ must be 0.
Therefore if $y \geq 0$, $y \in \cS(\Lambda_{\e},\mu_\epsilon)$, and $E(y) = 0$, then
$$
0
=
\langle p_{\alpha}|yp_{\alpha} \rangle_{\cC_{\Lambda_{\e}}} + \langle p_{\beta}|yp_{\beta} \rangle_{\cC_{\Lambda_{\e}}}
=
\langle y^{1/2}p_{\alpha}|y^{1/2}p_{\alpha} \rangle_{\cC_{\Lambda_{\e}}} + \langle y^{1/2}p_{\beta}|y^{1/2}p_{\beta} \rangle_{\cC_{\Lambda_{\e}}}
$$
implying $y = 0$.
This implies $E$ is faithful on $\cS_{\e}$.

\item
Note that the map $E$ is clearly well-defined on all of $\cT(Y(\vec{\Lambda}))$, and the expectationless elements in $\cT(Y(\vec{\Lambda}))$ are densely spanned by terms of the form
    $$
    \sum_{i=1}^{n} a_{i} L_{+}(\e_{1})\cdots L_{+}(\e_{\ell(i)})L_{+}(\e'_{1})^{*}\cdots L_{+}(\e'_{r(i)})^{*}
    $$
for $\ell(i)$ and $r(i)$ not both zero.  Therefore, the expectationless elements in $\cS(\Lambda,\mu)$ are densely spanned by certain sums of this form as well.  Suppose that $x_{1}, ..., x_{n}$ are terms of this form, $x_{i} \in \cS_{\e_{i}}$ with $\e_{i} \neq \e_{i+1}$, and $E(x_{i}) = 0$.  The identity
$$
L_{+}(\e_{i}^{t})^{*}L_{+}(\e_{i+1}^{t}) = 0 \text{ for } s, t \in \{', ''\}
$$
shows that no elements of $\cC$ appear as a summand of the expression for $x_{1}\cdots x_{n}$.  This gives the desired result.
\qedhere
\ee
\end{proof}

Let $\tau_{\mu}$ be the tracial weight on $\cC_{\Lambda}$ which is defined by $\tau_{\mu}(p_{\alpha}) = \mu(\alpha)$, and define $\Tr_{\mu} = \tau_{\mu}\circ E$.
We can now give the explicit structure of the algebras $\cS_\epsilon$.

\begin{thm}\label{thm:FreeGraph}
For each algebra $\cS_{\e}$, $\Tr_{\mu}$ is a lower-semicontinuous tracial weight on $\cS_{\e}$ which is finite if and only if $\sum_{v \in V(\Lambda)} \mu(v)$ is finite.  Furthermore,
\be
\item
If $\e$ is a loop at $\alpha$, then
$$
\cS_{\e} \cong \overset{p_{\alpha}}{\underset{\mu(\alpha)}{C[0, 1]}} \oplus C_{0}(V(\Lambda) \setminus \{\alpha\})
$$
where if $f \in p_{\alpha}\cS_{\e}p_{\alpha}$, $\Tr_\mu(f) = \mu(\alpha)\int_{0}^{1}f(\lambda)\, d\lambda$ with $d\lambda$ Lebesgue measure.

\item
If $\e$ has $\alpha$ and $\beta$ as endpoints with $\alpha \neq \beta$ and $\mu(\alpha) > \mu(\beta)$, then
$$
\cS_{\e} \cong \underset{2\mu(\beta)}{\overset{p_{\beta} + q_{\alpha}}{\big(M_{2}(\C) \otimes C[0, 1] \big)}} \oplus \underset{\mu(\alpha) - \mu(\beta)}{\overset{r_{\alpha}}{\C}} \oplus C_{0}(V(\Lambda) \setminus \{\alpha, \beta\}).
$$
Here, $p_{\alpha} = q_{\alpha} + r_{\alpha}$.
On $(p_{\alpha} + q_{\beta})\cS_{\e}(p_{\alpha} + q_{\beta})$, we have $\Tr_\mu = \mu(\beta) \Tr_{M_{2}(\C)} \otimes \int (\cdot) \,d\lambda$ with $d\lambda$ Lebesgue measure.

\item
Set $\cD$ to be the algebra
$$
\cD = \set{f: [0, 1] \rightarrow M_{2}(\C)}{f \text{ is continuous  and } f(0) \text{ is diagonal}}.
$$
If $\e$ has $\alpha$ and $\beta$ as endpoints with $\alpha \neq \beta$ and $\mu(\alpha) = \mu(\beta)$, then
$$
\cS_{\e} \cong \underset{2\mu(\alpha)}{\overset{p_{\alpha} + p_{\beta}}{\cD}} \oplus C_{0}(V(\Lambda) \setminus \{\alpha, \beta\}),
$$
where $p_{\alpha}$ and $p_{\beta}$ are the canonical matrix units $e_{11}$ and $e_{22}$ respectively.
The trace on $\cD$ is $\mu(\alpha)\Tr_{M_{2}(\C)} \otimes \int (\cdot)\,d\lambda$ with $d\lambda$ Lebesgue measure.

\ee
\end{thm}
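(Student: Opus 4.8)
\emph{Overall strategy.} The plan is to put each $\cS_\e$ into an explicit spatial form on the Fock space $\cF(Y(\vec\Lambda_\e))$ attached to the subgraph $\Lambda_\e$ carrying $\e$, and then recognise the resulting operator algebra. First I would note that $\cC_\Lambda=\cC_{\Lambda_\e}\oplus C_0(V(\Lambda)\setminus V(\Lambda_\e))$ as an ideal decomposition, and that every creation/annihilation operator in $T_\e$ is annihilated by, and commutes with, $C_0(V(\Lambda)\setminus V(\Lambda_\e))$ (a word of positive degree produced by $T_\e$ starts at an endpoint of $\e$, and $T_\e$ acts trivially on words starting elsewhere). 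Hence $\cS_\e\cong C^*(\cC_{\Lambda_\e},T_\e)\oplus C_0(V(\Lambda)\setminus V(\Lambda_\e))$, which explains the $C_0(\cdots)$ summand, and I may assume $\Lambda=\Lambda_\e$. For the statements about $\Tr_\mu$: writing $\Tr_\mu=\tau_\mu\circ E$ with $E$ the faithful conditional expectation onto $\cC_\Lambda$ from Theorem \ref{thm:freegraphToeplitz} and $\tau_\mu$ the faithful lower semicontinuous tracial weight on $\cC_\Lambda\cong C_0(V(\Lambda))$ with $\tau_\mu(p_v)=\mu(v)$, one gets that $\Tr_\mu$ is faithful and lower semicontinuous, that $\Tr_\mu$ is tracial by the same diagrammatic trace identity used in Subsections \ref{sec:KMS} and \ref{sec:PACuntz}, and that $\Tr_\mu(1)=\tau_\mu(1_{\cC_\Lambda})=\sum_v\mu(v)$, which is the finiteness criterion. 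It then remains to identify the $C^*$-algebra structure and to evaluate $\Tr_\mu$ on the distinguished projections; the reparametrizations to $C[0,1]$ below I would justify by invoking faithfulness of $\Tr_\mu$ to see that the relevant spectral measures are non-atomic with full support on their interval, hence conjugate to Lebesgue measure on $[0,1]$ via their distribution functions.

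\emph{The loop case and the spatial model.} If $\e$ is a loop at $\alpha$ then $\vec\Lambda_\e$ has one vertex, $\cF(Y(\vec\Lambda_\e))\cong\ell^2(\N)$ with orthonormal basis $\{\e^{\otimes n}\}_{n\ge0}$, $p_\alpha$ acts as the identity, and $T_\e=L_+(\e)+L_+(\e)^*=S+S^*$ for $S$ the unilateral shift; so $\cS_\e=C^*(1,S+S^*)=C(\sigma(S+S^*))=C([-2,2])$ and $E$ on this corner is the semicircle law, whence the claimed $C[0,1]$-picture with $\Tr_\mu(f)=\mu(\alpha)\int_0^1 f\,d\lambda$ (this is just Voiculescu's free Gaussian functor, cf.\ Example \ref{ex:FreeGaussian} and Theorem \ref{thm:freegraphToeplitz}(1)). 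In the non-loop case, after relabeling I may assume $\alpha=s(\e')$, so $\mu(\alpha)\ge\mu(\beta)$ and $a=\sqrt[4]{\mu(\alpha)/\mu(\beta)}\ge1$ (and $T_\e$ is symmetric under swapping $\e'\leftrightarrow\e''$, $a\leftrightarrow a^{-1}$, so no generality is lost). Here $\cF(Y(\vec\Lambda_\e))=p_\alpha\cF\oplus p_\beta\cF=:H_\alpha\oplus H_\beta$, where in each summand the alternating words of each length give an orthonormal basis, so $H_\alpha\cong H_\beta\cong\ell^2(\N)$ and $p_\alpha+p_\beta=1$. A direct computation from Definition \ref{defn:FreeGraphAlgebra} shows $p_\alpha T_\e p_\alpha=p_\beta T_\e p_\beta=0$ and, under the above identifications, $p_\alpha T_\e p_\beta=B^*$ with $B=a^{-1}S+aS^*$; thus $T_\e=\left(\begin{smallmatrix}0&B^*\\B&0\end{smallmatrix}\right)$ and $\cS_\e=C^*\bigl(p_\alpha,p_\beta,B\bigr)$. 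Expanding gives $B^*B=(a^2+a^{-2})1-a^2e_{00}+(S^2+(S^*)^2)$ and $BB^*=(a^2+a^{-2})1-a^{-2}e_{00}+(S^2+(S^*)^2)$; since $S^2+(S^*)^2$ splits over even/odd indices into two copies of $S+S^*$, a Stieltjes-transform-of-the-semicircle computation yields $\sigma(BB^*)=[(a-a^{-1})^2,(a+a^{-1})^2]$ and $\sigma(B^*B)$ equal to this interval together with $\{0\}$, with $0$ an \emph{isolated} eigenvalue exactly when $a>1$; the eigenvector is the explicit geometric-series vector $\sum_{k\ge0}(-1)^ka^{-2k}w^\alpha_{2k}\in H_\alpha$, which lies in $\ell^2$ iff $a>1$, while $\ker B^*=0$ always.

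\emph{Assembling the non-loop cases.} If $a>1$: set $r_\alpha:=\chi_{\{0\}}(B^*B)\in C^*(B^*B)\subseteq\cS_\e$, the rank-one projection onto $\ker B\subseteq H_\alpha$. Since $T_\e r_\alpha=0$ and $B^*r_\alpha=0$, $r_\alpha$ is orthogonal to $p_\beta$ and to the ranges of all generators touching it, so $r_\alpha$ is central and $r_\alpha\cS_\e\cong\C$; computing $E$ on the normalised kernel vector gives $E(r_\alpha)=(1-a^{-4})p_\alpha$, so $\Tr_\mu(r_\alpha)=\mu(\alpha)-\mu(\beta)$. On the complementary central summand, with $q_\alpha:=p_\alpha-r_\alpha$, the element $BB^*$ is bounded below, hence $V:=B^*(BB^*)^{-1/2}\in\cS_\e$ is a unitary $p_\beta\cF\to q_\alpha\cF$ giving $2\times2$ matrix units; together with $p_\beta\cS_\e p_\beta=C^*(BB^*,p_\beta)\cong C(\sigma(BB^*))$ this yields $(q_\alpha+p_\beta)\cS_\e\cong M_2(\C)\otimes C[0,1]$, with $\Tr_\mu(q_\alpha)=\Tr_\mu(p_\beta)=\mu(\beta)$ and, after reparametrizing $\sigma(BB^*)$ to $[0,1]$, $\Tr_\mu=\mu(\beta)\Tr_{M_2(\C)}\otimes\int(\cdot)\,d\lambda$. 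If $a=1$: now $B=S+S^*$ is self-adjoint with $\sigma(B)=[-2,2]$, $0\in\sigma(B)$ \emph{not} isolated, $\ker B=0$; so $p_\alpha\cS_\e p_\alpha=p_\beta\cS_\e p_\beta=C^*(B^2,1)\cong C([0,4])$ and the corner $B=p_\alpha T_\e p_\beta$ is injective with dense range but not bounded below. The $C^*$-algebra generated by two complementary projections and such an off-diagonal self-adjoint is, by the standard structure theory (equivalently, by recognising $\cD$ as the universal $C^*$-algebra for these relations, exactly as for the GJS/Hartglass single-edge graph algebras), isomorphic to $\cD=\{f\colon[0,1]\to M_2(\C)\text{ continuous},\ f(0)\text{ diagonal}\}$, with $p_\alpha,p_\beta$ the constant matrix units $e_{11},e_{22}$; reparametrizing $\sigma(B^2)=[0,4]$ to $[0,1]$ turns $\Tr_\mu$ into $\mu(\alpha)\Tr_{M_2(\C)}\otimes\int(\cdot)\,d\lambda$, of total mass $2\mu(\alpha)$.

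\emph{Main obstacle.} The genuine work is establishing equality rather than mere inclusion of $C^*$-algebras: one must verify that the elements used to pin down the structure --- the central projection $r_\alpha$, the matrix-unit unitary $V$ when $a>1$, and the full off-diagonal interval structure when $a=1$ --- actually lie in the algebra generated by only $p_\alpha,p_\beta,T_\e$, and that no further relations hold. This is precisely where the dichotomy $a>1$ versus $a=1$ is forced: $0$ is isolated in $\sigma(B^*B)$, and $BB^*$ is invertible in the relevant corner, exactly when $a>1$ (equivalently $\mu(\alpha)\neq\mu(\beta)$), which is what separates off the extra $\C$ summand and cleanly produces $M_2(\C)\otimes C[0,1]$, whereas at $a=1$ the bands of $\sigma(B^*B)$ merge at $0$ and one gets the interval algebra $\cD$ instead. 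A secondary technical point, handled via faithfulness of $\Tr_\mu$, is checking that the scalar spectral measures appearing in the $C[0,1]$ factors have full support and are non-atomic so that they can be normalised to Lebesgue measure.
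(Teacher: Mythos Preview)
Your approach is correct and genuinely different from the paper's. The paper proceeds by free-probability moment combinatorics: it sets $T_{\e'}=aL_+(\e')+a^{-1}L_+(\e'')^*$, derives the coupled recursions $P_n(\e')=a^2\sum_{k}P_k(\e'')P_{n-k-1}(\e')$ (and its partner), solves the resulting quadratic system for the moment generating functions, passes to Cauchy transforms, and recovers the laws of $T_{\e'}^*T_{\e'}$ and $T_{\e'}T_{\e'}^*$ by Stieltjes inversion as explicit free-Poisson distributions supported on $[a^2+a^{-2}-2,\,a^2+a^{-2}+2]$, with an atom at $0$ of mass $\mu(\alpha)(1-a^{-4})$ for the $\alpha$-corner exactly when $a>1$. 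Traciality is then read off from the identity $\Tr_\mu((T_{\e'}^*T_{\e'})^n)=\Tr_\mu((T_{\e'}T_{\e'}^*)^n)$ that the explicit densities make visible, and invertibility of $T_{\e'}^*T_{\e'}$ in $p_\beta\cS_\e p_\beta$ (for $a>1$) supplies the polar part and hence the $M_2(\C)\otimes C[0,1]$ matrix units. Your route replaces all of this with a direct spatial model: the block form $T_\e=\bigl(\begin{smallmatrix}0&B^*\\ B&0\end{smallmatrix}\bigr)$ with $B=a^{-1}S+aS^*$, the even/odd splitting of $S^2+(S^*)^2$ into two semicircles, a rank-one-perturbation resolvent argument for the spectra, and the explicit $\ell^2$ kernel vector; the structure then drops out of functional calculus and the two-projection picture. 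Your method is more transparent for the $C^*$-algebraic structure and avoids the recursion/Stieltjes machinery; the paper's method buys you the spectral \emph{densities} for free, so the identification of $\Tr_\mu$ with Lebesgue measure after reparametrisation (which you flag as a ``secondary technical point'') is immediate there.

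One correction: your appeal to ``the same diagrammatic trace identity used in Subsections~\ref{sec:KMS} and~\ref{sec:PACuntz}'' for traciality of $\Tr_\mu$ is misplaced---Theorem~\ref{thm:FreeGraph} concerns a general weighted graph $(\Lambda,\mu)$, not the planar-algebra setting, so there are no diagrams and no sphericality to invoke. This is harmless, since traciality is in fact a \emph{consequence} of your structural identification (once $\cS_\e$ is written as $M_2(\C)\otimes C[0,1]\oplus\C\oplus\cdots$ or $\cD\oplus\cdots$ with the stated functional, the trace property is manifest); just reorder the argument to deduce traciality at the end rather than asserting it at the start.
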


\begin{proof}
For (1), note that $\cS_{\e}$ is commutative, and $p_{\alpha}\cS_{\e}p_{\alpha}$ is generated by $p_{\alpha}$ and $L_{+}(\e) + L_{+}(\e)^{*}$.  
Standard arguments show that $L_{+}(\e) + L_{+}(\e)^{*}$ has Voiculescu's semicircular distribution on $[-2, 2]$ in $p_{\alpha}\cS_{\e}p_{\alpha}$ with respect to $\Tr$.

For (2) and (3), we set $\alpha = s(\e')$ and $a = \sqrt[4]{\frac{\mu(\alpha)}{\mu(\beta)}}$ and assume that $a \geq 1$.  
We note that $(p_{\alpha} + p_{\beta})\cS_{\e}(p_{\alpha} + p_{\beta})$ is generated as a $C^{*}$-algebra by
$$
T_{\e'} = a L_{+}(\e') + a^{-1}L_{+}(\e'')^{*} 
\text{ and } 
T_{\e''} = a^{-1}L_{+}(\e'') + aL_{+}(\e').
$$
Note that $T_{\e'}^{*} = T_{\e''}$, $T_{\e'}^{*}T_{\e'} \in p_{\beta}\cS_{\e}p_{\beta}$, and $T_{\e''}^{*}T_{\e''} \in p_{\alpha}\cS_{\e}p_{\alpha}$.  
We will compute the laws of $T_{\e'}^{*}T_{\e'}$ in $p_{\beta}\cS_{\e}p_{\beta}$, and $T_{\e''}^{*}T_{\e''}$ in $p_{\alpha}\cS_{\e}p_{\alpha}$ with respect to $\Tr_{\mu}$.

We let $P_{n}(\e')$ be the coefficient of $p_{\beta}$ in $(T_{\e'}^{*}T_{\e'})^{n}$ and $P_{n}(\e'')$ the coefficient of $p_{\alpha}$ in $(T_{\e''}^{*}T_{\e''})^{n}$. 
We say that $P_{0}(\e') = 1 = P_{0}(\e'')$.  We claim that for $n \geq 1$
$$
P_{n}(\e') = a^{2}\sum_{k=0}^{n-1} P_{k}(\e'')P_{n-k-1}(\e')
\, \, \text{ and } \, \, 
P_{n}(\e'') = a^{-2}\sum_{k=0}^{n-1} P_{k}(\e')P_{n-k-1}(\e'').
$$
Indeed, the $p_{\beta}$ term in $(T_{\e'}^{*}T_{\e'})^{n}$ comes from terms of the form
$$
a^{2} L_{+}(\e')^{*}(T_{\e''}^{*}T_{\e''})^{k}L_{+}(\e')(T_{\e'}^{*}T_{\e'})^{n-k-1},
$$
where in each term, we take the term
$$
a^{2}L_{+}(\e')^{*}(P_{k}(\e'')p_{\alpha})L_{+}(\e')P_{n-k-1}p_{\beta} 
= 
a^{2}P_{k}(\e'')P_{n-k-1}(\e')p_{\beta}
$$
to avoid double-counting.
This establishes the first identity and the second identity is established from similar arguments.

We now write down the moment generating functions
$$
M_{\e'}(z) = \sum_{n=0}^{\infty} P_{n}(\e')z^{n} \text{ and } M_{\e''}(z) = \sum_{n=0}^{\infty} P_{n}(\e'')z^{n}
$$
for $T_{\e'}^{*}T_{\e'}$ and $T_{\e''}^{*}T_{\e''}$ respectively.  From the above recursion relations, we have the following system of equations:
$$
M_{\e'}(z) = a^{2}zM_{\e'}(z)M_{\e''}(z) + 1 \, \, \text{ and }\, \, M_{\e'}(z) = a^{-2}zM_{\e'}(z)M_{\e''}(z) + 1.
$$
Solving this gives
\begin{align*}
&M_{\e'}(z) = \frac{a^{2} - (a^{4} - 1)z + \sqrt{(z(a^{4} - 1) - a^{2})^{2} - 4a^{2}z}}{2z} \text{ and }\\
&M_{\e''}(z) = \frac{a^{-2} - (a^{-4} - 1)z + \sqrt{(z(a^{-4} - 1) - a^{-2})^{2} - 4a^{-2}z}}{2z}.
\end{align*}
The laws with respect to $\Tr_\mu$ can now be recovered from the Cauchy transforms $G_{f}(z) =\mu(t(f))z^{-1} M_{f}(z^{-1})$ for $f \in \{\e', \e''\}$.  The Cauchy transforms are
\begin{align*}
&G_{\e'}(z) = \mu(\beta)\frac{a^{2}z - (a^{4} - 1) + \sqrt{((a^{4} - 1) - a^{2}z)^{2} - 4a^{2}z}}{2z} \text{ and }\\
&G_{\e''}(z) = \mu(\alpha)\frac{a^{-2}z - (a^{-4} - 1) + \sqrt{((a^{-4} - 1) - a^{-2}z)^{2} - 4a^{-2}z}}{2z}
\end{align*}
where the branch on the square root is chosen so that
$$
\lim_{\Im(z) \rightarrow + \infty} G_{\e'}(z) = 0 = \lim_{\Im(z) \rightarrow + \infty} G_{\e''}(z).
$$
The polynomials in the square root both have roots at $a^{2} + a^{-2} \pm 2$.  Therefore they are scalar multiples of each other, and we see that the second polynomial differs from the first by a factor of $a^{-8}$.

To get the law, we compute $\lim_{y \to 0^+} -\frac{1}{\pi}\Im(G(x + iy))$ in the sense of distributions.  Using $a^{4} \geq 1$ and $a^{-4} \leq 1$, this says that the laws $d\mu_{\e'}$ and $d\mu_{\e''}$ of $T_{\e'}^{*}T_{\e'}$ and $T_{\e''}^{*}T_{\e''}$ respectively are:
\begin{align*}
d\mu_{\e'} &=
\mu(\beta)\frac{\sqrt{4a^{2}x - (a^{4} - 1 - a^{2}x)^{2}}}{2\pi x}{\bf 1}_{[a^{2} + a^{-2} - 2, a^{2} + a^{-2} + 2]}\, dx
\text{ and }\\
d\mu_{\e''} &=
\mu(\alpha)(1 - a^{-4})\delta_{0} + \mu(\alpha)a^{-4}\frac{\sqrt{4a^{2}x - (a^{4} - 1 - a^{2}x)^{2}}}{2\pi x} {\bf 1}_{[a^{2} + a^{-2} - 2, a^{2} + a^{-2} + 2]}\, dx.
\end{align*}
Since $a^{-4} = \frac{\mu(\beta)}{\mu(\alpha)}$, this implies that $\Tr_{\mu}((T_{\e'}^{*}T_{\e'})^{n}) = \Tr_{\mu}((T_{\e'}T_{\e'}^{*})^{n})$ for all $n \geq 0$.  Therefore $\Tr_{\mu}$ is tracial on $\cS_{\e}$.

We see from this analysis that both distributions are free-Poisson.  If $a > 1$, then $\mu(\alpha) > \mu(\beta)$, and $a^{-2} + a^{2} > 2$, implying $T_{\e'}^{*}T_{\e'}$ is invertible in $p_{\beta}\cS_{\e}p_{\beta}$ and hence the polar part of $T_{\e'}$ lies in $\cS_{\e}$.  Since the law of $T_{\e'}^{*}T_{\e'}$ in $p_{\beta}\cS_{\e}p_{\beta}$ is absolutely continuous with respect to Lebesgue measure, we have established the isomorphism in (2).

To establish the isomorphism in (3), we have $a = 1$ and hence $\mu(\alpha) = \mu(\beta)$.  Note that the laws of $T_{\e'}^{*}T_{\e'}$ and $T_{\e''}^{*}T_{\e''}$ are both absolutely continuous with respect to Lebesgue measure and are supported on $[0, 4]$.  This gives the isomorphism in (3).
\end{proof}

Since $E$ preserves $\Tr_\mu$ on each $\cS_{\e}$ by definition, we see that $\Tr_\mu$ is a faithful semi-finite trace on $\cS(\Lambda,\mu)$.  We obtain a Hilbert space representation of $\cS(\Lambda, \mu)$ by the induced representation on $\cH = \cF(Y(\vec{\Lambda})) \otimes_{\cC_{\Lambda}} \ell^{2}(V(\Gamma), \mu)$.  
The action of $\cS(\Lambda, \mu)$ on $\cH$ is simply the GNS representation of $\cS(\Lambda, \mu)$ with respect to $\Tr_\mu$.  
(This argument is similar to \cite[Lemma 3.3]{GJSCStar}.)
The von Neumann algebra generated by $\cS(\Lambda, \mu)$ in this representation is $\cM(\Lambda, \mu)$ from \cite{MR3110503}.  
We have the following useful lemma about the structure of $\cM(\Lambda, \mu)$.

\begin{lem}[\cite{MR3110503}]\label{lem:vNGraph}
Write $\alpha \sim \beta$ if $\alpha$ and $\beta$ are the endpoints of at least one edge in $\Lambda$, and $n(\alpha, \beta)$ the number of edges having $\alpha$ and $\beta$ as endpoints.  
Set $a_\mu(\alpha) = \sum_{\beta \sim \alpha} n(\alpha, \beta)\mu(\beta)$ and
$$
A(\Lambda, \mu) = \set{\alpha \in V(\Lambda)}{\mu(\alpha) > a_{\mu}(\alpha)}.
$$
Then
$$
\cM(\Lambda, \mu) \cong \cN \oplus \bigoplus_{\alpha \in A(\Lambda, \mu)} \overset{r_{\alpha}}{\underset{\mu(\alpha) - a_{\mu}(\alpha)}{\C}}.
$$
The algebra $\cN$ is $L(\F_{t})$ for some $t$ if $\sum_{\alpha \not\in A(\Lambda, \mu)} \mu(\alpha) + \sum_{\alpha \in A(\Lambda, \mu)}a_{\mu}(\alpha)$ is finite or $L(\F_{s}) \otimes B(H)$ otherwise.  
The projection $r_{\alpha}$ is a subprojection of  $p_{\alpha}$.
\end{lem}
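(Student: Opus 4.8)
The plan is to deduce Lemma \ref{lem:vNGraph} (which is quoted from \cite{MR3110503}) from the structural results already proved here together with the von Neumann algebra free product calculus of Dykema. The strategy is to realize $\cM(\Lambda,\mu)$ as an amalgamated free product of simple building blocks and then compute that free product.

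First I would set up the free product picture. By Theorem \ref{thm:freegraphToeplitz}, $\cS(\Lambda,\mu)=\Asterisk_{\cC_\Lambda}(\cS_\e,E)_{\e\in E(\Lambda)}$ is a reduced amalgamated free product over $\cC_\Lambda\cong C_0(V(\Lambda))$, with the conditional expectation $E$ compatible with $\Tr_\mu=\tau_\mu\circ E$. Passing to the GNS representation on $\cH=\cF(Y(\vec\Lambda))\otimes_{\cC_\Lambda}\ell^2(V(\Lambda),\mu)$ and taking weak closures, $\cM(\Lambda,\mu)$ becomes the von Neumann algebra free product of $\cM_\e:=\cS_\e''$ amalgamated over $D:=\cC_\Lambda''\cong\ell^\infty(V(\Lambda))$, equipped with the (possibly semifinite) trace $\tau_\mu$. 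Next I would read off each $\cM_\e$ from Theorem \ref{thm:FreeGraph}: the closure of $C[0,1]$ carrying the semicircle law is the diffuse abelian $L^\infty[-2,2]$, the closure of $M_2(\C)\otimes C[0,1]$ is $M_2(\C)\otimes L^\infty$, and $\cD$ closes to $M_2(\C)\otimes L^\infty[0,4]$. So a loop at $\alpha$ contributes a diffuse abelian block at $\alpha$, and an edge with endpoints $\alpha\ne\beta$ contributes $M_2(\C)\otimes L^\infty$ when $\mu(\alpha)=\mu(\beta)$ and $M_2(\C)\otimes L^\infty\oplus\C r_\alpha$, with $r_\alpha\le p_\alpha$ of trace $\mu(\alpha)-\mu(\beta)$ at the heavier vertex, when $\mu(\alpha)\ne\mu(\beta)$, all sitting under $p_{s(\e)}+p_{t(\e)}\in D$.

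Then comes the main step: computing $\Asterisk_D\cM_\e$. The mechanism is the usual one for free products of finite von Neumann algebras amalgamated over an atomic abelian center. Cutting down by $p_\alpha$, the corner at a fixed vertex $\alpha$ is a free product over $\C p_\alpha$ of the corners $p_\alpha\cM_\e p_\alpha$ for edges $\e$ incident to $\alpha$ (plus the loops at $\alpha$), each of which is either diffuse abelian or a diffuse $L^\infty$ possibly summed with a leftover atom; the diffuse parts consume the atoms and projections at $\alpha$ up to total trace $a_\mu(\alpha)=\sum_{\beta\sim\alpha}n(\alpha,\beta)\mu(\beta)$ and fuse into a single interpolated free group factor piece, while a leftover atom $r_\alpha$ of trace $\mu(\alpha)-a_\mu(\alpha)$ survives exactly when $\mu(\alpha)>a_\mu(\alpha)$, i.e.\ when $\alpha\in A(\Lambda,\mu)$. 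Adding up these contributions over all edges and vertices and running the free-dimension calculus of Dykema should then give $\cN=L(\F_t)$ with $t$ an explicit function of $(\Lambda,\mu)$ (or $\cN=L(\F_s)\otimes B(H)$ when $\sum_{\alpha\notin A(\Lambda,\mu)}\mu(\alpha)+\sum_{\alpha\in A(\Lambda,\mu)}a_\mu(\alpha)$ diverges), while the surviving atoms assemble to $\bigoplus_{\alpha\in A(\Lambda,\mu)}\C r_\alpha$.

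I expect the hard part to be precisely this last step: proving that the amalgamated free product of these blocks collapses to an interpolated free group factor plus the asserted atoms, with the exact identification of the parameter $t$ and of the projections $r_\alpha$. This is carried out in full in \cite{MR3110503}, building on Dykema's work on free products of finite von Neumann algebras and free dimension, so in practice I would invoke that reference directly; the only thing left to check on our side is that the GNS representation of $\cS(\Lambda,\mu)$ with respect to $\Tr_\mu$ used above is the one producing the von Neumann algebra $\cM(\Lambda,\mu)$ of \cite{MR3110503}, which is immediate from the free product description.
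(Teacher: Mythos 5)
Your proposal is correct and in substance matches the paper's treatment: the lemma is quoted from \cite{MR3110503}, and the paper likewise gives no independent proof beyond observing that the von Neumann algebra generated by $\cS(\Lambda,\mu)$ in the GNS representation of $\Tr_\mu$ is the $\cM(\Lambda,\mu)$ of that reference, which is exactly the compatibility check you isolate at the end. The only point the paper records that you omit is that \cite{MR3110503} proves the result for \emph{finite} weighted graphs, so the infinite case additionally requires the standard embedding arguments (exhausting $\Lambda$ by finite subgraphs) from Section 4 of that paper.
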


The above lemma was proven in \cite{MR3110503} for finite weighted graphs.  The standard embedding arguments used in Section 4 of that paper get the infinite case as well.  We also have the following lemma which is surely well known, but we will provide a proof.

\begin{lem}\label{lem:repcompact}
Suppose $x \in \cK(\cF(Y(\vec{\Lambda})))$ and let $\pi$ be the induced representation of $\cL(\cF(Y(\vec{\Lambda})))$ on $\cH = \cF(Y(\vec{\Lambda})) \otimes_{\cC_{\Lambda}} \ell^{2}(V(\Gamma), \mu)$.  Then $\pi(x) \in \cK(\cH)$.

\end{lem}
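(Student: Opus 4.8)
The plan is to reduce the statement to rank-one operators and then exploit the interior tensor product structure of $\cH=\cF(Y(\vec{\Lambda}))\otimes_{\cC_{\Lambda}}\cH_{0}$, where $\cH_{0}=\ell^{2}(V(\Lambda),\mu)$ is the GNS Hilbert space of $(\cC_{\Lambda},\tau_{\mu})$ with GNS representation $\pi_{0}\colon\cC_{\Lambda}\to B(\cH_{0})$. Since $\cK(\cF(Y(\vec{\Lambda})))$ is the closed linear span of the rank-one operators $\ketbra{\xi}{\eta}$ for $\xi,\eta\in\cF(Y(\vec{\Lambda}))$, and $\pi$ is a $*$-homomorphism and hence norm continuous, it suffices to show $\pi(\ketbra{\xi}{\eta})\in\cK(\cH)$ for every such $\xi,\eta$.

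For $\xi\in\cF(Y(\vec{\Lambda}))$, I would introduce the bounded linear map $T_{\xi}\colon\cH_{0}\to\cH$ defined by $T_{\xi}(v)=\xi\otimes v$; boundedness, with $\|T_{\xi}\|\leq\|\xi\|$, is immediate from the formula for the inner product on the interior tensor product. A direct computation using that same formula and the paper's inner-product conventions gives $T_{\xi}^{*}(\zeta\otimes v)=\pi_{0}(\langle\xi|\zeta\rangle_{\cC_{\Lambda}})v$, from which one reads off
\[
\pi(\ketbra{\xi}{\eta})=T_{\xi}T_{\eta}^{*}\qquad\text{and}\qquad T_{\xi}^{*}T_{\xi}=\pi_{0}(\langle\xi|\xi\rangle_{\cC_{\Lambda}}).
\]

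The crux is then that $\pi_{0}$ carries $\cC_{\Lambda}=C_{0}(V(\Lambda))$ into $\cK(\cH_{0})$: each minimal projection $p_{\alpha}$ acts on $\cH_{0}$ as the rank-one projection onto the line spanned by $\delta_{\alpha}$, so finite linear combinations of the $p_{\alpha}$ are finite rank, and $\cC_{\Lambda}$ is their norm closure. Hence $T_{\xi}^{*}T_{\xi}\in\cK(\cH_{0})$, and since an operator between Hilbert spaces is compact whenever $T^{*}T$ is (e.g.\ via polar decomposition), $T_{\xi}$ is itself compact. Therefore $\pi(\ketbra{\xi}{\eta})=T_{\xi}T_{\eta}^{*}\in\cK(\cH)$, and passing to closed linear spans gives $\pi(\cK(\cF(Y(\vec{\Lambda}))))\subseteq\cK(\cH)$. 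The argument is largely formal — it is the standard fact that interior tensor products are functorial on compacts once the coefficient algebra acts compactly on the second factor — so the only place demanding genuine care (and the closest thing to an obstacle) is pinning down the identification of $\cH_{0}$, the computation of $T_{\xi}^{*}$ on the interior tensor product, and the elementary observation that $C_{0}$ of a discrete vertex set acts by compact operators on its weighted $\ell^{2}$-space.
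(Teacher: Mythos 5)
Your proof is correct, and it is a sound, somewhat more abstract packaging of the same underlying mechanism the paper uses. The paper also reduces to rank-one operators, but only those of the special form $\ketbra{\e_{1}\otimes\cdots\otimes\e_{n}}{\e'_{1}\otimes\cdots\otimes\e'_{m}}$, and then observes directly that the range of such an operator on $\cF(Y(\vec{\Lambda}))$ is the one-dimensional space $(\e_{1}\otimes\cdots\otimes\e_{n})\cC_{\Lambda}=\C\,(\e_{1}\otimes\cdots\otimes\e_{n})p_{t(\e_{n})}$, so its image under $\pi$ has rank at most one on $\cH$ --- an ad hoc computation exploiting minimality of the $p_{\alpha}$ in $\cC_{\Lambda}$. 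You instead prove the general functoriality statement: for arbitrary $\xi,\eta$ you factor $\pi(\ketbra{\xi}{\eta})=T_{\xi}T_{\eta}^{*}$ through the insertion operators $T_{\xi}\colon\cH_{0}\to\cH$ and use $T_{\xi}^{*}T_{\xi}=\pi_{0}(\langle\xi|\xi\rangle_{\cC_{\Lambda}})$ together with the fact that $C_{0}$ of the discrete vertex set acts compactly on $\ell^{2}(V(\Lambda),\mu)$. Both arguments ultimately rest on that same fact (the $p_{\alpha}$ act as rank-one projections on $\cH_{0}$), but yours buys generality --- it works verbatim for any Hilbert bimodule over a coefficient algebra represented by compacts --- at the cost of the polar-decomposition step, while the paper's version gives the sharper conclusion that the images of its generating rank-one operators are themselves rank at most one. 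Your identifications of $\cH_{0}$, of $T_{\xi}^{*}$, and the closed-span reduction are all correct (note the paper's $\ell^{2}(V(\Gamma),\mu)$ in the statement should read $\ell^{2}(V(\Lambda),\mu)$, as you implicitly assume).
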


\begin{proof}
We need only show that $\pi(| \e_{1} \otimes \cdots \otimes \e_{n} \rangle \langle \e'_{1} \otimes \cdots \otimes \e'_{m} |) \in \cK(\cH)$.  We note that on $\cF(Y(\vec{\Lambda}))$, the range of $| \e_{1} \otimes \cdots \otimes\e_{n} \rangle \langle \e'_{1} \otimes \cdots \otimes \e'_{m} |$  is contained in 
$$
(\e_{1} \otimes \cdots \otimes \e_{n})\cC_{\Lambda} = \spann\{(\e_{1} \otimes \cdots \otimes \e_{n})p_{t(\e_{n})}\}.
$$
It follows from this that $\pi(| \e_{1} \otimes \cdots\otimes \e_{n} \rangle \langle \e'_{1} \otimes \cdots \otimes \e'_{m} |)$ is rank at most 1 on $\cH$ and we are finished.
\end{proof}

We now show when $\cS(\Lambda, \mu)$ nontrivially intersects $\cK(\cF(Y(\vec{\Lambda})))$.

\begin{thm}
If $A(\Lambda, \mu)$ is empty, then $\cS(\Lambda, \mu)\cap \cK(\cF(Y(\vec{\Lambda}))) = \{0\}$.  If $\Lambda$ is locally finite and $A(\Lambda, \mu)$ is nonempty, then $\cS(\Lambda, \mu)$ nontrivially intersects $ \cK(\cF(Y(\vec{\Lambda})))$.  
\end{thm}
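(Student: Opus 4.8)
The plan is to run both halves through the faithful representation $\pi$ of $\cL(\cF(Y(\vec\Lambda)))$ on $\cH=\cF(Y(\vec\Lambda))\otimes_{\cC_\Lambda}\ell^2(V(\Lambda),\mu)$, under which $\cS(\Lambda,\mu)$ is the GNS representation with respect to $\Tr_\mu$ and generates $\cM(\Lambda,\mu)$, and which by Lemma~\ref{lem:repcompact} carries $\cK(\cF(Y(\vec\Lambda)))$ into $\cK(\cH)$. Faithfulness of $\pi$ is immediate: $\cF(Y(\vec\Lambda))=\bigoplus_v\cF(Y(\vec\Lambda))p_v$ and $\cH$ is the direct sum of these summands rescaled by the positive weights $\mu(v)$, so $x\otimes 1=0$ forces $x=0$. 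For the first assertion, when $A(\Lambda,\mu)=\emptyset$, Lemma~\ref{lem:vNGraph} gives $\cM(\Lambda,\mu)\cong\cN$, a von Neumann algebra with no minimal projections ($\cN$ is $L(\F_t)$, possibly tensored with $B(H)$). I would then observe that such an $\cN$, acting on any Hilbert space, has no nonzero finite-rank operator: a nonzero finite-rank projection $e\in\cN$ would admit, by the absence of minimal projections, a strictly decreasing infinite chain $e>f_1>f_2>\cdots$ of nonzero projections in $\cN$, forcing the strictly decreasing chain of subspaces $e\cH\supsetneq f_1\cH\supsetneq\cdots$ inside the finite-dimensional space $e\cH$, which is absurd; hence $\cN$ has no nonzero compact operator either (spectral projections of $|\pi(x)|^2$ on $[\lambda,\infty)$, $\lambda>0$, would otherwise be nonzero and finite-rank). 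So if $0\neq x\in\cS(\Lambda,\mu)\cap\cK(\cF(Y(\vec\Lambda)))$ then $\pi(x)$ is a nonzero element of $\cN\cap\cK(\cH)$, a contradiction.

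For the second assertion, assume $\Lambda$ locally finite and fix $\alpha\in A(\Lambda,\mu)$, disposing of the degenerate case where $\alpha$ is the only vertex. Then $\alpha$ has finitely many incident edges $\epsilon_1,\dots,\epsilon_m$, none a loop (a loop at $\alpha$ would force $a_\mu(\alpha)\geq\mu(\alpha)$), with other endpoints $\beta_1,\dots,\beta_m$ satisfying $\sum_j\mu(\beta_j)=a_\mu(\alpha)<\mu(\alpha)$, and in particular $\mu(\beta_j)<\mu(\alpha)$ for every $j$. Theorem~\ref{thm:FreeGraph}(2) then provides, for each $j$, a central direct summand $\C r_\alpha^{(j)}$ of $\cS_{\epsilon_j}$ with $r_\alpha^{(j)}\leq p_\alpha$, $\Tr_\mu(r_\alpha^{(j)})=\mu(\alpha)-\mu(\beta_j)$, and (from the proof there) $r_\alpha^{(j)}$ equal to the orthogonal projection of $p_\alpha\cF(Y(\vec\Lambda))$ onto $\ker T_{\epsilon_j''}$. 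Since the $\cS_\epsilon$ are free over $\cC_\Lambda$ (Theorem~\ref{thm:freegraphToeplitz}), $p_\alpha$ is minimal in $\cC_\Lambda$, and $p_\alpha\cS_\epsilon p_\alpha=\C p_\alpha$ for every edge $\epsilon$ not incident to $\alpha$, the projections $r_\alpha^{(1)},\dots,r_\alpha^{(m)}$ are freely independent in $\bigl(p_\alpha\cS(\Lambda,\mu)p_\alpha,\tfrac{1}{\mu(\alpha)}\Tr_\mu\bigr)$ with normalized traces $t_j=1-\mu(\beta_j)/\mu(\alpha)$ obeying $\sum_j(1-t_j)=a_\mu(\alpha)/\mu(\alpha)<1$.

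I would then set $R:=r_\alpha^{(1)}\wedge\cdots\wedge r_\alpha^{(m)}$ and prove $0\neq R\in\cS(\Lambda,\mu)\cap\cK(\cF(Y(\vec\Lambda)))$. Membership in $\cS(\Lambda,\mu)$ comes inductively from the fact that for freely independent projections $p,q$ with $\tau(p)+\tau(q)>1$ the positive contraction $pqp$ has $1$ as an isolated point of its spectrum, so that $p\wedge q=\mathbf 1_{\{1\}}(pqp)\in C^*(p,q)$ with $\tau(p\wedge q)=\tau(p)+\tau(q)-1$; applying this to $r_\alpha^{(1)},r_\alpha^{(2)},\dots$ in turn, the hypothesis $\sum_j(1-t_j)<1$ is exactly what keeps every partial trace in $(0,1)$ and makes each successive sum exceed $1$, so $R\in p_\alpha\cS(\Lambda,\mu)p_\alpha$ with $\Tr_\mu(R)=\mu(\alpha)-a_\mu(\alpha)>0$, whence $R\neq 0$. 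For module-compactness I would compute the common range $\bigcap_j\bigl(\ker T_{\epsilon_j''}\cap p_\alpha\cF(Y(\vec\Lambda))\bigr)$ explicitly: writing a vector there as a series $\sum_\pi d_\pi\,\pi$ over paths $\pi$ from $\alpha$ and unwinding the recursion imposed by $T_{\epsilon_j''}\cdot=0$ (the first edge of such a $\pi$ is some $\epsilon_i'$, which forces $d_\pi=0$ unless $\pi$ continues with $\epsilon_i''$, in which case $d_{\epsilon_i'\otimes\epsilon_i''\otimes\rho'}=-a_i^{-2}d_{\rho'}$), one finds that this space is one-dimensional, spanned by
$$
\Xi=\sum_{s\geq 0}\ \sum_{(i_1,\dots,i_s)}(-1)^s\Bigl(\prod_{l=1}^{s}a_{i_l}^{-2}\Bigr)\,\epsilon_{i_1}'\otimes\epsilon_{i_1}''\otimes\cdots\otimes\epsilon_{i_s}'\otimes\epsilon_{i_s}''\in p_\alpha\cF(Y(\vec\Lambda)),
$$
which converges since $\sum_i a_i^{-4}=a_\mu(\alpha)/\mu(\alpha)<1$. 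Consequently $R$ is a scalar multiple of the rank-one operator $\ketbra{\Xi}{\Xi}$, so $R\in\cK(\cF(Y(\vec\Lambda)))$, completing the second assertion.

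I expect the delicate point to be verifying that $R=r_\alpha^{(1)}\wedge\cdots\wedge r_\alpha^{(m)}$ really lies in the $C^*$-algebra $\cS(\Lambda,\mu)$, and not merely in $\cM(\Lambda,\mu)$: this rests on the spectral-gap estimate separating the eigenvalue $1$ from the rest of the spectrum of $pqp$ for free projections of summed co-trace $<1$, which has to be read off from the explicit free-Poisson/free-Bernoulli laws in the proof of Theorem~\ref{thm:FreeGraph} together with the standard behaviour of free families under corner compression. The remaining ingredients — the chain argument of the first assertion and the kernel computation above — are routine.
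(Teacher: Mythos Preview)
Your first assertion is argued exactly as in the paper: pass to the induced representation $\pi$ on $\cH$, use Lemma~\ref{lem:repcompact} to push module-compacts into $\cK(\cH)$, and invoke Lemma~\ref{lem:vNGraph} to see that $\cM(\Lambda,\mu)$ is a II$_1$ or II$_\infty$ factor with no nonzero compact operators. Your chain-of-projections argument is just an unpacking of this last step.

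For the second assertion your route is correct but genuinely different from the paper's. Both arguments produce the same projection $r_\alpha$ (the common kernel of the $T_{\epsilon_j''}$ on $p_\alpha\cF$), but the paper works with the \emph{sum} of the support projections $p_j$ of $T_{\epsilon_j}T_{\epsilon_j}^*$: it applies Voiculescu's $R$-transform once to show the law of $\sum_j p_j$ has an isolated atom at $0$, so the spectral projection $r_\alpha=\mathbf 1_{\{0\}}(\sum_j p_j)$ lies in $\cS(\Lambda,\mu)$; compactness then follows from Lemma~\ref{lem:vNGraph}, which identifies $r_\alpha$ with the rank-one central summand of $\cM(\Lambda,\mu)$ acting on $\cH$. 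You instead take the \emph{meet} of the complementary projections $r_\alpha^{(j)}=p_\alpha-p_j$ and argue inductively via the explicit law of $pqp$ for a free pair of projections, then exhibit the range explicitly as $\bbC\,\Xi$. Your approach is more hands-on and avoids both the $R$-transform and the slightly implicit step ``rank~$1$ on $\cH$ implies $\cC_\Lambda$-compact on $\cF$,'' at the cost of an induction and the kernel computation; the paper's approach is terser but leans on more free-probability machinery and on the structural Lemma~\ref{lem:vNGraph}. The point you flag as delicate --- that the successive meets remain in the $C^*$-algebra --- is handled correctly: the running trace after $k$ steps is $1-\sum_{j\leq k}(1-t_j)$, and $\sum_j(1-t_j)=a_\mu(\alpha)/\mu(\alpha)<1$ keeps each pairwise trace-sum above $1$, so the isolated eigenvalue $1$ of $pqp$ persists at every stage.
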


\begin{proof}
If $A(\Lambda, \mu)$ is empty, Lemma \ref{lem:vNGraph} implies that $\cS(\Lambda, \mu)$ completes to either a II$_{1}$ factor or a II$_{\I}$ factor on $\cH$.  Neither of these contain $\cK(\cH)$ so it follows that $\cS(\Lambda, \mu) \cap \cK(\cF(Y(\vec{\Lambda}))) = \{0\}$ from Lemma \ref{lem:repcompact}.

If $\alpha \in A(\Lambda, \mu)$ and $\Lambda$ is locally finite, let $\e_{1}, \dots \e_{n}$ be the edges satisfying $s(\e_{i}) = \alpha$.  
By Theorem \ref{thm:FreeGraph} and its proof, the support projections $p_{i}$ of $T_{\e_{i}}T_{\e_{i}}^{*}$ are in $\cS(\Lambda, \mu)$.  
Since $a(\alpha) = \sum_{i=1}^{n} \Tr_\mu(p_{i}) < \mu(\alpha)$, Voiculescu's $R$-transform can be used to show that $\sum_{i=1}^{n}p_{i}$ has an atom of size $\mu(\alpha) - a(\alpha)$ at 0 and the rest of the law is supported inside an interval missing the origin.   
From Lemma \ref{lem:vNGraph}, we see that the projection corresponding to the atom at zero is $r_{\alpha}$ and is hence $r_{\alpha} \in \cS(\Gamma, \mu)$.  
The projection $r_{\alpha}$ is rank 1 on $\cH$ so it is compact on $\cF(Y(\vec{\Lambda}))$.
\end{proof}

From this theorem, we get the following corollary:
\begin{cor}
If $A(\Lambda, \mu)$ is empty, then the canonical surjection $\cT(Y(\Lambda)) \rightarrow \cO_{\vec{\Lambda}}$ is injective on $\cS(\Lambda, \mu)$; hence we can view  $\cS(\Lambda, \mu)$ as  a subalgebra of $\cO_{\vec{\Lambda}}$.
\end{cor}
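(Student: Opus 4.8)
The plan is to read this off directly from the preceding theorem together with the definition of the Cuntz--Krieger algebra as a Cuntz--Pimsner quotient. Recall from Definition \ref{defn:CuntzKriegerBimodule} that $\cO_{\vec{\Lambda}} = \cO(Y(\vec{\Lambda})) = \cT(Y(\vec{\Lambda}))/\cK(\cF(Y(\vec{\Lambda})))$, so the canonical surjection $q\colon \cT(Y(\vec{\Lambda})) \to \cO_{\vec{\Lambda}}$ has kernel exactly $\cK(\cF(Y(\vec{\Lambda})))$. By the construction in Subsection \ref{sec:FreeGraphAlgebraOfUnoriented}, $\cS(\Lambda,\mu)$ is a $C^*$-subalgebra of $\cT_{\vec{\Lambda}} = \cT(Y(\vec{\Lambda}))$, so it makes sense to restrict $q$ to it.

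First I would form the restriction $q|_{\cS(\Lambda,\mu)}$. Its kernel is $\cS(\Lambda,\mu) \cap \ker q = \cS(\Lambda,\mu) \cap \cK(\cF(Y(\vec{\Lambda})))$, and by the preceding theorem this intersection is $\{0\}$ whenever $A(\Lambda,\mu)$ is empty. Hence $q|_{\cS(\Lambda,\mu)}$ is an injective $*$-homomorphism; its image is a $C^*$-subalgebra of $\cO_{\vec{\Lambda}}$ that is $*$-isomorphic to $\cS(\Lambda,\mu)$, which is precisely the claimed identification of $\cS(\Lambda,\mu)$ as a subalgebra of $\cO_{\vec{\Lambda}}$.

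There is no genuine obstacle here: all of the content lies in the preceding theorem, whose proof in turn rests on Lemma \ref{lem:vNGraph} (identifying the von Neumann completion of $\cS(\Lambda,\mu)$ in the GNS representation of $\Tr_\mu$ as a II$_1$ or II$_\infty$ factor when $A(\Lambda,\mu)$ is empty) and on Lemma \ref{lem:repcompact} (so that a nonzero compact operator on $\cF(Y(\vec{\Lambda}))$ lying in $\cS(\Lambda,\mu)$ would push forward to a nonzero compact in that factor, a contradiction). The only bookkeeping point to verify is that the ideal $\cK(\cF(Y(\vec{\Lambda})))$ appearing in the definition of $\cO_{\vec{\Lambda}}$ is literally the same ideal appearing in the statement of the preceding theorem; this is immediate, since both denote the compacts on the Fock space $\cF(Y(\vec{\Lambda}))$.
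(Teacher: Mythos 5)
Your proposal is correct and matches the paper's (implicit) argument: the paper derives this corollary immediately from the preceding theorem, since the kernel of the canonical surjection $\cT(Y(\vec{\Lambda}))\to\cO_{\vec{\Lambda}}$ is exactly $\cK(\cF(Y(\vec{\Lambda})))$, which meets $\cS(\Lambda,\mu)$ trivially when $A(\Lambda,\mu)$ is empty. Nothing is missing.
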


It would be nice to determine more about the general structure of the algebras $\cS(\Lambda, \mu)$ for different weightings $\mu$.  In particular, it would be nice to know which weightings guarantee that $\cS(\Lambda, \mu)$ is simple with unique trace, and more ambitiously, when $\cS(\Lambda, \mu)$ is isomorphic to  $\cS(\Lambda', \mu')$.  Using Theorem \ref{thm:Germain}, we can very quickly compute the $K$-theory of $\cS(\Lambda, \mu)$.

\begin{thm}\label{thm:KTheory}
The inclusion $\cC_{\Lambda} \hookrightarrow \cS(\Lambda, \mu)$ is a $KK$-equivalence for any $\Lambda$.  Consequently,
$$
K_{0}(\cS(\Lambda, \mu)) = \Z\set{[p_{\alpha}]}{\alpha \in V(\Lambda)} \text{ and } K_{1}(\cS(\Lambda, \mu) )= (0).
$$  
\end{thm}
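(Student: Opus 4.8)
The plan is to identify $\cS(\Lambda,\mu)$ with the semicircular algebra $\cS(\cY_{\R})$ of some closed real subspace $\cY_{\R}$ of the Cuntz--Krieger bimodule $Y(\vec{\Lambda})$, and then to quote Germain's Theorem~\ref{thm:Germain}, which says precisely that the inclusion $\cC_{\Lambda}\hookrightarrow\cS(\cY_{\R})$ is a $KK$-equivalence. Concretely, for each $\e\in E(\Lambda)$ let $\eta_{\e}\in Y(\vec{\Lambda})$ be the finitely supported function read off from Definition~\ref{defn:FreeGraphAlgebra}: $\eta_{\e}=\chi_{\e}$ if $\e$ is a loop, and $\eta_{\e}=a\chi_{\e'}+a^{-1}\chi_{\e''}$ with $a=\sqrt[4]{\mu(s(\e'))/\mu(t(\e'))}$ if $\e$ is not a loop, so that $T_{\e}=L_{+}(\eta_{\e})+L_{+}(\eta_{\e})^{*}$. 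Let $\cY_{\R}$ be the closed real-linear span of $\{\eta_{\e}\mid \e\in E(\Lambda)\}$ inside $Y(\vec{\Lambda})$. Since $\eta\mapsto L_{+}(\eta)+L_{+}(\eta)^{*}$ is real-linear and satisfies $\|L_{+}(\eta)+L_{+}(\eta)^{*}\|\le 2\|\eta\|$, a routine closure argument shows that the $C^{*}$-algebra generated inside $\cT_{\vec{\Lambda}}=\cT(Y(\vec{\Lambda}))$ by $\cC_{\Lambda}$ together with $\{T_{\e}\}$ equals the one generated by $\cC_{\Lambda}$ together with $\{L_{+}(\eta)+L_{+}(\eta)^{*}\mid \eta\in\cY_{\R}\}$; that is, $\cS(\Lambda,\mu)=\cS(\cY_{\R})$.

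The only hypothesis of Theorem~\ref{thm:Germain} left to check is that $\cY_{\R}$ generates $Y(\vec{\Lambda})$ as a Hilbert $\cC_{\Lambda}$-bimodule, i.e. $\cY_{\R}\cdot\cC_{\Lambda}$ is dense. This is a one-line computation with the bimodule relations of Definition~\ref{defn:CuntzKriegerBimodule}: for a loop $\e$ at $\alpha$ we have $\eta_{\e}p_{\alpha}=\chi_{\e}$, and for a non-loop $\e$ we have $\eta_{\e}p_{t(\e')}=a\chi_{\e'}$ and $\eta_{\e}p_{s(\e')}=a^{-1}\chi_{\e''}$; hence $\cY_{\R}\cdot\cC_{\Lambda}$ contains a nonzero scalar multiple of $\chi_{\e}$ for every $\e\in E(\vec{\Lambda})$, so its closed linear span is all of $Y(\vec{\Lambda})$. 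When $\Lambda$ has finitely many vertices, $\cC_{\Lambda}$ is unital and Theorem~\ref{thm:Germain} applies directly, giving that $\cC_{\Lambda}\hookrightarrow\cS(\Lambda,\mu)$ is a $KK$-equivalence. Passing to $K$-theory, and using that $K_{0}(\cC_{\Lambda})=K_{0}(C(V(\Lambda)))$ is free abelian on the classes $[p_{\alpha}]$ with $K_{1}=0$, while the inclusion sends $[p_{\alpha}]$ to $[p_{\alpha}]$, the stated formulas for $K_{*}(\cS(\Lambda,\mu))$ drop out.

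To handle an infinite $\Lambda$ (where $\cC_{\Lambda}=C_{0}(V(\Lambda))$ is no longer unital) I would write $\Lambda$ as an increasing union of finite full subgraphs $\Lambda_{n}$, with $\mu_{n}=\mu|_{V(\Lambda_{n})}$, and verify the compatible identifications $\cC_{\Lambda}=\varinjlim\cC_{\Lambda_{n}}$, $Y(\vec{\Lambda})=\varinjlim Y(\vec{\Lambda_{n}})$, $\cY_{\R}=\varinjlim\cY_{\R,n}$, and $\cS(\Lambda,\mu)=\varinjlim\cS(\Lambda_{n},\mu_{n})$, the last obtained by compressing with $P_{n}=\sum_{\alpha\in V(\Lambda_{n})}p_{\alpha}$ and using $P_{n}Y(\vec{\Lambda})P_{n}=Y(\vec{\Lambda_{n}})$; continuity of $K$-theory under inductive limits then upgrades the finite-graph computation. (Alternatively one can unitize, apply Theorem~\ref{thm:Germain} to $(\cC_{\Lambda}^{\sim},Y(\vec{\Lambda}))$, and pass to the split ideals.) I expect this last, purely bookkeeping step --- the non-unital reduction together with the check that $P_{n}\cS(\Lambda,\mu)P_{n}$ is genuinely the free graph algebra of $\Lambda_{n}$ --- to be the only delicate point; everything substantive is packaged inside Germain's theorem.
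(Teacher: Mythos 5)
Your proposal is correct and follows essentially the same route as the paper: realize $\cS(\Lambda,\mu)$ as the semicircular algebra of the closed real span of the vectors $a\chi_{\e'}+a^{-1}\chi_{\e''}$ inside the Cuntz--Krieger bimodule, apply Germain's Theorem~\ref{thm:Germain} when $\Lambda$ is finite (so $\cC_\Lambda$ is unital), and handle infinite $\Lambda$ by writing it as an increasing union of finite subgraphs and using continuity of $K$-theory under inductive limits. You are in fact somewhat more careful than the paper in explicitly verifying the hypothesis $\cY_{\R}\cdot\cC_{\Lambda}=Y(\vec{\Lambda})$ and in flagging the bookkeeping needed for the inductive-limit step.
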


\begin{proof}
We note that on $\cT(Y(\vec{\Lambda}))$, $\cS(\vec{\Lambda})$ is generated by elements $L_{+}(x) + L_{-}(x)$ for $x$ in the closure of the real subspace
$$
\spann_{\R}\set{a\e' + a^{-1}\e''}{\e \in E(\Lambda) \text{ and } a = \sqrt[4]{\frac{\mu(s(\e'))}{\mu(t(\e'))}}}
$$
where we assume $\e' = \e = \e''$ if $\e$ is a loop.  If $\Lambda$ is finite, Theorem \ref{thm:Germain} gives the $K$-theory since $\cC_{\Lambda}$ is unital.  If $\Lambda$ is infinite, then write $\Lambda$ as an increasing union of finite subgraphs $\Lambda_{n}$.  Theorem \ref{thm:Germain} implies that the canonical inclusion $\cC_{\Lambda_{n}} \hookrightarrow \cS(\Lambda_{n}, \mu)$ is a $KK$-equivalence.  Taking inductive limits gives the result.
\end{proof}

\subsection{Compressing $\SP{}$ to the free graph algebra $\cS(\Gamma)$}\label{sec:PAGraph}

Let $\cY_{\R} = \set{y \in \cY}{y = y^{\dagger}}$, and note that $\cY_{\R} = \overline{\cup_{n} P_{n}\cX_{\R}P_{n}}^{\|\cdot\|}$.

\begin{defn}
We define $\cS(\Gamma)$ to be the $C^{*}$-subalgebra of $\cT(\Gamma)$ generated by $\cC$ and $\set{L_{+}(y) + L_{-}(y)}{y \in \cY_{\R}}$.
\end{defn}

The following proposition has the same proof as Propositions \ref{prop:cyclic} and \ref{prop:CompressedFock}.

\begin{prop}\label{prop:cyclic3}
\mbox{}
\be

\item $\cS(\Gamma)$ is generated by $\cC$ and elements
$\D\sum_{\ell\, + \,\err \, = n}\begin{tikzpicture}[baseline = .2cm]
	\draw (-.8, 0)--(.8, 0);
	\draw (-.2, 0)--(-.2, .8);
	\draw (.2, 0)--(.2, .8);
	\node at (-.4, .6) {{\scriptsize{$\ell$}}};
	\node at (.4, .6) {{\scriptsize{$\err$}}};
	\nbox{unshaded}{(0,0)}{.4}{0}{0}{$y$}
	\draw[fill=red] (0,.4) circle (.05cm);
\end{tikzpicture}
$
for $y \in \cY_{n}$.

\item $\cS(\Gamma)$ is generated by $\cC$ and $\set{L_{+}(y) + L_{-}(y)}{y \in \cY}$.

\item $\cS(\Gamma) = \overline{\cC\SP{}\cC}^{\|\cdot\|}$.

\ee

\end{prop}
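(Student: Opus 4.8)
The plan is to imitate, in the compressed picture, the arguments already carried out for $\SP{}$ in Propositions \ref{prop:cyclic} and \ref{prop:cyclic2} and for $\cT(\Gamma)=\overline{\cC\TP{}\cC}$ in Subsection \ref{sec:Induced}, now using $\cY_n\cong\bigotimes_\cC^n\cY$ (Proposition \ref{prop:CompressedFock}) in place of Proposition \ref{prop:OperatorValuedFock}. Throughout I would write $\Sigma_n(y)=\sum_{\ell+\err=n}L_\err(y)$ for $y\in\cY_n$; note that $\Sigma_0$ is just the left action of $\cC$ and $\Sigma_1(y)=L_+(y)+L_-(y)$. Part (2) is the easiest and would be handled first, since it feeds part (1): given $y\in\cY$, both $\tfrac12(y+y^\dagger)$ and $\tfrac1{2i}(y-y^\dagger)$ lie in $\cY_\R$ because $\dagger$ is a norm-continuous conjugate-linear involution on $\cY$, and since $L_+$ and $L_-$ are complex-linear in their arguments, $\Sigma_1(y)$ is a complex-linear combination of the generators $L_+(z)+L_-(z)$, $z\in\cY_\R$, of $\cS(\Gamma)$; conversely those are exactly the elements $\Sigma_1(z)$ with $z\in\cY_\R\subseteq\cY$. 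Hence $\cC$ together with either family generates $\cS(\Gamma)$.

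For part (1), let $\cA$ be the $C^*$-algebra generated by $\cC$ and $\set{\Sigma_n(y)}{y\in\cY_n,\ n\ge0}$. The generators with $n=0$ lie in $\cC$ and those with $n=1$ lie in $\cS(\Gamma)$ by part (2); conversely the defining generators $\Sigma_1(z)$, $z\in\cY_\R\subseteq\cY_1$, of $\cS(\Gamma)$ appear among those of $\cA$, so $\cS(\Gamma)\subseteq\cA$. For the reverse inclusion I would show $\Sigma_n(y)\in\cS(\Gamma)$ for every $y\in\cY_n$ by strong induction on $n$, the cases $n\le1$ being done. In the inductive step, Proposition \ref{prop:CompressedFock} lets one write a general $y\in\cY_n$ as a norm-limit of finite sums of simple tensors $y_1\otimes y'$ with $y_1\in\cY$ and $y'\in\cY_{n-1}$; since $\Sigma_n$ is bounded (a finite sum of bounded operators) and $\cS(\Gamma)$ is closed, it suffices to treat $y=y_1\otimes y'$. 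There the diagrammatic identity from the proof of Proposition \ref{prop:cyclic} reads
$$\Sigma_1(y_1)\,\Sigma_{n-1}(y')=\Sigma_n(y_1\otimes y')+\Sigma_{n-2}(z)$$
for a suitable $z\in\cY_{n-2}$ obtained by capping $y_1$ against $y'$; the left-hand side is in $\cS(\Gamma)$ by the $n=1$ case and the induction hypothesis, and $\Sigma_{n-2}(z)\in\cS(\Gamma)$ by the induction hypothesis, so $\Sigma_n(y_1\otimes y')\in\cS(\Gamma)$. Thus $\cA=\cS(\Gamma)$.

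For part (3), the inclusion $\cS(\Gamma)\subseteq\overline{\cC\SP{}\cC}$ would go as follows: $\cC=\overline{\bigcup_nP_n\cB P_n}\subseteq\overline{\cC\cB\cC}\subseteq\overline{\cC\SP{}\cC}$ since $\cB\subseteq\SP{}$ and $P_n\in\cC$, while for $y\in\cY_\R\subseteq\cX_\R$ one has $\Sigma_1(y)=L_+(y)+L_+(y)^*\in\SP{}$, and because $P_ny P_n\to y$ in norm together with $L_\pm(P_ny P_n)=P_n L_\pm(y)P_n$ (using $P_n^\dagger=P_n=P_n^*$) we get $\Sigma_1(y)=\lim_nP_n\Sigma_1(y)P_n\in\overline{\cC\SP{}\cC}$; part (2) then yields $\cS(\Gamma)\subseteq\overline{\cC\SP{}\cC}$. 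For the reverse inclusion, since $(P_n)$ is a norm-approximate unit for $\cC$ one has $\overline{\cC\SP{}\cC}=\overline{\bigcup_nP_n\SP{}P_n}$, so it suffices to show $P_n\SP{}P_n\subseteq\cS(\Gamma)$; writing $\SP{}$ via the generators of Proposition \ref{prop:cyclic}, compressing by $P_n$, and using $L_\pm(P_n(\cdot)P_n)=P_n L_\pm(\cdot)P_n$ together with Proposition \ref{prop:CompressedFock} (so that the compression of an $\cX_m$-box is an element of $\cY_m$), one identifies each compressed generator with an operator $\Sigma_m(y)$, $y\in\cY_m$, hence an element of $\cS(\Gamma)$ by part (1), while the compressions of $\cB$ lie in $\cC\subseteq\cS(\Gamma)$.

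I expect the reverse inclusion in part (3) to be the only real obstacle: one must justify freely interchanging the formation of words, norm limits, and compression by the $P_n$, i.e., that the norm-closed algebra generated by the compressed generators of $\SP{}$ exhausts the corner $\overline{\cC\SP{}\cC}$. This is exactly the hereditary-subalgebra bookkeeping already performed for the Toeplitz algebra in Subsection \ref{sec:Induced} (Propositions \ref{prop:CompressedToeplitz1}--\ref{prop:CompressedToeplitz2}), and parts (1) and (2) are then the mild self-adjoint refinement of it; the diagrammatic identity driving the induction in part (1) is literally the one appearing in the proof of Proposition \ref{prop:cyclic}, so no genuinely new computation is required.
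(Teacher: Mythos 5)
Your outline is the paper's own (essentially unwritten) argument: the paper disposes of this proposition in one sentence by pointing back to Propositions \ref{prop:cyclic} and \ref{prop:CompressedFock}, and your parts (1) and (2) carry out exactly that induction and real/imaginary decomposition in the compressed module, correctly using $\cY_n\cong\bigotimes_\cC^n\cY$ and the fact that capping two elements of $\cC\cX\cC$ and $\cC\cX_{n-1}\cC$ lands back in $\cC\cX_{n-2}\cC$.

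The one place where what you wrote does not yet close is the reverse inclusion in (3), and you are right to single it out. Showing that each compressed \emph{generator} $P_nL_\pm(x)P_n$ lies in $\cS(\Gamma)$ does not give $P_n\SP{}P_n\subseteq\cS(\Gamma)$, because compression by $P_n$ does not distribute over products: a word $g_1g_2\cdots g_k$ in the generators of $\SP{}$ compressed on the outside is not a word in the $P_ng_iP_n$, and the intermediate factors see all of $\cB$, not just $\cC$. Moreover Subsection \ref{sec:Induced} does not actually supply this bookkeeping for the Toeplitz algebra either, so the deferral there is not a proof. The clean fix stays entirely inside the paper's toolkit: by Remark \ref{rem:GJS}, the product of two operators of the form $\Sigma_m(w):=\sum_{\ell+\err=m}L_{\err}(w)$, $w\in\cX_m$, is again a \emph{finite linear combination} of such operators (the sum over partial cappings), so $\SP{}$ is the closed \emph{linear span} of $\{\Sigma_m(w)\}_{m\geq 0}$, not merely the $C^*$-algebra they generate. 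Since $c_1L_{\err}(w)c_2=L_{\err}(c_1wc_2)$ for $c_1,c_2\in\cC$ (the compressions attach to the bottom-left and bottom-right of the box and commute past the cappings), one gets $c_1\Sigma_m(w)c_2=\Sigma_m(c_1wc_2)$ with $c_1wc_2\in\cC\cX_m\cC\subseteq\cY_m$, and then your part (1) applies termwise to put $\cC\SP{}\cC$ inside $\cS(\Gamma)$. With that substitution for the last step, the proof is complete.
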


With the aid of this proposition, we can picture $\cS(\Gamma)$ as being generated by elements $y \in \cY_{n}$ such that if $x \in \cY_{m}$ and $y \in \cY_{n}$, then
$$
x\cdot y = \sum_{k=0}^{\min\{n, m\}}\, \begin{tikzpicture} [baseline = .1cm]
\draw (-.8, 0)--(2, 0);
\draw (-.2, 0)--(-.2, .8);
\draw (.2,.4) arc(180:0:.4cm);
\draw (1.4, 0)--(1.4, .8);
\node at (.6, .6) {\scriptsize{$k$}};
\nbox{unshaded}{(0,0)}{.4}{0}{0}{$x$}
\nbox{unshaded}{(1.2,0)}{.4}{0}{0}{$y$}
\end{tikzpicture}\,.
$$
Note that
$$
L_{+}(g_{\e'}) + L_{-}(g_{\e'}) = \frac{S_{\e'}}{\sqrt{\mu(t(\e'))}} + \frac{S^{*}_{\e''}}{\sqrt{\mu(s(\e'))}}
$$
with $\mu$ the quantum dimension weighting on $\Gamma$ induced from $\cP_{\bullet}$, which satisfies the Frobenius-Perron condition.  
From Section \ref{sec:FreeGraphAlgebraOfUnoriented} we see that $\cS(\Gamma) = \cS(\Gamma, \mu)$, so the name $\cS(\Gamma)$ makes sense.   
As the set $A(\Gamma, \mu)$ is empty for a weighting which satisfies the Frobenius-Perron condition, we immediately deduce that $\cS(\Gamma)$ can be seen as a subalgebra of $\cO(\Gamma)$.  This shows the following:

\begin{cor}
$\cS(\Gamma)$ is subnuclear, and thus exact.
\end{cor}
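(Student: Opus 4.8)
The plan is to read the corollary off the embedding already produced in the discussion preceding its statement, so almost all of the work is the bookkeeping needed to apply Facts~\ref{facts:CuntzKrieger}. First I would recall that $\cS(\Gamma)=\cS(\Gamma,\mu)$, where $\mu$ is the quantum dimension weighting on $\Gamma$ induced from $\cP_\bullet$, and that this weighting satisfies the Frobenius--Perron condition
$\delta\,\mu(\alpha)=\sum_{\beta\sim\alpha}n(\alpha,\beta)\mu(\beta)=a_\mu(\alpha)$.
Since $\delta>1$ this gives $\mu(\alpha)=\delta^{-1}a_\mu(\alpha)<a_\mu(\alpha)$ at every vertex, so $A(\Gamma,\mu)=\set{\alpha}{\mu(\alpha)>a_\mu(\alpha)}$ is empty. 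By the theorem and corollary above on the case $A(\Lambda,\mu)=\emptyset$ (namely $\cS(\Lambda,\mu)\cap\cK(\cF(Y(\vec\Lambda)))=\{0\}$, hence the quotient map $\cT(\Gamma)\to\cO(\Gamma)$ is injective on $\cS(\Lambda,\mu)$), applied with $\Lambda=\Gamma$, the algebra $\cS(\Gamma)$ is exhibited as a $C^*$-subalgebra of $\cO(\Gamma)$.

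Next I would invoke Theorem~\ref{thm:CuntzGraph} to identify $\cO(\Gamma)\cong\cO_{\vec\Gamma}$, the Cuntz--Krieger algebra of the directed graph $\vec\Gamma$. To conclude that this algebra is nuclear I would check that the hypotheses of Facts~\ref{facts:CuntzKrieger}(2) hold: by Remarks~\ref{rem:PAExamples} and~\ref{rem:OrientLambda}, $\vec\Gamma$ is locally finite (because $\dim\cP_n<\infty$), strongly connected, and its edge matrix is not a permutation matrix (because $\delta>1$ excludes the one-vertex-one-loop graph and the $A_2$ Coxeter--Dynkin diagram). Hence $\cO_{\vec\Gamma}$ is nuclear, so $\cS(\Gamma)$ is isomorphic to a $C^*$-subalgebra of a nuclear $C^*$-algebra, i.e., it is subnuclear. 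Exactness is then automatic, since nuclear $C^*$-algebras are exact and exactness passes to $C^*$-subalgebras.

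I do not expect a genuine obstacle here: the substantive work has already been carried out in Subsection~\ref{sec:FreeGraphAlgebraOfUnoriented} (the embedding into $\cO_{\vec\Gamma}$ when $A(\Gamma,\mu)=\emptyset$) and in Facts~\ref{facts:CuntzKrieger} (nuclearity of $\cO_{\vec\Gamma}$). The only points requiring care are the verification that the Frobenius--Perron equation forces $A(\Gamma,\mu)=\emptyset$ and that $\vec\Gamma$ satisfies the running hypotheses of Facts~\ref{facts:CuntzKrieger}; both reduce to the standing assumption $\delta>1$ together with finite-dimensionality of the $\cP_n$.
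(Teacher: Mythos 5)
Your proposal is correct and follows essentially the same route as the paper: the authors likewise note that the Frobenius--Perron weighting forces $A(\Gamma,\mu)=\emptyset$, so that $\cS(\Gamma)=\cS(\Gamma,\mu)$ embeds into $\cO(\Gamma)\cong\cO_{\vec{\Gamma}}$, whose nuclearity is supplied by Facts~\ref{facts:CuntzKrieger}. Your extra verifications (that $\delta>1$ yields $\mu(\alpha)<a_\mu(\alpha)$, and that $\vec{\Gamma}$ meets the hypotheses of Facts~\ref{facts:CuntzKrieger}) are details the paper leaves implicit but are exactly the right checks.
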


We could have also deduced exactness of $\cS(\Gamma)$ by \cite{MR2039095}.  From the previous section, we deduce that 
$$
K_{0}(\cS(\Gamma)) = \Z\set{[p_{\alpha}]}{\alpha \in V(\Gamma)} \text{ and } K_{1}(\cS(\Gamma)) = (0).
$$
In \cite[Subsection 5.1]{GJSCStar} we prove that $\cS(\Gamma)$ is simple.  
For now, we end with the following lemma which ties together the structure of $\cB$, $\TP{}$, $\SP{}$ and $\OP{}$  with $\cC$, $\cT(\Gamma)$, $\cS(\Gamma)$, and $\cO(\Gamma)$.

\begin{lem}\label{lem:CompactTensor1} 
Let $\cK$ be the algebra of compact operators on a separable infinite-dimensional Hilbert space.  

\be
\item 
$\cB \cong \cC \otimes \cK$, and

\item 
for $\cA\in \{\cT,\cO,\cS\}$, we have $\cA(\cP_\bullet) \cong \cA(\Gamma) \otimes \cK$.

\ee
\end{lem}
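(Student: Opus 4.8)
The plan is to prove (1) from the inductive‑limit structure of $\cB$, in a form that also records how $\cC$ sits inside $\cB$, and then to deduce (2) by showing that the Hilbert bimodule $\cX$ is itself a ``stabilization'' of its compression $\cY$, so that the Fock‑space, Toeplitz, Cuntz--Pimsner, and semicircular constructions all commute with $-\otimes\cK$.

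For (1), recall $\cB=\varinjlim\cB_n$ with $\cB_n\cong\bigoplus_{\alpha\in V(\Gamma(n))}M_{n(\alpha)}(\C)$, and that $\cB_n$ is a non‑unital hereditary subalgebra of $\cB_{n+1}$, so blockwise the connecting map is a corner embedding $M_{n(\alpha)}(\C)\hookrightarrow M_{n+1(\alpha)}(\C)$ (together with the appearance of new blocks). Since $P_n\cB P_n\cong C(V(\Gamma(n)))$, each $p_\alpha$ is a minimal projection of its block of $\cB_n$, and minimal projections remain minimal under corner embeddings, so $p_\alpha$ is a rank‑one projection in the $\alpha$‑summand $\cK$ of $\cB\cong\bigoplus_{\alpha\in V(\Gamma)}\cK$. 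As $\cC=\varinjlim P_n\cB P_n=\bigoplus_\alpha\C p_\alpha\cong C_0(V(\Gamma))$, fixing matrix units for each $\cK_\alpha$ with $p_\alpha$ as the $(1,1)$‑entry gives an isomorphism $\theta\colon\cC\otimes\cK\xrightarrow{\ \sim\ }\cB$ which carries $\cC\otimes e$ onto $\cC$, where $e\in\cK$ is a fixed rank‑one projection; this is (1) in the form needed below.

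For (2), first upgrade $\theta$ to an isomorphism of bimodules. Set $D=\cC\otimes\cK$ and $q=1\otimes e\in M(D)$, so $qDq\cong\cC$ and $q$ is full in $D$ (because $e$ is full in $\cK$); then $Dq$ is a $D$--$\cC$ imprimitivity bimodule with $\cK(Dq)\cong\cC$, and compression by $q$ is an equivalence from $D$--$D$ correspondences to $\cC$--$\cC$ correspondences, with inverse $W\mapsto Dq\otimes_\cC W\otimes_\cC qD$. Transporting $\cX$ along $\theta$ and using $q\cX q\cong\overline{\cC\cX\cC}=\cY$, together with $Dq\cong\cC\otimes\cH$ and $qD\cong\cC\otimes\overline{\cH}$ as correspondences, where $\cH\cong\ell^{2}$ carries the standard representation of $\cK$ and the trivial action of $\cC$, a short computation collapsing the $\cC$‑factors gives $\cX\cong\cH\otimes\cY\otimes\overline{\cH}\cong\cY\otimes\cK$ as Hilbert $D$--$D$ bimodules, where on the right $\cK$ is regarded as the identity $\cK$--$\cK$ correspondence; one checks this identification carries $\dagger$ to $\dagger_{\cY}\otimes(\,\cdot\,)^{*}$, hence $\cX_\R\cong\cY_\R\otimes\cK_\R$ with $\cK_\R=\{k\in\cK:k=k^{*}\}$. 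Then, since $(\cY\otimes\cK)^{\otimes_D n}\cong\cY^{\otimes_\cC n}\otimes\cK$ (as $\cK^{\otimes_\cK n}\cong\cK$), we get $\FP{}\cong\cF(\cY)\otimes\cK$, under which $L_{+}(\eta\otimes k)$ becomes $L_{+}(\eta)\otimes\lambda(k)$ for $\lambda\colon\cK\xrightarrow{\ \sim\ }\cK(\cK_\cK)$ the left‑multiplication representation, and $\cK(\FP{})\cong\cK(\cF(\cY))\otimes\cK$. Consequently the $C^{*}$‑algebra generated by $\cB$ and $\{L_{+}(\xi):\xi\in\cX\}$ is $\cT(\cY)\otimes\cK$, so $\TP{}\cong\cT(\Gamma)\otimes\cK$ by Propositions \ref{prop:OperatorValuedToeplitz}, \ref{prop:CKBim}, and \ref{prop:CompressedToeplitz1}; passing to the quotient by the compacts and using exactness of $\cK$ together with Proposition \ref{prop:CompressedFock} (and Theorem \ref{thm:CuntzGraph}) yields $\OP{}\cong\cO(\Gamma)\otimes\cK$; and since $L_{+}(\xi)+L_{-}(\xi)=\big(L_{+}(\eta)+L_{-}(\eta)\big)\otimes\lambda(k)$ for $\xi=\eta\otimes k\in\cX_\R$, the same argument restricted to the real subspace gives $\SP{}\cong\cS(\Gamma)\otimes\cK$ by the definition of $\cS(\Gamma)$ in Subsection \ref{sec:PAGraph} and Proposition \ref{prop:cyclic3}.

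The main obstacle is the bimodule upgrade: turning the $C^{*}$‑algebra isomorphism $\cB\cong\cC\otimes\cK$ into an isomorphism of Hilbert bimodules $\cX\cong\cY\otimes\cK$. This is where one must either invoke the Morita theory of $C^{*}$‑correspondences (compression by a full projection is an equivalence of correspondence categories) or build the isomorphism directly from the imprimitivity bimodule $Dq$, and in either case carefully keep track of \emph{both} the left and the right $\cB$‑actions and of the involution $\dagger$, so that $\cX_\R$ is matched with $\cY_\R\otimes\cK_\R$; once this is in place, the remaining steps are the routine stabilization facts for Pimsner algebras indicated above.
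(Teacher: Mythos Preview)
Your argument for (1) matches the paper. For (2) your approach is correct but takes a genuinely different and considerably longer route than the paper's one-line proof. The paper simply observes that the matrix units implementing $\cB\cong\cC\otimes\cK$ from part (1) already lie in $\cB\subset\cA(\cP_\bullet)$ for each $\cA\in\{\cT,\cO,\cS\}$, and that $\cA(\Gamma)=\overline{\cC\,\cA(\cP_\bullet)\,\cC}$ (by definition for $\cT$, by Proposition \ref{prop:cyclic3}(3) for $\cS$, and then for $\cO$ by passing to the quotient). A $C^*$-algebra containing a full system of matrix units is isomorphic to the corner tensored with $\cK$, so $\cA(\cP_\bullet)\cong\cA(\Gamma)\otimes\cK$ follows immediately---no analysis of the bimodule $\cX$ is needed.

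Your route instead proves the stronger statement that the \emph{correspondence} stabilizes, $\cX\cong\cY\otimes\cK$, and then pushes this through the Fock, Toeplitz, Cuntz--Pimsner, and semicircular constructions. This is conceptually pleasant (it explains the three isomorphisms uniformly at the level of the input data) and is the kind of argument that would generalize, but it requires invoking or reproving the Morita theory of $C^*$-correspondences and then tracking the involution $\dagger$ through the identification, exactly as you flag. Two small corrections: in your imprimitivity step you should have $\cK(Dq)\cong D$, not $\cC$ (though you do not actually use this); and for $\SP{}\cong\cS(\Gamma)\otimes\cK$ you should note that the elements $(L_+(\eta)+L_-(\eta))\otimes\lambda(k)$ with $\eta\in\cY_\R$, $k\in\cK_\R$, together with $\cC\otimes\cK$, generate all of $\cS(\Gamma)\otimes\cK$ because $\cK_\R$ spans $\cK$ and $\cC\otimes\cK$ can be used to move within the second tensor factor---this is straightforward but worth saying. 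The paper's approach avoids all of this by working at the level of the ambient $C^*$-algebras rather than the bimodule.
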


\begin{proof}
(1) is immediate since 
$\cC \cong \bigoplus_{v \in V(\Gamma)} \C$
and
$\cB \cong \bigoplus_{v \in V(\Gamma)}  \cK$
with $\cC$ canonically a hereditary subalgebra  of $\cB$.
To prove (2)-(4), just use the matrix units from (1), since $\cA(\Gamma)\cong \overline{\cC\cA(\cP_{\bullet})\cC}^{\|\cdot\|}$.
\end{proof}

These stable isomorphisms immediately imply the following result.

\begin{cor}\label{cor:KTheory}
The canonical inclusion $i: \cB \hookrightarrow \SP{}$ is a $KK$-equivalence.   As a consequence,
$$
K_{0}(\SP{}) = \Z\set{[p_{\alpha}]}{\alpha \in V(\Gamma)} \text{ and }\, K_{1}(\SP{}) = (0).
$$
The same holds for the canonical inclusion $\SP{} \hookrightarrow \TP{}$. 
\end{cor}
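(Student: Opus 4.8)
The strategy is to transport both statements, by means of the stable isomorphisms of Lemma~\ref{lem:CompactTensor1}, to the $KK$-equivalences already supplied by Theorems~\ref{thm:KTheory} and~\ref{thm:Germain}. The first step I would take is to record a compatibility built into the proof of Lemma~\ref{lem:CompactTensor1}: the isomorphisms $\cB\cong\cC\otimes\cK$, $\SP{}\cong\cS(\Gamma)\otimes\cK$ and $\TP{}\cong\cT(\Gamma)\otimes\cK$ are all realized by one and the same system of matrix units, namely the partial isometries in $\cB$ between minimal projections, which live inside $\cB\subset\SP{}\subset\TP{}$. Hence, under these identifications, the canonical inclusion $i\colon\cB\hookrightarrow\SP{}$ becomes $(\cC\hookrightarrow\cS(\Gamma))\otimes\id_\cK$ and the canonical inclusion $\SP{}\hookrightarrow\TP{}$ becomes $(\cS(\Gamma)\hookrightarrow\cT(\Gamma))\otimes\id_\cK$, with the rank-one projection $p_\alpha\in\cB$ corresponding to $p_\alpha\in\cC\subset\cS(\Gamma)$. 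Checking this is the one point that needs care, but it is pure bookkeeping with the $p_\alpha$ and the matrix units, with no analytic content.

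Granting this, and using that $KK$-equivalence is preserved under tensoring with $\cK$ (stability of $KK$), it suffices to show that $\cC\hookrightarrow\cS(\Gamma)$ and $\cS(\Gamma)\hookrightarrow\cT(\Gamma)$ are $KK$-equivalences. The first is exactly Theorem~\ref{thm:KTheory} applied to $\Lambda=\Gamma$ with the quantum dimension weighting $\mu$, using the identification $\cS(\Gamma)=\cS(\Gamma,\mu)$ recorded in this subsection. For the second, recall that $\cT(\Gamma)\cong\cT(\cY)\cong\cT(Y(\vec\Gamma))$ is the Pimsner--Toeplitz algebra of the Cuntz--Krieger bimodule $\cY\cong Y(\vec\Gamma)$, while $\cS(\Gamma)=\cS(\cY_\R)$ for the closed real subspace $\cY_\R\subset\cY$, which satisfies $\cY_\R\cdot\cC=\cY$ because each edge element $g_\epsilon$ lies in $\cY_\R\cdot\cC$ and the $g_\epsilon$ span $\cY$. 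When $\Gamma$ has finitely many vertices, $\cC$ is unital, so Theorem~\ref{thm:Germain} directly yields that $\cS(\cY_\R)\hookrightarrow\cT(\cY)$ is a $KK$-equivalence; when $\Gamma$ is infinite one writes $\Gamma$ as an increasing union of finite subgraphs and passes to the inductive limit, exactly as in the proof of Theorem~\ref{thm:KTheory}. (Alternatively, one can avoid the second appeal to Germain's theorem: $\cC\to\cS(\Gamma)$ and $\cC\to\cT(\Gamma)$ are both $KK$-equivalences and the latter factors through the former, so $\cS(\Gamma)\to\cT(\Gamma)$ is a $KK$-equivalence by the two-out-of-three property for Kasparov products.) The main obstacle, such as it is, lies precisely in the non-unital (infinite-graph) case, which forces this inductive-limit detour rather than a one-line application of Theorem~\ref{thm:Germain}.

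Finally, the $K$-theory assertion is immediate: $\cB\cong\bigoplus_{\alpha\in V(\Gamma)}\cK$, so $K_0(\cB)=\bigoplus_{\alpha\in V(\Gamma)}\Z$ with the $\alpha$-th summand generated by the class of the rank-one projection $p_\alpha$, and $K_1(\cB)=0$. Since $i$ is a $KK$-equivalence it induces an isomorphism on $K$-theory carrying $[p_\alpha]$ to $[p_\alpha]\in K_0(\SP{})$, which gives $K_0(\SP{})=\Z\set{[p_\alpha]}{\alpha\in V(\Gamma)}$ and $K_1(\SP{})=0$; the $KK$-equivalence $\SP{}\hookrightarrow\TP{}$ then transports the same description to $\TP{}$, identifying $[p_\alpha]\in K_0(\TP{})$ with $[p_\alpha]\in K_0(\SP{})$.
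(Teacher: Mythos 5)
Your proposal is correct and follows essentially the same route as the paper, which simply invokes the stable isomorphisms of Lemma \ref{lem:CompactTensor1} to reduce to the $KK$-equivalences $\cC\hookrightarrow\cS(\Gamma)\hookrightarrow\cT(\Gamma)$ supplied by Theorem \ref{thm:KTheory} and Theorem \ref{thm:Germain} (with the same inductive-limit device in the infinite-depth case). You have merely made explicit the bookkeeping the paper leaves implicit, namely that the three stable isomorphisms are implemented by one common system of matrix units so that the canonical inclusions become the compressed inclusions tensored with $\id_\cK$.
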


\subsection{A further canonical compression}\label{sec:FurtherCompression}

A planar algebra inclusion $\cQ_\bullet \hookrightarrow \cP_{\bullet}$ does not induce inclusions of (Toeplitz-)Cuntz-Krieger or free graph algberas.
More precisely, we do \underline{not} get a map $\cY(\cQ_\bullet)$ to $\cY(\cP_\bullet)$ since $\depth(\cQ_\bullet) \geq \depth(\cP_\bullet)$ (e.g., see \cite[Corollary 3.12]{1208.1564}). 
In particular, we see this problem when $\delta\geq 2$, $\cQ_\bullet=\TL_\bullet$, and $\cP_\bullet$ is finite depth.

However, we do have functorality of the assignment $\cP_\bullet$ to $\SP{0}$ and $\OP{0}$.
We realize this by a further compression by $p_\star$.
To this end, we will need to represent our compression faithfully on a Hilbert space.

\begin{defn}\label{defn:GJSTrace}
Using the identification of Remark \ref{rem:GJS}, we can define a conditional expectation $E_\infty: \SP{} \rightarrow \cB$ given by the extension of $E_\infty(x) = \delta_{n, 0}x$ for $x \in \cP_{l, n, r}\subset \SP{}$.  
Note that $E_\infty$ is continuous since if $Q_{0}$ is the orthogonal projection from $\FP{}$ onto $\cB$, we have $E_\infty(x) = Q_{0}xQ_{0}$ as an element in $\cB$.  
Furthermore, using the isomorphisms $\cB\cong \cC_\Gamma \otimes \cK$ and $\SP{}\cong \cS(\Gamma)\otimes \cK$ from Lemma \ref{lem:CompactTensor1}, $E_\infty=E\otimes \id_\cK$ for the $E$ given in Theorem \ref{thm:freegraphToeplitz}.
Thus $E_\infty$ is faithful.

The map $\Tr \circ E_\infty$ on $\SP{}$ becomes a faithful lower-semicontinuous tracial weight on $\SP{}$.  
Diagrammatically, we have
$$
\Tr \circ E_\infty(x) =  \delta_{n, 0} \cdot \delta_{l, r}\,
\begin{tikzpicture}[baseline = -.2cm]
	\draw (-.4, 0)--(.4, 0) arc(90:-90:.4cm)--(-.4, -.8) arc(270:90:.4cm);
	\nbox{unshaded}{(0,0)}{.4}{0}{0}{$x$}
	\node at (-.6, .2) {{\scriptsize{$l$}}};
	\node at (.6, .2) {{\scriptsize{$r$}}};
\end{tikzpicture}\,.
$$
\end{defn}

\begin{thm}\label{thm:OSfunctor}
The assignment $\cP_{\bullet}$ to $\SP{0}$ and $\OP{0}$ is functorial in the sense that an inclusion $\cQ_{\bullet} \hookrightarrow \cP_{\bullet}$ of factor planar algebras induces a canonical inclusion $\cA(\cQ_{\bullet}) \hookrightarrow \cA(\cP_{\bullet})$ for $\cA \in \{\cO, \cS\}$.
\end{thm}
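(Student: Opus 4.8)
The plan is to construct the two maps directly on the diagrammatic generators and then control them with the canonical faithful state of Subsection~\ref{sec:PACuntz}. For an inclusion $i\colon\cQ_\bullet\hookrightarrow\cP_\bullet$ of factor planar algebras, define $i_{\cO}\colon\cO_0(\cQ_\bullet)\to\cO_0(\cP_\bullet)$ on generators by $\overline{L^{\cQ_\bullet}_\err(x)}\mapsto\overline{L^{\cP_\bullet}_\err(i(x))}$, where the bars denote images in the respective $\cP_\bullet$-Cuntz algebras, and define $i_{\cS}\colon\cS_0(\cQ_\bullet)\to\cS_0(\cP_\bullet)$ on generators by $\sum_{\ell+\err=n}L^{\cQ_\bullet}_\err(x)\mapsto\sum_{\ell+\err=n}L^{\cP_\bullet}_\err(i(x))$. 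These are precisely the ``further compression by $p_\star$'' of the statement: $p_\star=1_0\in\cP_{0,0}\subset\cB(\cP_\bullet)$ is the image of the empty diagram, the functorial map $i_\cB$ of Theorem~\ref{thm:functorial} carries $p_\star$ to $p_\star$ because $i$ is the identity on $\cP_0\cong\C\cong\cQ_0$, and the displayed assignments are the cut-downs by $p_\star$ of the functorial $i_\cO,i_\cS$ for $\OP{},\SP{}$ once one identifies $\cO_0(\cP_\bullet)\cong p_\star\OP{}p_\star$ and $\cS_0(\cP_\bullet)\cong p_\star\SP{}p_\star$ (the identifications deferred to this theorem in Remark~\ref{rem:OP} and their $\cS$-analogues).

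To see that the assignments are well defined and isometric, I would invoke the faithful $\KMS_{\ln\delta}$-state $\varphi$ of Lemma~\ref{lem:KMSstate}; for $\cS_0$ one uses its restriction, which is a faithful trace. The key point is that on any word in the generators $\varphi$ is evaluated by a single closed planar tangle applied to the labels, so --- since a planar algebra homomorphism commutes with tangle evaluation and is the identity on $\cP_0\cong\C$ --- one obtains $\varphi^{\cP_\bullet}\circ(\text{formal map})=\varphi^{\cQ_\bullet}$ on the generating $*$-algebras. Since $\fN_\varphi=\cK$, this forces the formal map to descend to $\cO_0(\cQ_\bullet)$ (a word lying in $\cK$ over $\cQ_\bullet$ has image $a$ over $\cP_\bullet$ with $\varphi^{\cP_\bullet}(a^{*}a)=0$, hence $a\in\cK$), and via the identity $\|b\|^{2}=\lim_{n}\varphi\big((b^{*}b)^{n}\big)^{1/n}$, valid for any faithful state, it is isometric; it therefore extends to an injective $*$-homomorphism $\cO_0(\cQ_\bullet)\hookrightarrow\cO_0(\cP_\bullet)$. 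The identical computation, applied to the generators of $\cS_0$ (which already sit inside $\cO_0$ because $\cS_0$ meets $\cK$ trivially), yields the injective $\cS_0(\cQ_\bullet)\hookrightarrow\cS_0(\cP_\bullet)$. Functoriality --- preservation of identities and of composition --- is read off the formulas on generators.

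The main obstacle is making precise why this succeeds only \emph{after} quotienting by the compacts, so that it is consistent with the failure of functoriality of $\cP_\bullet\mapsto\TP{0}$ flagged before Remark~\ref{rems:NotFunctor}: there $L_1(\cup_1)-L_2(\cup_2)$ vanishes in $\cT_{0}(\TL_\bullet)$ but not in $\cT_{0}(\NC_\bullet)$, yet this element is compact, so it does vanish in $\cO_0(\NC_\bullet)$, and it never lies in any $\cS_0$. Turning this into a proof is exactly the combination of the two facts used above --- that $\varphi$ is diagrammatic and that $\fN_\varphi=\cK$ --- i.e.\ the assembly of Subsections~\ref{sec:KMS}--\ref{sec:PACuntz}; once those are in hand the argument is routine. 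Equivalently, one may phrase everything as the corner identifications $\cO_0(\cP_\bullet)\cong p_\star\OP{}p_\star$, $\cS_0(\cP_\bullet)\cong p_\star\SP{}p_\star$ and then merely restrict $i_\cO,i_\cS$ to the $p_\star$-corners, using $i_\cB(p_\star)=p_\star$; the two routes are the same calculation.
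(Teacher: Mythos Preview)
Your approach is correct and takes a genuinely different route from the paper. The paper first establishes the corner identifications $\SP{0}\cong p_\star\SP{}p_\star$ and $\OP{0}\cong p_\star\OP{}p_\star$ --- the former by recognizing the action on $\FP{0}$ as the GNS representation of the faithful trace $\Tr\circ E_\infty(p_\star\,\cdot\,p_\star)$, the latter by invoking simplicity of $p_\star\OP{}p_\star$ (inherited from $\cO_{\vec{\Gamma}}$) --- and then simply restricts the already-functorial maps of Theorem~\ref{thm:functorial} to the $p_\star$-corners. Your primary route bypasses the operator-valued system and works directly with the $\KMS$ state: the diagrammatic identity $\varphi^{\cP_\bullet}\circ(\text{formal map})=\varphi^{\cQ_\bullet}$, the equality $\fN_\varphi=\cK$, and the norm formula $\|b\|=\lim_n\varphi((b^*b)^n)^{1/n}$ (valid for any faithful state and positive $b^*b$) handle well-definedness and injectivity simultaneously. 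One small correction: $\varphi|_{\cS_0}$ is \emph{not} a trace --- the generators $\sum_{\ell+\err=n}L_\err(x)$ are not fixed by $\sigma$ --- but your argument only needs faithfulness, which holds since $\cS_0\hookrightarrow\cO_0$. What the paper's route buys is that the corner identifications are proved along the way (they are used elsewhere, e.g.\ Remark~\ref{rem:OP}); what yours buys is a self-contained argument that never leaves Section~\ref{sec:PAFockSpace}. Your closing claim that the two routes are ``the same calculation'' is a slight overstatement: the paper's $\cO$-argument uses simplicity rather than the $\KMS$ state.
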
  

\begin{proof}
We note that the map $i_{\cA}$ in Theorem \ref{thm:functorial} induces a canonical inclusion $p_{\star}\cA(\cQ_{\bullet})p_{\star} \rightarrow p_{\star}\cA(\cP_{\bullet})p_{\star}$ for $\cA \in \{\cO, \cS\}$.  
If we show that $p_{\star}\cA(\cP_{\bullet})p_{\star}$ is naturally isomorphic to $\cA_{0}(\cP)$, we will be finished.

To this end, we note that $p_{\star}\FP{}p_{\star}$ is the Hilbert space $\FP{0}$. 
Let $\rho(p_{\star})$ denote the right action of $p_{\star}$ on $\FP{}$.  
We define a $C^{*}$-algebra homomorphism $\phi: p_{\star}\cL(\FP{})p_{\star} \rightarrow \cB(\FP{0})$ by $\phi(x) = x\rho(p_{\star})$.   
We observe that $\phi(p_{\star}\TP{}p_{\star}) = \TP{0}$ and $\phi(p_{\star}\SP{}p_{\star}) = \SP{0}$.
Diagrammatically, $\phi$ sends $x$ as an operator on $p_{\star}\FP{}$ to the operator $x$ on $\FP{0}$.  

The $C^{*}$-algebra $p_{\star}\SP{}p_{\star} = p_{\star}\cS(\Gamma)p_{\star}$ carries a faithful trace $\tr=\Tr\circ E_\infty (p_{\star}\, \cdot \, p_{\star})$ as in Definition \ref{defn:GJSTrace}.  
We see that the representation of $p_{\star}\SP{}p_{\star}$ on $\FP{0}$ is simply the GNS representation with resect to $\tr$.  
Therefore, $\phi$ is faithful on $p_{\star}\SP{}p_{\star}$.

We note that if $x \in p_{\star}\cK(\cP_{\bullet})$, then $\phi(x) \in \cK(\FP{0})$.  
Therefore, $\phi$ induces a surjection from $p_{\star}\OP{}p_{\star}$ onto $\OP{0}$.  
The algebra $p_{\star}\OP{}p_{\star}$ is simple from Theorem \ref{thm:CuntzGraph} and the simplicity of $\cO_{\vec{\Gamma}}$.  
Therefore the surjection is an isomorphism.
\end{proof}

\begin{rem}
The proof of Theorem \ref{thm:OSfunctor} shows that $\TP{0}$ is a reduced compression of $\TP{}$. 
Note that the map $\phi$ need not be injective on $\TP{}$.
Indeed, if we consider
$$
y_{\cP_\bullet}=\,
\begin{tikzpicture}[baseline=.1cm]
	\draw (-.2,.7)--(-.2,.1) arc (-180:0:.2cm)--(.2,.7);
	\nbox{}{(0,0)}{.3}{.1}{.1}{}
	\draw[fill=red] (0,.3) circle (.05cm);
\end{tikzpicture}
\,-
\begin{tikzpicture}[baseline=.1cm]
	\draw (-.2,.7)--(-.2,.1) arc (-180:0:.2cm)--(.2,.7);
	\node at (-.4,.5) {\scriptsize{$2$}};
	\node at (.4,.5) {\scriptsize{$2$}};
	\nbox{}{(0,0)}{.3}{.1}{.1}{}
	\draw[fill=red] (0,.3) circle (.05cm);
\end{tikzpicture}
\in \TP{}
$$
similar to Remarks \ref{rems:NotFunctor}, then $y_{\TL_\bullet}\neq 0$, but $\phi(y_{\TL_{\bullet}}) = 0$.  
This explains why the assignment $\cP_{\bullet}$ with $\TP{0}$ is not functorial.
\end{rem}

\begin{ex}\label{ex:ToeplitzAF}
Recall the AF structure of Cuntz core of $\OP{0}$ from (1) of Remarks \ref{rem:CuntzCore}.

Now consider $p_\star \TP{} p_\star=p_\star \cT(\Gamma)p_\star$, which acts faithfully on $p_\star \cF(\Gamma)$.
We show that the core of this algebra (the fixed points under the $\bbT$-action) has an AF structure related to that of the core of $\OP{0}$.
We see that the core of $p_\star \TP{} p_\star$ is generated by the subalgebras $C_{n}$ spanned by elements of the form
$$
\begin{tikzpicture}[baseline=0cm]
	\draw (.2,0)--(.2,.8);
	\draw (-.2,0)--(-.2,.8);
	\node at (-.4,.6) {\scriptsize{$k$}};
	\node at (.4,.6) {\scriptsize{$k$}};
	\nbox{unshaded}{(0,0)}{.4}{0}{0}{$x$}
	\draw[fill=red] (0,.4) circle (.05cm);
\end{tikzpicture}
$$
for $k\leq n$. 
The inclusion $C_n \hookrightarrow C_{n+1}$ is the identity.
Notice that for each minimal projection $p\in \cP_{2n}$, we can write
$$
\begin{tikzpicture}[baseline=0cm]
	\draw (.2,0)--(.2,.8);
	\draw (-.2,0)--(-.2,.8);
	\node at (-.4,.6) {\scriptsize{$n$}};
	\node at (.4,.6) {\scriptsize{$n$}};
	\nbox{unshaded}{(0,0)}{.4}{0}{0}{$p$}
	\draw[fill=red] (0,.4) circle (.05cm);
\end{tikzpicture}
=
\begin{tikzpicture}[baseline=.1cm]
	\draw (.4,0)--(.4,1.2);
	\draw (-.4,0)--(-.4,1.2);
	\draw (-.2,1.2) -- (-.2,.7) arc (-180:0:.2cm) -- (.2,1.2);
	\node at (-.6,1) {\scriptsize{$n$}};
	\node at (.6,1) {\scriptsize{$n$}};
	\nbox{}{(0,.2)}{.6}{.1}{.1}{}
	\nbox{unshaded}{(0,.05)}{.3}{.2}{.2}{$p$}
	\draw[fill=red] (0,.8) circle (.05cm);
\end{tikzpicture}
+
\left(
\underbrace{
\begin{tikzpicture}[baseline=0cm]
	\draw (.2,0)--(.2,.8);
	\draw (-.2,0)--(-.2,.8);
	\node at (-.4,.6) {\scriptsize{$n$}};
	\node at (.4,.6) {\scriptsize{$n$}};
	\nbox{unshaded}{(0,0)}{.4}{0}{0}{$p$}
	\draw[fill=red] (0,.4) circle (.05cm);
\end{tikzpicture}
-
\begin{tikzpicture}[baseline=.1cm]
	\draw (.4,0)--(.4,1.2);
	\draw (-.4,0)--(-.4,1.2);
	\draw (-.2,1.2) -- (-.2,.7) arc (-180:0:.2cm) -- (.2,1.2);
	\node at (-.6,1) {\scriptsize{$n$}};
	\node at (.6,1) {\scriptsize{$n$}};
	\nbox{}{(0,.2)}{.6}{.1}{.1}{}
	\nbox{unshaded}{(0,.05)}{.3}{.2}{.2}{$p$}
	\draw[fill=red] (0,.8) circle (.05cm);
\end{tikzpicture}
}_{r_p}
\right).
$$
We already understand how the first projection on the right hand side decomposes in $C_{n+1}$ from its decomposition in the  Cuntz core.
Hence we must analyze the remainder term on the right hand side.

\begin{claim}
$r_p$ is a projection of rank exactly 1.
\end{claim}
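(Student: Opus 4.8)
The plan is to establish the three parts of the claim in turn — that $r_{p}=r_{p}^{*}$, that $r_{p}^{2}=r_{p}$, and that $\operatorname{rank} r_{p}=1$ — where $r_{p}=L_{n}(p)-L_{n+1}(\iota(p))$ and $\iota\colon\cP_{2n}\hookrightarrow\cP_{2n+2}$ is the right inclusion of Remark~\ref{rem:CuntzCore}. Self-adjointness is free: $p$ and $\iota(p)$ are self-adjoint projections (a right inclusion of a projection is a projection), and the grade-preserving operators satisfy $L_{k}(x)^{*}=L_{k}(x^{*})$, so both $L_{n}(p)$ and $L_{n+1}(\iota(p))$ are self-adjoint.

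For idempotency I would show that $L_{n+1}(\iota(p))$ is a subprojection of $L_{n}(p)$, i.e.\ that $L_{n}(p)\,L_{n+1}(\iota(p))=L_{n+1}(\iota(p))=L_{n+1}(\iota(p))\,L_{n}(p)$; then $r_{p}$, being the difference of a projection and a subprojection, is a projection. The first identity is a short diagram computation with the product formula for the $L$-operators: since $\iota(p)$ is $p$ with one extra through-strand adjoined, composing $L_{n}(p)$ on top of $L_{n+1}(\iota(p))$ and contracting forces the two copies of $p$ to meet along $n$ strands, where they fuse into a single $p$ because $p^{2}=p$, while the extra strand passes through untouched; the result is exactly $L_{n+1}(\iota(p))$. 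The other identity follows by taking adjoints and using self-adjointness.

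The rank statement I would prove in three moves. First, $r_{p}$ is finite rank: by Remark~\ref{rem:CuntzCore} the right inclusion is precisely the map identifying $L_{n}(p)$ with $L_{n+1}(\iota(p))$ inside the Cuntz core, so $L_{n}(p)$ and $L_{n+1}(\iota(p))$ have the same image modulo the compacts, whence $r_{p}$ is compact — and a compact projection has finite rank. Second, $r_{p}$ is concentrated in a single grade: core elements respect the $\bbZ_{\geq 0}$-grading of the (compressed) Fock space, and checking grade by grade one finds that $L_{n}(p)$ and $L_{n+1}(\iota(p))$ both vanish below grade $n$, agree above grade $n$ (the appended strand being inert in the contraction that defines the action), and that $L_{n+1}(\iota(p))$ annihilates grade $n$; hence $r_{p}$ equals $L_{n}(p)$ restricted to the grade-$n$ subspace and extended by zero. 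Third, that restriction is a rank-one projection: on the grade-$n$ subspace $L_{n}(p)$ implements the standard action of the finite-dimensional $C^{*}$-algebra $\cP_{2n}$, which is multiplicity-free there — this is where it matters that $\cF(\bullet)$ was compressed first by the full diagonal algebra $\cC$ and only afterwards by $p_{\star}$, so that the grade-$n$ space meets every block of $\cP_{2n}$ — and therefore the minimal projection $p$ acts as a rank-one projection; a normalization bookkeeping with the spherical trace on $\cP_{\bullet}$ identifies $r_{p}$ with this rank-one projection and, in particular, certifies that it is nonzero.

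The main obstacle is the last move: one must verify that the grade-$n$ action of $\cP_{2n}$ on the compressed Fock space is genuinely multiplicity-free, so that minimality of $p$ forces the rank to be exactly one (rather than zero). The supporting claim in the second move — that $L_{n}(p)$ and $L_{n+1}(\iota(p))$ coincide on every grade above $n$ — is likewise a real diagrammatic check, turning on the explicit form of the right inclusion $\iota$.
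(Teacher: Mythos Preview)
Your proof is correct and follows essentially the same route as the paper: both reduce to the grade-$n$ piece and then invoke Schur's lemma. The paper phrases the final step concretely, computing $r_p y$ as a diagram and observing that it lies in the hom space between the two minimal projections $p$ and $p_\alpha$, which is at most one-dimensional; you phrase the same fact representation-theoretically, noting that the grade-$n$ subspace $p_\star\cY_n=\bigoplus_\alpha p_\star\cY_n p_\alpha$ is the multiplicity-free $\cP_{2n}$-module (one copy of each simple), so a minimal projection acts with rank one. Your remark that the left compression is only by $p_\star$ while the right still carries the full $\cC$ is exactly the point that makes the rank nonzero, and matches the paper's later observation that further compressing on the right by $p_\star$ kills the tails not under the trivial vertex.
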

\begin{proof}[Proof of Claim]
The only elements not killed by $r_p$ are those of degree exactly $n$ with no strings on the left side.
Consider a vector $y\in p_\star\cF(\Gamma)$ of the form
$$
\begin{tikzpicture}[baseline=0cm]
	\draw (0,0)--(0,.8);
	\draw (0,0)--(1.4,0);
	\node at (-.2,.6) {\scriptsize{$n$}};
	\nbox{unshaded}{(0,0)}{.4}{0}{0}{$y$}
	\nbox{unshaded}{(.9,0)}{.3}{0}{0}{$p_\alpha$}
\end{tikzpicture}
$$
for some $\alpha \in V(\Gamma)$. Now we see that $r_p y$ is given by
$$
\begin{tikzpicture}[baseline=0cm]
	\draw (-.8,.4) arc (180:90:.2cm) -- (-.2,.6) arc (90:0:.2cm);
	\draw (-1.2,0)--(-1.2,.8);
	\draw (0,0)--(1.4,0);
	\node at (-.4,.8) {\scriptsize{$n$}};
	\node at (-1.4,.6) {\scriptsize{$n$}};
	\nbox{unshaded}{(-1,0)}{.4}{0}{0}{$p$}
	\nbox{unshaded}{(0,0)}{.4}{0}{0}{$y$}
	\nbox{unshaded}{(.9,0)}{.3}{0}{0}{$p_\alpha$}
\end{tikzpicture}
$$
Since $p,p_\alpha$ are both minimal in $\cP_\bullet$, they are equivalent if $r_p y\neq 0$, and there is only a one-dimensional space of morphisms between both simples.
\end{proof}

From the above analysis, we see that the Bratteli diagram of $(C_n)_{n\geq 0}$ is obtained by taking the Bratteli diagram for the Cuntz core of $\OP{0}$, i.e., $(\cP_{2n})_{n\geq 0}$ under the right inclusion, and adjoining an $A_\infty$ tail to each vertex.
We draw this diagram by first drawing the Bratteli diagram for the core of $\OP{0}$, drawing a dotted line to the left, and attaching the $A_\infty$ tails to the left of the dotted line. 
We give an example in Figure \ref{fig:Bratteli}.

\begin{figure}[!htb]
$$
\begin{tikzpicture}
	\draw (0,0) -- (1.5,-1.5);
	\draw (0,-1) -- (1,-2);
	\draw (.5,-.5) -- (0,-1);
	\draw (1,-1) -- (0,-2);
	\draw (1.5,-1.5) -- (1,-2);
	\draw (0,0) -- (-1,-1) -- (-2,-1.5) -- (-3,-2);
	\draw (.5,-.5) -- (-.5,-1) -- (-1.5,-1.5) -- (-2.5,-2);
	\draw (0,-1) -- (-1,-1.5) -- (-2,-2);
	\draw (1,-1) -- (-.5,-1.5) -- (-1.5,-2);
	\draw (.5,-1.5) -- (-1,-2);
	\draw (1.5,-1.5) -- (-.5,-2);
	\filldraw (0,0) circle (.05cm);
	\filldraw (.5,-.5) circle (.05cm);
	\filldraw[fill=white] (0,-1) circle (.05cm);
	\filldraw (1,-1) circle (.05cm);
	\filldraw[fill=white] (.5,-1.5) circle (.05cm);
	\filldraw (1.5,-1.5) circle (.05cm);	
	\filldraw[fill=white] (0,-2) circle (.05cm);
	\filldraw[fill=white] (1,-2) circle (.05cm);
	\draw[dashed] (-.25,.5) -- (-.25, -2.5);	
	\filldraw[fill=DarkGreen] (-.5,-.5) circle (.05cm);	
	\filldraw[fill=DarkGreen] (-.5, -1) circle (.05cm);
	\filldraw[fill=DarkGreen] (-1, -1) circle (.05cm);
	\filldraw[fill=DarkGreen] (-.5, -1.5) circle (.05cm);
	\filldraw[fill=DarkGreen] (-1, -1.5) circle (.05cm);
	\filldraw[fill=DarkGreen] (-1.5, -1.5) circle (.05cm);
	\filldraw[fill=DarkGreen] (-2, -1.5) circle (.05cm);
	\filldraw[fill=DarkGreen] (-.5, -2) circle (.05cm);
	\filldraw[fill=DarkGreen] (-1, -2) circle (.05cm);
	\filldraw[fill=DarkGreen] (-1.5, -2) circle (.05cm);
	\filldraw[fill=DarkGreen] (-2, -2) circle (.05cm);
	\filldraw[fill=DarkGreen] (-2.5, -2) circle (.05cm);
	\filldraw[fill=DarkGreen] (-3, -2) circle (.05cm);	
\end{tikzpicture}
$$
\caption{Bratteli diagram for the core of $p_\star \cT(\TL_\bullet)p_\star$ for the $A_4$ factor planar algebra}\label{fig:Bratteli}
\end{figure}

We can now describe the core of $\TP{0}$ by looking at the reduction of $C$ by the right action of $p_\star$ on $p_\star \cF(\Gamma)$.
We take the Bratteli diagram for the core of $p_\star \TP{} p_\star$, and we eliminate all $A_\infty$ tails to the left of the dotted line which do not emanate from a vertex corresponding to a projection equivalent to the empty diagram, i.e., a vertex underneath the trivial vertex $\star$.
One sees this by repeating the above argument with $\alpha = \star$, so our vector $y$ lies in $p_\star \cF(\Gamma) p_\star$. Thus $p$ must be equivalent to the empty diagram to find a $y$ with $r_p y \neq 0$.
We give an example in Figure \ref{fig:NewBratteli}.

\begin{figure}[!htb]
$$
\begin{tikzpicture}
	\draw (0,0) -- (1.5,-1.5);
	\draw (0,-1) -- (1,-2);
	\draw (.5,-.5) -- (0,-1);
	\draw (1,-1) -- (0,-2);
	\draw (1.5,-1.5) -- (1,-2);
	\draw (0,0) -- (-2,-2);
	\draw (0,-1) -- (-1,-2);
	\filldraw (0,0) circle (.05cm);
	\filldraw (.5,-.5) circle (.05cm);
	\filldraw[fill=white] (0,-1) circle (.05cm);
	\filldraw (1,-1) circle (.05cm);
	\filldraw[fill=white] (.5,-1.5) circle (.05cm);
	\filldraw (1.5,-1.5) circle (.05cm);	
	\filldraw[fill=white] (0,-2) circle (.05cm);
	\filldraw[fill=white] (1,-2) circle (.05cm);
	\draw[dashed] (-.25,.5) -- (-.25, -2.5);	
	\filldraw[fill=DarkGreen] (-.5,-.5) circle (.05cm);	
	\filldraw[fill=DarkGreen] (-1, -1) circle (.05cm);
	\filldraw[fill=DarkGreen] (-.5, -1.5) circle (.05cm);
	\filldraw[fill=DarkGreen] (-1.5, -1.5) circle (.05cm);
	\filldraw[fill=DarkGreen] (-1, -2) circle (.05cm);
	\filldraw[fill=DarkGreen] (-2, -2) circle (.05cm);	
\end{tikzpicture}
$$
\caption{Bratteli diagram for the core of $\cT_0(\TL_\bullet)$ for the $A_4$ factor planar algebra}\label{fig:NewBratteli}
\end{figure}
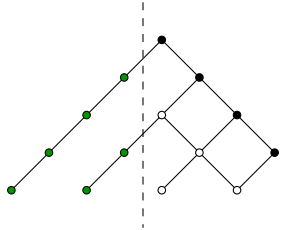
\end{ex}

\subsection{The shaded case}\label{sec:ShadedCase}

In this subsection, we briefly sketch what happens if $\cP_{\bullet}$ is shaded.
Let $\cP_{\bullet}$ be a subfactor planar algebra with principal graph $\Gamma_{+}$ and dual principal graph $\Gamma_{-}$.  We form two algebras $\cB_{+}$ and $\cB_{-}$ which are generated by 
$$
\bigoplus_{l, n, r = 0}^{\infty} \cP^{\pm}_{l, n, r}=\bigoplus_{l,n,r=0}^\infty \cP_{l+r+n,\pm}
$$
respectively.  Here, the boxes are drawn exactly as in Section \ref{sec:Operator} with the marked region on the bottom of the box.  
One can then form $\cB_{\pm}$ bimodules $\cX_{\pm}$, the Fock spaces $\cF(\cP_{\pm})$, the Toeplitz algebras $\cT(\cP_{\pm})$, the free semicircular algebras $\cT(\cP_{\pm})$ and the Cuntz-Pimsner algebras $\cO(\cP_{\pm})$.

Compressing $\cT(\cP_{\pm})$, $\cS(\cP_{\pm})$, and $\cO(\cP_{\pm})$ with the same techniques as the beginning of this section produces algebras isomorphic to $\cT(Y(\vec{\Gamma}_{\pm}))$, $\cS(\Gamma_{\pm}, \mu_{\pm})$, and $\cO_{\vec{\Gamma}_{\pm}}$ respectively.

In general, the $\pm$-algebras are not necessarily isomorphic.
If $\Gamma_+$ and $\Gamma_-$ have a different number of vertices, then the $K_0$-groups are not isomorphic.

We encourage the reader to see \cite{GJSCStar}, in particular Subsection 3.3, Remark 4.18, and Subsection 6.3, for more information on the shaded case.

\bibliographystyle{amsalpha}

{\footnotesize
\bibliography{../bibliography}
}
\end{document}